\newtheorem{theorem}{Theorem}[section]
\newtheorem*{theorem*}{Theorem}	
\newtheorem{conjt}[theorem]{Conjecture}
\newtheorem{corollary}[theorem]{Corollary}
\newtheorem{lemma}[theorem]{Lemma}
\newtheorem{proposition}[theorem]{Proposition}
\theoremstyle{definition}
\newenvironment{definition}
{\pushQED{\qed}\defx}
{\popQED\enddefx}
\newenvironment{example}
{\pushQED{\qed}\examplex}
{\popQED\endexamplex}
\newenvironment{remark}
{\pushQED{\qed}\remarkx}
{\popQED\endremarkx}
\numberwithin{equation}{section}
\DeclareMathOperator{\im}{Im}
\DeclareMathOperator{\End}{End}
\DeclareMathOperator{\Id}{Id}
\DeclareMathOperator{\Hom}{Hom}
\DeclareMathOperator{\Diff}{Diff}
\DeclareMathOperator{\rank}{rank}
\DeclareMathOperator{\Vol}{Vol}
\DeclareMathOperator{\Spinc}{Spin^c}
\DeclareMathOperator{\Spincr}{Spin^c_R}
\DeclareMathOperator{\Cob}{Cob}
\DeclareMathOperator{\gr}{\mathbf{gr}}
\DeclareMathOperator{\SW}{\underline{SW}}
\DeclareMathOperator{\Ob}{Ob}
  \DeclareMathOperator{\HM}{\it HM}
  \DeclareMathOperator{\LHM}{\it LHM}
\DeclareMathOperator{\KHM}{\it KHM}
\DeclareMathOperator{\SHM}{\it SHM}
\DeclareMathOperator{\SLL}{\it SL}
\DeclareMathOperator{\SFH}{SFH}
\DeclareMathOperator{\HFK}{HFK}
\newcommand{\R}{\mathbb{R}}
\newcommand{\C}{\mathbb{C}}
\newcommand{\Z}{\mathbb{Z}}
\newcommand{\T}{\mathbb{T}}
\newcommand{\BF}{\mathbb{F}}
\newcommand{\x}{\mathbb{X}}
\newcommand{\y}{\mathbb{Y}}
\newcommand{\BW}{\mathbb{W}}
\newcommand{\Step}{\textit{Step }}
\newcommand{\half}{\frac{1}{2}}
\newcommand{\CA}{\mathcal{A}}
\newcommand{\CB}{\mathcal{B}}
\newcommand{\SC}{\mathcal{C}}
\newcommand{\E}{\mathcal{E}}
\newcommand{\CG}{\mathcal{G}}
\newcommand{\K}{\mathcal{K}}
\newcommand{\NR}{\mathcal{R}}
\newcommand{\CL}{\mathcal{L}}
\newcommand{\CX}{\mathcal{X}}
\newcommand{\fa}{\mathfrak{a}}
\newcommand{\fd}{\mathfrak{d}}
\newcommand{\fm}{\mathfrak{m}}
\newcommand{\q}{\mathfrak{q}}
\newcommand{\s}{\mathfrak{s}}
\newcommand{\bs}{\widehat{\mathfrak{s}}}
\newcommand{\sw}{\mathfrak{M}}
\newcommand{\bpartial}{\bar{\partial}}
\newcommand{\hy}{\widehat{Y}}
\newcommand{\hx}{\widehat{X}}
\newcommand{\bomega}{\overline{\omega}}
\newcommand{\AT}{\mathbf{T}}
\newcommand{\AK}{\mathbf{K}}
\newcommand{\AR}{\mathbf{R}}
\newcommand{\CAT}{\textbf{CAT}}
\newcommand{\TSigma}{\mathbf{\Sigma}}
\newcommand{\spinc}{$spin^c\ $}
\newcommand{\tF}{\tilde{F}}
\newcommand{\Mod}{\text{-}\mathrm{Mod}}
\newcommand{\THM}{\widetilde{\HM}}
\newcommand{\tHM}{\widehat{\HM}}
\newcommand{\fHM}{\widecheck{\HM}}
\newcommand{\bHM}{\overline{\HM}}
\title{Monopoles And Landau-Ginzburg Models III:\\ A Gluing Theorem}
\author{Donghao Wang}
\date{\today}
\address{Department of Mathematics, Massachusetts Institute of Technology, Cambridge, MA 02139, USA}
\email{donghaow@mit.edu}
\begin{document}

	\setcounter{section}{1}
		
	\begin{abstract} This is the third paper of this series. In \cite{Wang20}, we defined the monopole Floer homology for any pair $(Y,\omega)$, where $Y$ is a compact oriented 3-manifold with toroidal boundary and $\omega$ is a suitable closed 2-form viewed as a decoration. In this paper, we establish a gluing theorem for this Floer homology when two such 3-manifolds are glued suitably along their common boundary, assuming that $\partial Y$ is disconnected, and $\omega$ is small and yet non-vanishing on $\partial Y$. 
		
		As applications, we construct a monopole Floer 2-functor and the generalized cobordism maps. Using results of Kronheimer-Mrowka and Ni, it is shown that  for any such 3-manifold $Y$ that is irreducible,  this Floer homology detects the Thurston norm on $H_2(Y,\partial Y;\R)$ and the fiberness of $Y$. Finally, we show that our construction recovers the monopole link Floer homology for any link inside a closed 3-manifold.
	\end{abstract}
	
	\maketitle
	\tableofcontents

\part{Introduction}

\subsection{An Overview} The Seiberg-Witten Floer homology of a closed oriented 3-manifold as introduced by Kronheimer-Mrowka \cite{Bible} has greatly influenced the study of 3-manifold
topology since its inception. The underlying idea is an infinite dimensional Morse theory: the monopole equations on $\R_t$ times a closed 3-manifold is interpreted as the downward gradient flow of the Chern-Simons-Dirac functional. 

This idea was further explored by the author in \cite{Wang20} in order to define the monopole Floer homology for any pair $(Y,\omega)$, where $Y$ is a compact oriented 3-manifold with toroidal boundary and $\omega$ is a suitable closed 2-form on $Y$. This Floer homology categorifies the Milnor-Turaev torsion invariant of $Y$ (this follows from the work of Meng-Taubes \cite{MT96} and Turaev \cite{T98}) and can be cast into a functor from a suitable cobordism category; see \cite[Theorem 1.5]{Wang20}. However, the second paper \cite{Wang20} was devoted to the analytic foundation of this Floer theory; very little was explored about its topological properties.

\medskip

The goal of the present paper is to understand the properties of this Floer homology in the special case that
\begin{enumerate}[label=($\star$)]
\item\label{star}  $\partial Y$ is disconnected, and $\omega$ has non-zero small pairing with each component of $\partial Y$.
\end{enumerate}

 Under this assumption, we will prove that the monopole Floer homology of $(Y,\omega)$ is a topological invariant: it depends only on the 3-manifold $Y$, the cohomology class $[\omega]\in H^2(Y; i\R)$ and an additional class in $H^1(\partial Y; i\R)$. Moreover, when $Y$ is irreducible, this invariant detects the Thurston norm on $H_2(Y,\partial Y; \R)$ and the fiberness of $Y$, generalizing the classical results \cite{KM97B,Bible,Ni08} for closed 3-manifolds. 

\medskip

In the context of Heegaard Floer homology, the knot Floer homology $\widehat{\HFK}_*$ and $\HFK_*^-$ were introduced by Oszv\'{a}th-Sz\'{a}bo \cite{KFH} and independently Rasmussen \cite{KFH1}. Motivated by the sutured manifold technique developed by Juh\'{a}sz \cite{J06,J08}, Kronheimer-Mrowka \cite{KS} defined the monopole knot Floer homology $\KHM_*$ as the analogue of the hat-version $\widehat{\HFK}_*$. 

One motivation of this project is to give an alternative definition of $\KHM_*$ so that a (3+1) TQFT property can be verified easily. This goal is accomplished in this paper: for any knot $K$ inside a closed 3-manifold $Z$, we prove that for a suitable choice of $\omega$ the monopole Floer homology of the link complement $Z\setminus N(K\cup m)$ is isomorphic to $\KHM_*(Z,K)$, where $m\subset Z\setminus K$ is a meridian. This confirms a longstanding speculation \cite{M16} that the knot Floer homology is related to the monopole equations on $\R_t$ times the link complement $Z\setminus N(K\cup m)$.

\medskip

Most topological implications of this paper follow immediately from a gluing theorem that computes the monopole Floer homology when two such 3-manifolds $(Y_1,\omega_1)$ and $(Y_2,\omega_2)$ are glued suitably along their common boundary; see Theorem \ref{T2.5} for the precise statement. Under the assumption \ref{star}, the gluing map
\begin{equation}\label{E1.1}
 \alpha: \HM_*(Y_1,\omega_1)\otimes_\NR \HM_*(Y_2,\omega_2)\to \HM_*(Y_1\circ_h Y_2, \omega_1\circ_h\omega_2)
\end{equation}
 is in fact an isomorphism and is functorial when considering both 3-manifold and 4-manifold cobordisms, where $\circ_h$ denotes horizontal composition and $Y_1\circ_h Y_2$ is the 3-manifold obtained after gluing. In fact, the (3+1) TQFT can be upgraded into a (2+1+1) TQFT: there is a monopole Floer 2-functor 
\[
\HM_*: \AT\to \AR,
\]
from a suitable cobordism bi-category $\AT$ to the strict 2-category $\AR$ of finitely generated $\NR$-modules. In this paper, we will always work with the mod 2 Novikov ring $\NR$ to avoid any orientation issues.

\medskip

With that being said, the gluing map \eqref{E1.1} is constructed simply using Floer's excision argument \cite{BD95,KS}, and the technique involved in the proof of the Gluing Theorem \ref{T2.5} is pretty standard. This is not a bad sign: the monopole Floer homology of $(Y,\omega)$ is expected to have intimate relations with the existing theory for closed 3-manifolds and for balanced sutured manifolds. This paper is providing the first few results towards this direction.

\subsection{Summary of Results}\label{Subsec1.2} For the benefit of the readers, we give a more detailed account of results that come out of the Gluing Theorem \ref{T2.5}. Let $Y$ be a connected, compact, oriented 3-manifold whose boundary $\partial Y\cong \Sigma\colonequals \coprod_{1\leq j\leq n} \T^2_j$ is a union of 2-tori.  The monopole Floer homology constructed in \cite{Wang20} relies on some auxiliary data on the surface $\Sigma$. In this paper, we focus on the special case that $\Sigma$ is disconnected and choose the following data in order:
\begin{itemize}
	\item a flat metric $g_\Sigma$ of $\Sigma$;
	\item an imaginary-valued harmonic 1-form $\lambda\in \Omega_h^1(\Sigma; i\R)$ such that  $\lambda|_{\T^2_j}\neq 0,\ 1\leq j\leq n$;
	\item an imaginary-valued harmonic 2-form $\mu\in \Omega_h^2(\Sigma; i\R)$  such that $|\langle \mu, [\T^2_j]\rangle|<2\pi$ and $\neq 0$, $1\leq j\leq n$. 
\end{itemize}

Such a quadruple $\TSigma=(\Sigma, g_\Sigma,\lambda,\mu)$ will be called a $T$-surface in this paper, and the triple $\fd=(g_\Sigma,\lambda,\mu)$ is called a geometric datum on $\Sigma$ in \cite{Wang20}. We choose a closed 2-form $\omega\in \Omega^2(Y; i\R)$ on $Y$ such that
\[
\omega=\mu+ds\wedge\lambda
\]
in a collar neighborhood $(-1,0]_s\times \Sigma\subset Y$. We denote such a pair $(Y,\omega)$ together with other auxiliary data in the construction by a thickened letter $\y$. 

\medskip

In \cite{Wang20}, the monopole Floer homology group $\HM_*(\y)$ of $\y$ is defined by studying the monopole equations on the non-compact 3-manifold $\hy\colonequals\  Y\ \bigcup_{\Sigma}\ [0,+\infty)_s\times\Sigma$ with perturbation given by the 2-form $\omega$. The geometric datum $\fd=(g_\Sigma,\lambda,\mu)$ is used here to specify the geometry along the cylindrical end $[0,+\infty)_s\times\Sigma$, and the class $[*_\Sigma\lambda]\in H^1(\Sigma; i\R)$ must lie in the image of $H^1(Y;i\R)$ in order for the compactness argument to work. By \cite[Theorem 1.4]{Wang20}, $\HM_*(\y)$ is a finitely generated module over $\NR$. However, the results from \cite{Wang20} did not guarantee that $\HM_*(\y)$ is a topological invariant: this group may a priori depend on the geometric datum $\fd$ in a subtle way. In this papar, we establish an invariance result in the special case described above:

\begin{theorem}[Theorem \ref{T5.1}]\label{T1.1} Under above assumptions, the monopole Floer homology group $\HM_*(\y)$ depends at most on the 3-manifold $Y$, the class $[\omega]\in H^2(Y;i\R)$ and $[*_\Sigma\lambda]\in \im (H^1(Y;i\R)\to H^1(\Sigma; i\R))$. This group is independent of the rest of data used in the construction up to canonical isomorphisms. 
\end{theorem}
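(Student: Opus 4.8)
The plan is to deduce the invariance entirely from the Gluing Theorem \ref{T2.5}, by gluing $\y$ with a family of thickened tori that absorbs any change of auxiliary data. Throughout I write $\HM_*(Y;\TSigma)$ for $\HM_*(\y)$ to stress the dependence on the chosen data.

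\emph{Reduction to the $T$-surface.} Fix a $T$-surface $\TSigma$ and a class $[\omega]\in H^2(Y;i\R)$. The remaining choices entering the construction — the Riemannian metric on $Y$ extending $g_\Sigma$ over the collar, the closed representative of $[\omega]$ with the prescribed collar form $\mu+ds\wedge\lambda$, and the abstract perturbation — form a connected family, and a path of such choices induces, via the standard continuation/cobordism construction in the analytic framework of \cite{Wang20}, a chain homotopy equivalence of the Floer complexes. Hence $\HM_*(\y)$ depends only on $(\TSigma,[\omega])$, and it remains to compare $\HM_*$ for two $T$-surfaces $\TSigma=(\Sigma,g_\Sigma,\lambda,\mu)$ and $\TSigma'=(\Sigma,g_\Sigma',\lambda',\mu')$ inducing the same class $[\mu]\in H^2(\Sigma;i\R)$ — which is forced by $i^*[\omega]=[\mu]$ — and the same class $[*_\Sigma\lambda]\in H^1(\Sigma;i\R)$. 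The moduli of flat metrics on $\Sigma$ is path-connected, and for a given flat metric the harmonic forms $\mu$ and $\lambda$ are uniquely recovered from $[\mu]$ and $[*_\Sigma\lambda]$, with $|\langle\mu,[\T^2_j]\rangle|$ and $\lambda|_{\T^2_j}$ unchanged along the way; hence $\TSigma$ and $\TSigma'$ are joined by a smooth path $\{\TSigma_t\}_{t\in[0,1]}$ of admissible $T$-surfaces, and by treating the $\T^2_j$ separately we may assume the path is supported on a single torus.

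\emph{Gluing in a thickened torus.} Form $W=\bigsqcup_{k=1}^{n}(\T^2_k\times[0,1])$, equipped with a flat metric and a closed 2-form $\omega_W$ which is translation-invariant of shape $\mu+ds\wedge\lambda$ near $\T^2_k\times\{0\}$ with the data of $\TSigma$, and of shape $\mu'+ds\wedge\lambda'$ near $\T^2_k\times\{1\}$ with the data of $\TSigma'$; the constancy of $[\mu]$ and $[*_\Sigma\lambda]$ along $\{\TSigma_t\}$ is precisely what allows such a closed interpolation with these collar shapes. Then $Y\cup_\Sigma W$ is diffeomorphic to $Y$, now carrying the auxiliary data of $\TSigma'$, and applying the Gluing Theorem \ref{T2.5} to the boundary tori one at a time produces a natural isomorphism
\[
\HM_*(Y;\TSigma)\ \otimes_\NR\ \bigotimes_{k=1}^{n}\HM_*\!\left(\T^2_k\times[0,1]\right)\ \xrightarrow{\ \cong\ }\ \HM_*(Y;\TSigma').
\]
Consequently the theorem reduces to the model computation $\HM_*(\T^2\times[0,1],\,\mu+ds\wedge\lambda)\cong\NR$ together with the statement that, under this identification, the gluing map is the evident isomorphism $x\otimes 1\mapsto x$. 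For the latter, functoriality of $\alpha$ in Theorem \ref{T2.5} applied to the composite cylinder joining $\TSigma$ to $\TSigma'$ and back — which is homotopic through admissible data to the product cylinder on $\TSigma$, whose gluing map is the identity — shows that the map built from $\{\TSigma_t\}$ and the one built from the reverse path are mutually inverse; hence $\HM_*(Y;\TSigma)\cong\HM_*(Y;\TSigma')$.

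\emph{Main obstacle.} The hard part is the model computation and its compatibility with the gluing map. On the completion $\widehat{W}\cong\T^2\times\R$ the perturbing form may be written $\omega=dx\wedge\nu$ for a nowhere-vanishing closed 1-form $\nu$; since $\lambda|_{\T^2}\neq 0$, the perturbed three-dimensional Seiberg–Witten equations should have no reducible solution and exactly one irreducible solution, cut out transversally, so that the Floer complex of $W$ has rank one over $\NR$ with vanishing differential. One must also check that no broken trajectory contributes to the gluing map and that the distinguished generator of $\HM_*(W)\cong\NR$ is the canonical one, so that tensoring with $\HM_*(W)$ is literally the identity functor on $\NR$-modules. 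A secondary technical point is that Theorem \ref{T2.5}, stated for closed pairs $(Y_i,\omega_i)$, applies verbatim when one factor is the thickened torus $W$ — that is, that $W$ is an admissible object of the cobordism category — and that the interpolating data on $W$ can be kept admissible ($|\langle\mu_t,[\T^2_j]\rangle|<2\pi$ and $\lambda_t|_{\T^2_j}\neq 0$) throughout.
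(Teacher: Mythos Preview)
Your overall strategy is exactly the paper's: reduce via \cite{Wang20} to dependence on $(\TSigma,[\omega])$ (this is Theorem~\ref{T5.2}), then glue in an interpolating cylinder $\y_0\in\AT(\TSigma,\TSigma')$ and invoke Theorem~\ref{T2.5} to transport $\HM_*(\y_1)$ to $\HM_*(\y_1\circ_h\y_0)$, which now lives in $\AT(\emptyset,\TSigma')$ and can be compared with $\y_2$ by the weak invariance already available. You have also correctly isolated the key lemma: the model computation $\HM_*(\y_0)\cong\NR$, concentrated at $\bs_{std}$ (the paper's Lemma~\ref{L5.2}).

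Where your argument is incomplete is precisely in that lemma. Your sketch proposes to solve the three-dimensional Seiberg--Witten equations directly on $\T^2\times\R$ with the interpolating form, but this form is \emph{not} translation-invariant (the two ends carry different metrics $g_\Sigma,g_\Sigma'$ and different $\lambda,\lambda'$), so the analysis you allude to, which in the paper applies only to the identity $1$-cell $e_\TSigma$ (Subsection~\ref{Subsec3.2.5}), does not transfer automatically; your claim that the equations ``should have \dots\ exactly one irreducible solution, cut out transversally'' is an expectation, not a proof. The paper circumvents this entirely by a second application of the Gluing Theorem: pair $\y_0\in\AT(\TSigma,\TSigma')$ with a reversed cylinder $\y_0'\in\AT(\TSigma',\TSigma)$ to obtain the closed $3$-torus $\T^3=S^1\times\Sigma$, whose Floer homology is computed in Lemma~\ref{L5.4} to have rank $1$; hence $\rank_\NR\HM_*(\y_0)=\rank_\NR\HM_*(\y_0')=1$. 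To pin down \emph{which} relative $\mathrm{spin}^c$ structure supports the nonzero group, the paper invokes the metric- and form-independence of the graded Euler characteristic (Theorem~\ref{T3.4}, the Milnor--Turaev torsion), comparing with the known answer for $e_{\TSigma}$ to force $\bs=\bs_{std}$. With Lemma~\ref{L5.2} in hand, Theorem~\ref{T3.5} (the gluing map respects the $\mathrm{spin}^c$ grading) gives the isomorphism $\HM_*(\y_1,\bs)\cong\HM_*(\y_1\circ_h\y_0,\bs)$ immediately, so your separate worry about checking that ``tensoring with $\HM_*(W)$ is literally the identity functor'' is unnecessary at the level of isomorphism classes. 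Finally, one small point you elide: after gluing one must still match cohomology classes of the $2$-forms; the paper does this by noting that the discrepancy $\beta\in H^2(Y,\partial Y;i\R)$ lies in the image of $H^1(\partial Y;i\R)$ and can be killed by adjusting $\omega_0$ on the cylinder.
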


The group $\HM_*(\y)$ admits a decomposition with respect to relative \spinc structures on $Y$:
\[
\HM_*(\y)=\bigoplus_{\bs\in \Spincr(Y)}\HM_*(\y,\bs). 
\]

For any closed irreducible 3-manifold $Z$, this grading information \cite{KM97B, Bible} determines the Thurston norm $x(\cdot)$ on $H_2(Z;\R)$. The Gluing Theorem \ref{T2.5} then allows us to relate $\HM_*(\y)$ with the monopole Floer homology of the double $Y\cup (-Y)$, so a similar detection result is obtained for any irreducible 3-manifold with disconnected toroidal boundary. However, our statement below is slightly different from the one in \cite{KM97B,Bible}, as the author was unable to verify the adjunction inequality. 

\begin{theorem}[Theorem \ref{T7.1}]\label{T1.2} Consider the set of monopole classes 
	\[
	\sw(\y)\colonequals \{ c_1(\bs): \HM_*(\y,\bs)\neq \{0\}\}\subset H^2(Y,\partial Y;\Z),
	\]
	and define $\varphi_{\y}(\kappa)\colonequals \max_{a\in \sw(\y)} \langle a, \kappa\rangle$, $\kappa\in H_2(Y,\partial Y;\R)$. We set $\varphi_{\y}\equiv -\infty$ if $\sw(\y)=\emptyset$. Then 
	\[
	\half(\varphi_{\y}(\kappa)+\varphi_{\y}(-\kappa))\leq x(\kappa),\ \forall \kappa\in H_2(Y,\partial Y;\R). 
	\]
	If in addition $Y$ is irreducible, then $\sw(\y)$ is non-empty, and the equality holds for any $\kappa$.
\end{theorem}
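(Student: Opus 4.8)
The plan is to reduce the statement to the already-known case of closed $3$-manifolds by passing to the double. Let $\bar\y$ denote the reflected datum: it lives over $-Y$ and carries the $2$-form $\mu-ds\wedge\lambda$ near the boundary, so that the $2$-forms on $Y$ and $-Y$ match smoothly across $\Sigma$. By the Gluing Theorem \ref{T2.5}, $\y\circ_h\bar\y$ is a datum $D\y$ for the closed $3$-manifold $DY\colonequals Y\cup_\Sigma(-Y)$, equipped with a $2$-form $\omega_{DY}$ whose periods over the seam tori $\T^2_j$ equal $\langle\mu,[\T^2_j]\rangle$ (small and non-vanishing), and the gluing map is an isomorphism
\[
\alpha:\ \HM_*(\y)\otimes_\NR\HM_*(\bar\y)\ \xrightarrow{\ \cong\ }\ \HM_*(D\y)
\]
compatible with the decompositions indexed by relative \spinc structures. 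Since $\NR$ is a field, a tensor product of finitely generated $\NR$-modules vanishes iff one of the factors does. I will use three further inputs: an orientation-reversal duality $\HM_*(\bar\y,\bar\bs)\cong\HM_{-*}(\y,\bs)$ that identifies $\sw(\bar\y)$ with $\sw(\y)$; the double $\tilde\kappa\in H_2(DY;\R)$ of a class $\kappa\in H_2(Y,\partial Y;\R)$, obtained by doubling a relative cycle; and the homological bookkeeping that a \spinc structure $\bs$ on $DY$, cut along $\Sigma$ into $\bs_1=\bs|_Y$ and $\bar\bs_2=\bs|_{-Y}$, satisfies $\langle c_1(\bs),\tilde\kappa\rangle=\langle c_1(\bs_1),\kappa\rangle-\langle c_1(\bs_2),\kappa\rangle$, the minus sign being forced by the orientation reversal --- this is precisely why the conclusion has the averaged form rather than $\varphi_{\y}(\kappa)\le x(\kappa)$.

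\emph{The inequality.} If $\sw(\y)=\emptyset$ there is nothing to prove, so choose $a_i=c_1(\bs_i)\in\sw(\y)$, $i=1,2$, achieving $\varphi_{\y}(\kappa)$ and $\varphi_{\y}(-\kappa)$. The admissibility conditions $|\langle\mu,[\T^2_j]\rangle|<2\pi$ and $\lambda|_{\T^2_j}\neq0$ force all relevant relative \spinc structures to restrict compatibly to $\Sigma$, so (using the duality for $-Y$) $\bs_1$ and $\bar\bs_2$ glue to a \spinc structure $\bs$ on $DY$ with $\HM_*(D\y,\bs)\cong\HM_*(\y,\bs_1)\otimes_\NR\HM_*(\bar\y,\bar\bs_2)\neq0$; thus $c_1(\bs)\in\sw(DY)$. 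Doubling a norm-minimizing properly embedded surface representing $\kappa$ produces a closed surface representing $\tilde\kappa$ with $\chi_-\le 2x(\kappa)$, so $x_{DY}(\tilde\kappa)\le 2x(\kappa)$; and the adjunction inequality for closed $3$-manifolds \cite{KM97B,Bible}, applied to $(DY,\omega_{DY})$ --- the smallness of $\mu$ keeping the perturbation in the range where that argument applies --- gives $\langle c_1(\bs),\tilde\kappa\rangle\le x_{DY}(\tilde\kappa)$. Together with the pairing formula, $\varphi_{\y}(\kappa)+\varphi_{\y}(-\kappa)=\langle c_1(\bs),\tilde\kappa\rangle\le 2x(\kappa)$.

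\emph{Sharpness when $Y$ is irreducible.} If $Y$ is irreducible then, $\partial Y$ being disconnected, $Y$ is not a solid torus, so each torus of $\partial Y$ is incompressible and $DY$ is irreducible with $b_1(DY)>0$. By the Thurston norm detection theorem for closed irreducible $3$-manifolds \cite{KM97B,Bible,Ni08} --- whose sharp direction uses Gabai's taut foliations --- $\sw(DY)$ is non-empty and there is $\bs\in\sw(DY)$ with $\langle c_1(\bs),\tilde\kappa\rangle=x_{DY}(\tilde\kappa)$; moreover $x_{DY}(\tilde\kappa)=2x(\kappa)$ (the non-elementary direction of the doubling identity for the Thurston norm, again via sutured manifold theory). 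Cutting $\bs$ along $\Sigma$ and applying $\alpha$, both $\HM_*(\y,\bs|_Y)$ and $\HM_*(\bar\y,\bs|_{-Y})$ are non-zero; so $\sw(\y)\neq\emptyset$, and with $a_1=c_1(\bs|_Y)$ and $a_2$ the element of $\sw(\y)$ corresponding to $\bs|_{-Y}$, the pairing formula yields
\[
2x(\kappa)=\langle c_1(\bs),\tilde\kappa\rangle=\langle a_1,\kappa\rangle-\langle a_2,\kappa\rangle\le\varphi_{\y}(\kappa)+\varphi_{\y}(-\kappa)\le 2x(\kappa),
\]
forcing equality throughout.

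The one genuinely delicate step is identifying $\HM_*(D\y)$ --- the group attached to the closed manifold $DY$ with the non-exact doubled $2$-form $\omega_{DY}$ --- with the flavor of Kronheimer--Mrowka's monopole Floer homology appearing in their Thurston norm theorem, and verifying that this small non-exact perturbation does not move us out of the regime where that theorem holds; this is where the hypotheses $|\langle\mu,[\T^2_j]\rangle|<2\pi$ are used in an essential way. The orientation-reversal duality, the sign in the pairing formula, and the doubling identity for the Thurston norm are expected to be routine or classical.
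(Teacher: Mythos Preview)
Your approach is essentially the paper's: double $Y$, apply the Gluing Theorem to get $\HM_*(\y)\otimes\HM_*(-\y')\cong\HM_*(\tilde Y,[\tilde\omega])$, then invoke the adjunction inequality and non-vanishing theorem for closed $3$-manifolds (adapted to the non-exact perturbation $\tilde\omega$, which is Propositions~\ref{P4.2} and~\ref{P4.5}), together with Gabai's result that the doubled surface is norm-minimizing. The one place you are too optimistic is the ``orientation-reversal duality'' $\HM_*(\bar\y,\bar\bs)\cong\HM_{-*}(\y,\bs)$: this is Lemma~\ref{L7.3}, and while part~(1) is indeed Poincar\'e duality, part~(2)---the invariance under flipping the sign of $\lambda$ so that $\y$ and the reflected copy live over the \emph{same} $T$-surface and can actually be horizontally composed---requires constructing an explicit cobordism and is not quite routine; also, your aside that the admissibility conditions ``force relative $\spinc$ structures to restrict compatibly to $\Sigma$'' is misplaced, since that compatibility is built into the definition of $\Spincr(Y)$.
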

\begin{remark}\label{R1.3} Ideally, one would expect that $\sw(\y)$ is symmetric about the origin, and so $\varphi_{\y}(\kappa)=\varphi_{\y}(-\kappa)$ whenever $\sw(\y)$ is non-empty; but this symmetry is hard to verify directly due to the presence of the 2-form $\omega$. Nevertheless, the ideal adjunction inequality $\varphi_{\y}(\kappa)\leq x(\kappa)$ can be established in some special cases, e.g., when $\kappa$ is integral and $[\partial \kappa]\in H_1(\partial Y;\Z)$ is proportional to the Poincar\'{e} dual of $[*_\Sigma\lambda]$; see Corollary \ref{C8.4}.
\end{remark}

This Thurston norm detection theorem is accompanied with a fiberness detection result. In the context of Heegaard Floer homology, such a result was first established by Ghiggini \cite{Ghiggini08} for genus-1 fibered knots and by Ni for any knots \cite{Ni07} and for any closed 3-manifolds \cite{Ni09b} in general. In the context of the Seiberg-Witten theory, this was proved by Kronheimer-Mrowka \cite{KS} for the monopole knot Floer homology $\KHM_*$, and by Ni \cite{Ni08} for closed 3-manifolds. Our statement below, however, is slightly weaker than the ideal version that one would hope to prove due to the same reason explained in Remark \ref{R1.3}.

\begin{theorem}[Theorem \ref{T7.2}]\label{T1.4} For any integral class $\kappa\in H_2(Y,\partial Y;\Z)$, consider the subgroup
	\[
	\HM_*(\y|\kappa)\colonequals \bigoplus_{\langle c_1(\bs), \kappa\rangle=\varphi_{\y}(\kappa)}\HM_*(\y,\bs). 
	\]
	If $Y$ is irreducible and $\rank_\NR\HM_*(\y|\kappa)=\rank_\NR\HM_*(\y|-\kappa)=1$, then $Y$ fibers over $S^1$. 
\end{theorem}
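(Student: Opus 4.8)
The plan is to deduce the statement from Ni's fibering criterion for closed $3$-manifolds \cite{Ni08} by passing to the double of $Y$ through the Gluing Theorem \ref{T2.5}. A few harmless reductions come first: we may take $\kappa$ primitive, since both $\HM_*(\y|\kappa)$ and the property ``$Y$ fibres with fibre representing $\kappa$'' are unchanged under scaling by a positive rational; and since $Y$ is irreducible we may assume each torus $\T^2_j\subset\partial Y$ is incompressible, the alternative forcing $Y$ to be a solid torus, for which the assertion is elementary. Reversing orientation, form $-\y$ and the closed $3$-manifold $Z\colonequals Y\circ_h (-Y)$ (topologically $Y\cup_\Sigma(-Y)$) by doubling the $T$-surface data and the $2$-form $\omega$; when $\omega|_\Sigma$ is small the resulting closed form on $Z$ is a small perturbation, so that the construction of \cite{Wang20} on $Z$ computes a completion of the ordinary monopole Floer homology $\widehat{\HM}(Z)$. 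The class $\kappa$ together with its mirror glues to a class $\hat\kappa\in H_2(Z;\Z)$.

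First I would apply the Gluing Theorem \ref{T2.5}, with the auxiliary data on the two sides matched across $\Sigma$ by the invariance Theorem \ref{T1.1}, to obtain a functorial isomorphism $\HM_*(\y)\otimes_\NR\HM_*(-\y)\cong\HM_*(Z)$ compatible with the relative $\Spinc$ decompositions; under orientation reversal $\HM_*(-\y,\bs)$ corresponds to $\HM_*(\y,\bar\bs)$ with $c_1(\bar\bs)=-c_1(\bs)$. Since $c_1$ of a $\Spinc$ structure on $Z$ pairs with $\hat\kappa$ as the sum of the pairings of its two restrictions with $\kappa$ and with $-\kappa$, passing to the extremal summands gives an isomorphism $\widehat{\HM}(Z\,|\,\hat\kappa)\cong\HM_*(\y|\kappa)\otimes_\NR\HM_*(\y|-\kappa)$ together with $\varphi_Z(\hat\kappa)=\varphi_{\y}(\kappa)+\varphi_{\y}(-\kappa)$, where $\widehat{\HM}(Z\,|\,\hat\kappa)$ denotes the sum of the $\hat\kappa$-maximal summands, in analogy with $\HM_*(\y|\kappa)$. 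By Theorem \ref{T1.2}, since $Y$ is irreducible, $\varphi_{\y}(\kappa)+\varphi_{\y}(-\kappa)=2x(\kappa)$; gluing norm-minimising surfaces gives $x_Z(\hat\kappa)\le 2x(\kappa)$, and the monopole Thurston-norm bound on the closed manifold $Z$ gives $\varphi_Z(\hat\kappa)\le x_Z(\hat\kappa)$; chaining these forces $\varphi_Z(\hat\kappa)=x_Z(\hat\kappa)=2x(\kappa)$, so $\hat\kappa$ is an extremal class of $Z$ at which the norm bound is sharp. By hypothesis $\rank_\NR\widehat{\HM}(Z\,|\,\hat\kappa)=\rank_\NR\HM_*(\y|\kappa)\cdot\rank_\NR\HM_*(\y|-\kappa)=1$.

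Since $Y$ is irreducible with incompressible toroidal boundary, $Z=DY$ is irreducible, so Ni's theorem \cite{Ni08} (in the form used in \cite{Bible,KS}) applies: the rank-$1$ extremal summand of $\widehat{\HM}(Z)$ realising the norm bound forces $Z$ to fibre over $S^1$ with fibre a norm-minimising surface $\hat R$ whose class lies in the open fibred face carrying $\hat\kappa$; after rescaling, $\hat R$ represents $\hat\kappa$. The final step is descent. The involution $\iota\colon Z\to Z$ exchanging the two copies of $Y$ fixes $\Sigma$ and fixes $\hat\kappa$, hence preserves the fibred face; by uniqueness of the fibration in a given fibred class one may isotope $\hat R$ to be $\iota$-invariant and transverse to $\Sigma$, so that $\hat R\cap\Sigma\subset\mathrm{Fix}(\iota)=\Sigma$ is a (nonempty, as $\hat R$ is connected) union of essential curves and $\iota$ exchanges the two halves $\hat R\cap Y$, $\hat R\cap(-Y)$. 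Then $R\colonequals\hat R\cap Y$ is properly embedded in $Y$ with $\partial R=\hat R\cap\Sigma$, and (by Ehresmann's theorem for manifolds with boundary, since both $Y\to S^1$ and $\Sigma\to S^1$ are submersions) the restricted fibration exhibits $Y$ as a surface bundle over $S^1$ with fibre $R$, which represents $\kappa$.

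The main obstacle is the grading bookkeeping in the second paragraph: one must show that the Gluing Theorem identifies $\widehat{\HM}(Z\,|\,\hat\kappa)$ with \emph{precisely} $\HM_*(\y|\kappa)\otimes\HM_*(\y|-\kappa)$ and not with an a priori larger summand --- equivalently, one must control, for a doubled norm-minimising surface, which relative $\Spinc$ structures are ``outer'' on each side. Since the ideal adjunction inequality is unavailable (Remark \ref{R1.3}), one knows only that $\varphi_{\y}(\kappa)$ and $\varphi_{\y}(-\kappa)$ average to $x(\kappa)$, not their separate values; this is exactly why both rank-$1$ hypotheses must be combined, through Theorem \ref{T1.2}, to certify that $\hat\kappa$ is a monopole-simple extremal class of $Z$, and why the conclusion falls short of the ideal one. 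The remaining points are routine: identifying $\HM_*$ with $\widehat{\HM}(Z)$ in the small-$\omega$ limit via the usual compactness and no-bubbling input, and the isotopy making $\hat R$ invariant, which is standard once its class is known to be $\iota$-fixed. The edge case $\partial\kappa=0$ (with the fibre not meeting $\partial Y$) is handled separately by the same circle of ideas.
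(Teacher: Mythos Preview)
Your overall strategy---double $Y$, use the Gluing Theorem to show the extremal monopole summand on the closed double $Z$ has rank $1$, and then invoke a fiberness detection result---is the same opening move the paper makes, and the grading bookkeeping you worry about is indeed handled by \eqref{E7.2} and Lemma~\ref{L7.3}. But from that point on the two arguments diverge, and your route has a genuine gap.

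The gap is the descent step. You apply Ni's closed-manifold theorem to conclude $Z$ fibers over $S^1$ with fiber $\hat R$ in the class $\hat\kappa$, and then claim that because $\iota_*\hat\kappa=\hat\kappa$, the fibration can be isotoped to be $\iota$-equivariant (so that $\hat R$ is $\iota$-invariant and $\Sigma\to S^1$ is a submersion). This is not justified: uniqueness of the fibered class up to isotopy does \emph{not} yield an equivariant representative. Concretely, if $\alpha$ is a non-vanishing closed $1$-form dual to $\hat\kappa$, then $\iota^*\alpha$ is cohomologous to $\alpha$ but the average $\tfrac{1}{2}(\alpha+\iota^*\alpha)$ can vanish on $\Sigma=\mathrm{Fix}(\iota)$; and there is no a priori reason the fibration restricted to $\Sigma$ is a submersion. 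Without that, Ehresmann's theorem does not apply to $Y$. A second, smaller issue: Ni's theorem \cite{Ni08} is stated for exact perturbations, whereas the double carries the non-exact form $\tilde\omega$; passing from $\HM(\tilde Y,[\tilde\omega]\,|\,\tilde F)$ to $\HM(\tilde Y\,|\,\tilde F)$ requires the excision argument in Steps~1--3 of the paper's proof and is not automatic from a ``small-$\omega$ limit''.

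The paper avoids the descent problem entirely by \emph{not} deducing that $Z$ fibers. Instead it uses only Ni's weaker consequence that the complement $\tilde M$ of $\tilde F$ in $\tilde Y$ is a homology product, transfers this to show the sutured manifold $M(Y,F)$ obtained by cutting $Y$ along a norm-minimizing $F$ is a homology product, invokes the decomposition Theorem~\ref{T8.1} to identify $\SHM(M(Y,F))\cong\HM_*(\y\,|\,[F])\cong\NR$, and then applies the sutured fiberness detection \cite[Theorem~6.1]{KS} (Theorem~\ref{T9.2}) directly to $M(Y,F)$. This gives the fibration of $Y$ intrinsically, with no equivariance argument needed.
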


The proof of Theorem \ref{T7.2} relies on the fiberness detection result \cite[Theorem 6.1]{KS} for sutured monopole Floer homology $\SHM$. In the original definition of Gabai \cite{Gabai83}, any 3-manifolds with toroidal boundary are examples of sutured manifolds, but they are not balanced in the sense of Juh\'{a}sz \cite[Definition 2.2]{J06}. Thus the sutured monopole Floer homology $\SHM$ is not previously defined for this class of sutured manifolds. One may regard our construction as a natural extension of $\SHM$ and ask if a sutured decomposition theorem, as in \cite[Theorem 1.3]{J08} and \cite[Theorem 6.8]{KS}, continues to hold at this generality. Theorem \ref{T8.1} is a preliminary result in this direction. Theorem \ref{T1.4} then follows immediately from Theorem \ref{T8.1} and the works of Kronheimer-Mrowka \cite{KS} and Ni \cite{Ni09b}. 

\medskip

Theorem \ref{T1.2} combined with Theorem \ref{T1.4} gives a simple characterization of the product manifold $[-1,1]_s\times \T^2$, which was first suggested to the author by Chris Gerig.

\begin{proposition}[Corollary \ref{C9.3}]\label{P0.5} Let $Y$ be any oriented 3-manifold with disconnected toroidal boundary. If $Y$ is connected and irreducible, then $\rank_{\NR}\HM_*(\y)\geq 1$. If the equality holds, then $Y=[-1,1]_s\times\T^2$ is a product. This is true for any choice of $(\omega,\fd)$ satisfying the conditions at the beginning of Section \ref{Subsec1.2}.
\end{proposition}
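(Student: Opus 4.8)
The plan is to deduce both assertions from Theorems~\ref{T7.1} and~\ref{T7.2}, together with the classification of low-complexity surface bundles over the circle. For the inequality: since $Y$ is irreducible, Theorem~\ref{T7.1} guarantees $\sw(\y)\neq\emptyset$, so there is some $\bs\in\Spincr(Y)$ with $\HM_*(\y,\bs)\neq\{0\}$. As $\NR$ is the mod~$2$ Novikov field, a nonzero finitely generated $\NR$-module has positive rank, hence $\rank_\NR\HM_*(\y)=\sum_{\bs}\dim_\NR\HM_*(\y,\bs)\geq 1$.

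Now suppose $\rank_\NR\HM_*(\y)=1$. Then the decomposition of $\HM_*(\y)$ over $\Spincr(Y)$ forces a unique $\bs_0$ with $\HM_*(\y,\bs_0)\neq\{0\}$, of rank exactly one; hence $\sw(\y)=\{c_1(\bs_0)\}$ and $\varphi_\y(\kappa)=\langle c_1(\bs_0),\kappa\rangle$ is linear in $\kappa$. Feeding this into Theorem~\ref{T7.1} gives $x(\kappa)=\half\bigl(\varphi_\y(\kappa)+\varphi_\y(-\kappa)\bigr)=0$ for every $\kappa\in H_2(Y,\partial Y;\R)$, so the Thurston norm of $Y$ vanishes identically. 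Since $\partial Y$ has $n\geq 2$ toroidal components, $H_2(Y,\partial Y;\R)\neq 0$ (half-lives-half-dies), so we may choose a nonzero integral class $\kappa$. Because $\bs_0$ is the only summand contributing to $\HM_*(\y)$ and $\varphi_\y(\pm\kappa)=\langle c_1(\bs_0),\pm\kappa\rangle$, the subgroups $\HM_*(\y|\kappa)$ and $\HM_*(\y|-\kappa)$ both equal $\HM_*(\y,\bs_0)$, which has rank one. Theorem~\ref{T7.2} then shows $Y$ fibers over $S^1$.

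It remains to classify an irreducible, oriented $3$-manifold $Y$ with disconnected toroidal boundary that fibers over $S^1$ and has vanishing Thurston norm. Choose a fibration $p\colon Y\to S^1$ with connected fiber $F$ (which we may take to be connected). The restriction of $p$ to $\partial Y$ is a surface bundle over $S^1$ with fiber $\partial F$, and restricting further to any one boundary torus exhibits $\partial F$ as meeting it in a nonempty family of parallel circles; in particular $F$ is a compact orientable surface with nonempty boundary, so $\chi(F)\leq 1$. Since a fiber of a fibration is norm-minimizing in its homology class and $x\equiv 0$, we get $-\chi(F)\leq 0$, hence $\chi(F)\in\{0,1\}$, so $F$ is a disk or an annulus. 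If $F$ were a disk, then $Y$ would be the mapping torus of a homeomorphism of $D^2$, i.e. $Y\cong D^2\times S^1$, whose boundary is connected, contradicting the hypothesis. Thus $F$ is an annulus $A$ and $Y$ is the mapping torus of an orientation-preserving homeomorphism $\phi$ of $A$. If $\phi$ interchanged the two ends of $A$, then $\partial Y$ would again be a single torus; so $\phi$ preserves each boundary component, and since the orientation-preserving mapping class group of the annulus fixing each end is trivial, $\phi$ is isotopic to the identity. Therefore $Y\cong A\times S^1\cong[-1,1]_s\times\T^2$, as claimed.

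All of the essential input is furnished by Theorems~\ref{T7.1} and~\ref{T7.2}; the only place requiring any care is the last paragraph, where one must keep track of the connectedness of the fiber and correctly read off $x([F])=\max\{0,-\chi(F)\}$. I expect this bookkeeping to be the main, though modest, obstacle, the rest being formal.
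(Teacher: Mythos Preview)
Your proof is correct and follows essentially the same route as the paper: verify the hypotheses of Theorem~\ref{T7.2} and then classify the resulting surface bundle. The only real difference is cosmetic: the paper invokes the conjugation symmetry of the graded Euler characteristic (Theorem~\ref{T3.4}) to pin down $\varphi_{\y}(\kappa)=\varphi_{\y}(-\kappa)=0$, whereas you instead note that a singleton $\sw(\y)$ makes $\varphi_{\y}$ linear and then feed this into Theorem~\ref{T7.1}. Your final paragraph spelling out why a mapping torus over an annulus with disconnected boundary must be $[-1,1]_s\times\T^2$ is a welcome elaboration of what the paper absorbs into the $\chi(F)=0$ case of the proof of Theorem~\ref{T7.2}.
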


\subsection{Connected Sum Formulae} Two simple connected sum formulas can be derived from the Gluing Theorem \ref{T2.5} for reducible 3-manifolds. For any $\y_1$ and $\y_2$, take their connected sum to be
\[
\y_1\# \y_2=(Y_1\# Y_2, \omega_1\# \omega_2,\cdots). 
\]
The class $[\omega_1\# \omega_2]\in H^2(Y_1\#Y_2; i\R)$ is canonically determined by $\omega_j\in \Omega^2(Y_j; i\R),\ j=1,2$ and by the relation $\langle [\omega_1\# \omega_2], S^2\rangle=0$, where $S^2\subset Y_1\# Y_2$ is the 2-sphere separating $Y_1$ and $Y_2$. 

\begin{proposition}[Proposition \ref{P10.1}]\label{P1.6} Under above assumptions, we have 
	\[
	\HM_*(\y_1\# \y_2)=\HM_*(\y_1)\otimes_\NR\HM_*(\y_2)\otimes_\NR V,
	\]
	where $V$ is a two dimensional vector space over $\NR$. 
\end{proposition}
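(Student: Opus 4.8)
The plan is to run the neck-stretching argument underlying the Gluing Theorem \ref{T2.5} along the separating $2$-sphere $S\colonequals S^2\subset Y_1\#Y_2$ rather than along a torus. Write $Y_1\#Y_2=Y_1^{\circ}\cup_S Y_2^{\circ}$ with $Y_j^{\circ}\colonequals Y_j\setminus\mathrm{int}(B^3)$, and note that $\omega_1\#\omega_2$ is supported near the toroidal boundary and so vanishes identically near $S$; by Theorem \ref{T1.1} we may also choose the remaining auxiliary data so that the metric is a round product metric near $S$. Insert a long neck $[-R,R]_u\times S$ and let $R\to\infty$. The feature absent in the toroidal case is that $S$ has positive scalar curvature: by the Weitzenb\"{o}ck formula any finite-energy solution of the unperturbed Seiberg-Witten equations over the neck has vanishing spinor, and its connection is reducible and, since $S$ is simply connected, gauge-equivalent to the product connection on the restricted $spin^c$ structure, which on the separating sphere is the trivial one. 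Thus the neck analysis reduces to a single reducible, while the rest of the compactness and gluing package --- a priori energy bounds, exponential decay, and the construction of gluing maps --- is the one already developed for Theorem \ref{T2.5}.

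Granting this local model, one derives the formula as follows. As relative $spin^c$ structures $\Spincr(Y_1\#Y_2)\cong\Spincr(Y_1)\times\Spincr(Y_2)$, since the neck carries only the trivial one, so it suffices to prove the identity summand by summand. Because every Seiberg-Witten solution over $Y_1\#Y_2$ with a long neck is $C^{\infty}_{loc}$-close over $[-R,R]\times S$ to the reducible, the usual fibre-product description of glued moduli spaces identifies the Floer chain complex of $\y_1\#\y_2$, up to chain homotopy, with the triple tensor product over the Novikov field $\NR$ of the Floer complex of $\y_1$, the Floer complex of $\y_2$, and a ``neck complex'' $C(S)$; here one uses that capping the $S$-boundary of either piece with a standard half-ball is a positive--scalar--curvature filling contributing no irreducible trajectories, which is exactly what makes the first two tensor factors the complexes of $\y_1$ and $\y_2$ themselves. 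An index and energy count shows that the tensor differential is the product differential --- a neck-crossing trajectory is forced through the reducible and carries no extra moduli --- and, $\NR$ being a field, homology commutes with the tensor product, giving $\HM_*(\y_1\#\y_2)\cong\HM_*(\y_1)\otimes_{\NR}\HM_*(\y_2)\otimes_{\NR}V$ with $V\colonequals H_*(C(S))$. Finally one computes $V\cong H_*(S^1;\NR)\cong\NR^{2}$: the residual gauge group of the reducible on $S$ is a circle, and after a small Morse perturbation of the gluing parameter $C(S)$ is the Morse complex of that circle; equivalently, double-capping the neck yields $S^3$, whose blown-up reducible contributes a rank-$2$ piece in the present framework.

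The heart of the argument, and its main difficulty, is this local model over $S$. Unlike in Theorem \ref{T2.5}, where $\omega|_{\T^2_j}\neq 0$ excludes reducibles outright, here the neck genuinely carries a reducible with one-dimensional stabilizer, so one must work in the blown-up configuration space of Kronheimer-Mrowka, establish transversality for the glued-up solutions, and rule out obstructed gluings across the neck; pinning down $\dim_{\NR}V=2$ amounts to computing the spectral flow of the family of Dirac operators running along $[-R,R]_u\times S$. Everything else --- reducing the neck to the reducible via positive scalar curvature, the $spin^c$ decomposition, and the K\"unneth bookkeeping over the Novikov field --- follows the template of \cite{Wang20} and of the proof of Theorem \ref{T2.5}. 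One could alternatively try to deduce the proposition from the composition law for the generalized cobordism maps constructed here, applied to the standard $1$- and $3$-handle cobordisms between $\y_1\sqcup\y_2$ and $\y_1\#\y_2$, but the computation of those handle maps again reduces to the same local model over $S^2$.
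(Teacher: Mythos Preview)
Your approach --- neck-stretching along the separating $S^2$ --- is not how the paper proceeds, and as written it is a sketch whose hardest step you yourself flag as unfinished. The analysis near a reducible on $[-R,R]\times S^2$, with its circle stabilizer and the attendant blow-up, transversality, and obstruction bookkeeping, is precisely what is \emph{not} available in \cite{Wang20} or in this paper: the entire framework behind Theorem~\ref{T2.5} is designed to glue along tori, where the nonvanishing of $\omega|_{\partial Y}$ excludes reducibles outright. So your assertion that ``the rest of the compactness and gluing package \ldots\ is the one already developed for Theorem~\ref{T2.5}'' is not justified; carrying out your local model would amount to redoing, in this setting, the work that goes into the closed connected-sum formula.

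The paper sidesteps the sphere entirely by reducing to the closed case via two applications of the torus-gluing theorem. Pick one boundary torus $\Sigma_i'\subset\Sigma_i$ from each side and let $\y_4\in\AT(\TSigma_1'\cup\TSigma_2',\TSigma_1'\cup\TSigma_2')$ be the connected sum of the two product 1-cells $[-3,3]_s\times\Sigma_i'$. Then $\y_1\#\y_2=(\y_1\coprod\y_2)\circ_h\y_4$, and Theorem~\ref{T3.3} gives
\[
\HM_*(\y_1\#\y_2)\cong\HM_*(\y_1)\otimes_\NR\HM_*(\y_2)\otimes_\NR\HM_*(\y_4).
\]
A second application of Theorem~\ref{T3.3}, gluing $\y_4$ to the identity 1-cells $e_{\TSigma_i'}$, identifies $\HM_*(\y_4)$ with $\HM_*\big((\Sigma_1'\times S^1)\#(\Sigma_2'\times S^1),[\omega_1']\#[\omega_2']\big)$, a \emph{closed} 3-manifold. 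Since each factor satisfies $\HM_*(\Sigma_i'\times S^1,[\omega_i'])\cong\NR$ by Lemma~\ref{L5.4}, the connected-sum formula for closed 3-manifolds \cite[Theorem~5]{Lin17} gives $\rank_\NR\HM_*(\y_4)=2$. All the delicate $S^2$ analysis is thus outsourced to the already-established closed theory; nothing new near a reducible needs to be proved here.
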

\begin{proposition}[Proposition \ref{P10.3}]\label{P1.7} For any closed 3-manifold $Z$, one can form the connected sum $\y\# Z$ in a similar way. Then 
	\[
	\HM_*(\y\# Z)\cong \HM_*(\y)\otimes_\NR\THM_*(Z)
	\]
	where $\THM_*(Z)$ is the sutured monopole Floer homology of $(Z(1),\delta)$, which is the balanced sutured manifold obtained from $Z$ by removing a 3-ball; the unique suture $\delta$ is the equator of $\partial Z(1)$; see Definition \ref{D10.2}. 
\end{proposition}

Proposition \ref{P1.6} and \ref{P1.7} are consistent with the connected sum formulas in \cite[Proposition 9.15]{J06} for sutured Heegaard Floer homology; the analogue for sutured monopole Floer homology was obtained in \cite[Theorem 1.5]{Li18b}.

Proposition \ref{P1.6} should be compared with a vanishing result, which is concerned with the case that the 2-form $\omega$ on $Y_1\# Y_2$ has non-zero pairing with the separating 2-sphere $S^2$.

\begin{proposition} Suppose $Y$ is any 3-manifold with toroidal boundary that contains an embedded 2-sphere $S^2\subset Y$. If in addition $\langle c_1(\bs),[S^2]\rangle =0$ and $\langle \omega, [S^2]\rangle \neq 0$, then the group $\HM_*(\y,\bs)$ vanishes. 
\end{proposition}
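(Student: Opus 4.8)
The plan is to exploit the embedded 2-sphere $S^2 \subset Y$ by performing surgery on it: since $\langle \omega, [S^2]\rangle \neq 0$, the 2-form $\omega$ restricts to a nowhere-exact (indeed cohomologically nontrivial) closed form on $S^2$, which forces a neck-stretching / bubbling analysis. Concretely, I would split $Y$ along $S^2$, writing $Y = Y' \cup_{S^2} B^3$ in the case where $S^2$ bounds a ball (the general case $Y = Y_1 \#_{S^2} Y_2$ reduces to this by a further stretch), and then insert a long cylinder $[-R,R]_s \times S^2$ along the 2-sphere, running $R \to \infty$. On the cylindrical neck, the perturbing 2-form looks like a nonzero harmonic 2-form $\nu$ on $S^2$ with $\langle \nu, [S^2]\rangle \neq 0$, and the key point is that there is no translation-invariant solution to the (perturbed) Seiberg-Witten equations on $\R_s \times S^2$ carrying that flux, because the relevant reducible configurations are obstructed: the curvature equation $\ast F_{\hat A} = \dots$ cannot be solved with a closed 2-form of nonzero period on $S^2$ while keeping finite energy. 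This is exactly the mechanism behind the standard vanishing results on connected sums with positive-scalar-curvature pieces, adapted here to the presence of $\omega$.

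More precisely, I would argue that for $R$ large enough the moduli spaces defining $\HM_*(\y,\bs)$ are empty (or more accurately, that the continuation map to the stretched picture is chain-homotopic to zero), using an energy estimate: any finite-energy solution on $\hy$ with the stretched neck must, by the usual compactness theorem from \cite{Wang20}, converge on the neck to a (broken) trajectory of the Chern-Simons-Dirac flow for $(S^2, \nu)$. But the condition $\langle c_1(\bs), [S^2]\rangle = 0$ combined with $\langle \nu, [S^2]\rangle \neq 0$ means the first-order deformation operator over the neck has no small eigenvalues of the right sign — equivalently, the relevant $\Spinc$ structure on $S^2$ paired with the flux $\nu$ has no reducible or irreducible critical points at all, because on $S^2$ the only candidate critical points are reducibles, and reducibles require $2\pi c_1(\bs)|_{S^2}$ to represent the cohomology class of the curvature, which here is $[\nu] \neq 0 = 2\pi c_1(\bs)|_{S^2}$. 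Hence the limiting configuration on the neck does not exist, contradicting the existence of any finite-energy solution for large $R$. Since $\HM_*(\y,\bs)$ is independent of $R$ up to canonical isomorphism (this is part of the invariance machinery already set up), we conclude $\HM_*(\y,\bs) = 0$.

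The main obstacle I expect is the bookkeeping of energy and the Chern-Simons-Dirac functional on the noncompact manifold $\hy$ in the presence of the cylindrical end $[0,\infty)_s\times\Sigma$ \emph{and} the newly stretched neck along $S^2$: one must ensure that the compactness/limiting analysis of \cite{Wang20} applies simultaneously to both necks, and in particular that energy does not leak into the $\Sigma$-end in a way that spoils the argument on the $S^2$-neck. A clean way to handle this is to first observe that the grading by $\Spincr(Y)$ is compatible with the splitting along $S^2$ (the class $\bs$ restricts to the unique torsion $\Spinc$ structure on a neighborhood of $S^2$), so the relevant component of the moduli space sees only the flux $\nu$ near $S^2$; then the standard a priori energy bound (Lemma-level, as in \cite{Bible}) localizes the obstruction. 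The rest — that a closed 2-form on $S^2$ with nonzero period cannot equal the curvature of a connection whose $c_1$ pairs trivially with $[S^2]$ — is elementary Chern-Weil theory.
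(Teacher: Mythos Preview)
Your approach is essentially that of the paper, which disposes of the proposition in one line by invoking the neck-stretching argument of \cite[Proposition 40.1.3]{Bible} together with the energy equation of \cite[Proposition 5.4]{Wang20}. One point worth tightening: your assertion that on the stretched neck the only candidate solutions are reducibles (and the phrase ``Chern-Simons-Dirac flow for $(S^2,\nu)$'', which should refer to the 3-manifold $\R_s\times S^2$) is not automatic once the $\omega$-perturbation is present, and this is exactly where the energy identity does the work cleanly --- it furnishes the obstruction to any finite-energy solution directly, without first having to classify solutions on the neck.
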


\begin{proof} This follows from the neck-stretching argument in \cite[Proposition 40.1.3]{Bible} and the energy equation in \cite[Proposition 5.4]{Wang20}.
\end{proof}

\subsection{Gerig's program in contact topology} This paper is motivated in part by Gerig's broader program to attack the two-or-infinite conjecture (see for instance \cite{HWZ98, HWZ03, HT09, GH16}) by mimicking Taubes' solution \cite{Taubes07} to the Weinstein conjecture in dimension 3. What he needed from the Seiberg-Witten theory is a much stronger result, which we state now as a conjecture.

	\begin{conjt}[Gerig]\label{conjecture} For any connected 3-manifold $Y$ with toroidal boundary, one can choose a closed 2-form $\omega$ and a geometric datum $\fd=(g_\Sigma,\lambda,\mu)$ on $\partial Y$ such that $\rank_\NR\HM_*(\y)\geq 1$. If $\rank=1$, then $Y$ is the connected sum of $[-1,1]_s\times \T^2$ with some monopole $L$-space. 
	\end{conjt}

In light of Proposition \ref{P0.5}, \ref{P1.6} and \ref{P1.7}, this conjecture holds if $Y$ is the connected sum $Y_1\#\cdots \# Y_m$ of prime 3-manifolds, with each $Y_j$ either closed or boundary-disconnected.

Gerig's approach towards the two-or-infinite conjecture can be described as follows. Suppose that $Z_0$ is any compact oriented 3-manifold, equipped with a contact 1-form  $\alpha_0$  with only finitely many Reeb orbits. Denote by $Y_0$ the complement of these orbits. If one can define an embedded contact homology for the pair $(Y_0,\alpha_0)$ and prove an ECH=SW type theorem, then $\rank_\NR\HM_*(\y)=\rank_\NR \text{ECH}(Y_0,\alpha_0)=1$. Conjecture  \ref{conjecture} then implies that $\partial Y_0$ has only two components. 

An appropriate candidate for this ECH theory has already appeared in \cite{CGH10} and \cite{HS05} assuming that all Reeb orbits of $\alpha_0$ are non-degenerate and elliptic. In this case, the Reeb flow of $\alpha_0$ foliates the boundary of $Y_0$ with irrational slopes, which is a technical condition needed for their construction. The main difficulty in Gerig's program is to find the correct 2-form $\omega_0$ on $Y_0$ and to establish this ECH=SW type theorem using the deformation $ \omega_0+td\alpha_0$ as $t\to +\infty$. The 2-form $\omega_0$ must be adapted to $\alpha_0$ in a certain sense so that Taubes' compactness argument goes through as usual. (Of course, Taubes' solution to the Weinstein conjecture in dimension 3 requires only a weaker version of ``ECH=SW", but it is conceptually easier to think of Gerig's strategy this way).

\subsection{Relations with Link Floer Homology} Our monopole Floer theory can be also used to recover the monopole link Floer homology $\LHM_*$. The next theorem will be proved using the Decomposition Theorem \ref{T8.1} in Section \ref{Sec8}.

\begin{theorem}[Corollary \ref{C8.3}]\label{T1.6} For any link $L=\{L_i\}_{i=1}^n$ in a closed oriented 3-manifold $Z$, we pick a meridian $m_i$ for each component $L_i$, and consider the link complement 
	\[
	Y(Z, L)=Z\setminus N(L\cup m_1\cup \cdots\cup m_n). 
	\]
	Then for a suitable 2-form $\omega$ on $Y=Y(Z,L)$, we have $
	\HM_*(\y)\cong \LHM_*(Z, L). $
\end{theorem}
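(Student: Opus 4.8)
The plan is to deduce the isomorphism from the Decomposition Theorem \ref{T8.1} by undoing the meridian drilling in $Y=Y(Z,L)$, one link component at a time. Recall that, following Kronheimer-Mrowka \cite{KS}, the monopole link Floer homology is by definition $\LHM(Z,L)=\SHM(Z\setminus N(L),\Gamma_\mu)$, the sutured monopole Floer homology of the link exterior with two meridional sutures $\Gamma_\mu$ on each torus $\partial N(L_i)$; its knot case $\KHM(Z,K)=\SHM(Z\setminus N(K),\Gamma_\mu)$ is the instance recovered in the introduction, and the general link case is the same argument run simultaneously on all $n$ boundary components.

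First I would fix the auxiliary data. The boundary $\partial Y=\coprod_i\partial N(L_i)\ \sqcup\ \coprod_i\partial N(m_i)$ is a union of $2n\geq 2$ tori, so the standing hypothesis \ref{star} holds. By the Invariance Theorem \ref{T1.1}, $\HM_*(\y)$ depends only on $Y$, on $[\omega]\in H^2(Y;i\R)$ and on $[*_\Sigma\lambda]\in H^1(\partial Y;i\R)$, which is the latitude in the phrase ``for a suitable $\omega$''. Fix a meridian disk $D_i$ of $L_i$ with $m_i$ isotoped to a core circle of a collar of $\partial D_i$, and let $A_i\subset Y$ be the sub-annulus of $D_i$ between $m_i$ and $\partial N(L_i)$, with $N(m_i)$ removed; thus $A_i$ is properly embedded with $\partial A_i=\mu_i\sqcup\ell_i$, where $\mu_i$ is a meridian of $L_i$ on $\partial N(L_i)$ and $\ell_i\subset\partial N(m_i)$. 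I would then take $[\omega]$ small and choose $[*_\Sigma\lambda]$ so that the linear foliation of $\lambda|_{\partial N(L_i)}$ has meridional slope $\mu_i$, while that of $\lambda|_{\partial N(m_i)}$ has slope $\ell_i$; with this choice the $T$-surface on $\partial Y$ records exactly two meridional sutures on each $\partial N(L_i)$ and two copies of $\ell_i$ on each $\partial N(m_i)$.

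Next I would apply Theorem \ref{T8.1} to the properly embedded surface $A=A_1\sqcup\cdots\sqcup A_n$. Each $A_i$ is incompressible, is compatible with $\lambda$ by the choice above, and is a product annulus (one boundary circle in the interior of $R_+$ on $\partial N(L_i)$, the other in the interior of $R_-$ on $\partial N(m_i)$). Decomposing $Y$ along $A$ merges each pair $\partial N(L_i)\cup\partial N(m_i)$ into a single torus, and a direct inspection of the local model shows that the resulting balanced sutured manifold is precisely $(Z\setminus N(L),\Gamma_\mu)$: the composite ``drill $m_i$, then decompose along $A_i$'' reinstalls the meridian disk $D_i$ and so reverses the drilling at the level of sutured manifolds. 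Because the decomposition is of product-annulus type, Theorem \ref{T8.1} yields an isomorphism, not merely the inclusion of a direct summand, and hence $\HM_*(\y)\cong\SHM(Z\setminus N(L),\Gamma_\mu)=\LHM(Z,L)$ as $\NR$-modules.

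The step I expect to be the main obstacle is verifying the hypotheses of Theorem \ref{T8.1} for the annuli $A_i$ in this generality: that the boundary curves $\partial A_i$ are genuinely compatible with the harmonic form $\lambda$ so that the decomposition is admissible, and -- more importantly -- that a product-annulus decomposition induces an isomorphism on the invariant rather than the inclusion of a proper summand, so that all of $\HM_*(\y)$ is captured. A related point that needs care is the dictionary between the cohomological data $([\omega],[*_\Sigma\lambda])$ and the sutures: one must confirm that the choice above corresponds, under Theorem \ref{T8.1}, to exactly the meridional sutures defining $\LHM(Z,L)$, and that a closed $2$-form $\omega$ realizing this data and meeting \ref{star} exists on the disconnected boundary $\partial Y$. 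Granting these, the argument concludes as above.
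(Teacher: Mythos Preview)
Your overall strategy---decompose $Y=Y(Z,L)$ along the union of annuli $A=\bigsqcup_i A_i$ and invoke Theorem~\ref{T8.1}---is exactly what the paper does. The identification of the decomposed manifold with $(Z\setminus N(L),\Gamma_\mu)$ is also correct. But there is a genuine gap at the step you yourself flag, and your suggested mechanism for closing it does not work.

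The issue is that Theorem~\ref{T8.1} only identifies the summand
\[
\bigoplus_{\langle c_1(\bs),[A]\rangle = x(A)} \HM_*(\y,\bs)
\]
with $\SHM(M(Y,A))$; it says nothing special about product annuli. Your appeal to the ``product-annulus'' principle is imported from the balanced sutured world, but it does not apply here: $Y$ has purely toral boundary, there is no $R_\pm$ decomposition on $\partial Y$, and the sentence ``one boundary circle in the interior of $R_+$ on $\partial N(L_i)$, the other in the interior of $R_-$ on $\partial N(m_i)$'' has no meaning in this setup. So you still owe an argument that this summand exhausts $\HM_*(\y)$.

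The paper closes this gap differently and more simply. Since each $A_i$ is an annulus, $x(A)=x(-A)=0$. The ``moreover'' clause of Theorem~\ref{T8.1} applied to both orientations gives $\varphi_\y([A])\le 0$ and $\varphi_\y(-[A])\le 0$, hence $\langle c_1(\bs),[A]\rangle=0$ whenever $\HM_*(\y,\bs)\neq 0$. That forces the summand to be all of $\HM_*(\y)$, and the isomorphism follows. A secondary point: ``take $[\omega]$ small'' is not enough; to apply Theorem~\ref{T8.1} you need condition \ref{P6}, namely that the Poincar\'e dual of $-2\pi i[\omega]$ is represented by a real $1$-cycle lying on $A$. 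The paper takes $\eta\cap A_i$ to be an arc joining the two boundary circles of $A_i$, which simultaneously arranges \ref{P6} and makes $\mu$ non-vanishing on every boundary torus as required by \ref{T3}.
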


By the work of Osv\'{a}th-Sz\'{a}bo\cite{OS08} and Ni \cite{Ni09}, the link Floer homology detects the Thurston norm of the link complement for any links in rational homology spheres. The assumption on homology is later removed by Juh\'{a}sz \cite[Remark 8.5]{J08}. The same detection result using monopole link Floer homology $\LHM_*$ was obtained by Ghosh-Li \cite[Theorem 1.17]{GL19}. By Theorem \ref{T1.6}, Theorem \ref{T1.2} recovers the previous detection results in the case that the link complement is irreducible. This constraint can be further removed by our connected sum formulae. 

On the other hand, any 3-manifold $Y$ with toroidal boundary can be viewed as the link complement of some link $L$ inside a closed 3-manifold $Z$. Then the link Floer homology of $(Z,L)$ detects the Thurston norm of $Y$. However, the statements in \cite{OS08, Ni09b,J08} and \cite{GL19} must rely on the choice of $(Z,L)$ and therefore have a radically different nature from the one in Theorem \ref{T1.2}.

\subsection{Generalized (3+1) TQFT} In \cite{Wang20}, the cobordism maps were constructed only for a very restrictive class of cobordisms called \textit{strict}. For any 3-manifolds $(Y_i,\Sigma_i)$ with toroidal boundary, $i=1,2$, a cobordism 
\[
(X,W): (Y_1,\Sigma_1)\to (Y_2,\Sigma_2)
\]
is a 4-manifold with corners, with $W$ a cobordism from $\Sigma_1$ to $\Sigma_2$. It is called strict, if $\Sigma_1=\Sigma_2$ and $W=[-1,1]_t\times\Sigma_1$ is a product. This constraint is removed by constructing generalized cobordism maps in this paper. Again we shall work under the assumptions stated in the beginning of Section \ref{Subsec1.2}.
\begin{theorem}[Corollary \ref{C4.4}]\label{T1.10} We define a category $\AT_2$ as follows: each object of $\AT_2$ is a 3-manifold $\y=(Y,\omega,\cdots)$ with toroidal boundary, equipped with a closed 2-form $\omega$ satisfying the conditions in Section \ref{Subsec1.2}. Each morphism is a triple $(\x,\BW, a)$ where
	\begin{itemize}
\item  $(X,W): (Y_1,\Sigma_1)\to (Y_2,\Sigma_2)$ is a 4-manifold with corners;
\item $a\in \HM_*(\BW)$ is an element in the monopole Floer homology of $\BW=(W,\omega_{W},\cdots)$.
	\end{itemize}
Then there is a functor $\HM_*: \AT_2\to \NR\Mod$ from $\AT_2$ to the category of finitely generated $\NR$-modules, which assigns to each $\y$ its monopole Floer homology $\HM_*(\y)$. 
\end{theorem}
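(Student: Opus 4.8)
**Proof proposal for Theorem~\ref{T1.10} (construction of the functor $\HM_*\colon \AT_2\to \NR\Mod$).**

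The plan is to reduce every generalized cobordism to a composition of strict cobordisms and gluing maps, and then to invoke the functoriality already available for each of those two ingredients. Concretely, given a morphism $(X,W)\colon (Y_1,\Sigma_1)\to(Y_2,\Sigma_2)$ together with a class $a\in\HM_*(\BW)$, I would first reinterpret $W$ as a $T$-surface cobordism and observe that $\BW$, being a $3$-manifold with toroidal boundary in its own right (namely $\partial W = -\Sigma_1\sqcup\Sigma_2$), has its own monopole Floer homology $\HM_*(\BW)$ by \cite{Wang20}; the element $a$ is what will carry the ``$W$-direction'' of the cobordism. The key geometric move is a decomposition of the pair $(X,W)$: thicken the incoming and outgoing ends, write $X$ as the composite of a strict cobordism glued to a piece of the form $Y_2\times$(interval) only along the $\Sigma$-directions, and arrange that along the common toroidal boundary the $2$-form restricts to the model form $\mu+ds\wedge\lambda$ of a $T$-surface. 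This is precisely the situation handled by the Gluing Theorem~\ref{T2.5}, whose gluing map $\alpha$ is asserted there to be an isomorphism and functorial for both $3$- and $4$-manifold cobordisms.

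The key steps, in order, are: (i) show that every morphism of $\AT_2$ factors (up to the obvious equivalences) as a strict cobordism followed by a ``boundary gluing'' cobordism, so that the would-be map $\HM_*(X,W,a)$ can be \emph{defined} as the composite of the strict cobordism map of \cite{Wang20} with the gluing isomorphism of Theorem~\ref{T2.5} applied with the slot filled by $a$; (ii) verify that this definition is independent of the chosen factorization — this is a standard but slightly delicate check using the functoriality clause of Theorem~\ref{T2.5} together with the fact that two factorizations differ by an isotopy of the corner structure and a deformation of $\omega$ within its cohomology class, both of which induce canonical isomorphisms by Theorem~\ref{T1.1}; (iii) check identity morphisms go to identities (the identity cobordism is strict with $a$ the canonical generator, so this is immediate from \cite{Wang20}); and (iv) check composition: given composable $(X,W,a)$ and $(X',W',a')$, glue the respective decompositions and use associativity of $\alpha$ — again part of the functoriality in Theorem~\ref{T2.5} — to identify $\HM_*(X'\circ X, W'\circ W, a''\!)$ with the composite of the two maps, where $a''$ is the image of $a\otimes a'$ under the gluing map for $W$-cobordisms. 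Well-definedness of the module structure (finite generation of $\HM_*(\y)$) is already guaranteed by \cite[Theorem 1.4]{Wang20}.

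The main obstacle I expect is step~(ii), the independence of the factorization, because the decomposition of $(X,W)$ into strict-plus-gluing pieces is far from unique: one must allow the ``cut torus'' to be slid along $X$, and one must allow the auxiliary $T$-surface data (the metric $g_\Sigma$, the harmonic forms $\lambda,\mu$) along the cut to vary. Controlling all of these simultaneously requires combining three inputs — the naturality of the strict cobordism maps under change of auxiliary data from \cite{Wang20}, the invariance Theorem~\ref{T1.1} (which says the endpoint groups do not depend on most of that data), and the functoriality of $\alpha$ from Theorem~\ref{T2.5} — and checking that the resulting web of isomorphisms commutes. A secondary technical point is ensuring that the $2$-form $\omega_X$ can always be put in the required collar model near the cutting torus without changing its cohomology class; this follows from a Moser-type argument using that $\omega|_{\partial Y}$ is small and non-vanishing, i.e. hypothesis~\ref{star}. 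Once these compatibilities are in place, functoriality of $\HM_*$ on $\AT_2$ is formal.
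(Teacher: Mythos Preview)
Your core idea is right and matches the paper: define $\HM_*(\x,\BW,a)$ as the composite ``glue with $a$ via $\alpha$, then apply the strict cobordism map,'' and verify composition using the associativity of $\alpha$ from Theorem~\ref{T2.5}~\ref{G3}. Where you diverge is in the setup. You read the informal statement of Theorem~\ref{T1.10} literally, taking a morphism to be an arbitrary $4$-manifold with corners $(X,W)$, and therefore you plan to (i) \emph{produce} a factorization into a strict piece and a gluing piece, and (ii) prove the result is independent of that factorization. The paper avoids both steps entirely by \emph{defining} the morphisms of $\AT_2$ to already be in factored form: a morphism from $\y_1$ to $\y_2$ is a triple $(\x_{12},\BW_{12},a_{12})$ where $\BW_{12}\in\AT(\TSigma_1,\TSigma_2)$ is a $1$-cell and $\x_{12}$ is a \emph{strict} cobordism $\y_1\circ_h\BW_{12}\to\y_2$. (The remark following Theorem~\ref{T1.10} warns that ``the actual construction of $\AT_2$ in Corollary~\ref{C4.4} is slightly different.'') With that definition, your steps~(i) and~(ii) simply do not arise.

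What remains is exactly your step~(iv), and the paper's proof of Corollary~\ref{C4.4} is one line: functoriality is the commutativity of the diagram~\eqref{E5.1}, which in turn is obtained by applying the $2$-functor $\HM_*$ of Theorem~\ref{T2.5} to the pasting diagram~\eqref{E5.2}. Concretely, \ref{G2} says $\alpha$ is natural with respect to strict cobordisms, and \ref{G3} gives associativity of $\alpha$; together these make the square~\eqref{E5.1} commute, and plugging in $a_{12}$ and $a_{23}$ turns that square into the composition law for $\HM_*$ on $\AT_2$. So your anticipated ``main obstacle'' (independence of factorization, sliding the cut torus, varying auxiliary $T$-surface data) is a nonissue in the paper's formulation; the invariance Theorem~\ref{T1.1} is not used here at all.
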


\begin{remark} To better package the information on 2-forms, the actual construction of $\AT_2$ in Corollary \ref{C4.4} is slightly different. The gluing map \eqref{E1.1} has to be used in order to compose morphisms in the category $\AT_2$. This generalized (3+1) TQFT structure contains strictly less information than the monopole Floer 2-functor in Theorem \ref{T2.5}, since the functoriality of the element $a\in \HM_*(\BW)$ is completely ignored here.
\end{remark}

\subsection{A Remark on the Gluing Theorem} Most results of this paper are based on the monopole Floer 2-functor in the Gluing Theorem \ref{T2.5}, and we give a short conceptual reason here to see why it has a particularly simple form and how this gluing result fits into the broader context of Landau-Ginzburg models.

In the first paper \cite{Wang202}, the author introduced an infinite dimensional gauged Landau-Ginzburg model for any $T$-surface $\TSigma=(\Sigma,g_\Sigma,\lambda,\mu)$:
\[
(M(\Sigma), W_\lambda, \CG(\Sigma)). 
\]
It is more appropriate to think of $\HM_*(\y)$ as an invariant of $Y$ relative to this Landau-Ginzburg model. To develop a gluing theory in general, one has to assign an $A_\infty$-category $\CA$ to $(M(\Sigma), W_\lambda, \CG(\Sigma))$ and upgrade the Floer homology of $\y$ as an $A_\infty$-module over $\CA$. By the work of Seidel \cite[Corollary 18.27]{S08},  we wish to construct a spectral sequence abutting to $\HM_*(\y_1\circ_h\y_2)$ with $E_1$ page equal to 
\[
\HM_*(\y_1)\otimes_\NR\HM_*(\y_2). 
\]

In a finite dimensional framework, this spectral sequence has been reproved using the complex gradient flow equation in \cite{Wang22}, and a generalization to suitable infinite dimensional situations may come soon afterwards. However, in the special case \ref{star} that we have discussed so far, the $A_\infty$-category $\CA$ is expected to be trivial: the thimbles of different critical points of $W_\lambda$ are not supposed to intersect at all. This suggests that the spectral sequence collapses at $E_1$-page and the gluing formula is simply a tensor product. On the technical level, this allows us to take a shortcut in this paper by using Floer's excision argument, which bypasses the complicated general framework of \cite{Wang22}. 

\medskip

The next interesting case that we hope to investigate in the future is when $\Sigma$ is connected and $\mu=0$ in the geometric datum $\fd=(g_\Sigma,\lambda,\mu)$, which is also important for Gerig's program in contact topology. This may lead eventually to an analytic construction of the knot Floer homology $\HFK_*^-$ from our story. In this case, the gluing map \eqref{E1.1} constructed in this paper is identically zero, and the general framework of \cite{Wang22} will be indispensable. Readers are referred to \cite[Section 1.6]{Wang20} for more heuristics on this direction.

\begin{remark} We also note that the Gluing Theorem \ref{T2.5} is indeed subject to certain limitations. When $(Y_1,\omega_1)$ and $(Y_2,\omega_2)$ are glued along their common boundaries, the 2-forms $\omega_1$ and $\omega_2$ must match within a neighborhood of the boundary. This imposes a non-trivial homological constraint on the way they are glued. 
\end{remark}

\subsection{Organization} The rest of this paper is organized as follows:

\medskip

In Section \ref{Sec2}, we review the basic definition of bi-categories and state the Gluing Theorem \ref{T2.5}. Section \ref{Sec3} is devoted to its proof. We will follow closely the proof of Floer's excision theorem in \cite{BD95, KS}. In Section \ref{Sec4}, we construct the generalized cobordism maps and define the functor in Theorem \ref{T1.10}. In Section \ref{Sec5}, we prove the Invariance Theorem \ref{T1.1}. 

\medskip

In Section \ref{Sec6}, we review the theory for closed 3-manifolds following the book \cite{Bible} by Kronheimer-Mrowka and adapt their non-vanishing results to the case of non-exact perturbations. Section \ref{Sec7} is devoted to the Thurston norm detection result: Theorem \ref{T1.2}. After a digression into sutured Floer homology in Section \ref{Sec8}, the proof of the fiberness detection result, Theorem \ref{T1.4}, is supplied in Section \ref{Sec9}. The connected sum formulae are derived in Section \ref{Sec10}. 

\subsection*{Acknowledgments} The author is extremely grateful to his advisor, Tom Mrowka, for his patient help and constant encouragement throughout this project.  The author would like to thank Chris Gerig for suggesting Proposition \ref{P0.5}. The author would also like to thank Zhenkun Li for many helpful discussions. This material is based upon work supported by the National Science Foundation under Grant No.1808794.

\part{The Gluing Theorem}

The primary goal of this part is to construct the monopole Floer 2-functor
\[
\HM_*: \AT\to \AR
\]
where $\AT$ is the toroidal bi-category constructed in Section \ref{Sec2} and $\AR$ is the strict 2-category of finitely generated $\NR$-modules. Throughout this paper, the base ring $\NR$ is always the mod $2$ Novikov ring. Our main result is the Gluing Theorem \ref{T2.5}. 


\section{The Toroidal Bi-category}\label{Sec2}

\subsection{2-categories}
In this section, we review the definition of 2-categories following \cite{B67}. The $\Hom$-sets of a 2-category form categories themselves. 

\begin{definition} A strict 2-category $\AK$ consists of the following data:
	\begin{enumerate}[label=(Y\arabic*)]
		\item\label{Y1} a collections of objects $A,B,C\cdots$. They are also called $0$-cells;
		\item\label{Y2} for any objects $A,B\in \Ob \AK$, there is a category $\AK(A,B)$, whose objects are called 1-cells and morphisms are called 2-cells. The identity morphism of an 1-cell $f$ is denoted by $\Id_f:f\to f$. The compositions of 2-cells in $\AK(A,B)$ are called vertical compositions, denoted by $\circ_v$.
		\item\label{Y3} for any objects $A,B,C\in \Ob\AK$, there is a functor
		\[
		\circ_h: \AK(A,B)\times \AK(B,C)\to \AK(A,B),
		\]
		called the horizontal composition. 
		
		\item\label{Y4} The horizontal composition $\circ_h$ is associative, i.e. for any four 0-cells $A,B,C,D$, the two different ways of  composing $\circ_h$:
		\[
		-\ \circ_h\ (-\ \circ_h\ -) \text{ and }(-\ \circ_h\ -)\ \circ_h \ -
		\]
		give rise to the same functor from $\AK(A,B)\times\AK(B,C)\times \AK(C,D)$ to $\AK(A,D)$.
		\item\label{Y5} let $\mathbf{1}$ be the category with one object and one morphism; for any object $A\in \Ob \AK$, there is a functor $1_A: \mathbf{1}\to \AK(A,A)$ picking out the identity 1-cell $e_A: A\to A$ and its identity 2-cell $\Id_{e_A}:e_A\to e_A$. The functor $1_A$ is the unit of the horizontal composition $\circ_h$. \qedhere
	\end{enumerate}
\end{definition}	

\begin{example} Let $\NR$ be the Novikov field defined over $\BF_2$, the field of 2-elements:
	\[
	\NR\colonequals \BF_2[\R]^-=\{\sum_{i\geq 0} a_i q^{n_i}: a_i\in \BF_2, n_i\in \R, \lim_{i\to\infty} n_i=-\infty\}.
	\]
	We define $\AR$ to be the strict 2-category with a single object $\star$ such that $\AR(\star)\colonequals \AR(\star, \star)$ is the category of finitely generated $\NR$-modules. The horizontal composition is defined using the tensor product of $\NR$-modules. The identity 1-cell $e\in \AR(\star)$ is $\NR$ itself.
\end{example}

\begin{example} Let $\CAT$ be the strict 2-category consisting of all categories as 0-cells. For any categories $A,B$, 1-cells in $\CAT(A,B)$ are functors from $A$ to $B$, while 2-cells are given by natural transformations. 
\end{example}
	For technical reasons, the toroidal bi-category $\AT$ defined in the next subsection is not strictly associative. However, $\AT$ is still unital, and the associativity of $\circ_h$ still holds up to 2-isomorphisms; so it is an example of bi-categories:
	\begin{definition} A (unital) bi-category $\AK$ satisfies \ref{Y1}\ref{Y2}\ref{Y3}\ref{Y5} and 
		\begin{enumerate}[label=(Y4')]
			\item\label{Y4'} for any 0-cells $A,B,C,D$, there is a natural isomorphism $a(A,B,C,D)$ between the two functors below:
			\[
			\begin{tikzcd}
		\AK(A,B)\times\AK(B,C)\times \AK(C,D)\arrow[r, bend left, "-\ \circ_h\ (-\ \circ_h\ -)"] \arrow[r, bend right, "(-\ \circ_h\ -)\ \circ_h\ -"'] &\AK(A,D),
			\end{tikzcd}
			\] 
			which satisfies an associativity coherence condition \cite[P.5]{B67}. For any triple $(f,g,k)$, we use $a(f,g,k)$ to denote the 2-cell isomorphism:
			\[
			a(f,g,k): f\circ_h (g\circ_h k)\to (f\circ_h g)\circ_h k. \qedhere
			\]
					\end{enumerate}
			\end{definition} 
\subsection{The Toroidal Bi-category}\label{Subsec2.2} The primary goal of this subsection is to define the toroidal bi-category $\AT$, over which the monopole Floer 2-functor $\HM_*$ is defined. Each object of $\AT$ is a quadruple
\[
\TSigma=(\Sigma, g_\Sigma, \lambda, \mu),
\]
called a $T$-surface, where 
\begin{enumerate}[label=(T\arabic*)]
\item $\Sigma=\coprod_{i=1}^n \Sigma^{(i)}\neq \emptyset$ is an oriented surface consisting of finitely many 2-tori; we insist here that the surface $\Sigma$ is non-empty;
\item $g_\Sigma$ is a flat metric of $\Sigma$;
\item\label{T3} $\lambda\in \Omega^1_h(\Sigma, i\R)$ is a harmonic 1-form and $\mu\in \Omega^2_h(\Sigma, i\R)$ is a harmonic 2-form; when restricted to each connected component $\Sigma^{(i)}$, $\lambda$ and $\mu$ are both non-zero;
\item\label{T1} for any $1\leq i\leq n$, $|\langle \mu, [\Sigma^{(i)}]\rangle|<2\pi$.
\end{enumerate}


For any $T$-surfaces $\TSigma_1$ and $\TSigma_2$, $\AT(\TSigma_1, \TSigma_2)$ is a full subcategory of the strict cobordism category $\Cob_s$ defined in \cite[Section 3]{Wang20}. We recall the definition below for the sake of completeness. An object of $\AT(\TSigma_1, \TSigma_2)$ is a quintuple $\y=(Y, \psi, g_Y, \omega, \{\q\})$ satisfying the following properties:
\begin{enumerate}[label=(P\arabic*)]
	\item\label{P1} $Y$ is a compact oriented 3-manifold with toroidal boundary and $\psi : \partial Y\to (-\Sigma_1)\cup 
	\Sigma_2$ is an orientation preserving diffeomorphism. $\Sigma_1$ and $\Sigma_2$ are regarded as the incoming and outgoing boundaries of $Y$ respectively. When it is clear from the context, the identification map $\psi$ might be dropped from our notations.
	
	\item \label{P1.2} Each component of $Y$ intersects non-trivially with both $\TSigma_1$ and $\TSigma_2$.
	\item\label{P1.5} The metric $g_Y$ of $Y$ is cylindrical, i.e. $g_Y$ is the product metric 
	\[
	ds^2+\psi^*(g_{\Sigma_1}, g_{\Sigma_2})
	\]
	within a collar neighborhood $[-1,1)_s\times\Sigma_1\ \bigcup\ (-1,1]_s\times\Sigma_2$ of $\partial Y$.  
	
	\item\label{P2} $\omega\in \Omega^2(Y, i\R)$ is an imaginary valued \textbf{closed} 2-form on $Y$ such that 
	\begin{align*}
	\omega=\mu_1+ds\wedge\lambda_1& \text{ on } [-1,1)_s\times\Sigma_1,\\
	\omega=\mu_2+ds\wedge \lambda_2& \text{ on } (-1,1]_s\times\Sigma_2.
	\end{align*} 
	In particular, $(\mu_1,\mu_2)$ lies in the image $\im (H^2(Y; i\R)\to H^2(\Sigma; i\R))$.
	\item\label{P3} The cohomology class of $(*_1\lambda_1, *_2\lambda_2)$ lies in the image $
	\im (H^1(Y;i\R)\to H^1(\partial Y; i\R)).$  In particular, there exists a co-closed 2 form $\omega_{h}$ such that 
	\begin{align*}
\omega_h=ds\wedge\lambda_1& \text{ on } [-1,1)_s\times\Sigma_1;\\
\omega_h=ds\wedge \lambda_2& \text{ on } (-1,1]_s\times\Sigma_2;
\end{align*} 
	\item\label{P8} $\{\q\}$ is a collection of admissible perturbations (in the sense of \cite[Definition 13.3]{Wang20}) of the Chern-Simons-Dirac functional $\CL_\omega$, one for each relative \spinc structure of $Y$.
\end{enumerate}
\begin{remark} The reason to include \ref{P1.2} is to rule out closed components of $Y$ when considering horizontal compositions. This is not a serious problem; see Subsection \ref{Subsec2.4}.
\end{remark}

Having defined the objects (1-cells) of $\AT(\TSigma_1,\TSigma_2)$, let us now describe the set of morphisms (2-cells). Since each 3-manifold with toroidal boundary is now decorated by a closed 2-form $\omega$, morphisms will take these forms into account. Given two objects $\y_i=(Y_i, \psi_i, g_i, \omega_i,\{\q_i\}), i=1,2$ in $\AT(\TSigma_1,\TSigma_2)$, a morphism 
\[
\x: \y_1\to \y_2
\]
is a triple $\x=(X,[\psi_X],[\omega_X]_{cpt})$ satisfying the following properties.

\begin{figure}[H]
	\begin{tikzpicture}
	\draw (0,0) -- (3,0) -- (3,1.2) -- (0,1.2) -- (0,0);
	\draw  [ultra thick] (0,0) -- (3,0);
	\draw  [ultra thick] (0,1.2) -- (3,1.2);
	\node at (1.5,0.6) {$X$};
	\node [left]at (0,0.6) {\small $[-1,1]_t\times\Sigma_1$};
		\node [right] at (3,0.6) {\small$[-1,1]_t\times\Sigma_2$};
		\node [above] at (1.5,1.2) {$Y_1$};
				\node [below] at (1.5,0) {$Y_2$};
	\node [above] at (3,1.2) {$\Sigma_2$};
		\node [above] at (0,1.2) {$\Sigma_1$};
				\node [below] at (0,0) {$\Sigma_1$};
					\node [below] at (3,0) {$\Sigma_2$};
	\node at (9,0.6) {$\begin{tikzcd}[column sep=2cm]
	\TSigma_1\arrow[r,"\y_1"]\arrow[rd,"\x",phantom] \arrow[d,equal]& \TSigma_2\arrow[d,equal]\\
	\TSigma_1\arrow[r, "\y_2"'] & \TSigma_2. 
	\end{tikzcd}$}; 
\node at (6,0.6) {$\rightsquigarrow$};
	\end{tikzpicture}
	\caption{A 2-cell morphism $\x$.}
	\label{Pic0}
\end{figure}
\begin{enumerate}[label=(Q\arabic*)]
	\item \label{Q1}$X$ is a 4-manifold with corners, i.e. $X$ is a space stratified by manifolds 
	\[
	X\supset X_{-1}\supset X_{-2}\supset X_{-3}=\emptyset
	\]
	such that the co-dimensional 1 stratum $X_{-1}$ consists of three parts
	\[
	X_{-1}= (-Y_1)\cup (Y_2)\cup W_X.
	\]
	where $W_X$ is an oriented 3-manifold with boundary $\partial W_X=\partial Y_1\cap\partial Y_2$.  Moreover, $\partial Y_i=Y_i\cap W_X$ and $X_{-2}=\partial Y_1\cup \partial Y_2$.  
	
	\item\label{Q3} $\psi_X: W_X\to [-1,1]_t\times ((-\Sigma_1)\cup\Sigma_2)$ is an orientation preserving diffeomorphism compatible with $\psi_1$ and $\psi_2$. More precisely, we require that 
	\begin{align*}
	\psi_X |_{\partial Y_1}&=\psi_1: \partial Y_1\to \{-1\}\times ((-\Sigma_1)\cup\Sigma_2), \\
	\psi_X |_{\partial Y_2}&=\psi_2: \partial Y_2\to \{1\}\times ((-\Sigma_1)\cup\Sigma_2),
	\end{align*}
	which hold also in a collar neighborhood of $\partial W_X$.  When no chance of confusion is possible, $\psi_X$ might be dropped from our notations. Such a pair $(X, \psi_X)$ is called \textbf{a strict cobordism} from $(Y_1,\psi_1)$ to $(Y_2,\psi_2)$. $[\psi_X]$ denotes the isotopy class of such a diffeomorphism. 
		\item\label{Q6} There exists a closed 2-form $\omega_X\in \Omega^2(X, i\R)$ on $X$ satisfying the following properties. 
	\begin{itemize}
		\item $\omega_X=\omega_i $ (see \ref{P2}) within a collar neighborhood of $Y_i\subset X_{-1}$ for $i=1,2$;
		\item within a collar neighborhood of $W_X\subset X_{-1}$, 
		\begin{align*}
	\omega_X&=\mu_1+ds\wedge \lambda_1 \text{ on } [-1,1]_t\times [-1,1)_s\times\Sigma_1,\\
		\omega_X&=\mu_2+ds\wedge \lambda_2\text{ on } [-1,1]_t\times (-1,1]_s\times\Sigma_2.
		\end{align*}
	\end{itemize}
	The existence of such a 2-form $\omega_X$ is guaranteed by a cohomological condition on $[\omega_X]\in H^2(X; i\R)$; see \cite[Section 3 (Q4)]{Wang20}. Any two such forms $\omega_X,\omega_X'$ are called relative cohomologous if $\omega_X-\omega_X'=da$ for a compactly supported smooth 1-form $a\in \Omega^1_c(X,i\R)$. We fix the relative cohomology class of $\omega_X$, denoted by $[\omega_X]_{cpt}$. 
\end{enumerate}

\begin{remark} It is necessary to record the isotopy class of $\psi_X$ here, because the diffeomorphism group $\Diff_+(\T^2)$ is not simply connected. By \cite[Theorem 1(b)]{EE67}, $\Diff_+(\T^2)$ has the same homotopy type of its linear subgroup $S^1\times S^1\times \SLL(2,\Z)$, so $\pi_1(\Diff_+(\T^2))\cong \Z\oplus\Z$. 
\end{remark}

The vertical composition of $\AT(\TSigma_1, \TSigma_2)$ is defined by composing strict cobordisms. Since we have recorded the relative cohomology class $[\omega_X]_{cpt}$ (instead of just $[\omega_X]\in H^2(X;i\R)$), these classes can be concatenated accordingly on the composed manifold. For any 1-cell $\y\in \AT(\TSigma_1, \TSigma_2)$, the identity 2-cell is given by the product strict cobordism $([-1,1]_t\times Y, \Id_{[-1,1]_t} \times \psi)$, with $[\omega]_{cpt}$ being the class of the pull-back 2-form $\omega$. A metric of $X$ is \textbf{not} encoded in the definition of $\x$. This category is topological, although auxiliary data are specified for its objects (1-cells).

The horizontal composition $\AT(\TSigma_1, \TSigma_2)\times \AT(\TSigma_2,\TSigma_3)\to \AT(\TSigma_1,\TSigma_3)$ is defined using the diffeomorphisms $\psi$. On the level of 1-cells, given any pair $(\y_{12},\y_{23})\in \AT(\TSigma_1, \TSigma_2)\times \AT(\TSigma_2,\TSigma_3)$, the underlying 3-manifold of $\y_{12}\circ_h \y_{23}$ is formed by gluing
\[
Y_{12}\setminus [0,1]_s\times \psi_{12}^{-1}(\Sigma_2) \text{ and } Y_{23}\setminus [-1,0]_s\times \psi_{23}^{-1}(\Sigma_2) 
\] 
along the common boundary $\{0\}\times \Sigma_2$, using the composition
\[
\psi_{12}^{-1} (\Sigma_2)\xrightarrow{\psi_{12}} \Sigma_2 \xrightarrow{\psi_{23}^{-1}}\psi_{23}^{-1}(\Sigma_2),
\]
which is orientation reversing. The condition \ref{P3} is certified by concatenating the co-closed 2-form $(\omega_h)_{12}$ with $(\omega_h)_{23}$. We shall formally write:
\[
\y_{12}\circ_h \y_{23}=(Y_{12}\circ_h Y_{23},\ \omega_{12}\circ_h\omega_{23},\ \cdots). 
\]

The problem arises from \ref{P8}: it is not guaranteed that two admissible perturbations on $Y_{12}$ and $Y_{23}$ can be concatenated in a canonical way to form an admissible perturbation on $Y_{12}\circ_h Y_{23}$. Instead, we will pick a random collection of admissible peturbations for $\y_{12}\circ_h \y_{23}$ making $\circ_h$ not strictly associative. 

As for 2-cells, their horizontal compositions are formed similarly using $\psi_X$ instead. 

\medskip

Let us now construct the natural isomorphism $a(\TSigma_1, \TSigma_2,\TSigma_3,\TSigma_4)$ in \ref{Y4'}: for any triple $(\y_{12},\y_{23},\y_{34})$, define
\[
a(\y_{12},\y_{23},\y_{34}): \y_{12}\circ_h(\y_{23}\circ_h\y_{34})\to (\y_{12}\circ_h\y_{23})\circ_h\y_{34}.
\]
to be the \textit{identity} 2-cell. Indeed, as 1-cells in $\AT(\TSigma_1,\TSigma_4)$, they have the same underlying metrics and closed 2-forms; only the admissible perturbations may differ from one another. 

\smallskip

Finally, for any 0-cell $\TSigma$, we define its  identity 1-cell $e_{\TSigma}$ as
\[
(Y=[-1,1]_s\times \Sigma,\ \psi=\Id,\ g_Y=d^2s+g_{\Sigma},\  \omega=\mu+ds\wedge\lambda,\ \{\q=0\}).
\]
For any 1-cell $\y_{12}\in \AT(\TSigma_1, \TSigma_2)$, one may set $\y_{12}\circ_h e_{\Sigma_2}$ and $e_{\Sigma_1}\circ_h \y_{12}$ to be just $\y_{12}$, as they already have the same underlying metrics and closed 2-forms by our conventions of horizontal compositions. In this way, the toroidal bi-category $\AT$ becomes strictly unital. 

\subsection{The Monopole Floer 2-Functor} \label{Subsec2.3}The primary goal of this paper is to define the monopole Floer 2-functor:
\[
\HM: \AT\to \AR.
\]

We expand on the requirement for a 2-functor in the theorem below:
\begin{theorem}[The Gluing Theorem]\label{T2.5} There exists a 2-functor $\HM$ from the toroidal bi-category $\AT$ to the strict 2-category $\AR$ of finitely generated $\NR$-modules satisfying the following properties:
	\begin{enumerate}[label=(G\arabic*)]
\item\label{G1} for any T-surfaces $\TSigma_1, \TSigma_2$, the functor \[
\HM_*: \AT(\TSigma_1,\TSigma_2)\to \AR(\star)
\]
is defined as in \cite[Theorem 1.5\ \&\ Remark 1.7]{Wang20}, which assigns each 1-cell $\y$ to its monopole Floer homology group $\HM_*(\y)$;
\item\label{G2} for any $\TSigma_1, \TSigma_2,\TSigma_3$, there is a natural isomorphism $\alpha(\TSigma_1, \TSigma_2,\TSigma_3)$ between the two compositions in the digram below:
\[
\begin{tikzcd}[column sep=2cm]
\AT(\TSigma_1,\TSigma_2)\times \AT(\TSigma_2,\TSigma_3)\arrow[rd,"\circ_h"']\arrow[r,"\HM_*\times \HM_*"] & \AR(\star)\times\AR(\star) \arrow[r,"\circ_h"]& \AR(\star)\\
 & \AT(\TSigma_1,\TSigma_3) \arrow[ru,"\HM_*"'].&
\end{tikzcd}
\]
In other words, for any composing pair $(\y_{12},\y_{23})$, there is an isomorphism of $\NR$-modules:
\[
\alpha:  \HM_*(\y_{12})\otimes_\NR\HM_*(\y_{23})\to \HM_*(\y_{12}\circ_h \y_{23}),
\]
that is natural with respect to the 2-cell morphisms in $\AT(\TSigma_1,\TSigma_2)$ and $ \AT(\TSigma_2,\TSigma_3)$;
\item\label{G3} $\alpha$ is associative meaning that the digram 
\begin{equation}\label{E2.1}
\begin{tikzcd}
\HM_*(\y_{12})\otimes \HM_*(\y_{23})\otimes \HM_*(\y_{34})\arrow[r,"\Id\otimes \alpha"] \arrow[d,"\alpha\otimes \Id"]&  \HM_*(\y_{12})\otimes \HM_*(\y_{23}\circ_h\y_{34})\arrow[d,"\alpha"]\\
\HM_*(\y_{12}\circ \y_{23})\otimes \HM_*(\y_{34})\arrow[dr,"\alpha"] &\HM_*(\y_{12}\circ_h(\y_{23}\circ_h \y_{34})) \arrow[d, "{\HM_*(a(\y_{12},\y_{23},\y_{34}))}","\cong"']\\
& \HM_*((\y_{12}\circ_h\y_{23})\circ_h \y_{34}) 
\end{tikzcd}
\end{equation}
is commutative for any triples $(\y_{12},\y_{23},\y_{34})\in \AT(\TSigma_1,\TSigma_2)\times \AT(\TSigma_2,\TSigma_3)\times \AT(\TSigma_3,\TSigma_4).$
\item\label{G4} for any T-surface $\TSigma$ and its identity $1$-cell $e_{\TSigma}$, there is a canonical isomorphism 
\[
\iota_{\TSigma}: \HM_*(e_{\TSigma})\to \NR,
\]
such that the gluing map
\[
\alpha: \HM_*(\y_{12})\otimes \HM_*(e_{\TSigma_2})\to \HM_*(\y_{12}\circ_h e_{\TSigma_2})=\HM_*(\y_{12}),
\]
is simply $\Id \otimes \iota_{\TSigma}$. A similar property holds also for $e_{\TSigma_1}\circ_h \y_{12}$.
\end{enumerate}
\end{theorem}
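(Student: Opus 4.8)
The plan is to adapt the proof of Floer's excision theorem in the form used by Kronheimer--Mrowka \cite{KS} for sutured monopole homology, taking full advantage of the rigidity imposed by the hypothesis that $\lambda$ is non-vanishing on each component of every $T$-surface. The key analytic input, imported from \cite{Wang20, Wang202}, is that on a cylindrical end $[0,+\infty)_s\times\Sigma$ carrying the translation-invariant perturbation $\mu+ds\wedge\lambda$, every finite-energy monopole converges exponentially, on each component $\Sigma^{(i)}$, to \emph{the same} nondegenerate limiting configuration $\fc_{\TSigma}$ --- the unique critical point of the superpotential $W_\lambda$ of \cite{Wang202}, whose existence, uniqueness and nondegeneracy are exactly what the conditions $\lambda|_{\Sigma^{(i)}}\neq 0$, $\mu|_{\Sigma^{(i)}}\neq 0$ and $|\langle\mu,[\Sigma^{(i)}]\rangle|<2\pi$ buy us. Thus the Floer chain complex underlying $\HM_*(\y)$ is generated by finite-energy monopoles on $\widehat{Y}$ that all carry the datum $\fc_{\TSigma_i}$ on the $i$-th end, and there is no positive-dimensional moduli of asymptotic limits to worry about.

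To produce the map $\alpha$ of (G2), I would insert a neck $[-T,T]_s\times\Sigma_2$ along the gluing torus inside the completion $\widehat{Y_{12}\circ_h Y_{23}}$, using the product perturbation $\mu_2+ds\wedge\lambda_2$ on the neck and the given perturbations of $\y_{12}$, $\y_{23}$ away from it; write $Z_T$ for the resulting Riemannian $3$-manifold. The excision argument then proceeds in two halves. \emph{Compactness:} as $T\to+\infty$, any sequence of monopoles on $Z_T$ with the fixed asymptotics $\fc_{\TSigma_1},\fc_{\TSigma_3}$ on the surviving ends converges, modulo gauge, to a pair of finite-energy monopoles on $\widehat{Y}_{12}$ and $\widehat{Y}_{23}$ whose $\Sigma_2$-asymptotics both equal $\fc_{\TSigma_2}$; here one combines the energy identity \cite[Proposition 5.4]{Wang20} with the exponential decay forced by $\lambda_2\neq 0$ to exclude any loss of energy or bubbling into the neck. \emph{Gluing:} conversely, any such pair is the limit of a unique monopole on $Z_T$ for $T\gg 0$, by the standard pregluing plus implicit-function-theorem construction; the linearized operator is surjective precisely because $\fc_{\TSigma_2}$ is nondegenerate, so the fibre product of the two moduli spaces over their asymptotic-evaluation maps degenerates to the honest product $\M(\widehat{Y}_{12})\times\M(\widehat{Y}_{23})$. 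Running the same two-part analysis on the $4$-dimensional trajectory moduli spaces that define the differentials shows that this bijection of generators upgrades, for $T$ large, to an isomorphism of the underlying Floer chain complexes; passing to homology and checking independence of $T$ and of the remaining auxiliary choices via the usual continuation-map argument yields the canonical isomorphism $\alpha$ asserted in (G2).

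Naturality of $\alpha$ under $2$-cells (the rest of (G2)) and associativity (G3) follow formally by the same neck-stretching principle one dimension up. For strict cobordisms $\x_{12}$, $\x_{23}$, their horizontal composite contains a genuine product neck $[-1,1]_t\times[-T,T]_s\times\Sigma_2$ --- this is where it is essential that $W_{\x}=[-1,1]_t\times\Sigma$ is a product for a strict cobordism --- and stretching it identifies the cobordism map $\HM_*(\x_{12}\circ_h\x_{23})$ with $\HM_*(\x_{12})\otimes\HM_*(\x_{23})$ through $\alpha$. For (G3) one stretches the triply glued $Y_{12}\circ_h Y_{23}\circ_h Y_{34}$ simultaneously along $\Sigma_2$ and $\Sigma_3$: the two parenthesizations are realized by two adjacent corners of the resulting square of metrics and perturbations, so \eqref{E2.1} commutes by the usual ``iterated gluing is associative'' argument, the bottom vertical map $\HM_*(a(\y_{12},\y_{23},\y_{34}))$ being identified with the continuation isomorphism relating the two choices of admissible perturbation on the triply glued manifold. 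Finally, for (G4): the completion of the underlying $3$-manifold of $e_{\TSigma}=[-1,1]_s\times\Sigma$ is $\R_s\times\Sigma$ with the translation-invariant perturbation, whose only finite-energy monopole is the constant one at $\fc_{\TSigma}$, and which supports no non-constant flow line since $\fc_{\TSigma}$ is the unique critical configuration; hence the underlying complex is $\NR\cdot\fc_{\TSigma}$, $\HM_*(e_{\TSigma})\cong\NR$, and $\iota_{\TSigma}$ is the map sending $\fc_{\TSigma}$ to $1$. As the completion of the underlying manifold of $\y_{12}\circ_h e_{\TSigma_2}$ is canonically $\widehat{Y}_{12}$ (attaching a half-cylinder to a cylindrical end merely reparametrizes it), the corresponding $\alpha$ is literally $\Id\otimes\iota_{\TSigma_2}$ on generators, and symmetrically on the left.

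I expect the main obstacle to be the compactness/gluing dichotomy of the second paragraph: controlling the stretched neck $[-T,T]_s\times\Sigma_2$ so that no energy escapes and the glued linearized operator stays surjective. This is exactly where $\lambda|_{\Sigma^{(i)}}\neq 0$ does the work, by making the asymptotic critical configuration $\fc_{\TSigma}$ isolated and nondegenerate; in its absence (for instance $\lambda=0$) one would be gluing over a positive-dimensional moduli of flat connections on each torus and obtain only a spectral sequence with $E_1$-page $\HM_*(\y_{12})\otimes\HM_*(\y_{23})$, in line with the Landau--Ginzburg heuristics discussed above. Everything else is bookkeeping: assembling (G1)--(G4) into a genuine $2$-functor, and reconciling the various perturbation choices made along the different gluings.
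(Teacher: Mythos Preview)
Your approach is reasonable and would likely succeed, but it is \emph{not} the route taken in the paper. You propose to stretch a $3$-dimensional neck $[-T,T]_s\times\Sigma_2$ inside the completed glued manifold and obtain $\alpha$ as a direct chain-level identification of generators (critical points) for $T\gg 0$, with the differential matched by a parallel $4$-dimensional analysis on $\R_t\times Z_T$. The paper instead realizes $\alpha$ as a genuine \emph{cobordism map}: it builds an explicit $4$-manifold $X$ from an octagon $\Omega$ times $\Sigma_2$ with copies of $[-1,1]_t\times Y_{i,-}$ attached (the pair-of-pants picture), equips it with the $2$-form $\mu+dh\wedge\lambda$, and sets $\alpha=\eta\cdot(\iota_{\TSigma}\otimes\Id)\circ\HM_*(\x)$ where $\eta=\prod_i(t_i-t_i^{-1})^{-1}$ is a normalizing constant. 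The inverse $\beta$ is the map of the opposite cobordism $\x'$, and $\beta\circ\alpha=\Id$ is proved by stretching the composite $\x'\circ_v\x$ along a separating $3$-torus $k\times\Sigma$ and invoking two auxiliary computations: $\HM_*(\T^3,\s,c;\NR_\omega)$ (Lemma~3.4) and the relative monopole invariant $\fm(D^2\times\T^2,[\omega_M])=(t-t^{-1})^{-1}$ (Lemma~3.5), which is exactly where $\eta$ comes from. Naturality (G2) and associativity (G3) then follow by comparing explicit composite cobordisms, and (G4) requires a short argument showing $\tilde\alpha^2=\tilde\alpha$ via (G3), then $\tilde\alpha^2=\Id$ in the special case $\y_{12}=e_{\TSigma_2}$ because there $\x'=\x$.

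What each approach buys: the paper's cobordism construction plugs directly into the existing $(3{+}1)$ TQFT machinery of \cite{Wang20}, so naturality under $2$-cells and associativity are immediate consequences of functoriality of $\HM_*$ on strict cobordisms, at the cost of the normalizing constant $\eta$ and the two auxiliary lemmas. Your approach is more geometric and makes (G4) essentially tautological, but you pay elsewhere: the $\alpha$ you produce is tied to a particular large-$T$ metric and to the \emph{product} perturbation on the neck, whereas $\y_{12}\circ_h\y_{23}$ carries an arbitrary admissible perturbation by definition; the continuation isomorphism you invoke to bridge this gap is itself a cobordism map, and verifying that the composite is natural in $\x_{12},\x_{23}$ and associative then requires essentially the same cobordism-comparison arguments the paper carries out directly. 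You should also be aware that the $3$-dimensional gluing of critical points across a long neck, and the simultaneous $\R_t$-direction/$s$-neck analysis for trajectories, are not results you can simply cite from \cite{Wang20}; they are plausible given the nondegeneracy of $\fc_{\TSigma}$, but would need to be established.
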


\subsection{A convention for $\AT(\emptyset,\TSigma)$}\label{Subsec2.4} If we allow the empty surface $\emptyset$ to be a 0-cell of the toroidal bi-category $\AT$, then we must allow the underlying 3-manifold of an 1-cell $\y=(Y,\omega,\cdots)$ to have closed components. This is not a serious problem, as long as on each closed component, the 2-form $\omega$ is never balanced or negatively monotone with respect to any \spinc structures, in the sense of \cite[Definition 29.1.1]{Bible}. This will allow us to apply the adjunction inequality in Proposition \ref{P4.2}.  Instead of setting up the theory at this generality, we will simply define
\[
\AT(\emptyset,\TSigma), \AT(\TSigma, \emptyset),
\]
as categories in their own rights, and do not regard them as part of the bi-category $\AT$. We can still define the horizontal composition
\[
\circ_h: \AT(\emptyset,\TSigma_1)\times \AT(\TSigma_1,\TSigma_2)\to \AT(\emptyset,\TSigma_2)
\]
in the usual way and make the assignment
\[
\TSigma\mapsto \AT(\emptyset,\TSigma)
\]
into a 2-functor (in a suitable sense) from $\AT$ to $\CAT$. However, the latter point of view is not needed for the purpose of this paper.

\medskip

The category $\AT(\emptyset,\emptyset)$ consists of closed 3-manifolds equipped with imaginary valued closed 2-forms that are never balanced or negatively monotone on each component.

\section{Proof of the Gluing Theorem}\label{Sec3}
In this section, we present the proof of the Gluing Theorem \ref{T2.5}. The construction of the gluing map $\alpha$ in Theorem \ref{T2.5} \ref{G2} is based upon Floer's Excision Theorem \cite{BD95}, which has been adapted to the monopole Floer homology by Kronheimer-Mrowka \cite{KS}. We will follow the setup of \cite[Theorem 3.1\ \&\ 3.2]{KS} closely.

This section starts with an overview of the monopole Floer homology defined in \cite{Wang20}, which yields the monopole Floer functor $\HM_*$ in Theorem \ref{T2.5} \ref{G1}. The gluing map $\alpha$ is then constructed in Subsection \ref{Subsec3.3}.

\subsection{Review} Recall that a \spinc structure $\s_X$ on an oriented Riemannian 4-manifold $X$ is a pair $(S_X,\rho_4)$ where $S_X=S^+\oplus S^-$ is the spin bundle, and the bundle map $\rho_4: T^*X\to \End(S_X)$ defines the Clifford multiplication. A configuration $\gamma=(A,\Phi)\in \SC(X,\s)$ consists of a smooth \spinc connection $A$ and a smooth section $\Phi$ of $S^+$. Use $A^t$ to denote the induced connection of $A$ on $\bigwedge^2 S^+$. Let $\omega_X\in \Omega^2(X, i\R)$ be a closed 2-form on $X$ and $\omega^+_X$ denote its self-dual part. The Seiberg-Witten equations perturbed by $\omega_X$ are defined on the configuration space $\SC(X,\s)$ by the formula:
\begin{equation}\label{SWEQ}
\left\{
\begin{array}{rl}
\half \rho_4(F_{A^t}^+)-(\Phi\Phi^*)_0&=\rho_4(\omega^+_X),\\
D_A^+\Phi&=0,
\end{array}
\right.
\end{equation}
where $D_A^+: \Gamma(S^+)\to \Gamma(S^-)$ is the Dirac operator and $(\Phi\Phi^*)_0=\Phi\Phi^*-\half |\Phi|^2\otimes\Id_{S^+}$ is the traceless part of the endomorphism $\Phi\Phi^*:S^+\to S^+$. 

\smallskip

When it comes to an oriented Riemannian 3-manifold $Y$, the dimensional reduction of \eqref{SWEQ} is obtained by looking at \eqref{SWEQ} on the product manifold $\R_t\times Y$ and by asking the configuration $(A,\Phi)$ to be $\R_t$-invariant. A \spinc structure $\s$ on $Y$ is again a pair $(S,\rho_3)$ where the spin bundle $S=S^+$ has complex rank 2 and $\rho_3: T^*Y\to \End(S)$ defines the Clifford multiplication. The 3-dimensional Seiberg-Witten equations now read: 
\begin{equation}\label{3DDSWEQ}
\left\{
\begin{array}{rl}
\half \rho_3(F_{B^t})-(\Psi\Psi^*)_0&=\rho_3(\omega),\\
D_B\Psi&=0.
\end{array}
\right.
\end{equation}
where $B$ is a \spinc connection and $\Psi\in \Gamma(Y,S)$. Here $\omega\in \Omega^2(Y, i\R)$ is a closed 2-form and $D_B:\Gamma(Y,S)\to \Gamma(Y,S)$ denotes the Dirac operator on $Y$. 
\subsection{Results from the Second Paper}\label{Subsec3.2} In this subsection, we review the construction of the monopole Floer homology from \cite{Wang20}, which defines the functor in Theorem \ref{T2.5} \ref{G1}. For any $T$-surface $\TSigma$, define $-\TSigma\colonequals (-\Sigma, g_\Sigma, -\lambda,\mu)$ to be the orientation reversal of $\TSigma$. Since the category $\AT(\TSigma_1, \TSigma_2)$ is more or less equivalent to $\AT(\emptyset, (-\TSigma_1)\cup \TSigma_2)$ (only the property \ref{P1.2} may be different), we focus on the case when $\TSigma_1=\emptyset$. 

Given any 1-cell $\y=(Y,\psi, g_Y,\omega, \{\q\})\in \AT(\emptyset, \TSigma)$, we first attach a cylindrical end $[1,\infty)_s\times\Sigma$ to $Y$ to obtain a complete Riemannian manifold:
\[
\hy=Y\cup_{\psi} [1,\infty)_s\times\Sigma.
\]
The metric on the end is given by $d^2s+g_\Sigma$. The closed 2-form $\omega$ is extended constantly as $\mu+ds\wedge\lambda$ on $[1,\infty)_s\times\Sigma$, denoted also by $\omega\in \Omega^2(\hy, i\R)$. 
 Let $\s_{std}=(S_{std},\rho_{std,3})$ be the standard \spinc structure on $\R_s\times\Sigma$ with $c_1(\s_{std})=0\in H^2(\Sigma, i\R)$. The spin bundle $S_{std}$ can be constructed explicitly as 
\[
S_{std}=\C\oplus \Lambda^{0,1} \Sigma.
\]
See \cite[Section 2]{Wang20} for more details. A relative \spinc structure $\bs$ on $Y$ is a pair $(\s, \varphi)$ where $\s=(S,\rho_3)$ is a \spinc structure on $Y$ and 
\[
\varphi: (S,\rho_3)|_{\partial Y}\to \psi^*\s_{std}|_{\partial Y}
\]
is an isomorphism of \spinc structures near the boundary, compatible with $\psi:\partial Y\to \Sigma$. The set of isomorphism classes of relative \spinc structures on $Y$ 
\[
\Spincr(Y)
\]
is a torsor over $H^2(Y,\partial Y; \Z)$. There is a natural forgetful map from $\Spincr(Y)$ to the set of isomorphism classes of \spinc structures:
\[
\Spincr(Y) \to \Spinc(Y),\ \bs=(\s,\varphi)\mapsto \s,
\]
whose fiber is acted on freely and transitively by  $H^1(\Sigma, \Z)/\im (H^1(Y,\Z))$ reflecting the change of boundary trivializations. Any $\bs\in \Spincr(Y)$ extends to a relative \spinc structure on $\hy$, denoted also by $\bs$.

\medskip

 The key observation is that on $\R_s\times\Sigma$, the 3-dimensional Seiberg-Witten equations \eqref{3DDSWEQ} perturbed by $\omega=\mu+ds\wedge\lambda$ have a canonical $\R_s$-invariant solution, denoted by
 \[
 (B_*,\Psi_*),
 \]
  which is also the unique finite energy solution on $\R_s\times\Sigma$, up to gauge, by our assumptions on $(\lambda,\mu)$. This result is due to Taubes \cite[Proposition 4.4\ \&\ 4.7]{Taubes01}. See \cite[Theorem 2.6]{Wang20} for the precise statement that we use here. 
  
  When it comes to $\hy$, each configuration is required to approximate this special solution $(B_*,\Psi_*)$ as $s\to\infty$. Take $(B_0,\Psi_0)$ to be a smooth configuration on $\hy$ which agrees with $(B_*,\Phi_*)$ on the cylindrical end $[1,\infty)_s\times \Sigma$ and consider the configuration space for any $k> \half$:
\begin{align*}
\SC_k(\hy,\bs)=\{(B,\Psi): (b,\psi)=(B,\Psi)-(B_0,\Psi_0)\in L^2_k (\hy, iT^*\hy\oplus S)
\}.
\end{align*}
which is acted on freely by the gauge group
\begin{align*}
\CG_{k+1}(\hy)=\{u: \hy\to S^1\subset \C: u-1\in L^2_{k+1} (\hy, \C)\},
\end{align*}
using the formula:
\[
u(B,\Psi)=(B-u^{-1}du, u\Psi). 
\]

The perturbed Chern-Simons-Dirac functional on $\SC_k(\hy, \bs)$ is then defined as 
	\begin{equation}\label{E3.3}
	\CL_\omega (B,\Psi)=-\frac{1}{8}\int_{\hy} (B^t-B_0^t)\wedge (F_{B^t}+F_{B_0^t})+\half \int_{\hy}\langle D_B\Psi, \Psi\rangle+\half \int_{\hy}(B^t-B_0^t)\wedge \omega. \qedhere
	\end{equation}

For any 1-cell $\y\in \AT(\emptyset, \AT)$ and any relative \spinc structure $\bs\in \Spincr(Y)$, the monopole Floer homology $\HM_*(\y,\bs)$ is defined as the Morse homology of $\CL_\omega$ on the quotient configuration space $\SC_k(\hy,\bs)/\CG_{k+1}(\hy)$. However, it is not guaranteed that $\CL_\omega$ descends to a Morse function on $\SC_k(\hy,\bs)/\CG_{k+1}(\hy)$, so an admissible perturbation $\q$ of $\CL_\omega$, which is encoded already in \ref{P8}, is needed to make critical points Morse and moduli spaces of flowlines regular. The main result of \cite{Wang20} says that the monopole Floer homology 
\[
\HM_*(\y,\bs)
\]
is well-defined as a finitely generated module over the mod $2$ Novikov field $\NR$. Since we have assumed in \ref{T3} that for any $T$-surface $\TSigma$, both $\lambda$ and $\mu$ are non-vanishing on any component $\Sigma^{(i)}\subset \Sigma$, we have a stronger statement:

\begin{theorem}[{\cite[Theorem 1.4]{Wang20}}] For any 1-cell $\y\in \AT(\TSigma_1,\TSigma_2)$, the direct sum
	\[
	\HM_*(\y)\colonequals \bigoplus_{\bs\in \Spincr(Y)}\HM_*(\y,\bs),
	\]
	is finitely generated over $\NR$. In particular, the group $\HM_*(\y,\bs)$ is non-trivial for only finitely many relative \spinc structures. 
\end{theorem}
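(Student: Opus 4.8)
The plan is to reduce the statement to the combination of two facts. The first is already part of the construction recalled above: for each fixed $\bs\in\Spincr(Y)$, the group $\HM_*(\y,\bs)$ is a finitely generated $\NR$-module. Granting this, it suffices to prove that $\HM_*(\y,\bs)\neq 0$ for only finitely many $\bs$, since a finite direct sum of finitely generated $\NR$-modules is finitely generated. Because the perturbation $\omega$ satisfies \ref{P2} and is non-vanishing on the cylindrical end, every finite-energy configuration on $\hy$ is irreducible near infinity (it is asymptotic to $(B_*,\Psi_*)$ with $\Psi_*\not\equiv 0$), so the Morse complex computing $\HM_*(\y,\bs)$ is generated by critical points of the $\q$-perturbed functional $\CL_\omega$; hence $\HM_*(\y,\bs)\neq 0$ forces the existence of at least one such critical point, i.e. a solution $(B,\Psi)$ on $\hy$ of the $\q$-perturbed three-dimensional Seiberg--Witten equations asymptotic to $(B_*,\Psi_*)$.

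First I would establish a uniform a priori curvature bound: there is a constant $C$ depending on $(Y,g_Y)$, on the $C^0$-norm of $\omega$, and on the collection $\{\q\}$ of admissible perturbations from \ref{P8}, but \emph{not} on $\bs$, such that any such solution satisfies $\|F_{B^t}\|_{C^0(\hy)}\le C$. This is the usual closed-manifold argument transported to $\hy$: the maximum principle applied to $|\Psi|^2$, using the Weitzenb\"{o}ck formula, the Dirac equation $D_B\Psi=0$, and the curvature equation, bounds $|\Psi|_{C^0}$ in terms of the scalar curvature and $|\omega|$, and then the curvature equation bounds $|F_{B^t}|_{C^0}$. On the end $[1,\infty)_s\times\Sigma$ the configuration converges exponentially to $(B_*,\Psi_*)$ at a rate independent of $\bs$ (the asymptotic analysis of \cite{Wang20}, which rests on the nondegeneracy at $(B_*,\Psi_*)$ coming from the hypotheses \ref{T3} on $\lambda$ and $\mu$), so the quantities entering the estimate are controlled there by fixed data.

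Given this, I would integrate $\tfrac{i}{2\pi}F_{B^t}$ over a fixed finite collection of properly embedded relative $2$-cycles representing a basis of $H_2(Y,\partial Y;\Z)$. Splitting each integral into its part over $Y$ and its tail over the end, the first is bounded by $C$ times the volume and the second decays exponentially with an $\bs$-independent rate; hence $|\langle c_1(\bs),\kappa\rangle|\le C'$ for every basis element $\kappa$, with $C'$ independent of $\bs$. Thus the relative first Chern class $c_1(\bs)$ lies in a bounded, hence finite, subset of the lattice $H^2(Y,\partial Y;\Z)$ modulo torsion; since $Y$ is compact the torsion subgroup is finite and $\bs\mapsto c_1(\bs)$ has finite fibers, so only finitely many $\bs$ support a critical point, completing the reduction.

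The main obstacle is making the curvature estimate genuinely uniform in $\bs$ on the noncompact manifold $\hy$: one must know the exponential convergence of solutions to $(B_*,\Psi_*)$ holds with a rate independent of the relative $\Spincr$-structure, and that the admissible perturbations $\{\q\}$ are cylindrical and $C^0$-bounded so that they spoil neither the maximum-principle computation on the compact part nor the asymptotic analysis on the end. Both points are supplied by the estimates and the definition of admissible perturbation in \cite{Wang20}; the compact-part argument and the passage from a curvature bound to finiteness of $c_1(\bs)$ are then standard.
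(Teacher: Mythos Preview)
The paper does not prove this theorem; it is quoted from \cite[Theorem 1.4]{Wang20} as background, and no argument is supplied here. So there is no in-paper proof to compare against.

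Your outline is the standard Seiberg--Witten finiteness argument adapted to a manifold with a cylindrical end, and it is essentially the shape of proof one expects in \cite{Wang20}: a maximum-principle bound on $|\Psi|^2$ feeds into a $C^0$ curvature bound, which in turn confines $c_1(\bs)$ to a bounded subset of the lattice $H^2(Y,\partial Y;\Z)$. The asymptotic analysis on the end is indeed $\bs$-independent, since all relative $\Spinc$ structures are identified with $\bs_{std}$ near infinity via $\varphi$, so the decay rate is governed solely by the linearization at $(B_*,\Psi_*)$.

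The one point that needs more care is the uniformity in $\bs$ with respect to the perturbations. The collection $\{\q\}$ in \ref{P8} contains one perturbation \emph{per} relative $\Spinc$ structure, and nothing in the raw data of $\y$ forces these infinitely many $\q$'s to be uniformly $C^0$-bounded. The cleaner logic is to run your curvature estimate for the \emph{unperturbed} equations \eqref{3DDSWEQ} first (or with a single fixed tame perturbation), concluding that only finitely many $\bs$ admit any critical point at all; for the remaining $\bs$ the critical set is empty, so $\q=0$ is vacuously admissible and $\HM_*(\y,\bs)=0$. One then perturbs only on the finitely many surviving $\bs$. Your last paragraph flags this issue but does not quite untangle the order of quantifiers.
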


As explained in \cite[Section 17]{Wang20}, this Floer homology is further enhanced into a functor:
\begin{theorem}[{\cite[Theorem 1.6 \& Remark 1.7]{Wang20}}]\label{T3.2} For any $T$-surfaces $\TSigma_1,\TSigma_2$, there is a functor from $\AT(\TSigma_1,\TSigma_2)$ to the category of finitely generated $\NR$-modules $ \AR(\star)$:
\[
\HM_*: \AT(\TSigma_1,\TSigma_2)\to \AR(\star),
\]
which assigns to each 1-cell $\y$ to its monopole Floer homology group $\HM_*(\y)$. 
\end{theorem}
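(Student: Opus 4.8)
The plan is to upgrade the assignment $\y\mapsto\HM_*(\y)$ on $1$-cells to a functor by attaching to every $2$-cell $\x=(X,[\psi_X],[\omega_X]_{cpt})\colon\y_1\to\y_2$ an $\NR$-linear cobordism map $\HM_*(\x)\colon\HM_*(\y_1)\to\HM_*(\y_2)$, and then checking that these maps are compatible with vertical composition and send identity $2$-cells to identities. The construction follows the template for monopole cobordism maps of Kronheimer--Mrowka \cite{Bible}, transplanted to the non-compact and non-exact setting developed in \cite{Wang20}.

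First I would complete the cobordism. Since $\x$ is strict, each side face of $X$ is a product $[-1,1]_t\times\Sigma_i$, so $X$ admits cylindrical ends in every stratum: attach $(-\infty,-1]_t\times\hy_1$ along $Y_1$ and $[1,\infty)_t\times\hy_2$ along $Y_2$, extend each side face to $[-1,1]_t\times[1,\infty)_s\times\Sigma_i$, and fill the four outer corners with the evident half-space products. Call the result $\hx$, equip it with a metric translation-invariant on all ends, and pick a closed $2$-form $\omega_X$ in the prescribed relative class $[\omega_X]_{cpt}$, extended constantly over the ends so as to agree with $\omega_i$ on the $\hy_i$-ends and with $\mu_i+ds\wedge\lambda_i$ on the $\Sigma_i$-ends. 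For relative \spinc structures $\bs_i\in\Spincr(Y_i)$ that extend over $X$ and critical points $\mathfrak{a}_i$ of the perturbed Chern--Simons--Dirac functional on $\hy_i$, form the moduli space $\M(\hx;\mathfrak{a}_1,\mathfrak{a}_2)$ of finite-energy solutions of the $\omega_X$-perturbed equations \eqref{SWEQ} on $\hx$ that are asymptotic to $\mathfrak{a}_i$ on the $\hy_i$-ends and to the Taubes solution $(B_*,\Psi_*)$ of \cite[Theorem 2.6]{Wang20} on the $\Sigma_i$-ends. Using the a priori energy identity of \cite[Proposition 5.4]{Wang20}, a compactness theorem whose limiting configurations are broken off by flowlines on $\R\times\hy_i$, and transversality obtained by perturbing $\omega_X$ (equivalently by an admissible perturbation on the ends), the zero-dimensional part of $\M$ is a finite set; counting its points with the $\BF_2$ Novikov weights $q$ that record topological energy yields a map $\hat m(\x)\colon C_*(\y_1)\to C_*(\y_2)$, and the structure of the ends of the one-dimensional strata shows $\hat m(\x)$ is a chain map.

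Next I would establish invariance and functoriality. A parametrized version of the previous step over paths of metrics on $\hx$ and of forms in $[\omega_X]_{cpt}$, with the usual chain-homotopy formula, shows that the induced map $\HM_*(\x)$ depends only on the recorded data $[\psi_X]$ and $[\omega_X]_{cpt}$; retaining the isotopy class $[\psi_X]$ is necessary here, since it enters the identification of asymptotic limits and $\pi_1(\Diff_+(\T^2))\cong\Z\oplus\Z$ is nontrivial. For a vertical composite $\x_2\circ_v\x_1$, stretch a neck $[-T,T]_t\times\hy_{\mathrm{mid}}$ and let $T\to\infty$: the gluing theorem for Seiberg--Witten moduli spaces identifies, for $T$ large, the moduli space on the stretched $4$-manifold with the fibered product over $\Crit(\hy_{\mathrm{mid}})$ of the moduli spaces for $\x_1$ and $\x_2$, which gives $\hat m(\x_2\circ_v\x_1)\simeq\hat m(\x_2)\circ\hat m(\x_1)$ up to chain homotopy, hence $\HM_*(\x_2\circ_v\x_1)=\HM_*(\x_2)\circ\HM_*(\x_1)$ on homology. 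The identity $2$-cell of $\y$ is the product cobordism $[-1,1]_t\times Y$ with pulled-back $2$-form and zero perturbation; a standard computation, in which only $t$-translation-invariant configurations contribute in the relevant index, shows it induces the identity of $\HM_*(\y)$.

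The main obstacle is the global analysis on $\hx$, which is non-compact in two transverse directions simultaneously --- the Floer direction $\R_t$ and the surface direction $\R_s\times\Sigma$. One must rule out escape of energy along the $\Sigma$-ends, where the non-compact factor $\Sigma$ and the non-exact piece $\mu+ds\wedge\lambda$ of $\omega_X$ could a priori permit loss of compactness or bubbling, and at the same time control solutions uniformly near the outer corners, all while the perturbation is genuinely non-exact, so that the naive accounting ``energy $=$ drop of the Chern--Simons--Dirac functional'' must be replaced by the Novikov-weighted version of \cite{Wang20}. This is exactly where hypotheses \ref{T3} and \ref{T1} are used: the non-vanishing of $\lambda$ and $\mu$ on each component of $\Sigma$ makes $(B_*,\Psi_*)$ the unique finite-energy model on $\R_s\times\Sigma$, which pins down the asymptotics and drives the whole compactness argument. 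These analytic foundations constitute the main content of \cite{Wang20}.
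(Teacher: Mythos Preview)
Your proposal is correct and follows essentially the same approach as the paper. The paper does not prove Theorem \ref{T3.2} from scratch here---it is cited from \cite{Wang20}---but it does sketch the construction of the cobordism map immediately afterward, and your outline matches that sketch: complete $X$ spatially to $\hx$ and then temporally to the full Riemannian manifold $\CX$, extend $\omega_X$ over the ends, and count zero-dimensional solutions of the perturbed equations \eqref{SWEQ}; functoriality and invariance follow from the standard neck-stretching and parametrized-moduli arguments, with the analytic foundations supplied by \cite{Wang20}.
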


\begin{remark} In the second paper \cite{Wang20}, we focused on connected 3-manifolds with toroidal boundary, but the results generalize to the disconnected case with no difficulty.
\end{remark}

For any 2-cell morphism $\x:\y_1\to \y_2$, the cobordism map 
\[
\HM_*(\x): \HM_*(\y_1)\to \HM_*(\y_2)
\]
is constructed as follows. We focus on the case when $\TSigma_1=\emptyset$. For the underlying strict cobordism $X: Y_1\to Y_2$, pick a diffeomorphism $\psi_X: W_X\to [-1,1]_t\times \Sigma$ and a closed 2-form $\omega_X\in \Omega^2(X, i\R)$ belonging to the class $[\psi_X]$ and $[\omega_X]_{cpt}$ respectively, as in \ref{Q3}\ref{Q6}. Choose a metric $g_X$ of $X$ compatible with its corner structures. We attach an end in the spatial direction to obtain a cobordism from $\hy_1$ to $\hy_2$:
\[
\hx\colonequals X\ \bigcup_{\psi_X}\ [-1,1]_t\times [0,\infty)_s\times\Sigma, 
\]
and attach cylindrical ends in the time direction to form a complete Riemannian manifold:
\[
\CX\colonequals (-\infty, -1]_t\times \hy_1\ \bigcup\ \hx\ \bigcup\ [1,+\infty)_t\times \hy_2. 
\]
The closed 2-form $\omega_X$ extends to a 2-form on $\CX$ by setting 
\[
\omega_X=\left\{\begin{array}{ll}
\omega_1 & \text{ on } (-\infty, -1]_t\times \hy_1,\\
\omega_2 & \text{ on }[1,+\infty)_t\times \hy_2,\\
\mu+ds\wedge\lambda& \text{ on } \R_t\times [0,\infty)_s\times \Sigma. 
\end{array}
\right.
\]

The cobordism map $\HM_*(\x)$ is then defined by counting 0-dimensional solutions (modulo gauge) to the Seiberg-Witten equations \eqref{SWEQ} with $\omega_X$ defined above. Some additional perturbations are required here to make moduli spaces regular; see \cite[Section 16]{Wang20} for more details. The cobordism map $\HM_*(\x)$ depends only on the isotopy class of $\psi_X$ and the relative cohomology class of $\omega_X$, and is independent of the planar metric $g_X$ of $X$. 

\subsection{The Canonical Grading} By \cite[Lemma 28.1.1]{Bible}, the standard spinor $\Psi_*$ on $\R_s\times\Sigma$ determines a canonical oriented 2-plane field $\xi_*$ that is $\R_s$-invariant.

 For any 3-manifold $Y$ with $\partial Y\cong \Sigma$, an oriented 2-plane field $\xi$ is called relative if $\xi=\xi_*$ near the boundary. Any homotopy of oriented relative 2-plane fields is supposed to preserve $\xi_*$ near the boundary $\partial Y$. 

Inspired by the construction in \cite[Section 28]{Bible}, the author introduced a canonical grading on the group $\HM_*(\y,\bs)$ in \cite[Section 18]{Wang20}. The grading set 
\[
\Xi^{\pi}(\y,\bs)
\]
is identified with the homotopy classes of oriented relative 2-plane fields that belongs to the relative \spinc structure $\bs$.

\subsection{Euler Characteristics} The 3-manifold $Y$ is homology oriented, if  we pick an orientation of $\bigoplus_{i=0}^3 H_i(Y;\R)$. Any homology orientation of $Y$ induces a canonical mod 2 grading on $\HM_*(\y,\bs)$ (cf. \cite{MT96}\cite[Subsection 18.2]{Wang20}). Then the graded Euler Characteristics of $\HM_*(\y,\bs)$ is well-defined and recovers a classical algebraic invariant for 3-manifolds with toroidal boundary. For future reference, we record the statement below:
\begin{theorem}[\cite{MT96,T98,Taubes01}]\label{T3.4} The graded Euler Characteristic of $\HM_*(\y,\bs)$:
	\begin{align*}
\SW(Y): \Spincr(Y)&\to \Z\\
	\bs &\mapsto  \chi(\HM_*(\y,\bs)),
	\end{align*}
	is independent of the auxiliary data $(g_Y,g_\Sigma; \omega,\lambda,\mu)$ and is equal to the Minor-Turaev torsion $T(Y)$ up to an overall sign ambiguity. Moreover, the function $\SW(Y)$ is invariant under the conjugacy symmetry: $\bs\leftrightarrow \bs^*$.
\end{theorem}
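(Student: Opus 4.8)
The plan is to unwind the graded Euler characteristic of $\HM_*(\y,\bs)$ into a signed count of three-dimensional monopoles on $\hy$ and then to appeal to the results of Meng--Taubes and Turaev. First I would recall from Subsection \ref{Subsec3.2} that $\HM_*(\y,\bs)$ is the homology of the Morse complex of the $\q$-perturbed Chern--Simons--Dirac functional $\CL_\omega$ on $\SC_k(\hy,\bs)/\CG_{k+1}(\hy)$, so its generators are the finitely many nondegenerate critical points --- i.e. the gauge-equivalence classes of solutions of the $\q$-perturbed equations \eqref{3DDSWEQ} on $\hy$ that are asymptotic to $(B_*,\Psi_*)$ along the toroidal end. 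A homology orientation of $Y$ fixes the mod $2$ grading $\gr_2$ (cf. \cite[Subsection 18.2]{Wang20}), and since the Euler characteristic is unchanged under passage to homology, $\chi(\HM_*(\y,\bs))$ equals $\sum_{\mathfrak{a}}(-1)^{\gr_2(\mathfrak{a})}$, the sum taken over critical points $\mathfrak{a}$. The content of this step is that $(-1)^{\gr_2(\mathfrak{a})}$ is exactly the sign that $\mathfrak{a}$ carries in the oriented count, so the right-hand side is the three-dimensional Seiberg--Witten invariant of $Y$ --- with its toroidal end, evaluated in the chamber of perturbations determined by $(\lambda,\mu)$ --- at the relative \spinc structure $\bs$.

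Next I would identify this count with the Milnor--Turaev torsion. The canonical finite-energy solution $(B_*,\Psi_*)$, together with the asymptotic analysis of Taubes \cite[Proposition 4.4\ \&\ 4.7]{Taubes01}, identifies the count above with the Seiberg--Witten function studied by Meng--Taubes; by \cite{MT96} this agrees with the Milnor torsion of $Y$ up to sign, and by Turaev \cite{T98} the sign is pinned down once one chooses an Euler structure --- which is precisely the datum of a relative \spinc structure $\bs\in\Spincr(Y)$ --- together with the homology orientation fixed above. This yields $\chi(\HM_*(\y,\bs))=\pm T(Y)(\bs)$ with a single overall sign. Independence of the auxiliary data $(g_Y,g_\Sigma;\omega,\lambda,\mu)$ is then immediate, since the right-hand side is a topological invariant of $Y$. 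In particular this is consistent with Theorem \ref{T1.1}, where $\HM_*(\y,\bs)$ itself does depend on $[\omega]$ and on $[*_\Sigma\lambda]$: the extra generators produced when a wall in the space of perturbations is crossed cancel in pairs, leaving the Euler characteristic unchanged.

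For the conjugation symmetry I would use that complex conjugation $(B,\Psi)\mapsto(\bar B,\bar\Psi)$ carries solutions of \eqref{3DDSWEQ} lying in $\bs$ to solutions lying in $\bs^*$, once the admissible perturbation $\q$ is transported accordingly (or chosen conjugation-invariant), and that the resulting bijection of critical sets respects $\gr_2$; hence $\chi(\HM_*(\y,\bs))=\chi(\HM_*(\y,\bs^*))$. Equivalently, one may quote Turaev's duality theorem $T(Y)(\bs^*)=\pm T(Y)(\bs)$.

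The main obstacle is the second step: one must check that the signed count of solutions of \eqref{3DDSWEQ} on the complete manifold $\hy$ --- with the non-exact perturbation $\omega$ and the Taubes asymptotic condition along the toroidal end --- reproduces, both in absolute value and in its refined, Euler-structure-dependent sign, the quantity computed by Meng--Taubes and Turaev for closed $3$-manifolds, with no contribution lost to the non-compactness of the end. Once this matching is established, both the independence of the auxiliary data and the conjugation symmetry follow formally.
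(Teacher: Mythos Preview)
The paper does not supply a proof of this theorem: it is stated ``for future reference'' and attributed entirely to the cited works \cite{MT96,T98,Taubes01}, with no argument given in the text. So there is no ``paper's own proof'' to compare against.

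Your outline is a reasonable sketch of how the cited references combine to yield the statement. The main substantive step --- matching the signed count of perturbed monopoles on the complete manifold $\hy$ with toroidal end to the Meng--Taubes invariant --- is precisely the content of Taubes's paper \cite{Taubes01}, which carries out the analysis on $3$-manifolds with cylindrical $\T^2$-ends and the asymptotic condition given by $(B_*,\Psi_*)$. That paper provides the compactness and transversality needed to make the count well-defined and to identify it with the quantity computed in \cite{MT96}. Turaev's refinement \cite{T98} then fixes the dependence on the Euler structure (equivalently, the relative $\spinc$ structure) and gives the conjugation symmetry. So the ``main obstacle'' you name is exactly what \cite{Taubes01} is cited for, and your proposal is in effect a summary of why those three references together establish the theorem.

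One small correction: for the conjugation symmetry, the involution $(B,\Psi)\mapsto(\bar B,\bar\Psi)$ also reverses the sign of the perturbing form $\omega$, so strictly speaking it relates solutions on $(\hy,\bs,\omega)$ to solutions on $(\hy,\bs^*,-\omega)$. The cleanest route is therefore the one you mention second: invoke Turaev's duality for the torsion, which already gives $T(Y)(\bs^*)=\pm T(Y)(\bs)$, together with the first part of the theorem.
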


\subsection{Identity 1-cells}\label{Subsec3.2.5} Before we proceed to the construction of the gluing map, let us first define the canonical isomorphism 
\[
\iota_{\TSigma}: \HM_*(e_{\TSigma})\to \NR
\]
in Theorem \ref{T2.5}\ \ref{G4}. By the definition of the identity 1-cell $e_{\TSigma}$, $\HM_*(e_{\TSigma})$ is computed using the product manifold $\R_s\times \Sigma$ with $\omega=\mu+ds\wedge\lambda$. As noted earlier, the 3-dimensional Seiberg-Witten equations \eqref{3DDSWEQ} have a unique finite energy solution $(B_*,\Psi_*)$ up to gauge for the standard relative \spinc structure $\bs_{std}$. Moreover, $(B_*,\Psi_*)$ is irreducible and non-degenerate as the critical point of $\CL_\omega$ in the quotient configuration space; see the proof of  \cite[Proposition 12.1]{Wang20}. By \cite[Theorem 2.4]{Wang20}, any downward gradient flowline of $\CL_\omega$ connecting $(B_*,\Psi_*)$ to itself is necessary a constant path. As a result, the monopole Floer chain complex of $(e_\TSigma,\bs_{std})$ is generated by this special solution with trivial differential. The canonical isomorphism $\iota_{\TSigma}$ is then defined by sending this generator to $1\in \NR$. When $\bs\neq \bs_{std}$, the 3-dimensional equations \eqref{3DDSWEQ} have no solutions at all, so $\HM_*(e_{\TSigma},\bs)=\{0\}$. 
\subsection{The Gluing Map}\label{Subsec3.3} Having defined the functor $\HM_*$ in Theorem \ref{T2.5} \ref{G1} and the canonical isomorphism $\iota_{\TSigma}$ in \ref{G4}, let us construct the gluing map $\alpha$ in \ref{G2} in this subsection. The idea is borrowed from the proof of Floer's excision theorem \cite{BD95} and \cite[Theorem 3.2]{KS}. We focus on the special case when $\TSigma_1=\TSigma_3=\emptyset$ and construct the map
\[
\alpha:  \HM_*(\y_1)\otimes_\NR\HM_*(\y_2)\to \HM_*(\y_1\circ_h \y_2),
\]
for any 1-cells $\y_1\in \AT(\emptyset, \TSigma)$ and $\y_2\in \AT(\TSigma, \emptyset)$. The general case is not really different. 

Let $Y_i$ be the underlying 3-manifold of $\y_i$ and $Y_1\circ_h Y_2$ be the closed 3-manifold obtained by gluing $Y_1$ and $Y_2$ along $\{0\}\times \Sigma$. In what follows, we also work with the truncated 3-manifolds
\[
\begin{array}{lcl}
Y_{1,-}\colonequals& \{s\leq -1\}&\subset \hy_1,\\
Y_{2,-}\colonequals&\{s\geq 1\}&\subset \hy_2.
\end{array}
\]

The gluing map $\alpha$ is induced from an explicit strict cobordism between
\[
X: Y_1\ \coprod\ Y_2\to [-1,1]_s\times\Sigma\ \coprod\ Y_1\circ_h Y_2,
\]
as we describe now. Let $\Omega$ be an octagon with a prescribed metric such that the boundary of $\Omega$ consists of geodesic segments of length 2, and the internal angles are always $\pi/2$. Moreover, this metric is hyperbolic somewhere in the interior and flat near the boundary. 
\medskip
\begin{figure}[H]
	\centering
	\begin{overpic}[scale=.12]{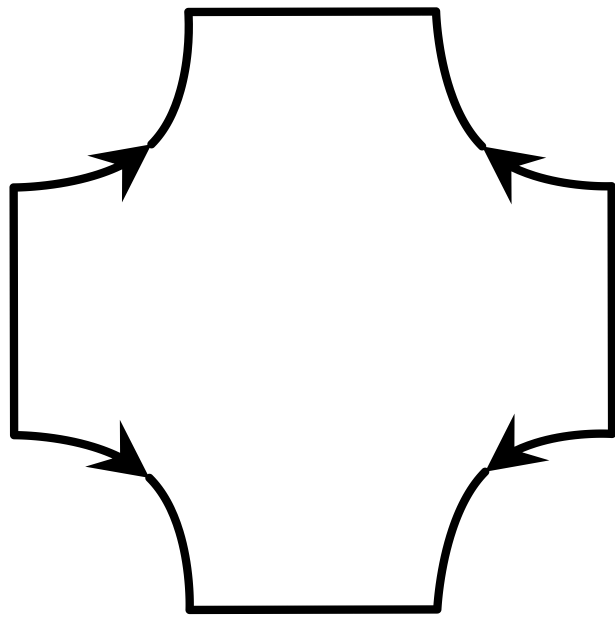}
		\put(5,10){\small$\gamma_1$}
		\put(85,10){\small$\gamma_2$}
		\put(85,85){\small$\gamma_3$}
				\put(5,85){\small$\gamma_4$}
				\put(47,47){\small$\Omega$}
					\put(-20,25){\small$-1$}
					\put(107,25){\small$1$}
					\put(107,65){\small$-1$}
					\put(-35,65){\small$h=1$}
					\put(20,-12){\small$-1$}
					\put(67,-12){\small 1}
				\put(27,103){\small 1}
	\put(63,103){\small $-1$}
	\end{overpic}	
\medskip
\caption{The surface $\Omega$ with corners}\label{Pic1}
\end{figure}

The product $\Omega\times\Sigma$ is now a 4-manifold with corners. The desired strict cobordism $X$ is obtained then by attaching $[-1,1]_t\times Y_{1,-}$ to $\gamma_1\times \Sigma$ and $[-1,1]_t\times Y_{2,-}$ to $\gamma_2\times \Sigma$. Arrows in Figure \ref{Pic1} indicate the positive direction of the time coordinate $t$. 

\smallskip

To define the closed 2-form $\omega_X$, let $h: \Omega\to \R$ be a function that equals to $1$ on $\gamma_2\cup \gamma_4$ and to $-1$ on $\gamma_1\cup \gamma_3$. Also, $h$ is required to be the linear function on the other four boundary segments of $\Omega$. Set
\begin{equation}\label{E3.1}
\omega_X=\mu+dh\wedge \lambda
\end{equation}
on $\Omega\times\Sigma$. The 4-manifold cobordism
\[
\hx:\hy_1\ \coprod\ \hy_2\to (\R_s\times\Sigma)\ \coprod\ (Y_1\circ_h Y_2)
\]
can be now schematically shown as follows: 
\begin{figure}[H]
	\centering
	\begin{overpic}[scale=.10]{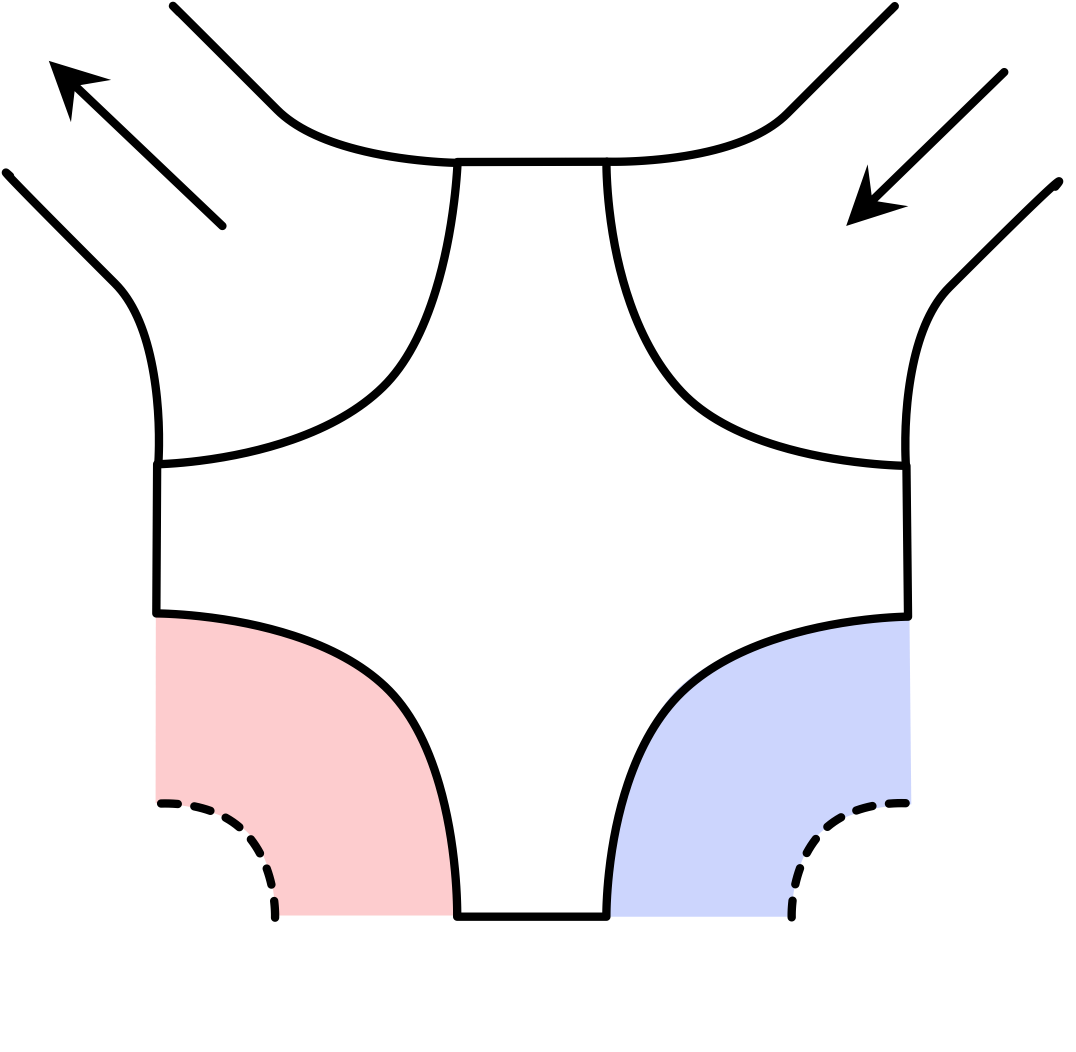}
	\put(35,0){$Y_1\circ _h Y_2$}
	\put(-20,40){$\hy_1\Rightarrow$}
	\put(95,40){$\Leftarrow \hy_2$}
		\put(35,90){$\R_s\times\Sigma$}
			\put(-70,10){$[-1,1]_t\times Y_1\rightsquigarrow$}
						\put(95,10){$\leftsquigarrow [-1,1]_t\times Y_2$}
							\put(105,80){$\leftsquigarrow [-1,1]_t\times (-\infty,-1]_s\times \Sigma$}
								\put(-120,80){$[-1,1]_t\times [1,\infty)_s\times\Sigma\rightsquigarrow$}
	\end{overpic}	
	\caption{}
\label{Pic2}
\end{figure}
The arrows $``\to"$ in Figure \ref{Pic2} indicate the positive direction of the spatial coordinate $s$. As a result, the complete Riemannian manifold $\CX$ obtained by attaching cylindrical ends to $\hx$ has two planar ends: $\R_t\times [1,+\infty)_s\times\Sigma$ and $\R_t\times (-\infty,-1]_s\times\Sigma$.

\begin{remark} In order to make $\omega_X$ into a smooth form on $\CX$, the function $h:\Omega\times\Sigma\to \R$ must extend to a smooth function on $\CX$ such that $h\equiv s$ on any planar end. One may think of $h$ as an extension of the spatial coordinate $s$ over the region $\Omega\times\Sigma$.
\end{remark}

On the other hand, one can draw the cobordism $\hx$ vertically and regard $\Omega$ as the part of the pair-of-pants cobordism that contains the saddle point:
\begin{figure}[H]
	\centering
	\begin{overpic}[scale=.10]{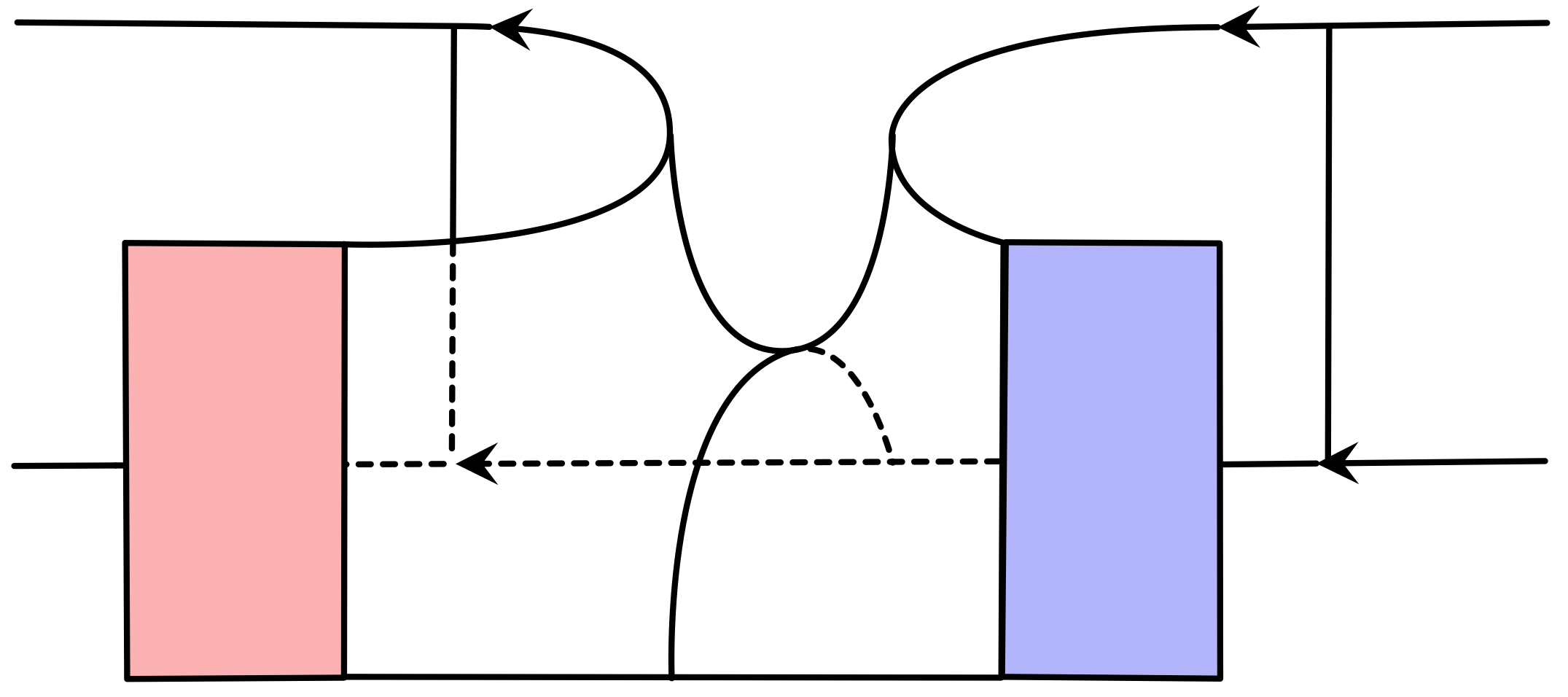}
		\put(35,-4){$Y_1\circ _h Y_2$}
		\put(-10,42){$\hy_1$}
		\put(105,42){$\hy_2$}
		\put(105,12){$\R_s\times\Sigma$}
			\put(45,42){$\Omega \times\Sigma$}
		\put(-31,3){$[-1,1]_t\times Y_1\rightsquigarrow$}
		\put(80,3){$\leftsquigarrow [-1,1]_t\times Y_2$}
		\put(87,32){$\leftsquigarrow [-1,1]_t\times (-\infty,-1]_s\times \Sigma$}
		\put(-45,32){$[-1,1]_t\times [1,\infty)_s\times\Sigma\rightsquigarrow$}
	\end{overpic}	
	\caption{Draw $\hx$ vertically.}
	\label{Pic3} 
\end{figure}

Since the base ring $\NR$ is also a field, we use the K\"{u}nneth formula to identify
\[
\HM_*(\y_1\coprod \y_2)\cong\HM_*(\y_1)\otimes_\NR\HM_*(\y_2). 
\]

The gluing map $\alpha$ is then defined as the cobordism map induced from $\x=(X,\omega_X)$ and multiplied by a normalizing constant $\eta\in \NR$:
\[
\begin{array}{rcc}
\alpha: \HM_*(\y_1)\otimes_\NR \HM_*(\y_2)&\xrightarrow{\HM_*(\x)}& \HM_*(e_{\TSigma})\otimes_\NR \HM_*(\y_1\circ_h \y_2)\\
&\xrightarrow{\iota_{\TSigma}\otimes \Id}&\NR\otimes_\NR \HM_*(\y_1\circ_h \y_2)\\
&\xrightarrow{\eta\times}&\HM_*(\y_1\circ_h \y_2),
\end{array}
\]
where 
\[
\eta=\prod_{i=1}^n\frac{1}{t_i-t_i^{-1}}\in \NR\text{ with }t_i=q^{|2\pi i\langle \mu, [\Sigma^{(i)}]\rangle|},
\]
and $\Sigma^{(i)}$ is the $i$-th component of $\Sigma=\coprod_{i=1}^n\Sigma^{(i)}$. The inverse $\beta$ of $\alpha$ is induced from the opposite cobordism of $\x$, denoted by $\x'=(X',\omega_X')$, and is normalized by the same constant $\eta$:
\[
\beta: \HM_*(\y_1\circ_h \y_2)\xrightarrow{\eta\ \otimes\ -\ } \HM_*(e_{\TSigma})\otimes_\NR \HM_*(\y_1\circ_h \y_2)\xrightarrow{\HM_*(\x')}\HM_*(\y_1)\otimes_\NR \HM_*( \y_2)
\]
\begin{figure}[H]
		\medskip
	\centering
	\begin{overpic}[scale=.10]{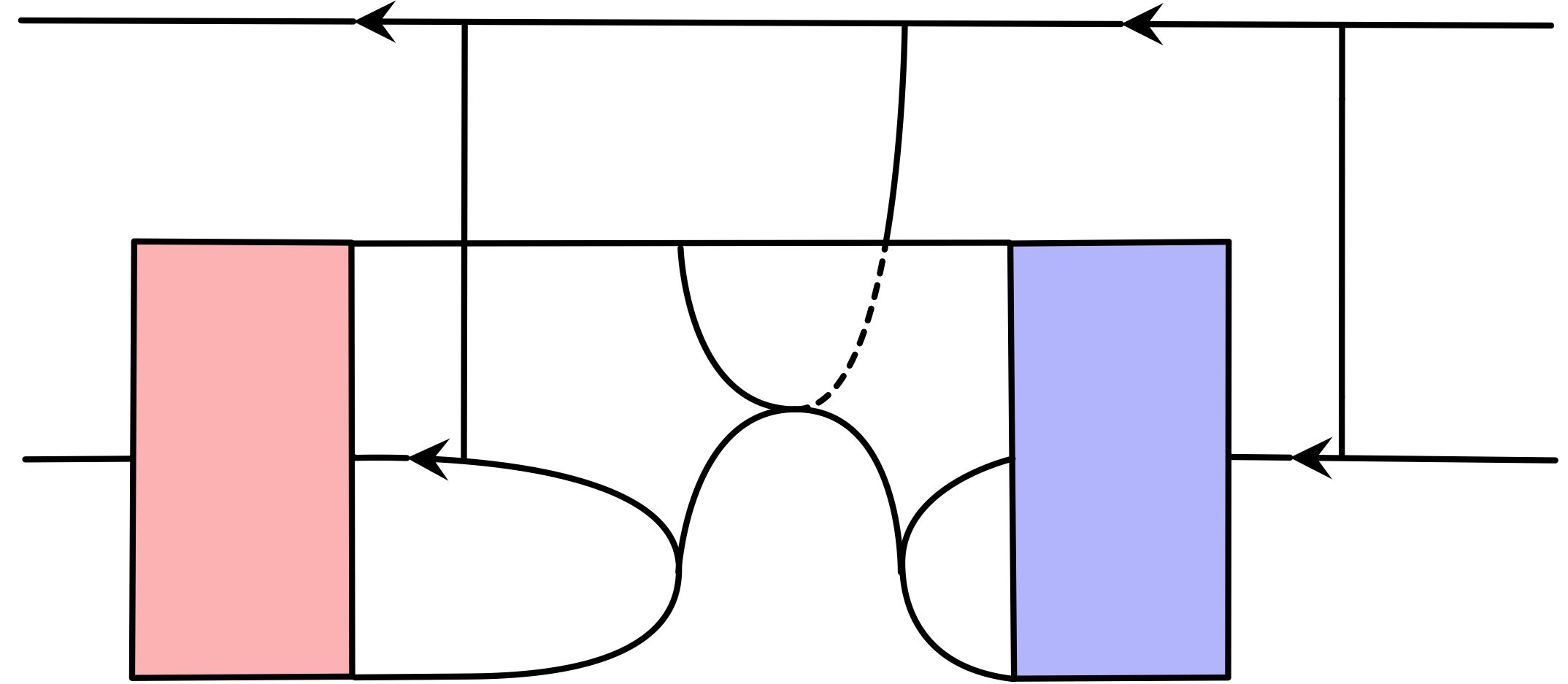}
		\put(35,31){$Y_1\circ _h Y_2$}
		\put(0,0){$\hy_1$}
		\put(82,0){$\hy_2$}
		\put(50,45){$\R_s\times\Sigma$}
	\end{overpic}	
	\caption{The opposite cobordism $\hx'$.}
	\label{Pic4} 
\end{figure}

The choice of the normalizing constant $\eta$ is justified by the following theorem, which says that the gluing map $\alpha$ is indeed an isomorphism with inverse $\beta$. 

\begin{theorem}[Floer's Excision Theorem]\label{T3.3} The gluing map $\alpha$ and $\beta$ constructed above are mutual inverses to each other, i.e.,
	\[
	\alpha\circ \beta=\Id_{\HM_*(\y_1\circ_h \y_2)},\ \beta\circ \alpha=\Id_{\HM_*(\y_1)\otimes\HM_*(\y_2)}.
	\]
\end{theorem}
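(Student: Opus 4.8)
The plan is to follow the classical proof of Floer's excision theorem \cite{BD95,KS}: the $4$--manifold underlying $\beta\circ\alpha$ will, after an isotopy of its metric, be seen to split as a product cobordism on $Y_1\sqcup Y_2$ together with a closed-up piece over the surface $\Sigma$ whose relative Seiberg--Witten invariant is the reciprocal of the normalizing constant $\eta$. The two facts we rely on are both recorded in Subsection \ref{Subsec3.2}: cobordism maps are functorial under composition of strict cobordisms (with relative cohomology classes concatenated), and each cobordism map $\HM_*(\x)$ depends only on the isotopy class of $\psi_X$ and on $[\omega_X]_{cpt}$, being independent of the chosen planar metric. Using functoriality, the K\"unneth identification, and the description of $\iota_{\TSigma}$ in Subsection \ref{Subsec3.2.5}, one first checks that $\beta\circ\alpha$ equals $\eta^{2}$ times the cobordism map of the glued strict cobordism $\x'\circ\x$, in which the outgoing $e_{\TSigma}\sqcup(Y_1\circ_h Y_2)$ boundary of $\x$ is attached to the incoming boundary of $\x'$; the two factors of $\eta$ come from $\beta$ and from the $\eta\times$ step in $\alpha$, and the detour through $\HM_*(e_{\TSigma})\xrightarrow{\iota_{\TSigma}}\NR$ and back is harmless because $\HM_*(e_{\TSigma})$ is one-dimensional. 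The underlying $4$--manifold of $\x'\circ\x$ is $\Omega''\times\Sigma$ with $[-1,1]_t\times Y_{1,-}$ and $[-1,1]_t\times Y_{2,-}$ attached along two of its edges, where $\Omega''$ is the planar region obtained by gluing the octagon $\Omega$ of Figure \ref{Pic1} to the mirror octagon $\Omega'$ underlying $\x'$, carrying the $2$--form $\mu+dh''\wedge\lambda$ for the obvious extension $h''$ of $h$. The analogous reduction applies to $\alpha\circ\beta$, with $\x$ and $\x'$ interchanged.

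Next, since the cobordism map is metric-independent, I would deform the metric on $\Omega''$, keeping the flat collars near the edges carrying $Y_{1,-}$ and $Y_{2,-}$, so that it develops two long product necks, one isolating a neighborhood of the $Y_{1,-}$--edge and one isolating a neighborhood of the $Y_{2,-}$--edge, with the complementary region $\Omega''_0$ a disk with flat boundary. Stretching these necks and invoking the neck-stretching and gluing package already used to construct $\HM_*$ in \cite{Wang20} (exponential decay toward translation-invariant solutions and surjectivity of gluing), the cobordism map of $\x'\circ\x$ factors as $\Id_{\HM_*(\y_1)}\otimes\Id_{\HM_*(\y_2)}$ tensored with the relative invariant of the closed-up piece $W_{\Sigma}\colonequals\Omega''_0\times\Sigma$, equipped with the $2$--form $\mu+dh''\wedge\lambda$: the two product necks exhibit the identity cobordism maps of $\y_1$ and $\y_2$, while everything else is concentrated on $W_\Sigma$.

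It then remains to compute the scalar invariant of $W_\Sigma$. Along each cylinder $\R_s\times\Sigma$ appearing in its ends, the Seiberg--Witten equations perturbed by $\mu+ds\wedge\lambda$ have $(B_*,\Psi_*)$ as their unique finite-energy solution up to gauge, by Taubes \cite{Taubes01} (cf.\ \cite[Theorem 2.6]{Wang20}), because every component of $\Sigma$ is a $2$--torus on which $\lambda$ and $\mu$ are non-vanishing; hence the moduli space on $W_\Sigma$ reduces to a finite, explicitly computable count, which I would evaluate one component $\Sigma^{(i)}$ at a time. The hypothesis $|\langle\mu,[\Sigma^{(i)}]\rangle|<2\pi$ enters precisely here, through $t_i=q^{|2\pi i\langle\mu,[\Sigma^{(i)}]\rangle|}$. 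The expected outcome is that this relative invariant equals $\prod_{i=1}^{n}(t_i-t_i^{-1})^{-2}=\eta^{-2}$ --- indeed, $\eta$ was defined exactly so that this cancellation occurs --- so that $\eta^{2}$ times it is $\Id_{\HM_*(\y_1)\otimes\HM_*(\y_2)}$; the same computation for $\alpha\circ\beta$ gives $\Id_{\HM_*(\y_1\circ_h\y_2)}$.

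The main obstacle is the splitting step: one must prove that after the metric degeneration there are no spurious contributions --- no broken trajectories using non-standard limits along the $\Sigma$--cylinders, no reducibles, and no unexpected components of the glued moduli space --- so that the count genuinely localizes on $W_\Sigma$ together with the constant flows over $Y_1$ and $Y_2$. This is the heart of the excision argument of \cite{BD95,KS}; in the present setting it is supplied by the a priori energy estimates, the uniqueness of the finite-energy solution on $\R_s\times\Sigma$, and the compactness and gluing theorems of \cite{Wang20}. The only genuinely new bookkeeping, caused by $\omega$ being non-exact, is tracking the relative cohomology class of the $2$--form through the gluing, which does not affect the analytic structure of the proof.
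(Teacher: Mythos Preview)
Your high-level reduction is correct: $\beta\circ\alpha=\eta^{2}\,\HM_*(\x'\circ_v\x)$, and one must show this cobordism map equals $\eta^{-2}\cdot\Id$ via neck-stretching. The gap is in the geometry of the splitting. The glued base surface $\Omega''=\Omega'\circ_v\Omega$ is an \emph{annulus}, not a disk: the top and bottom edges of $\Omega$ (carrying $e_{\TSigma}$ and $Y_1\circ_h Y_2$) are identified with those of $\Omega'$, and one boundary circle of the resulting annulus carries all of the $\hy_1$-data (two $Y_{1,-}$-edges $\gamma_1,\gamma_1'$, the incoming and outgoing $\hy_1$ edges, and $\gamma_4,\gamma_4'$) while the other carries the $\hy_2$-data. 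There is no pair of arc-type necks isolating a single ``$Y_{1,-}$-edge'' and leaving a disk complement $\Omega''_0$; removing collars of both boundary circles still leaves an annulus. Your factorization into product cobordisms on $\hy_1,\hy_2$ tensored with the scalar invariant of some $D^2\times\Sigma$ therefore does not arise as described. (There is also an algebra slip: $\prod_i(t_i-t_i^{-1})^{-2}=\eta^{2}$, not $\eta^{-2}$.)

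What the paper does instead is stretch along the \emph{core circle} $k\subset\Omega''$, so the neck is $k\times\Sigma$, a disjoint union of $3$-tori; one first homotopes $h$ so that $h\equiv\tfrac12$ near $k$, hence $\omega\equiv\mu$ on the neck. Two lemmas then do the work: $\HM_*(\T^3,\s_{std},c;\NR_\omega)\cong\NR$ (Lemma~\ref{L3.4}), so the intermediate Floer group is one-dimensional, and the monopole invariant of $D^2\times\T^2$ equals $(t-t^{-1})^{-1}$ (Lemma~\ref{L3.5}). Capping each half of $\Omega''\setminus k$ with $D^2\times\Sigma$ yields exactly the product cobordism on $\hy_1$ (respectively $\hy_2$), and the two caps contribute the factor $\eta\cdot\eta$ that cancels the normalization. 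The uniqueness of $(B_*,\Psi_*)$ on $\R_s\times\Sigma$ that you invoke governs the planar ends of $\CX$, not the internal degeneration; the latter genuinely requires the $\T^3$ computation.
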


The gluing map $\alpha$ preserves the canonical grading on $\HM_*(\y_i)$. There are natural concatenation maps:
\begin{align*}
-\ \circ_h\ -\ &:\Spincr(Y_1)\times \Spincr(Y_2)\to \Spincr(Y_1\circ_h Y_2),\\
-\ \circ_h\ -\ &:\Xi^{\pi}(\y_1,\bs_1)\times\Xi^{\pi}(\y_2,\bs_2)\to \Xi^{\pi}(\y_1\circ_h \y_2, \bs_1\circ_h\bs_2). 
\end{align*}
Indeed, any two relative \spinc structures $\bs_i=(\s_i,\varphi_i), i=1,2$ can be composed using the map
\[
(S_1,\rho_{3,1})|_{\partial Y_1} \xrightarrow{\varphi_1} \s_{std}|_\Sigma\xrightarrow{\varphi_2^{-1}} (S_2,\rho_{3,2})|_{\partial Y_2},
\]
to produce a \spinc structure on $Y_1\circ_h Y_2$. Meanwhile, any oriented relative 2-plane fields $(\xi_1,\xi_2)$ can be composed, since they agree with the canonical 2-plane field $\xi_*$ near $\Sigma$.

\smallskip

 In the special case that we have considered so far, $Y_1\circ_h Y_2$ is a closed 3-manifold, so $\Spincr(Y_1\circ_h Y_2)=\Spinc(Y_1\circ_h Y_2)$. $\Xi^{\pi}(\y_1\circ_h \y_2, \s)$ is the subset of $\pi_0(\Xi(Y_1\circ_h Y_2))$, the homotopy classes of oriented 2-plane fields on $Y_1\circ_h Y_2$, that belongs to $\s$; see \cite[P. 585]{Bible} for the precise definition of $\pi_0(\Xi(Y_1\circ_h Y_2))$.

\begin{theorem}\label{T3.5} The gluing map $\alpha:  \HM_*(\y_1)\otimes_\NR\HM_*(\y_2)\to \HM_*(\y_1\circ_h \y_2)$ preserves the relative \spinc grading and the canonical grading by the homotopy classes of oriented relative 2-plane fields, meaning that $\alpha$ restricts to a map 
	\[
	\alpha: \bigoplus_{ \bs_1\circ_h \bs_2=\s}\HM_*(\y_1,\bs_1)\otimes \HM_*(\y_2,\bs_2) \to \HM_*(\y_1\circ_h \y_2,\bs),
	\]
which is an isomorphism by Theorem \ref{T3.3}. Moreover, if an element $(x_1, x_2) $ belongs to the grading $([\xi_1],[\xi_2])\in \Xi^{\pi}(\y_1,\bs_1)\times\Xi^{\pi}(\y_2,\bs_2)$, then $\alpha(x_1\otimes x_2)$ is in the grading $[\xi_1]\circ_h[\xi_2]$. 
\end{theorem}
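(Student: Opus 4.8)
The plan is to track the grading data through the cobordism map $\HM_*(\x)$ that defines $\alpha$, exploiting that grading shifts for monopole cobordism maps are computed by purely topological/index-theoretic quantities which behave additively under the decomposition of $\CX$ into its cylindrical and planar pieces. First I would recall from \cite[Section 18]{Wang20} how the canonical grading set $\Xi^\pi(\y,\bs)$ is defined: a generator of the Floer complex is a critical point, i.e. (up to perturbation) a configuration on $\hy$ that approximates $(B_*,\Psi_*)$ at the end, and its grading is the homotopy class of the oriented relative $2$-plane field $\xi$ it determines, which equals $\xi_*$ near $\partial Y$. Likewise I would recall that a $4$-dimensional cobordism $\x\colon\y_a\to\y_b$ with a chosen relative homotopy class of data carries generators in grading $[\xi_a]$ to the component of the complex in grading $[\xi_b]$ determined by a formula of the shape $[\xi_b]=\x_\ast[\xi_a]$, where $\x_\ast$ is the map on $2$-plane-field homotopy classes induced by the almost-complex (or $\mathrm{Spin}^c$) structure on $\CX$; this is the relative version of \cite[Section 28]{Bible}.

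The key steps, in order: (1) Observe that the relative $\mathrm{Spin}^c$ structure $\bs_1\circ_h\bs_2$ on $Y_1\circ_h Y_2$ is by construction glued from $\bs_1$ and $\bs_2$ along $\bs_{std}$ on $\Sigma$, so the $\mathrm{Spin}^c$ structure on $X$ (hence $\CX$) restricts to $\bs_1,\bs_2$ on the incoming pieces $Y_1\coprod Y_2$ and to $\bs_{std}\circ_h(\bs_1\circ_h\bs_2)$ on the outgoing piece $(\R_s\times\Sigma)\coprod(Y_1\circ_h Y_2)$; after applying $\iota_{\TSigma}$ (which lives in the single grading of $(B_*,\Psi_*)$, i.e.\ the class of $\xi_*$), this forces $\alpha$ to respect the decomposition $\bigoplus_{\bs_1\circ_h\bs_2=\s}\HM_*(\y_1,\bs_1)\otimes\HM_*(\y_2,\bs_2)\to\HM_*(\y_1\circ_h\y_2,\s)$. (2) For the finer statement, decompose $\CX$ as $\Omega\times\Sigma$ glued to $[-1,1]_t\times Y_{1,-}$, $[-1,1]_t\times Y_{2,-}$ plus cylindrical and planar ends, and note that on the product region $\Omega\times\Sigma$ the $\mathrm{Spin}^c$ structure and the $2$-plane field are pulled back from $\Sigma$ and carry the canonical solution $(B_*,\Psi_*)$ across: the plane field there is the constant $\xi_*$, so this region contributes no twisting. (3) Conclude that $\x_\ast$ on $2$-plane-field homotopy classes is exactly the concatenation map $\circ_h$ on the $\Xi^\pi$, because concatenating $\xi_1,\xi_2$ along $\xi_*$ is precisely what the bordism $\CX$ realizes; hence $\alpha(x_1\otimes x_2)$ sits in grading $[\xi_1]\circ_h[\xi_2]$. (4) The final sentence ("isomorphism by Theorem \ref{T3.3}") is then immediate: $\alpha$ is a grading-preserving isomorphism, so it restricts to an isomorphism on each graded summand.

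The main obstacle I expect is step (3): making precise that the cobordism map $\HM_*(\x)$ shifts the canonical grading by the concatenation operation rather than by some additional correction term. In the closed case this is handled in \cite[Section 28]{Bible} via the relation between the $2$-plane-field grading and the relative index of the associated operator on $\CX$; here one must check that the relative version from \cite[Section 18]{Wang20} is compatible with cutting $\CX$ along the hypersurfaces $\{t=\pm1\}\times\hy_i$ and $\{s=\pm1\}$-type slices, i.e.\ an additivity-of-index argument (Atiyah–Patodi–Singer style) in which every contribution from the "neck" regions $\R\times\Sigma$ and from $\Omega\times\Sigma$ vanishes because the data there are the $\R_s$-invariant model data and the operator is the standard one with its canonical kernel. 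Once this additivity is in place, identifying the residual term with $\circ_h$ is a formal matter of matching definitions. I would also remark that the normalizing constant $\eta$ and the isomorphism $\iota_{\TSigma}$ play no role in the grading discussion, since multiplication by a scalar in $\NR$ and the identification $\HM_*(e_{\TSigma})\cong\NR$ are grading-homogeneous.
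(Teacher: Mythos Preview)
Your outline is essentially the same strategy as the paper's: track the grading shift of the cobordism map $\HM_*(\x)$ by an index computation, observe that on $\Omega\times\Sigma$ the data are the constant model so nothing is contributed, and conclude that the effect on gradings is concatenation. The paper executes your step~(3) in a more concrete way, and there is one point you do not address which the paper handles explicitly.

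First, the paper works with the unit-length relative spinor description of $\Xi^\pi(\y,\bs)$ rather than the $2$-plane-field language. The Index Axiom from \cite[Section 18]{Wang20} then converts the ``$0$-dimensional moduli space'' condition directly into the vanishing of a relative Euler number
\[
e(S^+_X;\Phi_X)[X,\partial X]=0,
\]
where $\Phi_X$ is a non-vanishing spinor on $\CX\setminus X$ built from the $\Psi_i$ representing $\gr(\fa_i)$ and from $\Psi_*/|\Psi_*|$ on all the ends. For the ``obvious'' relative $\mathrm{Spin}^c$ cobordism (product on $[-1,1]_t\times Y_{i,-}$, anything on $\Omega\times\Sigma$), taking $\Psi_3$ to be the concatenation of $\Psi_1$ and $\Psi_2$ makes this Euler number vanish trivially. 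This replaces your APS-style additivity argument with a single topological identity.

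Second, and this is the point missing from your sketch: the relative $\mathrm{Spin}^c$ structure on $X$ is \emph{not} determined by $\bs_1,\bs_2,\bs_1\circ_h\bs_2,\bs_{std}$; any two choices differ by a relative line bundle $L\in H^2(X,\partial X;\Z)$, and the cobordism map sums over all of them. The paper checks that tensoring $\bs_X$ by such an $L$ leaves the relative Euler number unchanged, so the image of $[\Psi_3]$ in $\Xi^\pi(\y_1\circ_h\y_2,\bs_1\circ_h\bs_2)$ is independent of the choice. Your proposal implicitly treats the $\mathrm{Spin}^c$ structure on $X$ as canonical, so you should add this verification.
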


The rest of Subsection \ref{Subsec3.3} is devoted to the proof of Theorem \ref{T3.3} and \ref{T3.5}.
\begin{proof}[Proof of Theorem \ref{T3.3}] The argument in \cite[Theorem 3.2]{KS} carries over to our case with little change. We focus on the second identity $\beta\circ \alpha=\Id_{\HM_*(\y_1)\otimes\HM_*(\y_2)}$ to explain the choice of the normalizing constant $\eta$. The map $\beta\circ \alpha$ is identical to the one induced by $\x'\circ _v \x$:
	\begin{figure}[H]
		\centering
		\begin{overpic}[scale=.08]{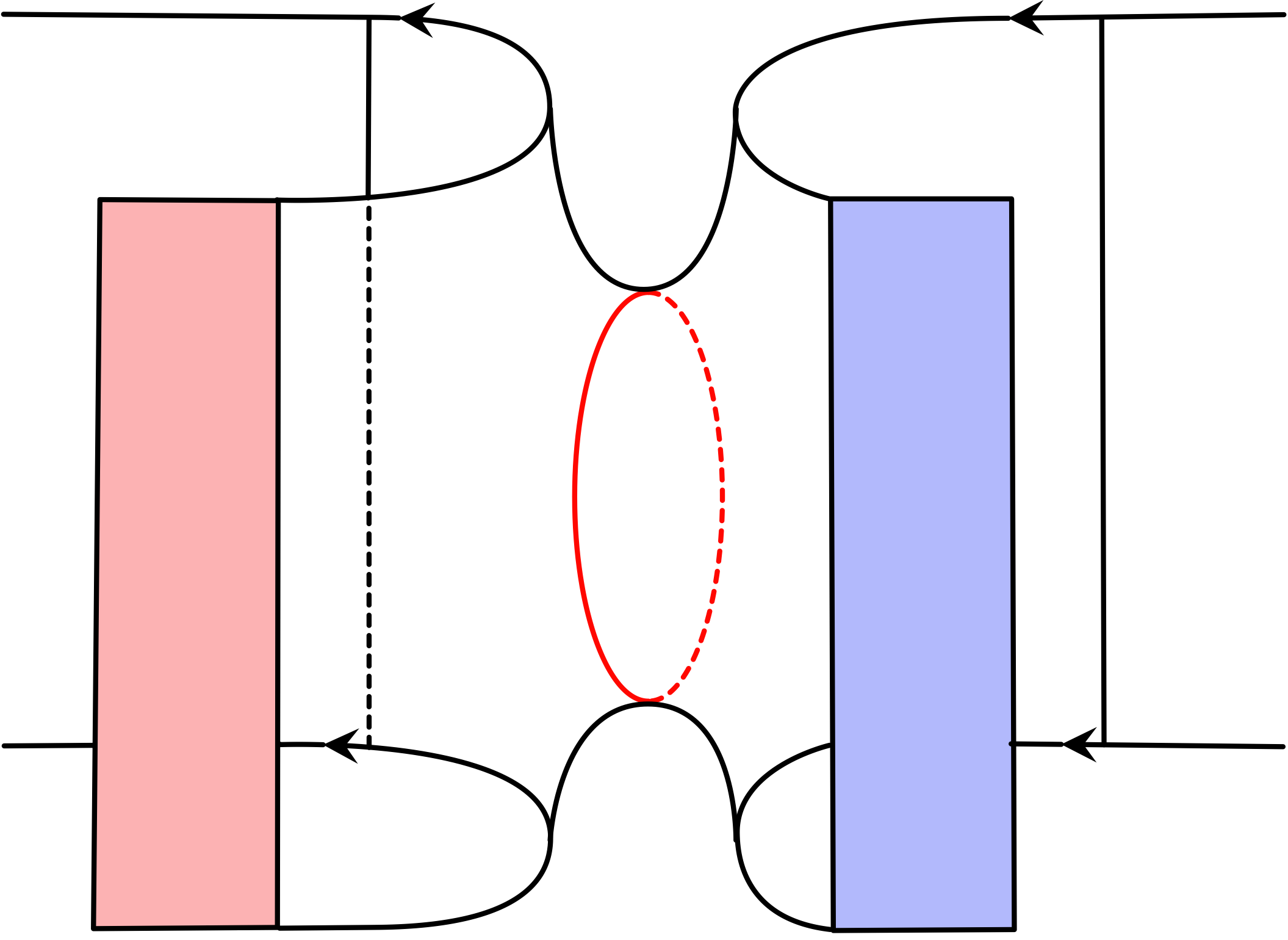}
			\put(-5,0){$\hy_1$}
			\put(85,0){$\hy_2$}
			\put(30,45){\small $k\times \Sigma$}
		\end{overpic}	
		\caption{}
		\label{Pic5} 
	\end{figure}
To compare $\x'\circ_v \x$ with the product cobordism from $\y_1\coprod \y_2$ to itself, we stretch the neck along a union of 3-tori $k\times \Sigma$, where $k$ is the red circle in Figure \ref{Pic5}. To specify the closed 2-form $\omega_4$ in the Seiberg-Witten equations \eqref{SWEQ} as we vary the metrics, we regard $k$ as a curve in $\Omega'\circ_v \Omega$:
	\begin{figure}[H]
	\centering
	\begin{overpic}[scale=.12]{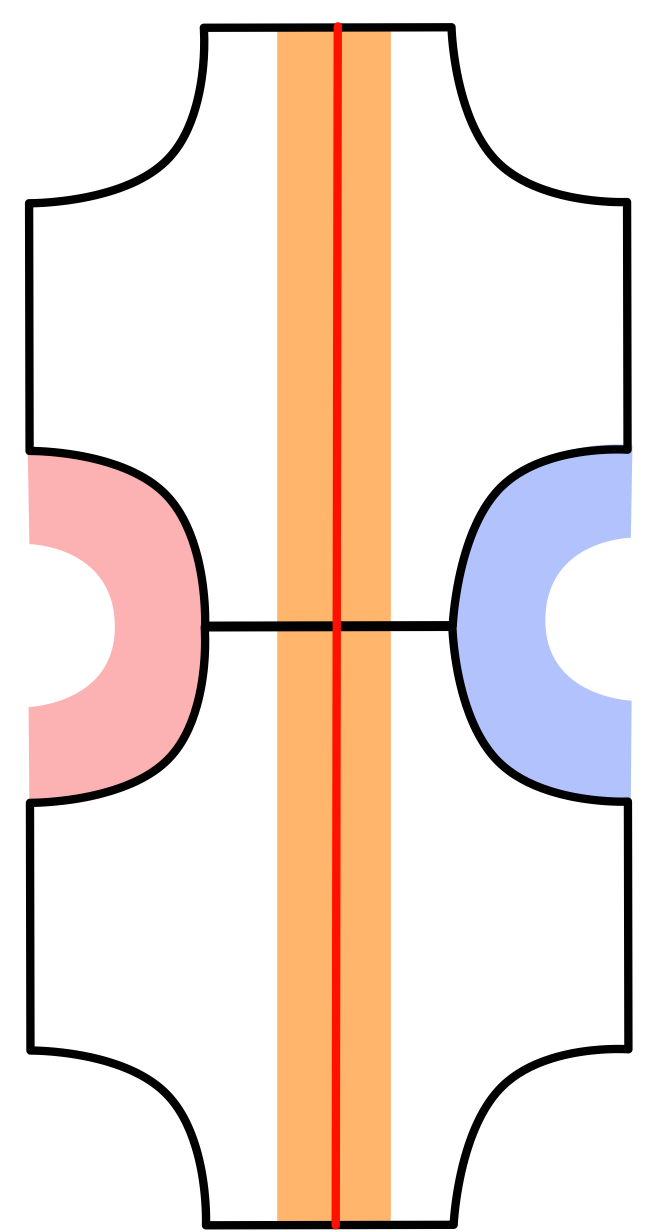}
		\put(-20,70){$\hy_1\Rightarrow$}
		\put(-20,20){$\hy_1\Leftarrow$}
		\put(25,101){$k$}
				\put(55,20){$\Rightarrow\hy_2$}
		\put(55,70){$\Leftarrow\hy_2$}
		\put(5,92){$1$}
			\put(40,92){$-1$}
					\put(5,3){$1$}
			\put(40,3){$-1$}
					\put(5,65){$-1$}
			\put(40,65){$1$}
					\put(10,20){$\Omega'$}
	\end{overpic}	
	\caption{}
	\label{Pic6} 
\end{figure}
Here $\Omega'$ is the opposite cobordism of $\Omega$, regarded as part of a pair-of-pants cobordism. In Figure \ref{Pic6}, the top horizontal edge is identified with the bottom edge. On $\Omega'\circ_v\Omega$, one may homotope the function $h: \Omega'\circ_v \Omega\to \R$ rel boundary such that $h\equiv 1/2$ on $[-1/2,1/2]_s\times k$, the tubular neighborhood of $k$ colored orange in Figure \ref{Pic6}. As we stretch the neck $[-1/2,1/2]_s\times k\times \Sigma$, the 2-form $\omega_4$ is set to be $\mu+dh\wedge\lambda$ on $(\Omega'\circ_v\Omega)\times\Sigma$. In particular, $\omega\equiv \mu$ on the neck $[-1/2,1/2]_s\times k\times \Sigma$. 

This 2-form $\omega_4$ is relatively cohomologous to the concatenation $\omega_X'\circ_v\omega_X$. Indeed, their difference is $d(f\wedge\lambda)$ for a compactly supported smooth function $f: \Omega'\circ_v \Omega\to \R$. Thus we can use $\omega_4$ to compute the cobordism map of $\x'\circ_v \x$. 

As the underlying 4-manifold of $\x'\circ_v \x$ is completely stretched along $k\times\Sigma$ in Figure \ref{Pic5}, we need the following result concerning the monopole Floer homology of the 3-torus $\T^3$:
\begin{lemma}\label{L3.4} Let $\T^2$ be the 2-torus and $\T^3=\T^2\times S^1$. Let $d\in H^2(\T^3,\Z)$ be the Poincar\'{e} dual of $\{pt\}\times S^1$ and set $[\omega]=i\delta \cdot d\in H^2(\T^3, i\R)$ for some $\delta\in \R$. Following the notations from \cite[Section 30]{Bible}, we write $\HM_*(\T^3, \s, c;\NR_\omega)$ for the monopole Floer homology of $\T^3$ associated to the period class $c=-2\pi i[\omega]=2\pi \delta\cdot d$ and the \spinc structure $\s$, which is defined using the Seiberg-Witten equations \eqref{3DDSWEQ} for some imaginary valued 2-form $\omega$ in the class $ [\omega]$. If in addition $\delta\neq 0$ and $|\delta|<2\pi$, then this group can be computed as follows:
\[
\HM_*(\T^3, \s, c;\NR_\omega)=\left\{\begin{array}{cl}
\NR & \text{ if } c_1(\s)=0,\\
\{0\} & \text{ otherwise.}
\end{array}
\right.
\]
\end{lemma}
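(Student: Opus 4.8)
The plan is to exploit the product structure $\T^3 = \T^2\times S^1$ and reduce the calculation to Taubes' analysis of the Seiberg--Witten equations on a surface, in close parallel with the treatment of the identity $1$-cell $e_{\TSigma}$ in Subsection~\ref{Subsec3.2.5}, the compact factor $S^1$ now playing the role of the spatial half-line $\R_s$. Since $\HM_*(\T^3,\s,c;\NR_\omega)$ depends only on $\s$ and on the class $[\omega] = -c/(2\pi i)$, not on the metric or on the choice of representative $2$-form (see \cite[Section 30]{Bible}), I would first pass to a product metric (a flat metric on $\T^2$ times the standard $S^1$) and to the harmonic representative of $\omega$. Because $d$ is the Poincar\'e dual of a fibre of the projection $\pi\colon \T^3 \to \T^2$, equivalently the pullback of the positive generator of $H^2(\T^2;\Z)$, this harmonic form is $\pi^*\mu$ for a harmonic $2$-form $\mu$ on $\T^2$ with $\int_{\T^2}\mu = i\delta$.

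\emph{Reducibles.} A reducible solution $(B,0)$ of \eqref{3DDSWEQ} forces $\tfrac12 F_{B^t} = \omega$, hence $[\omega] = \pi i\, c_1(\s)$ in $H^2(\T^3;i\R)$; with $[\omega]=i\delta\, d$ this requires $c_1(\s) = (\delta/\pi)\, d$, which for $0<|\delta|<2\pi$ is impossible unless $\delta = \pm\pi$ and $c_1(\s) = \pm d$. I would set that borderline case aside --- it does not occur in the applications of Lemma~\ref{L3.4}, where the periods $\langle\mu,[\Sigma^{(i)}]\rangle$ may be chosen off $\pi i\Z$ --- so that there are no reducibles for any $\s$; then $\bHM$ vanishes and $\fHM$, $\tHM$, $\THM$ all agree with the homology of the irreducible Floer complex.

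\emph{Irreducible critical points.} For $c_1(\s)\neq 0$ I would show the complex vanishes by combining the adjunction inequality of \cite[Chapter 40]{Bible}, applied to the coordinate $2$-tori of $\T^3$, with Taubes' surface estimates: the tori on which $\omega$ restricts to zero force $c_1(\s) = a\, d$ to be pulled back from $\T^2$, and then the $S^1$-reduction identifies the solutions with degree-$a$ solutions of the $\delta$-twisted vortex equation on $\T^2$, of which there are none when $a\neq 0$ and $|\delta|<2\pi$ (a Bradlow-type nonexistence bound, part of \cite[Proposition 4.4 \& 4.7]{Taubes01}). For $c_1(\s)=0$ the $S^1$-reduction --- the argument of \cite[Proposition 12.1]{Wang20} --- shows every solution is the pullback of a solution of the degree-$0$ twisted vortex equation, and Taubes' theorem (stated as \cite[Theorem 2.6]{Wang20}) provides a unique such solution, irreducible and non-degenerate. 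Hence the complex for $c_1(\s)=0$ has a single generator, and its differential vanishes exactly as in Subsection~\ref{Subsec3.2.5}: by \cite[Theorem 2.4]{Wang20}, a non-constant downward gradient flowline on $\R_t\times\T^3$ from this critical point to itself would violate the relevant energy identity. This gives $\HM_*(\T^3,\s,c;\NR_\omega)=\NR$ for $c_1(\s)=0$ and $\{0\}$ otherwise.

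The routine part --- the reductions and the adjunction bookkeeping --- is straightforward; the substantive input is the surface analysis (that all three-dimensional solutions on $\T^2\times S^1$ with pulled-back data descend to $\T^2$, together with Taubes' classification of the twisted vortex equations in every degree), but this is precisely the material of \cite{Taubes01} and \cite[Section 2]{Wang20}, so no new estimate is required. The single genuine subtlety is the borderline $\delta\in\pi\Z$, where in the spin$^c$ structure with $c_1(\s)=\pm d$ a Morse--Bott torus of reducibles appears; I would dispose of it by genericity of the auxiliary data.
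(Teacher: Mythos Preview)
Your setup and the analysis of critical points are essentially the same as the paper's: product metric, harmonic $\omega$ pulled back from $\T^2$, and Taubes' classification \cite[Lemma 3.1]{Taubes01} giving no solutions for $c_1(\s)\neq 0$ (when $|\delta|<2\pi$) and a unique irreducible $\gamma_*$ for $c_1(\s)=0$. The borderline $\delta=\pm\pi$ is a genuine nuisance you correctly flag.

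The gap is in your treatment of the differential when $c_1(\s)=0$. You invoke \cite[Theorem 2.4]{Wang20} to rule out non-constant flowlines from $\gamma_*$ to itself, by analogy with Subsection~\ref{Subsec3.2.5}. But that theorem concerns the $4$-manifold $\R_t\times\R_s\times\Sigma$, and the analogy ``$S^1$ plays the role of $\R_s$'' fails precisely here: on $\T^3$ the gauge group has components winding around the extra $S^1$, and since the perturbation is non-balanced the functional $\CL_\omega$ is \emph{not} fully gauge invariant --- it changes by nonzero periods along these components. A flowline from $\gamma_*$ to a nontrivial gauge translate of $\gamma_*$ therefore has strictly positive topological energy, and the energy identity does not force it to be constant. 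In fact the paper cites \cite[Proposition 4.4]{Taubes01} for the statement that such flowlines \emph{do} exist on $\R_t\times\T^3$, even though their formal dimension is zero; the unperturbed moduli space is not regular.

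The paper's fix is not an energy argument but a transversality one: choose an admissible perturbation $\q$ supported away from $\gamma_*$ (as in \cite[Section 15]{Bible}) small enough that $\gamma_*$ remains the unique critical point, but now with regular moduli spaces. With a single irreducible generator and regular moduli, the differential is then zero for grading reasons and the homology is $\NR$. Your proof needs this perturbation step in place of the appeal to \cite[Theorem 2.4]{Wang20}.
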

\begin{proof}[Proof of Lemma \ref{L3.4}] Pick a flat metric of $\T^2$ and equip $\T^3$ with the product metric. Take $\omega$ to be a multiple of the volume form $dvol_{\T^2}$. In this case, the 3-dimensional Seiberg-Witten equations \eqref{3DDSWEQ} can be solved explicitly. If $|\delta|<2\pi$ and $c_1(\s)\neq 0$, \eqref{3DDSWEQ} has no solutions at all. If $\delta\neq 0$ and $c_1(\s)=0$, \eqref{3DDSWEQ} has a unique solution $\gamma_*$, which is irreducible; see \cite[Lemma 3.1]{Taubes01}. Since we have worked with a non-balanced perturbation, the perturbed Chern-Simons-Dirac functional $\CL_\omega$ is not full gauge invariant. By \cite[Proposition 4.4]{Taubes01}, the moduli space of down-ward gradient flowlines of $\CL_\omega$ connecting $\gamma_*$ to itself is not empty, although the formal dimension predicted by the index theory is always zero. 
	
This issue can be circumvented using an admissible perturbation $\q$ of $\CL_\omega$ supported away from $\gamma_*$, as explained in \cite[Section 15]{Bible}. Moreover, $\q$ can be made small so that $\gamma_*$ is still the unique critical point of the perturbed functional, giving rise to the unique generator of $\HM_*(\T^3,\s,c; \NR_\omega)$. 
\end{proof}

To complete the proof of Theorem \ref{T3.3}, we need another result regarding the monopole invariants of $M\colonequals D^2\times \T^2$, which we recall below. Let $\s_{std}$ be the standard \spinc structure on $\T^3$ with $c_1(\s_{std})=0$. A relative \spinc structure $\bs_M$ on $M$ is a pair $(\s_M,\varphi)$ where $\s_M$ is a \spinc structure and $\varphi:\s_M|_{\partial M}\to \s_{std}$ is a fixed isomorphism. In particular, one may define its relative Chern class $c_1(\bs_M)\in H^2(M, \partial M;\Z)\cong H^2(D^2,S^1;\Z)$. We shall work with a flat metric of $\T^2$ and make $D^2$ into a surface with a cylindrical end:
	\begin{figure}[H]
	\centering
	\begin{overpic}[scale=.12]{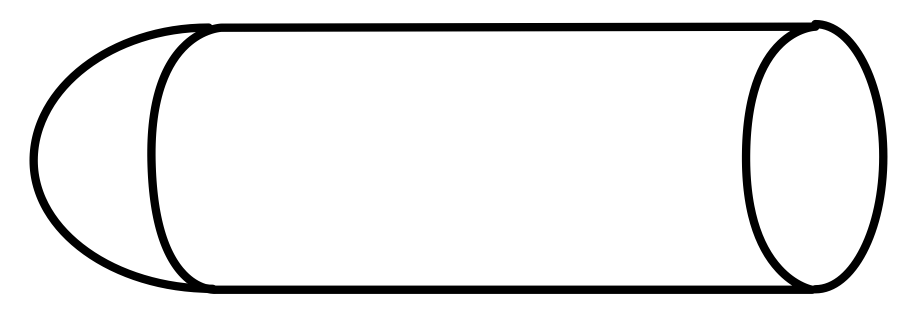}
		\put(110, 16){$\cdots$}
		\put(100,36){$s\to $}
	\end{overpic}	
	\caption{The disk with a cylindrical end.}
	\label{Pic7} 
\end{figure}

Let $d\in H^2(M, \Z)$ be the dual of $\{pt\}\times (D^2,S^1)$ and $[\omega_M]\colonequals i\delta \cdot d$ for some $\delta\in \R$. The monopole invariant of $M$ is defined as a generating function 
\[
\fm(M,[\omega_M])\colonequals \sum_{\bs_M} \fm(M, \bs_M,[\omega_M])\cdot q^{-\E_{top}^{\omega_M}(\bs_M)}\in\NR 
\]
with $\fm(M, \bs_M,[\omega_M])\in \BF_2$ and 
\begin{equation}\label{E3.5}
\E_{top}^{\omega_M}(\bs_M)\colonequals \frac{1}{4}\int_M (F_{A^t_0}-2\omega_M)\wedge(F_{A^t_0}-2\omega_M)=-2\pi \delta\cdot (c_1(\bs)\cup d)[M,\partial M].
\end{equation}
Here $A_0$ is a reference \spinc connection on $(M,\bs_M)$. The coefficient $\fm(M, \bs_M,[\omega_M])$ is defined by counting finite energy solutions to the Seiberg-Witten equations \eqref{SWEQ} with $\omega_M=i\delta\cdot dvol_{\T^2}/ \Vol(\T^2)$ for the relative \spinc manifold $(M,\bs_M)$. In practice, one has to perturb $\omega_M$ by a compactly supported closed 2-form (see \cite{Taubes01}) or add a tame perturbation to $\CL_\omega$ as in the proof of Lemma \ref{L3.4}, to ensure that the moduli space is transversely cut out. More invariantly, one should regard $\fm(M,[\omega_M])$ as an element in $\HM_*(\T^3,\s_{std}, c; \NR_\omega)$.
\begin{lemma}\label{L3.5} Using the canonical identification $\HM_*(\T^3,\s_{std},c;\NR_\omega)\cong \NR$ in the proof of Lemma \ref{L3.4}, the monopole invariant $\fm(M,[\omega_M])$ can be computed as 
	\[
	(t-t^{-1})^{-1}=t^{-1}+t^{-3}+t^{-5}+\cdots \in \NR \text{ with }t=q^{2\pi |\delta|}.
	\]
\end{lemma}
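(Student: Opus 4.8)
The plan is to carry out, on the $4$--manifold $M = D^2\times\T^2$, the same style of explicit analysis that proved Lemma \ref{L3.4}. First I would fix a flat metric on $\T^2$, give $D^2$ a cylindrical end $[0,\infty)_s\times S^1$, use the product metric, and take $\omega_M = i\delta\cdot dvol_{\T^2}/\Vol(\T^2)$. Since $\omega_M$ is pulled back from the flat torus, the Seiberg--Witten equations \eqref{SWEQ} on $M$ dimensionally reduce along the $\T^2$--fibres: writing the restriction of $\s_M$ to a fibre via a line bundle $L$ over $D^2$ and decomposing the positive spinor, every finite--energy solution is, up to gauge, pulled back from $D^2$ and corresponds to a finite--energy abelian vortex on $D^2\cong\C$ (a reduction of the type used by Taubes \cite{Taubes01}). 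The asymptotic requirement that a solution converge at the planar end to the $\R_s$--invariant solution $\gamma_*$ of Lemma \ref{L3.4} becomes the condition $|\phi|\to 1$ at infinity, so finite--energy solutions are classified by a nonnegative vortex number $n = \deg L$.

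Next I would do the bookkeeping relating $n$ to the gradings and energies. Carrying the boundary trivialization through the reduction, the degree--$n$ vortices should occupy the relative \spinc structure with
\[
(c_1(\bs_M)\cup d)[M,\partial M] \;=\; -\,\mathrm{sgn}(\delta)\,(2n+1),\qquad n\ge 0,
\]
every other value of $(c_1(\bs_M)\cup d)[M,\partial M]$ being unattainable. I would want to pin this parity shift down carefully, since the ``$2n+1$'' is exactly what makes only the odd powers $t^{-1},t^{-3},\dots$ appear and only one sign of $\delta$ contribute, and it comes from the relative $c_1$ of the disk fibre. Feeding this into \eqref{E3.5} gives $\E_{top}^{\omega_M}(\bs_M) = 2\pi|\delta|(2n+1)$, hence a weight $q^{-\E_{top}^{\omega_M}(\bs_M)} = t^{-(2n+1)}$ with $t = q^{2\pi|\delta|}$, and $\fm(M,\bs_M,[\omega_M]) = 0$ for every other $\bs_M$.

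The core step is then to show $\fm(M,\bs_M,[\omega_M]) = 1\in\BF_2$ for each $n\ge 0$. The unperturbed moduli space of degree--$n$ vortices on $\C$ is $\Sym^n\C\cong\C^n$, parametrized by the $n$ zeros of $\phi$; its formal dimension is $0$, so it is not transversally cut out, and, exactly as in the proof of Lemma \ref{L3.4}, I would add a generic small admissible perturbation $\q$ of $\CL_\omega$, supported away from the ends, to obtain a compact $0$--manifold, whose mod $2$ count is the $\BF_2$--Euler number of the obstruction bundle over $\Sym^n\C$; the claim is that this number is $1$. Granting this, summing the resulting geometric series in $\NR$ --- legitimate because $t^{-1} = q^{-2\pi|\delta|}$ has exponent tending to $-\infty$ --- gives
\[
\fm(M,[\omega_M]) \;=\; \sum_{n\ge 0} t^{-(2n+1)} \;=\; \frac{t^{-1}}{1-t^{-2}} \;=\; \frac{1}{t-t^{-1}},
\]
and, under the identification $\HM_*(\T^3,\s_{std},c;\NR_\omega)\cong\NR$ of Lemma \ref{L3.4} (the generator $\gamma_*$ going to $1$), this is the asserted value.

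The hard part is precisely this last point --- that the perturbed count on each $\Sym^n\C$ is exactly $1$. This is a genuine obstruction--bundle computation (the contribution of a square--zero torus fibre, in the spirit of Taubes's Seiberg--Witten/Gromov correspondence), not something that drops out of explicit solving. If a direct argument proves awkward, I would instead fix the coefficients indirectly: the Floer excision argument behind the Gluing Theorem \ref{T2.5}, applied to $M\cup_{\T^3}(-M)\cong S^2\times\T^2$, expresses the product of $\fm(M,[\omega_M])$ with its reflection as the Seiberg--Witten series of $S^2\times\T^2$ in this chamber, which by Meng--Taubes \cite{MT96} is a Milnor--Turaev torsion; combined with the a priori bound that each summand is at most $1$ --- visible from the explicit $\Sym^n\C$ description --- this would force $\fm(M,[\omega_M]) = (t-t^{-1})^{-1}$.
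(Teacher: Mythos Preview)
The paper's proof is a one-line citation: it observes that the computation with non-exact perturbations is no different from the exact-perturbation-with-local-coefficients case carried out in \cite[Section 38.2]{Bible}, and simply quotes the formula from \cite[P.719]{Bible}. You instead attempt to redo the computation from scratch via vortex reduction on $\C$, which is a genuinely different and considerably more ambitious route.

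Your outline is sound in its broad strokes --- the identification of the relevant relative \spinc structures, the parity shift giving the odd powers, and the energy bookkeeping all check out --- but you correctly flag the real difficulty: showing that each $\Sym^n\C$ contributes exactly $1\in\BF_2$ is an honest obstruction-bundle computation that you do not carry out. Your fallback via gluing to $S^2\times\T^2$ and invoking Meng--Taubes is reasonable, but it shifts the burden to knowing the Seiberg--Witten series of $S^2\times\T^2$, which is of comparable difficulty and is in fact essentially what \cite[Section 38]{Bible} computes anyway. There is also a smaller gap earlier: you assert that the equations ``dimensionally reduce along the $\T^2$-fibres,'' but solutions on $D^2\times\T^2$ are not a priori $\T^2$-invariant; this requires a separate argument (of the type in \cite{Taubes01}) before the vortex picture is available. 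In short, the paper sidesteps all of this by citing the literature; if you want a self-contained argument, the cleanest path is to follow \cite[Section 38.2]{Bible} and note that replacing the local system there by the non-exact perturbation $\omega_M$ changes nothing essential.
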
 
\begin{proof}[Proof of Lemma \ref{L3.5}] Although we have used non-exact perturbations, this computation is not different from the one in \cite[Section 38.2]{Bible}, in which case exact perturbations and a non-trivial local coefficient system are used. This formula can be found in \cite[P.719]{Bible}. 
\end{proof}

Back to the proof of Theorem \ref{T3.3}. Once the neck is completely stretched along $k\times\Sigma$ in Figure \ref{Pic5}, we glue two copies of $D^2\times \Sigma$ to obtain the product cobordism from $\hy_1 \coprod \hy_2$ to itself, which induces the identity map on monopole Floer homology groups. Since our surface $\Sigma=\coprod_{i=1}^n\Sigma^{(i)}$ is disconnected, we have 
\[
\fm(D^2\times \Sigma, [\mu])=\prod_{i=1}^n \fm(D^2\times \Sigma^{(i)}, [\mu_i])=\prod_{i=1}^n (t_i-t_i^{-1})^{-1}=\eta, 
\]
with $\mu_i\colonequals \mu|_{\Sigma^{(i)}}$ and $t_i=q^{|2\pi i\langle \mu,[\Sigma^{(i)}]\rangle|}$. As a result,
\[
\beta\circ \alpha=\eta^2 \HM_*(\x'\circ_v \x)=\Id_{\HM_*(\y_1)\otimes \HM_*(\y_2)}.
\]
The computation for $\alpha\circ \beta$ is similar and is omitted here. This completes the proof of Theorem \ref{T3.3}. 
\end{proof}
\begin{proof}[Proof of Theorem \ref{T3.5}] Fix relative \spinc structures $\bs_i\in \Spincr(Y_i)$ for $i=1,2$. The statement about relative \spinc gradings is obvious. Indeed, the 4-manifold cobordism in Figure \ref{Pic2} can be upgraded into a relative \spinc cobordism:
	\[
	(\hx,\bs_X): (\hy_1,\bs_1)\ \coprod\ (\hy_2,\bs_2)\to (Y_1\circ_h Y_2,\bs)\ \coprod\ (\R_s\times\Sigma, \bs_{std})
	\]
	only if $\bs=\bs_1\circ_h\bs_2$. To deal with the canonical grading, we make use of an equivalent definition of $\Xi^{\pi}(\hy_i,\bs_i)$ following \cite[(18.2)]{Wang20}. Instead of oriented relative 2-plane fields, we investigate the space of unit-length relative spinors on $\hy$, denoted by $\Xi(\y,\bs)$. A spinor $\Psi\in \Gamma(\hy,S)$ is called relative, if $\Psi=\Psi_*/|\Psi_*|$ on the cylindrical end $[0,\infty)_s\times\Sigma$; see \cite[Definition 18.2]{Wang20}. Finally, we have 
	\[
	\Xi^{\pi}(\y,\bs)\colonequals \pi_0(\Xi(\y,\bs))/H^2(Y,\partial Y;\Z)
	\]
	where $H^2(Y,\partial Y;\Z)= \pi_0(\CG(\hy))$ is the component group of the gauge group $\CG(\hy)$.
	
	Let $\fa_i\in \SC(\hy_i,\bs_i)$ be a critical point of the perturbed Chern-Simons-Dirac functional on $\hy_i$ for $i=1,2$. In fact, before passing to the quotient configuration space, we can assign an element 
	\[
	\gr(\fa_i)\in \pi_0(\Xi(\hy,\bs))
	\]
	whose image in $\Xi^{\pi}(\y,\bs)$ is the chain level grading of $[\fa_i]$. Let $\Psi_i\in \Gamma(\hy_i,S_i)$ be a unit-length relative spinor representing $\gr(\fa_i)$. 
	
	Since the cobordism map $\HM_*(\x)$ is defined by counting 0-dimensional moduli spaces on the 4-manifold $\CX$, the chain level map exists between 
	\[
	(\fa_1,\fa_2) \text{ and } (\fa_3, \fa_*)
	\]
	only if an index condition holds, where $\fa_*=(B_*,\Psi_*)$ is the canonical solution on $\R_s\times\Sigma$ and $\fa_3$ is a critical point on $Y_1\circ_h Y_2$. The relative spinor representing the grading of $\fa_*$ is $ \Psi_*/|\Psi_*|$, by the Normalization Axiom \cite[Section 18]{Wang20}. Let $\Psi_3$ be the one for $\fa_3$.
	
	By the Index Axiom from \cite[Section 18]{Wang20}, this index condition can be stated in terms of the quadruple $(\Psi_1,\Psi_2; \Psi_3, \Psi_*/|\Psi_*|)$. For a fixed relative \spinc cobordism $\bs_X$, we construct a non-vanishing spinor $\Phi_X$ on $\CX\setminus X$ as follows:
	\begin{itemize}
\item $\Phi_X\equiv \Psi_i$ on $(-\infty,1]_t\times \hy_i$ for $i=1,2$;
\item $\Phi_X\equiv \Psi_3$ on $[1,\infty)_t\times (Y_1\circ_h Y_2)$;
\item $\Phi_X\equiv \Psi_*/|\Psi_*|$ on $[1,\infty)_t\times\R_s\times\Sigma$ and 
\item $\Phi_X\equiv \Psi_*/|\Psi_*|$ also on $\R_t\times [1,\infty)_s\times\Sigma$ and $\R_t\times (-\infty,-1]_s\times \Sigma$.
	\end{itemize}

The index condition is then equivalent to saying that the relative Euler class of $\Phi_X$ vanishes
\begin{equation}\label{E3.2}
e(S^+_X; \Phi_X)[X,\partial X]=0,
\end{equation}
which determines the class $[\Psi_3]\in \pi_0(\Xi(\y_1\circ_h \y_2,\bs_1\circ_h\bs_2))$ in terms of $[\Psi_1]$ and $[\Psi_2]$.

\medskip

There is an obvious \spinc cobordism $\bs_X$ on $X$ (see Figure \ref{Pic2}): we pick the product relative \spinc structures on $[-1,1]_t\times Y_{1,-}$ and $[-1,1]_t\times Y_{2,-}$ respectively, and choose any relative \spinc structure on $\Omega\times\Sigma$. In this case, the characteristic condition \eqref{E3.2} holds trivially, if we take $\Psi_3$ to be the concatenation of $\Psi_1$ and $\Psi_2$. 

In general, any other relative \spinc cobordism $\bs_X'$ differs from $\bs_X$ by taking the tensor product with a relative line bundle $L\in H^2(X,\partial X;\Z)$, an action that leaves the relative Euler number unaffected. Thus the image of $[\Psi_3]$ in 
\[
\pi_0(\Xi(\y_1\circ_h\y_2,\bs_1\circ_h\bs_2))/H^2(Y_1\circ_h Y_2;\Z)
\]
is independent of $\bs_X'$. This completes the proof of Theorem \ref{T3.5}.
\end{proof}

\subsection{Proof of \ref{G2}} The proof of Theorem \ref{T2.5} will dominate the rest of Section \ref{Sec3}. In this subsection, we focus on \ref{G2}. For any 1-cells $\y_{12}\in\AT(\TSigma_1,\TSigma_2)$ and $\y_{23}\in \AT(\TSigma_2,\TSigma_3)$, the isomorphism 
\[
\alpha:  \HM_*(\y_{12})\otimes_\NR\HM_*(\y_{23})\to \HM_*(\y_{12}\circ_h \y_{23}),
\]
is constructed in the same way as in the special case discussed in Subsection \ref{Subsec3.3}. It remains to verify that $\alpha$ is a functor. Let $\x_{12}: \y_{12}\to \y_{12}'$ and $\x_{23}:\y_{23}\to\y_{23}'$ be 2-cell morphisms in $\AT(\TSigma_1,\TSigma_2)$ and $\AT(\TSigma_2,\TSigma_3)$ respectively. We have to show that  
\[
\HM_*(\x_{12}\circ_h \x_{23})\circ \alpha=\alpha\circ (\HM_*(\x_{12})\otimes \HM_*(\x_{23}))
\]
as maps from  $\HM_*(\y_{12})\otimes_\NR\HM_*(\y_{23})$ to $ \HM_*(\y_{12}'\circ_h \y_{23}')$. Indeed, both of them agree with the map induced by the 4-manifold cobordism in Figure \ref{Pic8}. This completes the proof of \ref{G2}. \qed
	\begin{figure}[H]
	\centering
	\begin{overpic}[scale=.10]{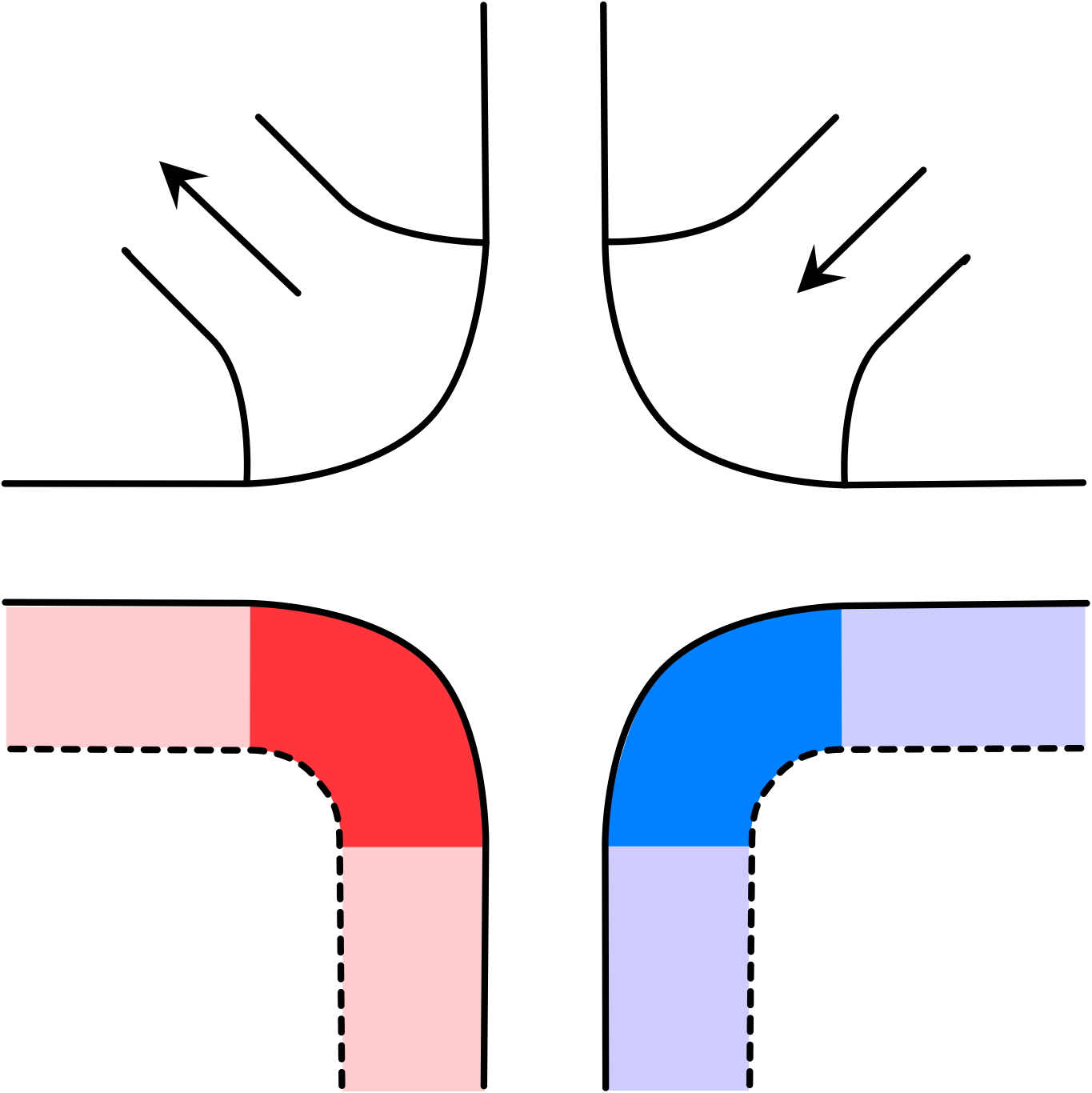}
		\put(-15,50){$\hy_{12}$}
		\put(105,50){$\hy_{23}$}
\put(29,31){\small $X_{12}$}
\put(59,31){\small $X_{23}$}
\put(-63,35){\small $(-\infty,-1]_t\times Y_{12}\to $}
\put(-25,10){\small $[1,\infty)_t\times Y_{12}'\to $}
\put(72,10){\small $\leftarrow [1,\infty)_t\times Y_{23}'$}
\put(102,35){\small $\leftarrow (-\infty, 1]_t\times Y_{23}$}
\put(36,-7){\small $Y_{12}'\circ_h Y_{23}'$}
	\end{overpic}	
\medskip
	\caption{}
	\label{Pic8} 
\end{figure}
\subsection{Proof of \ref{G3}} For any triple $(\y_{12},\y_{23},\y_{34})\in \AT(\TSigma_1,\TSigma_2)\times \AT(\TSigma_2,\TSigma_3)\times \AT(\TSigma_3,\TSigma_4)$, the composition $\alpha\circ (\alpha\otimes\Id)$ in the diagram \eqref{E2.1} is induced from the cobordism in Figure \ref{Pic9} below, with the red dot line indicating a copy of the completion $\widehat{Y_{12}\circ_h Y_{23}}$.
	\begin{figure}[H]
	\centering
	\begin{overpic}[scale=.06]{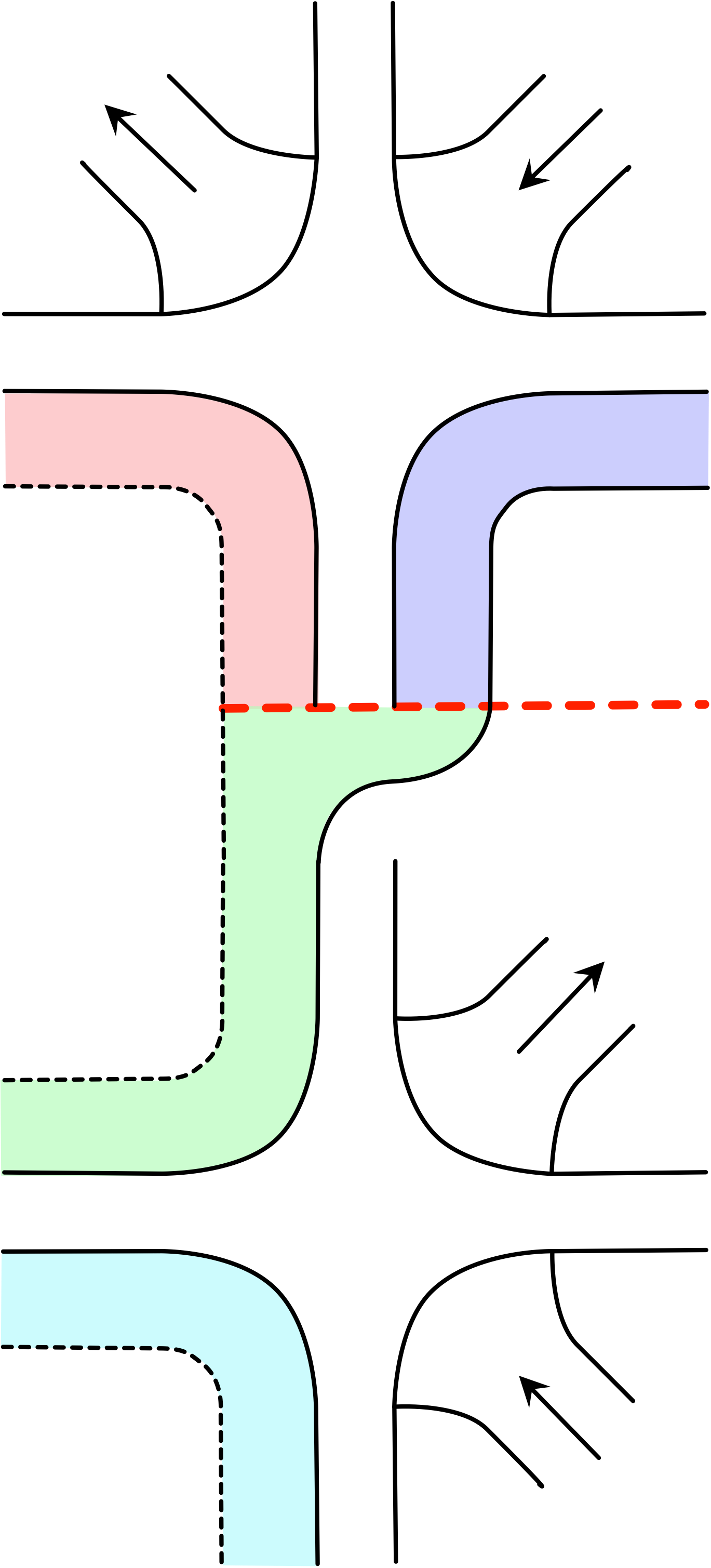}
		\put(-17,70){\small $Y_{12}\Rightarrow$}
		\put(-55,20){\small $(Y_{12}\circ_h Y_{23})\circ_h Y_{34}\Leftarrow$}
		\put(47,70){$\Leftarrow Y_{23}$}
\put(15,-7){$Y_{34}$}
\put(50,53){$\widehat{\ Y_{12}\circ_hY_{23}\ }$}
	\end{overpic}	
\medskip
	\caption{}
	\label{Pic9} 
\end{figure}
On the other hand, the map $\alpha\circ (\Id\otimes \alpha)$ is identical to the one induced from Figure \ref{Pic10}, which can be obtained from Figure \ref{Pic9} by continuously varying the metric and the closed 2-form, which implies that the diagram \eqref{E2.1} is commutative and completes the proof of \ref{G3}. \qed
	\begin{figure}[H]
	\centering
	\begin{overpic}[scale=.08]{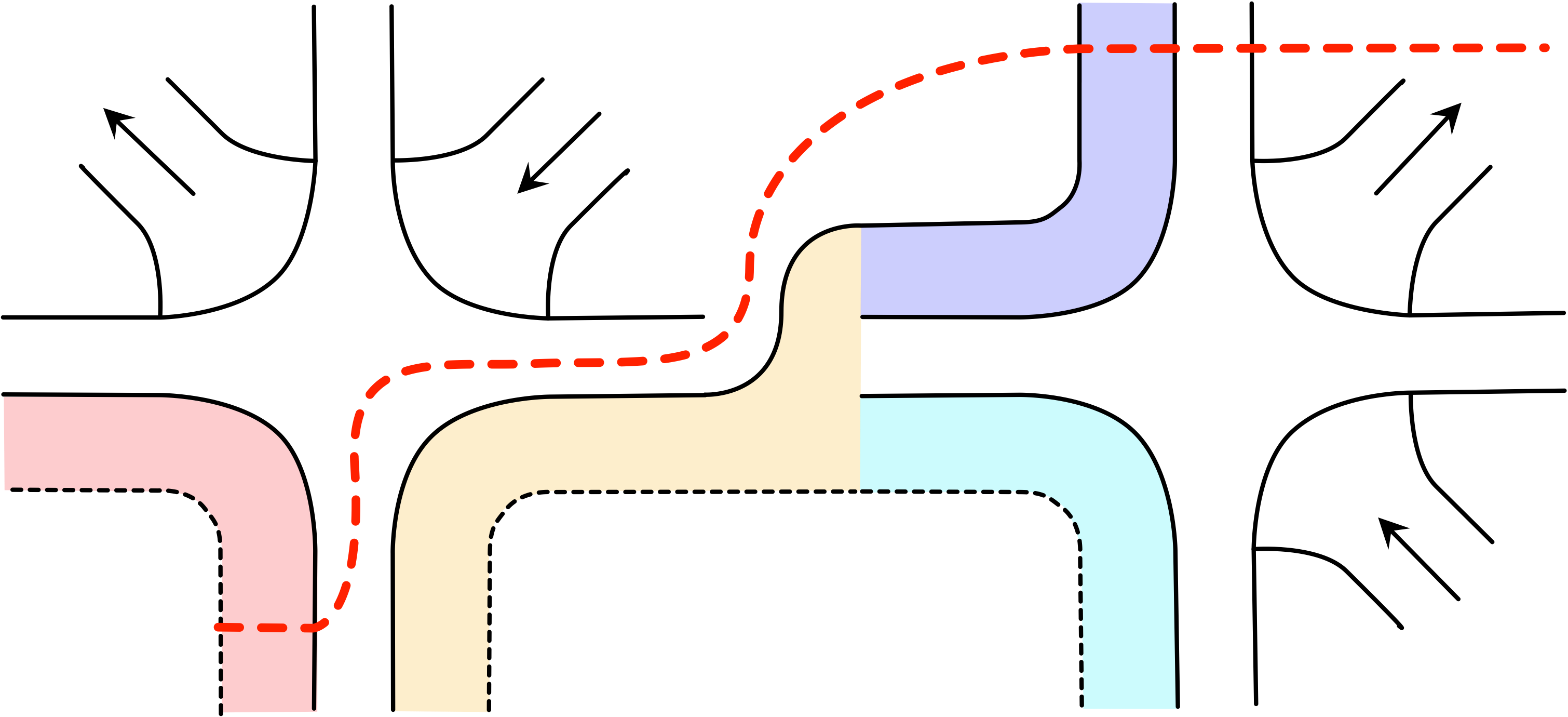}
		\put(-12,15){\small $Y_{12}\Rightarrow$}
		\put(5,-5){\small $Y_{12}\circ_h (Y_{23}\circ_h Y_{34})$}
		\put(70,48){$Y_{23}$}
		\put(70,-5){$Y_{34}$}
		\put(100,40){$\widehat{\ Y_{12}\circ_hY_{23}\ }$}
	\end{overpic}	
	\medskip
	\caption{}
	\label{Pic10} 
\end{figure}
\subsection{Proof of \ref{G4}} We have to show that the gluing map 
$$\alpha: \HM_*(\y_{12})\otimes\HM_*(e_{\TSigma_2})\to\HM_*(\y_{12}\circ_h e_{\TSigma_2})=\HM_*(\y_{12})$$
agrees with $\Id\otimes_\NR \iota_{\TSigma_2}$. Equivalently, we prove that 
\[
\tilde{\alpha}:\HM_*(\y_{12})\xrightarrow{\Id\otimes \iota_{\TSigma_2}^{-1}(1)} \HM_*(\y_{12})\otimes\HM_*(e_{\TSigma_2})\xrightarrow{\alpha}\HM_*(\y_{12}).
\]
is the identity map. We start with a few reductions:

\Step 1. $\tilde{\alpha}$ is an isomorphism. This is by Theorem \ref{T3.3}.

\Step 2. It suffices to verify the special case when $\TSigma_1=\TSigma_2$ and $\y_{12}=e_{\TSigma_2}$. 

Indeed, if the statement holds for this special case, consider the diagram:
\[
\begin{tikzcd}[column sep=2.3cm]
\HM_*(\y_{12})\arrow[r,"\Id\otimes\iota_{\TSigma_2}^{-1}(1)\otimes \iota_{\TSigma_2}^{-1}(1)"]& \HM_*(\y_{12})\otimes \HM_*(e_{\TSigma_2})\otimes \HM_*(e_{\TSigma_2})\arrow[r,bend left,"\alpha (\Id\otimes\alpha)"']\arrow[r,bend right,"\alpha(\alpha\otimes\Id)"] & \HM_*(\y_{12}).
\end{tikzcd}
\]
Applying Theorem \ref{T2.5} \ref{G3} to the triple $(\y_{12},e_{\TSigma_2},e_{\TSigma_2})$, we obtain that $\tilde{\alpha}^2=\tilde{\alpha}$; so $\tilde{\alpha}=\Id$.

\medskip

\Step 3. In the case when $\TSigma_1=\TSigma_2$ and $\y_{12}=e_{\TSigma_2}$, the group $\HM_*(e_{\TSigma_2})$ has rank $1$; so $\tilde{\alpha}:\HM_*(e_{\TSigma_2})\to \HM_*(e_{\TSigma_2})$ is a multiplication map. Let $\x$ be the cobordism inducing the gluing map $\alpha$. In this special case, the opposite cobordism $\x'$ of $\x$ is identical to $\x$. Theorem \ref{T3.3} then implies that $\tilde{\alpha}^2=\Id$; so $\tilde{\alpha}=\Id$. This completes the proof of Theorem \ref{T2.5} \ref{G4}.\qed

\section{Generalized Cobordism Maps}\label{Sec4}

2-cell morphisms in the category $\AT(\emptyset, \TSigma)$ are given by strict cobordisms: the induced cobordisms between boundaries are necessarily standard products, as required by Property \ref{Q3}. The primary goal of this section is to remove this constraint and define the generalized cobordism maps. 


For any 1-cells $\y_1\in \AT(\emptyset,\TSigma_1)$ and $\y_2\in \AT(\emptyset,\TSigma_2)$, a general cobordism $(X,W): (Y_1,\Sigma_1)\to (Y_2,\Sigma_2)$ is a 4-manifold with corners. To better package the data of closed 2-forms, however, we shall adopt a different point of view and introduce a new category $\AT_1$.
\begin{definition} Consider the category $\AT_1$ with objects 
	\[
	\Ob \AT_1=\coprod_{\TSigma} \Ob \AT(\emptyset,\TSigma).
	\] 
	For any $\y_1\in \AT(\emptyset,\TSigma_1)$ and $\y_2\in \AT(\emptyset,\TSigma_2)$, a morphism in $\AT_1(\y_1,\y_2)$ is a pair $(\x_{12},\BW_{12})$ where
	\[
	\BW_{12}\in \AT(\TSigma_1,\TSigma_2),\  \x_{12}\in \Hom_{\AT(\emptyset, \TSigma_2)}(\y_1\circ_h \BW_{12},\y_2). 
	\]
	For morphisms $(\x_{12},\BW_{12})\in \AT_1(\y_1,\y_2)$ and $(\x_{23},\BW_{23})\in \AT_1(\y_2,\y_3)$, their composition $(\x_{13},\BW_{13})\in \AT_1(\y_1,\y_3)$ is defined as
	\[
		\BW_{13}=\BW_{12}\circ_h\BW_{23},\  \x_{13}=(\x_{12}\circ_h \Id_{\BW_{23}})\circ \x_{23},
	\]
	The associativity can be easily checked using the digram that represents $\x_{13}$:
	\begin{equation}\label{E5.2}
	\begin{tikzcd}
\emptyset \arrow[r,"\y_1"]  \arrow[d,equal]& \TSigma_1 \arrow[r,"\BW_{12}"] \arrow[rd, phantom, "\x_{12}"] & \TSigma_2 \arrow[r,dashed, "\BW_{23}"] \arrow[d,equal] \arrow[rd, phantom, "\Id_{\BW_{23}}"]& \TSigma_3 \arrow[d,equal] \\
\emptyset \arrow[rr,"\y_2"]  \arrow[d,equal]&   & \TSigma_2 \arrow[r,"\BW_{23}"] \arrow[rd, phantom, "\x_{23}"]& \TSigma_3 \arrow[d,equal]\\
\emptyset\arrow[rrr,"\y_3"] & & {}&\TSigma_3.
\end{tikzcd}\qedhere
	\end{equation}
\end{definition}
\begin{corollary} We define a fake functor $\HM_*: \AT_1\to \AR(\star)$ as follows. For any object $\y_i\in \AT(\emptyset,\TSigma_i)$, we assign its monopole Floer homology group $\HM_*(\y_i)$. For any morphism $(\x_{12},\BW_{12}):\y_1\to \y_2$, we assign the map $	\HM_*(\x_{12},\BW_{12})\colonequals \HM_*(\x_{12})\circ \alpha: $
	\[
\HM_*(\y_1)\otimes \HM_*(\BW_{12})\xrightarrow{\alpha} \HM_*(\y_1\circ_h\BW_{12})\xrightarrow{\HM_*(\x_{12})} \HM_*(\y_2).
	\]
	
	This assignment $\HM_*$ fails to be a functor, since the ordinary composition law is violated. The replacement is a commutative diagram relating $\HM_*(\x_{12},\BW_{12})$, $\HM_*(\x_{23},\BW_{23})$ and the map of their composition $(\x_{13},\BW_{13})=(\x_{12},\BW_{12})\circ (\x_{23},\BW_{23})$:
		\begin{equation}\label{E5.1}
	\begin{tikzcd}
	\HM_*(\y_1)\otimes \HM_*(\BW_{12})\otimes \HM_*(\BW_{23})\arrow[r,"\Id\otimes\alpha"] \arrow[d,"{\HM(\x_{12},\BW_{12})\otimes \Id}"] & \HM_*(\y_1)\otimes \HM_*(\BW_{13}) \arrow[d, "{\HM_*(\x_{13},\BW_{13})}"]\\
	\HM_*(\y_2)\otimes \HM_*(\BW_{23})\arrow[r,"{\HM_*(\x_{23},\BW_{23})}"] & \HM_*(\y_3). 
	\end{tikzcd}
	\end{equation}
\end{corollary}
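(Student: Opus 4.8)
The plan is to reduce the commutativity of \eqref{E5.1} to a formal diagram chase built from three ingredients already available: the functoriality of $\HM_*$ on each category $\AT(\emptyset,\TSigma)$ (Theorem \ref{T3.2}), the naturality of the gluing map $\alpha$ with respect to $2$-cells (property \ref{G2}), and its associativity (property \ref{G3}, i.e.\ the commutative diagram \eqref{E2.1}). First I would record that the assignment is well defined: on objects there is nothing to check, and a morphism $(\x_{12},\BW_{12})\colon\y_1\to\y_2$ is sent to $\HM_*(\x_{12})\circ\alpha$, which is a genuine $\NR$-linear map since $\alpha$ is an $\NR$-module isomorphism by \ref{G2} and $\HM_*(\x_{12})$ is the cobordism map of the $2$-cell $\x_{12}\in\Hom_{\AT(\emptyset,\TSigma_2)}(\y_1\circ_h\BW_{12},\y_2)$ from Theorem \ref{T3.2}.

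For \eqref{E5.1}, I would unwind the ``down-then-right'' leg into
\[
\HM_*(\x_{23})\circ\alpha_{\y_2,\BW_{23}}\circ\bigl(\HM_*(\x_{12})\otimes\Id\bigr)\circ\bigl(\alpha_{\y_1,\BW_{12}}\otimes\Id\bigr),
\]
where $\alpha_{\y,\BW}$ denotes the relevant instances of the gluing map. Applying naturality of $\alpha$ to the $2$-cells $\x_{12}$ and $\Id_{\BW_{23}}$ replaces $\alpha_{\y_2,\BW_{23}}\circ(\HM_*(\x_{12})\otimes\Id)$ by $\HM_*(\x_{12}\circ_h\Id_{\BW_{23}})\circ\alpha_{\y_1\circ_h\BW_{12},\BW_{23}}$; then functoriality of $\HM_*$ on $\AT(\emptyset,\TSigma_3)$ composes the two cobordism maps into $\HM_*\bigl((\x_{12}\circ_h\Id_{\BW_{23}})\circ\x_{23}\bigr)$, which by the definition of the composition law in $\AT_1$ is $\HM_*(\x_{13})$, up to the associator $2$-cell $a=a(\y_1,\BW_{12},\BW_{23})$. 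Finally, feeding the triple $(\y_1,\BW_{12},\BW_{23})$ into \eqref{E2.1} identifies the remaining tail $\HM_*(a)^{-1}\circ\alpha_{\y_1\circ_h\BW_{12},\BW_{23}}\circ(\alpha_{\y_1,\BW_{12}}\otimes\Id)$ with $\alpha_{\y_1,\BW_{13}}\circ(\Id\otimes\alpha_{\BW_{12},\BW_{23}})$, so the whole leg collapses to $\HM_*(\x_{13})\circ\alpha_{\y_1,\BW_{13}}\circ(\Id\otimes\alpha)$, which is exactly the ``right-then-down'' leg $\HM_*(\x_{13},\BW_{13})\circ(\Id\otimes\alpha)$.

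The only genuine obstacle is bookkeeping the associator, since $\AT$ is merely a bi-category: $\x_{13}=(\x_{12}\circ_h\Id_{\BW_{23}})\circ\x_{23}$ is a priori a $2$-cell out of $(\y_1\circ_h\BW_{12})\circ_h\BW_{23}$ rather than out of $\y_1\circ_h\BW_{13}$, so $\HM_*(a)$ and $\HM_*(a)^{-1}$ must be inserted at the matching spots and seen to cancel against the associator implicit in the definition of $\x_{13}$. In the case at hand this is harmless: the associator is the identity $2$-cell in $\AT$, the two reassociated $3$-manifolds carrying the same cylindrical metric and the same closed $2$-form and differing only in their choices of admissible perturbations, so $\HM_*(a)$ is simply the continuation isomorphism between those perturbations. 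Once this point is pinned down the rest is the formal chase above, and the asserted failure of functoriality of $\HM_*$ on $\AT_1$ is, as stated, precisely the discrepancy between $\alpha_{\y_1,\BW_{13}}\circ(\Id\otimes\alpha_{\BW_{12},\BW_{23}})$ and a strict composition.
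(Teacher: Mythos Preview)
Your proof is correct and is essentially the same approach as the paper's, which simply says that \eqref{E5.1} follows by applying the monopole Floer 2-functor of Theorem~\ref{T2.5} to the pasting diagram \eqref{E5.2}. You have unpacked precisely what that one sentence means: the naturality \ref{G2} and associativity \ref{G3} of $\alpha$, together with the ordinary functoriality of $\HM_*$ on each $\AT(\emptyset,\TSigma)$, are exactly the axioms encoding that $\HM_*$ is a 2-functor, and your diagram chase is the standard verification that a 2-functor sends such a pasting diagram to a commutative square. Your careful handling of the associator is also correct and matches the paper's setup, where $a(\y_1,\BW_{12},\BW_{23})$ is the identity 2-cell between 1-cells that differ only in their admissible perturbations.
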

\begin{proof} The commutativity of $\eqref{E5.1}$ is obtained by applying the monopole Floer 2-functor $\HM_*$ in Theorem \ref{T2.5} to the digram \eqref{E5.2}.
\end{proof}

In order to obtain a genuine functor, we have to enlarge the category $\AT_1$ to incorporate an element of $\HM_*(\BW_{12})$ for each morphism $(\x_{12},\BW_{12})$. 
\begin{definition} The category $\AT_2$ has the set of same objects as $\AT_1$. For any objects $\y_1\in \AT(\emptyset,\TSigma_1)$ and $\y_2\in \AT(\emptyset,\TSigma_2)$, a morphism in $\AT_2(\y_1,\y_2)$ is now a triple $(\x_{12},\BW_{12}, a_{12})$ where
	\[
	(\x_{12},\BW_{12})\in \AT_1(\y_1,\y_2) \text{ and } a_{12}\in \HM_*(\BW_{12}). 
	\]
		For $(\x_{12},\BW_{12},a_{12})\in \AT_2(\y_1,\y_2)$ and $(\x_{23},\BW_{23},a_{23})\in \AT_2(\y_2,\y_3)$, their composition $$(\x_{13},\BW_{13},a_{13})\in \AT_2(\y_1,\y_3)$$ is defined as
	\[
	\BW_{13}=\BW_{12}\circ_h\BW_{23},\  \x_{13}=(\x_{12}\circ_h \Id_{\BW_{23}})\circ \x_{23},\ a_{23}=\alpha(a_{12}\otimes a_{23}). \qedhere
	\]
\end{definition}

\begin{corollary}\label{C4.4} We define a functor $\HM_*: \AT_2\to \AR(\star)$ as follows. For any object $\y_i\in \AT(\emptyset,\TSigma_i)$, we assign its monopole Floer homology group $\HM_*(\y_i)$. For any morphism $(\x_{12},\BW_{12},a_{12}):\y_1\to \y_2$, define the map $\HM_*(\x_{12},\BW_{12},a_{12})$ to be the composition
	\[
	\HM_*(\y_1)\xrightarrow{\alpha(\cdot, a_{12})} \HM_*(\y_1\circ_h\BW_{12})\xrightarrow{\HM_*(\x_{12})} \HM_*(\y_2).
	\]
	Then $\HM_*: \AT_2\to \AR(\star)$ is a functor in the classical sense.
\end{corollary}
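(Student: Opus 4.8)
The plan is to verify the two functor axioms --- preservation of composition and of identities --- each of which I expect to reduce formally to results already in hand. First observe that every map $\HM_*(\x_{12},\BW_{12},a_{12})$ is $\NR$-linear, being the composite of the $\NR$-linear maps $x\mapsto x\otimes a_{12}$, the gluing isomorphism $\alpha$, and the cobordism map $\HM_*(\x_{12})$; so the only substance lies in functoriality.

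For composition, I would take $(\x_{12},\BW_{12},a_{12})\in\AT_2(\y_1,\y_2)$ and $(\x_{23},\BW_{23},a_{23})\in\AT_2(\y_2,\y_3)$ with composite $(\x_{13},\BW_{13},a_{13})$, where $\BW_{13}=\BW_{12}\circ_h\BW_{23}$, $\x_{13}=(\x_{12}\circ_h\Id_{\BW_{23}})\circ\x_{23}$, and $a_{13}=\alpha(a_{12}\otimes a_{23})$. Rewrite $\HM_*(\x_{12},\BW_{12},a_{12})$ as the map $x\mapsto\HM_*(\x_{12},\BW_{12})(x\otimes a_{12})$, where $\HM_*(\x_{12},\BW_{12})=\HM_*(\x_{12})\circ\alpha$ is the ``fake functor'' map of the preceding corollary (and similarly for the indices $23$ and $13$). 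The key step is then to feed the element $x\otimes a_{12}\otimes a_{23}\in\HM_*(\y_1)\otimes\HM_*(\BW_{12})\otimes\HM_*(\BW_{23})$ into the commutative square \eqref{E5.1}: along the top-then-right route it goes to $\HM_*(\x_{13},\BW_{13})\bigl(x\otimes\alpha(a_{12}\otimes a_{23})\bigr)=\HM_*(\x_{13},\BW_{13},a_{13})(x)$, and along the left-then-bottom route it goes to $\HM_*(\x_{23},\BW_{23},a_{23})\bigl(\HM_*(\x_{12},\BW_{12},a_{12})(x)\bigr)$. Commutativity of \eqref{E5.1} --- proved in the previous corollary by applying the monopole Floer $2$-functor of Theorem \ref{T2.5} to the diagram \eqref{E5.2} --- then gives exactly the composition law $\HM_*(\x_{13},\BW_{13},a_{13})=\HM_*(\x_{23},\BW_{23},a_{23})\circ\HM_*(\x_{12},\BW_{12},a_{12})$.

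For identities, I would take the identity endomorphism of $\y\in\AT(\emptyset,\TSigma)$ in $\AT_2$ to be $(\Id_\y,e_{\TSigma},\iota_{\TSigma}^{-1}(1))$, with $\Id_\y$ the identity $2$-cell of $\y$ in $\AT(\emptyset,\TSigma)$ and $e_{\TSigma}$ the identity $1$-cell of $\TSigma$, recalling $\y\circ_h e_{\TSigma}=\y$ by the conventions of Subsection \ref{Subsec2.2}; its image under $\HM_*$ is $\HM_*(\Id_\y)\circ\alpha\circ\bigl(-\otimes\iota_{\TSigma}^{-1}(1)\bigr)$. By Theorem \ref{T2.5}\,\ref{G4} the map $\alpha\colon\HM_*(\y)\otimes\HM_*(e_{\TSigma})\to\HM_*(\y)$ equals $\Id\otimes\iota_{\TSigma}$, so $\alpha\bigl(x\otimes\iota_{\TSigma}^{-1}(1)\bigr)=x$; and $\HM_*(\Id_\y)=\Id_{\HM_*(\y)}$ since $\HM_*$ is a functor on $\AT(\emptyset,\TSigma)$ by Theorem \ref{T3.2}. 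Hence the identity is preserved. I would also record in passing that $\AT_2$ really is a category: well-definedness of the composite triple is built into its definition, and associativity follows from associativity in $\AT_1$ together with the associativity \ref{G3} of $\alpha$, the associator $a(\y_{12},\y_{23},\y_{34})$ being the identity $2$-cell of $\AT$ so that the term $\HM_*(a(\cdots))$ drops out. I anticipate no genuine obstacle: all the mathematical content lives in Theorem \ref{T2.5}, and the role of Corollary \ref{C4.4} is simply to record that adjoining the datum $a_{12}\in\HM_*(\BW_{12})$ upgrades the fake functor of the previous corollary to an honest one; the only ``hard part'', such as it is, is the bookkeeping through diagram \eqref{E5.1}, and no new analysis is needed.
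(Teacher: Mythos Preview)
Your proposal is correct and follows the same approach as the paper, which gives only the one-line proof ``The functoriality of $\HM_*$ follows from the commutative diagram \eqref{E5.1}.'' You have simply supplied the details the paper leaves implicit, including the identity axiom via \ref{G4} and the check that $\AT_2$ is a category.
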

\begin{proof} The functoriality of $\HM_*$ follows from the commutative digram \eqref{E5.1}.
\end{proof}

\section{Invariance of Boundary Metrics}\label{Sec5} 

For any 1-cell $\y\in \AT(\emptyset, \TSigma)$, we defined its monopole Floer homology group in \cite{Wang20}; but its invariance is only proved in a weak sense:

\begin{theorem}[{\cite[Remark 1.8\ \&\ Corollary 19.10]{Wang20}}]\label{T5.2}For any $T$-surface $\TSigma$ and any 1-cell $\y\in \AT(\emptyset, \TSigma)$, the monopole Floer functor from Theorem \ref{T3.2} implies the invariance of $\HM_*(\y,\bs)$ when
	\begin{itemize}
		\item we change the cylindrical metric $g_Y$ in \ref{P1.5}; 
		\item we replace $\omega$ by $\omega+d_{\hy}b$ for a compactly supported 1-form $b\in \Omega^1_c(Y,i\R)$ in \ref{P2};
		\item we apply an isotopy to the diffeomorphism $\psi:\partial Y\to \Sigma$. 
	\end{itemize}
\end{theorem}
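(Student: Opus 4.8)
The plan is to deduce all three invariance statements from the functoriality of $\HM_*$ proved in Theorem \ref{T3.2}, by exhibiting each modification $\y\mapsto\y'$ as an \emph{invertible} 2-cell morphism in $\AT(\emptyset,\TSigma)$. Recall from Subsection \ref{Subsec3.2} that a 2-cell $\x$ is recorded only by its underlying 4-manifold with corners, the isotopy class $[\psi_X]$, and the relative cohomology class $[\omega_X]_{cpt}$ (the metric on the cobordism being irrelevant), and that the identity 2-cell $\Id_\y$ is the product $[-1,1]_t\times Y$ with $\psi_X=\Id\times\psi$ and $[\omega_X]_{cpt}=[\pi^*\omega]_{cpt}$, where $\pi\colon[-1,1]_t\times Y\to Y$; its image under $\HM_*$ is the identity map, and it respects the tautological identification of relative \spinc structures, so it acts as the identity on each summand $\HM_*(\y,\bs)$. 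The conceptual point is that any 2-cell $\x$ whose underlying 4-manifold is such a product cylinder over the fixed $Y$ is automatically invertible: reversing $\x$ yields a 2-cell $\x'$ in the opposite direction, and a short check shows $\x'\circ_v\x=\Id_\y$ and $\x\circ_v\x'=\Id_{\y'}$, since the $\psi_X$-data of $\x'\circ_v\x$ is a path run forwards then backwards (hence null-homotopic rel endpoints) and its 2-form differs from $\pi^*\omega$ by an exact form with compactly supported primitive. Hence $\HM_*(\x)$ is an isomorphism carrying $\HM_*(\y,\bs)$ onto $\HM_*(\y',\bs)$.

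With this reduction, it remains to realize each of the three modifications as a product-cylinder 2-cell. First I would check that the modified datum is again a legitimate $1$-cell (conditions \ref{P1}--\ref{P8}), choosing arbitrary admissible perturbations for the new form. For a change of cylindrical metric $g_Y^0\rightsquigarrow g_Y^1$ one simply takes $\x=([-1,1]_t\times Y,\ \Id\times\psi,\ [\pi^*\omega]_{cpt})$: this is already a 2-cell from $\y^0$ to $\y^1$ precisely because no metric on the cobordism enters its data. For $\omega\rightsquigarrow\omega+d_{\hy}b$ with $b\in\Omega^1_c(Y,i\R)$ supported in the interior, one takes
\[
\omega_X=\pi^*\omega+d\bigl(\chi(t)\,\pi^* b\bigr),\qquad \chi\colon[-1,1]\to[0,1],\ \ \chi\equiv0\text{ near }-1,\ \ \chi\equiv1\text{ near }1,
\]
which is closed, equals $\omega$ near $Y_1$, equals $\omega+d_{\hy}b$ near $Y_2$, and equals $\mu+ds\wedge\lambda$ near $W_X$ because $\chi\,\pi^* b$ is interior-supported; the reverse $\x'$ uses $\pi^*(\omega+d_{\hy}b)-d(\chi(t)\pi^* b)$. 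For an isotopy $\psi_0\rightsquigarrow\psi_1$ — which, by \ref{P1.5} and \ref{P2}, forces one to adjust $g_Y$ and $\omega$ within a collar of $\partial Y$ by the rule dictated by the isotopy $\psi_\tau$ (and canonically identifies the two relative \spinc sets) — one takes the twisted cylinder with $\psi_X(t,x)=(t,\psi_{\beta(t)}(x))$ on $W_X$, $\beta\equiv0$ near $-1$ and $\equiv1$ near $1$, and $g_X,\omega_X$ defined near $W_X$ as the $\psi_X$-pullbacks of the product structures required by \ref{Q3}\ref{Q6}.

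The step I expect to be the actual (though still mild) obstacle is verifying in each case that the composite $\x'\circ_v\x$ is literally the identity 2-cell, i.e.\ that after reparametrising the underlying cylinder one has $[\psi_{X'\circ_v X}]=[\Id\times\psi]$ and $[\omega_{X'\circ_v X}]_{cpt}=[\pi^*\omega]_{cpt}$. For the metric case this is immediate. For the exact-perturbation case it amounts to checking that the two copies of $\chi(t)\pi^* b$ concatenate into a primitive $f(t)\pi^* b$ with $f$ vanishing near both ends of the cylinder, so that $f(t)\pi^* b\in\Omega^1_c(X,i\R)$ (using that $\pi^* b$ already vanishes near $[-1,1]_t\times\partial Y$). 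For the isotopy case it amounts to checking that the boundary identification of $\x'\circ_v\x$ is governed by the loop $\psi_0\rightsquigarrow\psi_1\rightsquigarrow\psi_0$ in the space of orientation-preserving diffeomorphisms $\partial Y\to\Sigma$, which is null-homotopic rel endpoints as a path run forwards then backwards, so its isotopy class is that of $\Id\times\psi_0$. Granting these bookkeeping points, $\HM_*(\x'\circ_v\x)=\HM_*(\Id_\y)=\Id$ and symmetrically $\HM_*(\x\circ_v\x')=\Id$, so $\HM_*(\x)$ is an isomorphism preserving the relative \spinc grading, which is exactly the asserted invariance of $\HM_*(\y,\bs)$.
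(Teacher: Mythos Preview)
Your argument is correct and is exactly the standard way to extract the invariance statements from the functor of Theorem~\ref{T3.2}: each modification is realized as a product-cylinder 2-cell, the opposite cylinder provides an inverse up to the identity 2-cell, and functoriality does the rest. Note that the paper does not actually supply a proof here---Theorem~\ref{T5.2} is a citation to \cite[Remark~1.8 \& Corollary~19.10]{Wang20}, where the argument is carried out---so there is nothing to compare against beyond the brief remark in the statement itself that ``the monopole Floer functor \ldots\ implies the invariance,'' which is precisely what you have unpacked.
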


 In this section, we strengthen this invariance result by showing that $\HM_*(\y)$ is independent of the flat metric of $\Sigma$ and depends only on the class $[\omega]\in H^2(Y;i\R)$ and $[*_\Sigma\lambda]\in H^1(\Sigma, i\R)$. In particular, the Floer homology $\HM_*(\y)$ is a topological invariant for the triple $(Y,[\omega],[*_\Sigma\lambda])$. The main result is as follows:

\begin{theorem}\label{T5.1} Suppose that $\TSigma_i=(\Sigma, g_i, \lambda_i,\mu_i), i=1,2$ have the same underlying oriented surface $\Sigma$ and 
	\begin{align*}
	[*_1\lambda_1]&=[*_2\lambda_2]\in H^1(\Sigma, i\R),& [\mu_1]&=[\mu_2] \in H^2(\Sigma, i\R).
	\end{align*}
If 1-cells $\y_i\in \AT(\emptyset,\TSigma_i),i=1,2$ have the same underlying 3-manifold $Y_1\cong Y_2$ (this diffeomorphism is compatible with the identification maps $\psi_i:\partial Y_i\to \Sigma, i=1,2$) and 
\[
[\omega_1]=[\omega_2]\in H^2(Y, i\R),
\]
then $\HM_*(\y_1, \bs)\cong \HM_*(\y_2,\bs)$ for any relative \spinc structure $\bs\in \Spincr(Y)$. 
\end{theorem}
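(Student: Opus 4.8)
The plan is to bridge $\y_1$ and $\y_2$ by a single geometric bordism supported on the trivial cobordism $[-1,1]\times\Sigma$ and feed it into the Gluing Theorem \ref{T2.5}, thereby reducing the statement to the weaker invariance of Theorem \ref{T5.2} together with the model computation of Subsection \ref{Subsec3.2.5}.

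First I would construct a $1$-cell $\BW=([-1,1]_s\times\Sigma,\ \psi=\Id,\ g_W,\ \omega_W,\ \{\q_W\})\in\AT(\TSigma_1,\TSigma_2)$ and a reverse bordism $\BW'\in\AT(\TSigma_2,\TSigma_1)$. The metric $g_W$ is any cylindrical metric equal to $ds^2+g_i$ near the $i$-th end. The $2$-form is taken of the shape $\omega_W=\mu(s)+ds\wedge\lambda(s)$ with $\partial_s\mu(s)=d_\Sigma\lambda(s)$ (so $\omega_W$ is closed), interpolating from $\mu_1+ds\wedge\lambda_1$ near $s=-1$ to $\mu_2+ds\wedge\lambda_2$ near $s=+1$; such a family exists because $[\mu_1]=[\mu_2]$ makes $\mu_2-\mu_1$ exact on each torus, and the bound $|\langle\mu(s),[\Sigma^{(i)}]\rangle|<2\pi$ persists since the period is cohomological, hence constant in $s$. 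Condition \ref{P3} for $\BW$ is precisely the hypothesis $[*_1\lambda_1]=[*_2\lambda_2]$, and \ref{P1}--\ref{P8} are otherwise routine. The key point is that the ambiguity in $\omega_W$ that preserves the near-boundary model — a torsor over $H^2_c(\mathrm{int}\,W)\cong H_1(\Sigma)$ — maps, after gluing $\BW$ onto $Y_1$, onto $\ker\!\big(H^2(Y,\partial Y;i\R)\to H^2(Y;i\R)\big)$ (the Lefschetz-dual statement to exactness of the cohomology sequence of $(Y,\partial Y)$). I would use this freedom to arrange that $[\omega_1\circ_h\omega_W]_{cpt}=[\omega_2]_{cpt}$, which is possible exactly because $[\omega_1]=[\omega_2]$ in $H^2(Y;i\R)$ and the near-boundary behaviours agree; having fixed $\omega_W$, I would then choose $[\omega_{W'}]_{cpt}$ so that $[\omega_W\circ_h\omega_{W'}]_{cpt}$ equals the class of the $2$-form of the identity $1$-cell $e_{\TSigma_1}$.

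Next I would identify the composites. By construction $\y_1\circ_h\BW$ has underlying $3$-manifold $Y_1$ with a collar attached (hence diffeomorphic to $Y\cong Y_2$ compatibly with $\psi_1,\psi_2$), a cylindrical metric, and a $2$-form with the same near-boundary model and the same $[\cdot]_{cpt}$ as $\omega_2$; so Theorem \ref{T5.2} gives $\HM_*(\y_1\circ_h\BW,\bs)\cong\HM_*(\y_2,\bs)$, compatibly with the relative $\Spincr$-grading via the concatenation $\bs_1\mapsto\bs_1\circ_h\bs_{std}$, which is the identity of $\Spincr(Y)$. The same argument (its metric is cylindrical and its $2$-form has the correct $[\cdot]_{cpt}$) gives $\BW\circ_h\BW'\simeq e_{\TSigma_1}$. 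Now the graded Gluing Theorem (Theorem \ref{T3.5}) yields, for every $\bs$,
\[
\HM_*(\y_1\circ_h\BW,\bs)\ \cong\ \bigoplus_{\bs_1\circ_h\bs_W=\bs}\HM_*(\y_1,\bs_1)\otimes_\NR\HM_*(\BW,\bs_W),
\]
together with $\HM_*(\BW)\otimes_\NR\HM_*(\BW')\cong\HM_*(\BW\circ_h\BW')\cong\HM_*(e_{\TSigma_1})\cong\NR$, using Subsection \ref{Subsec3.2.5}.

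The crux is the claim that $\HM_*(\BW,\bs_W)\cong\NR$ for $\bs_W=\bs_{std}$ and vanishes otherwise. Granting it, the displayed sum collapses to $\HM_*(\y_1,\bs)$ (since $\bs_1\circ_h\bs_{std}=\bs_1$), and combined with $\y_1\circ_h\BW\simeq\y_2$ this is the theorem. For the claim: since $\NR$ is a field and $\HM_*(\BW),\HM_*(\BW')$ are finitely generated, the identity $\HM_*(\BW)\otimes_\NR\HM_*(\BW')\cong\NR$ forces $\dim_\NR\HM_*(\BW)=\dim_\NR\HM_*(\BW')=1$, so $\HM_*(\BW)$ is concentrated in one relative \spinc structure $\bs_W$; to see $\bs_W=\bs_{std}$ I would invoke Theorem \ref{T3.4}, since the Milnor--Turaev torsion of $[-1,1]\times\T^2$, hence of $[-1,1]\times\Sigma$, is supported on $\bs_{std}$ with value $\pm1$, so $\chi(\HM_*(\BW,\bs_W))\ne0$ pins $\bs_W$. (Alternatively, one can prove the claim directly by analysing the $3$-dimensional Seiberg--Witten equations on $\R_s\times\Sigma$ perturbed by the interpolating $\omega_W$, following Taubes \cite{Taubes01} and the treatment of the canonical solution $(B_*,\Psi_*)$ in \cite{Wang20}: the failure of translation invariance is confined to a compact set, and the same a priori and topological energy estimates still force a unique irreducible, non-degenerate critical point with no non-constant self-flows for $\bs_{std}$, and no finite-energy solutions otherwise.) The main obstacle is thus the cohomological bookkeeping of the first two steps — building $\BW$ and $\BW'$ with the right relative cohomology classes and checking that $\y_1\circ_h\BW$ and $\BW\circ_h\BW'$ genuinely lie in the reach of Theorem \ref{T5.2} — since that is where the three hypotheses $[\omega_1]=[\omega_2]$, $[\mu_1]=[\mu_2]$ and $[*_1\lambda_1]=[*_2\lambda_2]$ are consumed; the analytic inputs (the excision-type Gluing Theorem and the model computation over $\R_s\times\Sigma$) are already established.
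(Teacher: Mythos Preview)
Your proposal is correct and follows essentially the same strategy as the paper: build an interpolating cylinder $\BW\in\AT(\TSigma_1,\TSigma_2)$, prove its Floer homology is $\NR$ concentrated at $\bs_{std}$ (the paper's Lemma \ref{L5.2}), use the graded Gluing Theorem to deduce $\HM_*(\y_1,\bs)\cong\HM_*(\y_1\circ_h\BW,\bs)$, and then invoke Theorem \ref{T5.2} after adjusting $[\omega_W]$ so that $\y_1\circ_h\BW$ matches $\y_2$. The only minor variation is in the rank-one computation: the paper glues $\y_0$ and $\y_0'$ along \emph{both} ends to obtain the closed $3$-manifold $S^1\times\Sigma$ and appeals to the $\T^3$ calculation (Lemma \ref{L5.4}), whereas you compose $\BW\circ_h\BW'$ along one end to land back at $e_{\TSigma_1}$ and invoke Theorem \ref{T5.2} plus Subsection \ref{Subsec3.2.5}; both then finish with the same Euler-characteristic argument via Theorem \ref{T3.4}.
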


The proof of  Theorem \ref{T5.1} is based on the Gluing Theorem \ref{T2.5} and a special property concerning the product manifold $[-3,3]_s\times \Sigma$:

\begin{lemma}\label{L5.2} Under the assumptions of Theorem \ref{T5.1}, consider the 1-cell $\y_0\in \AT(\TSigma_1, \TSigma_2)$ with $Y_0=[-3,3]_s\times\Sigma$ and $\psi=\Id: \partial Y_0\to (-\Sigma)\cup \Sigma$. Then for any cylindrical metric of $Y_0$ and any compatible 2-form $\omega_0\in \Omega^2(Y_0, i\R)$, we have 
	\[
	\HM_*(\y_0,\bs)\cong \left\{ \begin{array}{cl}
	\NR & \text{ if }\ \bs=\bs_{std},\\
	\{0\} & \text{ otherwise. }
	\end{array}
	\right.
	\] 
\end{lemma}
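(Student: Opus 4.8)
The plan is to compute the critical set of the Chern--Simons--Dirac functional directly on the completion $\hy_0$ and to show it consists of a single orbit. Attaching cylindrical ends to $Y_0=[-3,3]_s\times\Sigma$ produces a $3$--manifold diffeomorphic to $\R_s\times\Sigma$ carrying a metric $ds^2+g_{(s)}$ and a closed $2$--form $\omega_0=\mu_{(s)}+ds\wedge\nu_{(s)}$, both of which agree with the translation--invariant $T$--surface data of $\TSigma_1$ for $s\ll0$ and of $\TSigma_2$ for $s\gg0$. On a genuine product cylinder $\R_s\times(\Sigma,g_\Sigma)$ with $\omega=\mu+ds\wedge\lambda$, the $3$--dimensional equations \eqref{3DDSWEQ} are the downward gradient flow of a $2$--dimensional functional on $\Sigma$ whose critical points are the $(\lambda,\mu)$--vortices of Taubes \cite[Prop.~4.4\ \&\ 4.7]{Taubes01} (cf.\ \cite[Theorem 2.6]{Wang20}); over $\hy_0$ one obtains instead the \emph{non--autonomous} gradient flow of the $s$--dependent family of these functionals. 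Note this lemma is exactly the input that the Gluing Theorem cannot supply, so it must be proved by a direct analysis rather than by \eqref{E3.1}--style cobordism arguments.

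First I would analyse the $2$--dimensional problem. Because $\Sigma$ is a disjoint union of flat $2$--tori and the parameters $(g_{(s)},\nu_{(s)},\mu_{(s)})$ remain in the $T$--surface range for all $s$ (each $\nu_{(s)},\mu_{(s)}$ nonvanishing on every component and $|\langle\mu_{(s)},[\T^2_j]\rangle|<2\pi$), the vortex equations are explicit after a Fourier decomposition, and one should be able to show: for every $s$ the functional has, up to gauge, a unique critical point; it lies in $\bs_{std}$, is nondegenerate, varies smoothly with $s$, and equals the canonical solution $(B_*,\Psi_*)$ of $\TSigma_1$ for $s\ll0$ and of $\TSigma_2$ for $s\gg0$; and there are no critical points in any $\bs\neq\bs_{std}$. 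This is a mild strengthening of Taubes' computation, the dependence on the flat metric and on $(\lambda,\mu)$ being nondegenerate throughout. Granting this, every finite--energy solution of \eqref{3DDSWEQ} on $\hy_0$ is a trajectory of the non--autonomous gradient flow limiting to the two canonical solutions at the ends, so it stays in a fixed compact set and represents $\bs_{std}$; and since each time--slice functional has a single nondegenerate critical point, a contraction/Lyapunov argument (the same mechanism by which a continuation map between one--generator Floer complexes is an isomorphism) produces a unique such bounded trajectory. Thus, up to gauge, $\CL_{\omega_0}$ has exactly one critical point $\fa_0$, lying in $\bs_{std}$, so $\HM_*(\y_0,\bs)=\{0\}$ for $\bs\neq\bs_{std}$.

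To finish, exactly as in Subsection \ref{Subsec3.2.5} and \cite[Theorem 2.4]{Wang20}, $\fa_0$ is nondegenerate and admits no nonconstant self--gradient--flowline (uniqueness of the critical point leaves no room for one), and a small admissible perturbation $\q$ preserves these facts; hence the monopole Floer chain complex of $(\y_0,\bs_{std})$ has a single generator and vanishing differential, so $\HM_*(\y_0,\bs_{std})\cong\NR$. As a consistency check, the resulting graded Euler characteristics match the Milnor--Turaev torsion of $[-3,3]_s\times\Sigma=\coprod_j[-3,3]\times\T^2_j$ predicted by Theorem \ref{T3.4}. I expect the main difficulty to be precisely the passage from Taubes' translation--invariant uniqueness theorem to the asymptotically translation--invariant cylinder $\hy_0$: establishing (a) that the vortex moduli space over a flat $2$--torus stays a single point in $\bs_{std}$, and empty otherwise, throughout the whole $T$--surface parameter range, and (b) uniqueness of the bounded orbit of the resulting non--autonomous flow.
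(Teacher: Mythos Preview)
Your central premise---that ``this lemma is exactly the input that the Gluing Theorem cannot supply, so it must be proved by a direct analysis''---is mistaken. The Gluing Theorem (Theorem~\ref{T2.5}, in particular Theorem~\ref{T3.3}) is proved in Section~\ref{Sec3} using only the computation of $\HM_*(e_{\TSigma})$ for the \emph{translation-invariant} identity $1$-cell (Subsection~\ref{Subsec3.2.5}); it does not require Lemma~\ref{L5.2}. The paper exploits this by gluing $\y_0\in\AT(\TSigma_1,\TSigma_2)$ with a companion $\y_0'\in\AT(\TSigma_2,\TSigma_1)$ along \emph{both} ends to obtain a closed $3$-torus, so that Theorem~\ref{T3.3} gives
\[
\HM_*(\y_0)\otimes_\NR\HM_*(\y_0')\;\cong\;\HM_*(\T^3,\s,c;\NR_\omega),
\]
which has rank $1$ by Lemma~\ref{L5.4}. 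Hence $\rank_\NR\HM_*(\y_0)=1$. To locate the nonzero summand at $\bs_{std}$, the paper does \emph{not} analyze the equations on $\hy_0$: instead it invokes Theorem~\ref{T3.4}, the metric- and form-independence of the Euler characteristic $\chi(\HM_*(\y_0,\bs))$, together with the already-known computation $\chi(\HM_*(e_{\TSigma_1},\bs_{std}))=1$. This bypasses entirely the non-autonomous analysis you propose.

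Your direct approach has a genuine gap beyond this. You assert that ``the parameters $(g_{(s)},\nu_{(s)},\mu_{(s)})$ remain in the $T$-surface range for all $s$,'' but the lemma allows \emph{any} cylindrical metric and \emph{any} compatible $2$-form: in the interior of $Y_0$ the metric need not be flat (or even of the form $ds^2+g_{(s)}$), and the slice $1$-form $\nu_{(s)}$ need not be harmonic or nonvanishing. One would first have to invoke Theorem~\ref{T5.2} to reduce to a convenient representative, then argue that a path of data staying inside the $T$-surface range actually exists, and finally extend Taubes' uniqueness theorem to that entire family---none of which you carry out. Even granting all of this, the step ``uniqueness of the critical point leaves no room for a nonconstant self-flowline'' is not automatic: on $\R_t\times\hy_0$ there could in principle be flowlines from $\fa_0$ to a gauge-translate of itself with nonzero period, and ruling these out requires an energy argument of the sort in \cite[Theorem~2.4]{Wang20}, adapted to $s$-dependent data. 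The paper's gluing-plus-Euler-characteristic argument sidesteps every one of these issues.
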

\begin{proof}[Proof of Theorem \ref{T5.1}] By applying the gluing functor from Theorem \ref{T2.5}
	\[
	\alpha: \AT(\emptyset, \TSigma_1)\times\AT(\TSigma_1,\TSigma_2)\to \AT(\emptyset, \TSigma_2). 
	\]
	to the pair $(\y_1, \y_0)$, we deduce from Theorem \ref{T3.5} and Lemma \ref{L5.2} that
	\[
	\HM_*(\y_1,\bs)\cong \HM_*(\y_1\circ_h \y_0,\bs).
	\] 
	for any $\bs\in \Spincr(Y)$. Now $\y_1\circ_h \y_0$ and $\y_2$ are 1-cells in the same strict cobordism category $\AT(\emptyset, \TSigma_2)$. The difference $\omega_1\circ_h\omega_0-\omega_2$ determines a class $\beta\in H^2(Y, \partial Y; i\R)$. Our goal here is to choose $\omega_0$ properly so that $\beta=0$. This can be always done, because $[\omega_2]=[\omega_1]=[\omega_1\circ_h\omega_0]$; so $\beta$ lies in the image of $H^1(\partial Y; i\R)$:
	\[
	\begin{array}{rcl}
\cdots	\to H^1(\partial Y; i\R) \to &H^2(Y,\partial Y;i\R)&\to H^2(Y;i\R)\to\cdots \\
	&\beta&\mapsto 0.
	\end{array}
	\]
	When $\beta=0$, we can then identify $\HM_*(\y_1\circ_h \y_0,\bs)$ with $\HM_*(\y_2,\bs)$ using Theorem \ref{T5.2}. This completes the proof of Theorem \ref{T5.1}. 
\end{proof}

The proof of Lemma \ref{L5.2} relies on a computation for the 3-torus $\T^3$, which generalizes Lemma \ref{L3.4}.

\begin{lemma}\label{L5.4} For any closed 2-form $\omega\in \Omega^2(\T^3, i\R)$, suppose that the period class $c\colonequals -2\pi i[\omega]$ is neither negative monotone nor balanced for any \spinc structures on $\T^3$, then 
	\[
	\HM_*(\T^3, \s, c; \NR_\omega)=\left\{\begin{array}{cl}
	\NR & \text{ if } c_1(\s)=0,\\
	\{0\} & \text{ otherwise.}
	\end{array}
	\right.
	\]
\end{lemma}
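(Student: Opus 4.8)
The plan is to deform the period class back to a configuration already covered by Lemma~\ref{L3.4}. The input is the standard invariance principle for the monopole Floer homology of a closed $3$-manifold (\cite{Bible}, \cite{Wang20}): for a fixed \spinc structure $\s$, the group $\HM_*(\T^3,\s,c;\NR_\omega)$ depends only on the period class $c=-2\pi i[\omega]$, not on the chosen representative $\omega$ of its cohomology class nor on the metric; and since continuation maps are block-diagonal with respect to the decomposition $\HM_*(\T^3;\cdots)=\bigoplus_\s\HM_*(\T^3,\s;\cdots)$, the isomorphism type of $\HM_*(\T^3,\s,c;\NR_\omega)$ is locally constant as $c$ varies over the region $G_\s\subset H^2(\T^3;\R)$ of classes that are neither negative monotone nor balanced for $\s$.

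The first step is to identify $G_\s$. A class can be balanced for $\s$ only if $c=0$; it can be negative monotone for $\s$ only if it lies on the ray $\R_{\le 0}\cdot c_1(\s)$; and the equations \eqref{3DDSWEQ} admit a reducible solution only when $[\omega]=-\pi i\,c_1(\s)$, i.e.\ $c=-2\pi^2 c_1(\s)$, which already lies on that ray. So $H^2(\T^3;\R)\setminus G_\s$ is contained in $\{0\}$ if $c_1(\s)=0$, and in a half-line through the origin if $c_1(\s)\neq 0$; in either case it has codimension at least two in $H^2(\T^3;\R)\cong\R^3$, so $G_\s$ is path-connected. The second step is to pick a convenient class in $G_\s$: fix a primitive integral class $d\in H^2(\T^3;\Z)$ not proportional to $c_1(\s)$ (automatic when $c_1(\s)=0$, and otherwise possible since $\Z^3$ has primitive vectors off any given line), a product splitting $\T^3\cong\T^2\times S^1$ realizing $d$ as the Poincar\'{e} dual of $\{pt\}\times S^1$, and a real number $\delta\in(0,2\pi)$. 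Then $c':=2\pi\delta\,d$ is nonzero and not a real multiple of $c_1(\s)$, hence $c'\in G_\s$, and $c'$ meets the hypotheses of Lemma~\ref{L3.4} (which applies to this possibly nonstandard splitting, as $GL(3,\Z)$ acts transitively on primitive classes). Joining $c$ to $c'$ by a path in $G_\s$ and applying the invariance above gives $\HM_*(\T^3,\s,c;\NR_\omega)\cong\HM_*(\T^3,\s,c';\NR_\omega)$, which Lemma~\ref{L3.4} identifies with $\NR$ when $c_1(\s)=0$ and with $\{0\}$ otherwise.

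The step I expect to be the real obstacle is the chamber bookkeeping rather than any hard analysis: making KM's notions of \emph{balanced} and \emph{negative monotone} for $\T^3$ precise enough to be sure that the complement of $G_\s$ is as small as claimed, and checking that the continuation maps along a path inside $G_\s$ are genuine isomorphisms even though such a path may cross the bad loci of other \spinc structures or pass through positive-monotone classes. This should follow from the block-diagonal form of the continuation maps together with the fact that only negative monotonicity, balancedness, or the presence of reducibles can obstruct deformation invariance; one must also confirm that the Novikov-field local system $\NR_\omega$ behaves well under these deformations, which is routine since $\NR$ is the full Novikov field over $\BF_2$. Should the chamber argument prove delicate when $c_1(\s)\neq 0$, an alternative for that case is a direct vanishing argument: $\T^3$ contains an embedded torus $T$ with $\langle c_1(\s),[T]\rangle\neq 0$, and the adjunction-type neck-stretching estimate in the spirit of \cite[Proposition 40.1.3]{Bible}, adapted to non-exact perturbations as in Section~\ref{Sec6}, forces $\HM_*(\T^3,\s,c;\NR_\omega)=\{0\}$.
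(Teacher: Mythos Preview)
Your deformation strategy rests on a claim that is not the ``standard invariance principle'' you invoke: that $\HM_*(\T^3,\s,c;\NR_\omega)$ is locally constant in the period class $c$. Continuation maps are built by solving the Seiberg--Witten equations on $\R_t\times\T^3$ with a \emph{closed} $2$-form $\omega_X$ interpolating between the two endpoint perturbations. But a closed $\omega_X$ on the cylinder restricts to closed $2$-forms on the slices $\{t\}\times\T^3$ whose cohomology class is necessarily constant in $t$; so no continuation cobordism exists between distinct period classes. Comparing the groups at different $c$ is precisely the content of \cite[Chapter~31]{Bible}, which goes through the balanced class $c_b$ and a completion argument rather than a naive continuation map; this is what Proposition~\ref{P4.2} packages. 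Once you have Proposition~\ref{P4.2} the deformation to Lemma~\ref{L3.4} is unnecessary, since it already identifies $\HM_*(\T^3,\s,c;\NR_\omega)$ with the exact-perturbation group directly. (There is also a small slip in your chamber description: the balanced class for $\s$ is $c_b=-2\pi^2 c_1(\s)$, not $c=0$, though this does not damage the picture since $c_b$ still lies on the line $\R\cdot c_1(\s)$.)

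The paper's own argument avoids deformation entirely. For $c_1(\s)\neq 0$ it works with the flat metric and the harmonic representative of $[\omega]$ and checks directly, as in the proof of Lemma~\ref{L3.4}, that the equations \eqref{3DDSWEQ} have no solutions under the given hypothesis on $c$. For $c_1(\s)=0$ the unique-solution result \cite[Lemma~3.1]{Taubes01} applies to any nonzero harmonic perturbation on the flat $\T^3$, not only those of the special shape in Lemma~\ref{L3.4}, and gives $\NR$ immediately. Proposition~\ref{P4.2} is mentioned as an alternative route. Your fallback via the adjunction inequality for $c_1(\s)\neq 0$ is essentially this last route and would work; the gap is only in the main deformation argument.
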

\begin{proof}[Proof of Lemma \ref{L5.4}]The meaning of Lemma \ref{L5.4} will become more transparent when we review the theory of closed 3-manifolds in Section \ref{Sec6}. When $c_1(\s)\neq 0$, the statement can be verified as in the proof of Lemma \ref{L3.4}, by working with a product metric and a harmonic 2-form $\omega$. When $c_1(\s)=0$, the statement follows from \cite[Lemma 3.1]{Taubes01}. Alternatively, we may apply Proposition \ref{P4.2} to reduce the problem to the case of exact perturbations.
\end{proof}

\begin{proof}[Proof of Lemma \ref{L5.2}] With loss of generality, we assume that $\Sigma$ is connected. Since the roles of $\TSigma_1$ and $\TSigma_2$ are symmetric, consider a similar 1-cell $\y_0'\in \AT(\TSigma_2,\TSigma_1)$. Now regard $\y_0$ and $\y_0'$ as 1-cells in $\AT(\emptyset, \TSigma_2\cup (-\TSigma_1))$ and $\AT(\TSigma_2\cup (-\TSigma_1),\emptyset)$ respectively. We apply Theorem \ref{T3.3} to obtain that 
	\begin{equation}\label{E5.3}
	\HM_*(\y_0)\otimes_\NR \HM_*(\y_0')\cong 	\HM_*(\T^3, \s, c; \NR_\omega)
	\end{equation}
	where $\omega=\omega_0\circ_h\omega_0'$ is a closed 2-form on the 3-torus $\T^3=S^1\times\Sigma$. The condition of Lemma \ref{L5.4} can be verified, since
	\[
	|\langle \omega, [\Sigma]\rangle|<2\pi \text{ and }\neq 0 \text{ by properties }\ref{T3} \text{ and }\ref{T1}.
	\]
	
We conclude from \eqref{E5.3} and Lemma \ref{L5.4} that 
\[
\rank_{\NR} \HM_*(\y_0)=\rank_{\NR}\HM_*(\y_0')=1. 
\]
In particular, the group $\HM_*(\y_0,\bs)$ vanishes except for one particular relative \spinc structure. By Theorem \ref{T3.4}, the Euler characteristic $\chi(\HM_*(\y_0,\bs))$ is independent of the metric and the 2-form $\omega_0$. We conclude from the computation of $\HM_*(e_{\TSigma_1},\bs_{std})$ that 
\[
\chi (\HM_*(\y_0,\bs_{std}))=1.
\]
Thus $\HM_*(\y_0,\bs_{std})\neq \{0\}$. This completes the proof of Lemma \ref{L5.2}.
\end{proof}

\part{Monopoles and Thurston Norms}

Let $F=\bigcup_{i=1}^n F_i$ be a compact oriented surface with $F_i$ connected. Recall that the norm of $F$ is defined to be 
\[
x(F)\colonequals -\sum_{i} \min\{\chi(F_i),0\}.
\] 
For any 3-manifold $Y$ with toroidal boundary, Thurston \cite{T86} introduced a semi-norm $x(\cdot)$ on $H_2(Y,\partial Y;\R)$ such that for any integral class $\kappa\in H_2(Y,\partial Y;\Z)$, we have 
\[
x(\kappa)\colonequals \min\{ x(F): (F,\partial F)\subset (Y,\partial Y) \text{ properly embedded and }[F]=\kappa\}. 
\]

\medskip

In this part, we show that the monopole Floer homology $\HM_*(\y)$ defined in \cite{Wang20} detects the Thurston norm and fiberness of $Y$, if the underlying 3-manifold $Y$ is connected and irreducible, generalizing the previous results for closed 3-manifolds \cite{Bible,Ni08,Ni09b}. The same detection theorems for link Floer homology have been obtained in \cite{OS08, Ni09, J08,GL19} and \cite{Ni07}. The main results are Theorem \ref{T7.1} and \ref{T7.2}.

For the proof of Theorem \ref{T7.1}, we use the Gluing Theorem \ref{T2.5} to reduce the problem to the double of $Y$, and apply the non-vanishing result \cite[Corollary 41.4.3]{Bible} for closed 3-manifold. However, we have to adapt this corollary first to the case of non-exact perturbations, which is done in Section \ref{Sec6}.

The proof of Theorem \ref{T7.2} is accomplished in Section \ref{Sec9}, which exploits the relation of $\HM_*(\y)$ with sutured Floer homology, as discussed in Section \ref{Sec8}. Any 3-manifold with toroidal boundary is a sutured manifold in the sense of Gabai \cite{Gabai83}, but it is not an example of balanced sutured manifolds in the sense of \cite[Definition 2.2]{J06}. One may think of our construction as a natural extension of the sutured Floer homology. Although the author was not able to prove a general sutured manifold decomposition theorem, a preliminary result, Theorem \ref{T8.1}, will be supplied in Section \ref{Sec8} to justify this heuristic.

\section{Closed 3-Manifolds Revisited}\label{Sec6}

Throughout this section, we will take $Y$ to be a closed connected oriented 3-manifold. In \cite{Bible}, Kronheimer and Mrowka introduced three flavors of monopole Floer homology groups for any \spinc structure $\s$ on $Y$, which fit into a long exact sequence:
\begin{equation}\label{E6.1}
\cdots\xrightarrow{i_*}  \fHM_*(Y,\s; \Gamma)\xrightarrow{j_*}\tHM_*(Y,\s; \Gamma)\xrightarrow{p_* }\bHM_*(Y,\s;\Gamma)\xrightarrow{i_*}\cdots
\end{equation}
Here $\Gamma$ is any local coefficient system on the blown-up configuration space $\CB^{\sigma}(Y,\s)$. For instance, one may take $\Gamma$ to be the trivial system with $\Z$ coefficient. For any real 1-cycle $\xi$, we can also define the local coefficient system $\Gamma_\xi$ as in \cite[Section 3.7]{Bible}, whose fiber is always $\R$. We write $\HM_*(Y,\s;\Gamma)\colonequals \im j_*$ for the reduced Floer homology. The third group $\bHM_*(Y,\s;\Gamma)$ in \eqref{E6.1} is trivial if 
\begin{itemize}
\item $c_1(\s)\in H^2(Y;\Z) $ is non-torsion, or 
\item $\Gamma=\Gamma_\xi$ and $[\xi]\neq 0\in H_1(Y;\R)$; see \cite[Proposition 3.9.1]{Bible}.
\end{itemize}
 In either case, the map $j_*$ is an isomorphism, so $\HM_*(Y,\s;\Gamma)\cong \fHM_*(Y,\s;\Gamma)$. 

Monopole Floer homology detects the Thurston norm $x(\cdot)$ on $H_2(Y;\R)$. The next theorem expand on what lies behind this slogan: 
\begin{theorem}[{\cite[Corollary 40.1.2 \& 41.4.3]{Bible}}]\label{T4.1} Let $Y$ be a closed oriented 3-manifold with $b_1(Y)>0$ and $\kappa\in H_2(Y,\Z)$ be any integral class.
\begin{enumerate}
\item $($The adjunction inequalities$)$ For any local coefficient system $\Gamma$ and any \spinc structure $\s$ with $|\langle c_1(\s),\kappa\rangle|>\|\kappa\|_{Th}$, the monopole Floer homology group $\HM_*(Y,\s;\Gamma)$ is trivial. 
\item\label{2} If in addition $Y$ is irreducible, then there is a \spinc structure $\s$ on $Y$ such that 
\[
\langle c_1(\s), \kappa\rangle=\|\kappa\|_{Th} \text{ and }\HM_*(Y,\s; \Gamma_\xi)\neq \{0\} 
\]
for any 1-cycle $\xi$ with $[\xi]\neq 0\in H_1(Y;\R)$. 
\end{enumerate}
\end{theorem}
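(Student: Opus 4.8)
\emph{Proof strategy.} The two parts should be handled by entirely different mechanisms: Part (1) by an analytic localization near an embedded surface, Part (2) by a taut-foliation/symplectic existence argument.

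For Part (1), the plan is to localize Seiberg--Witten solutions near a norm-minimizing representative of $\kappa$. When $\kappa$ is torsion the hypothesis $|\langle c_1(\s),\kappa\rangle|>x(\kappa)$ cannot hold, so assume $\kappa$ is non-torsion and fix an embedded oriented surface $\hat\Sigma=\bigsqcup_i\hat\Sigma_i$ with $[\hat\Sigma]=\kappa$ and $x(\hat\Sigma)=x(\kappa)$; discarding torus components (which contribute nothing) and sphere components (handled separately via the vanishing of $\HM_*$ along an essential $2$-sphere), we may assume each $\hat\Sigma_i$ has $\chi(\hat\Sigma_i)<0$, so $x(\kappa)=-\chi(\hat\Sigma)$. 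I would give each $\hat\Sigma_i$ a metric of constant curvature and choose metrics $g_T$ on $Y$ containing an isometric collar $[-T,T]\times\hat\Sigma$ with $T\to\infty$. By the neck-stretching compactness analysis of \cite[Chapter 40]{Bible}, any finite-energy monopole on $\R\times(Y,g_T)$ restricts on the neck, after passing to a subsequence, to a translation-invariant solution over $\hat\Sigma$, i.e.\ to a vortex; and vortices exist on $\hat\Sigma_i$ only if $|\langle c_1(\s),[\hat\Sigma_i]\rangle|\le -\chi(\hat\Sigma_i)$, so a global solution would force this for every $i$, whence $|\langle c_1(\s),\kappa\rangle|\le x(\kappa)$. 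Hence, under the hypothesis of (1), for $T$ large there are no critical points of the (suitably small) perturbed Chern--Simons--Dirac functional in the class $\s$, so the monopole chain groups vanish; since these groups do not depend on $\Gamma$ (only their differentials do) and $\HM_*$ is metric-independent via continuation maps, $\HM_*(Y,\s;\Gamma)=0$ for every $\Gamma$.

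For Part (2), the plan is to realize the extremal pairing by a fillable contact structure coming from a taut foliation and then to quote the symplectic non-vanishing theorem. One may assume $\kappa\neq 0$ (for $\kappa=0$ the condition $\langle c_1(\s),\kappa\rangle=0$ is automatic, and any taut foliation on $Y$, which exists since $b_1(Y)>0$, supplies a \spinc structure with non-vanishing Floer homology by the argument below). Fix a norm-minimizing embedded $\hat\Sigma$ with $[\hat\Sigma]=\kappa$; since $Y$ is irreducible we may take it without sphere components, so $x(\kappa)=-\chi(\hat\Sigma)$. First I would invoke Gabai's theorem \cite{Gabai83} to produce a smooth coorientable taut foliation $\mathcal F$ of $Y$ in which $\hat\Sigma$ is a union of compact leaves; then $e(\mathcal F)|_{\hat\Sigma_i}=e(T\hat\Sigma_i)$, so $\langle e(\mathcal F),\kappa\rangle=\chi(\hat\Sigma)=-x(\kappa)$, and the coorientation-reversed foliation $\overline{\mathcal F}$ has $\langle e(\overline{\mathcal F}),\kappa\rangle=+x(\kappa)$. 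Next I would apply the Eliashberg--Thurston perturbation to turn $\overline{\mathcal F}$ into a weakly symplectically fillable positive contact structure $\zeta$ with $e(\zeta)=e(\overline{\mathcal F})$, and take $\s$ to be its associated \spinc structure, so that $\langle c_1(\s),\kappa\rangle=x(\kappa)$. Finally, for any $1$-cycle $\xi$ with $[\xi]\neq 0\in H_1(Y;\R)$ (such $\xi$ exists because $b_1(Y)>0$), one has $\bHM_*(Y,\s;\Gamma_\xi)=0$ by \cite[Proposition 3.9.1]{Bible}, so $j_*$ is an isomorphism onto the reduced group and $\HM_*(Y,\s;\Gamma_\xi)\cong\fHM_*(Y,\s;\Gamma_\xi)$; the Kronheimer--Mrowka non-vanishing theorem for fillable contact structures with local coefficients \cite[Chapter 41]{Bible} then yields $\HM_*(Y,\s;\Gamma_\xi)\neq\{0\}$. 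Since this argument applies verbatim to every such $\xi$, it gives the stated conclusion.

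The main obstacle will be the input to Part (2): the contact non-vanishing theorem itself, which packages the Eliashberg--Thurston approximation of a taut foliation by a fillable contact structure together with a Taubes-type non-vanishing of the (relative) Seiberg--Witten invariant of the symplectic filling, translated into monopole Floer homology with careful control of the reducible locus (the $b^+$ bookkeeping) and of the local coefficient systems. Gabai's existence of a taut foliation with a prescribed norm-minimizing leaf is equally deep but can be quoted as a black box. By contrast the analytic heart of Part (1) — neck-stretching, the compactness theory for the monopole moduli spaces, and the vortex classification on a Riemann surface — is technically heavy but by now standard.
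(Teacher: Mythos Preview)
The paper does not supply its own proof of this theorem: it is quoted verbatim as \cite[Corollary 40.1.2 \& 41.4.3]{Bible} and used as a black box, with the subsequent Propositions~\ref{P4.2} and~\ref{P4.5} devoted to adapting it to non-exact perturbations. So there is nothing in the paper to compare your argument against directly.

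That said, your outline is an accurate summary of the proof in \cite{Bible}. Part~(1) is precisely the neck-stretching argument of \cite[Chapter~40]{Bible}: stretch along a norm-minimizing representative, limit to vortex solutions on the surface, and use the vortex count to bound $|\langle c_1(\s),[\hat\Sigma_i]\rangle|$ by $-\chi(\hat\Sigma_i)$. Part~(2) is the Gabai--Eliashberg--Thurston--Taubes chain of \cite[Chapter~41]{Bible}: Gabai produces a taut foliation with $\hat\Sigma$ a leaf, Eliashberg--Thurston perturbs it to a weakly fillable contact structure (here you need $Y\neq S^1\times S^2$, which follows from irreducibility), and the contact non-vanishing theorem with local coefficients $\Gamma_\xi$ gives $\HM_*(Y,\s;\Gamma_\xi)\neq 0$ for the canonical $\s$. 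Your identification of the contact non-vanishing theorem as the deepest input is exactly right; the reducible/$b^+$ bookkeeping you mention is indeed the technical core of \cite[Chapter~41]{Bible}.

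Two minor points. In Part~(1), your treatment of sphere components is a bit compressed: the neck-stretching along a sphere shows directly that no monopoles exist unless $\langle c_1(\s),[S]\rangle=0$ (the vortex moduli on $S^2$ is empty otherwise), so spheres can simply be discarded. In Part~(2), your aside about the $\kappa=0$ case is correct but could be sharpened: since $b_1(Y)>0$ and $Y$ is irreducible, apply Gabai to any nonzero class to obtain a taut foliation, and the rest of the argument runs unchanged.
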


In particular, when $[\xi]\neq 0\in H_1(Y,\R)$, the subset 
\[
\{c_1(\s): \HM_*(Y,\s;\Gamma_\xi)\neq \{0\}\}\subset H^2(Y,\R)
\]
determines the Thurston norm on $H_2(Y,\R)$ in the same way that the Newton polytope of the Alexander polynomial determines the Alexander norm; see \cite{M02}. 

\medskip

However, Theorem \ref{T4.1} is stated only for the monopole Floer homology defined using exact perturbations. In \cite[Section 30]{Bible}, these groups are extended for non-exact perturbations:
\begin{equation}\label{E4.1}
\cdots\xrightarrow{i_*}  \fHM_*(Y,\s,c; \Gamma)\xrightarrow{j_*}\tHM_*(Y,\s,c; \Gamma)\xrightarrow{p_* }\bHM_*(Y,\s, c;\Gamma)\xrightarrow{i_*}\cdots
\end{equation}
where $c\in H^2(Y;\R)$ is the period class and $\Gamma$ is any $c$-complete local coefficient system in the sense of \cite[Definition 30.2.2]{Bible}. These groups are defined using the Seiberg-Witten equations \eqref{3DDSWEQ} on $Y$ with the closed 2-form $\omega$ belonging to the class $[\omega]=\frac{i}{2\pi}\cdot c\in H^2(Y,i\R)$. 
\smallskip

The purpose of this section is to understand the extent to which Theorem \ref{T4.1} generalizes to these groups defined using non-exact perturbations. With that said, the results of this section follow almost trivially from the general theory in \cite{Bible}; no originality is claimed here.

\subsection{Statements} Fix a \spinc structure $\s$ on $Y$. Recall from \cite[Definition 29.1.1]{Bible} that the non-exact perturbation associated to a closed 2-form $\omega\in \Omega^2(Y, i\R)$ is called monotone if 
\[
2\pi^2c_1(\s)+c=2\pi^2c_1(\s)\cdot t
\]
for some $t\in \R$, where $c\colonequals -2\pi i[\omega]$ is the period class of this perturbation. Furthermore, it is called balanced, positively or negatively monotone if $t=0$, $t>0$ or $<0$ respectively. We write
\[
\HM_*(Y,\s,c;\Gamma)\colonequals \im j_*\subset \tHM_*(Y,\s,c;\Gamma)
\] 
for the reduced monopole Floer homology. The third group $\bHM_*(Y,\s,c;\Gamma)$ in \eqref{E4.1} is always trivial unless the period class $c=c_b\colonequals -2\pi^2c_1(\s)$ is balanced. Thus $\HM_*(Y,\s,c;\Gamma)= \tHM_*(Y,\s,c;\Gamma)$ if $c\neq c_b$. 

\smallskip

We focus on the local coefficient systems $\NR_\omega$ on $\CB^\sigma(Y,\s)$, whose fiber at every point is always the mod 2 Novikov ring $\NR$. The fundamental group of $\CB^\sigma(Y,\s)$ is $H^1(Y,\Z)=\pi_0(\CG(Y))$. The monodromy of $\NR_\omega$ is then defined by sending $z\in H^1(Y,\Z)$ to $q^{f(z)}$ with
\begin{align}\label{E4.6}
f(z)\colonequals \langle (2\pi^2c_1(\s)+c)\cup z, [Y]\rangle\in \R.
\end{align}

For the adjunction inequality, it suffices to consider \spinc structures with non-torsion $c_1(\s)$. The problem already occurs for the 3-torus $\T^3=\T^2\times S^1$ in Lemma \ref{L3.4}. Let $d\in H^2(\T^3, \Z)$ be the Poincar\'{e} dual of $ \{pt\}\times S^1$. Suppose $\omega=i\delta\cdot d$ and consider the \spinc structure $\s$ on $\T^3$ with $c_1(\s)=2\cdot d$. If $\delta<-2\pi $, then this perturbation is negatively monotone. In this case, the moduli space of the Seiberg-Witten equations \eqref{3DDSWEQ} is non-empty, and is diffeomorphic to $\T^2\times\{pt\}$. As one may verify, for either $\Gamma=\Z$ or $\NR_\omega$, the monopole Floer homology group $\HM_*(\T^3,\s,c;\Gamma)$ is non-trivial, and so the adjunction inequality in Theorem \ref{T4.1} is violated in this case. 

\smallskip

However, by Proposition \ref{P4.2}, negatively monotone perturbations are the only exceptions.

\begin{proposition}\label{P4.2} Let $Y$ be any closed oriented 3-manifold with $b_1(Y)\geq 2$ and $\s$ be any non-torsion \spinc structure. Suppose that $[\omega]\in H^2(Y,i\R)$ is neither negatively monotone nor balanced with respect to $\s$, then there is an isomorphism 
	\[
	\HM_*(Y,\s,c;\NR_\omega)\cong \HM_*(Y,\s;\NR_\omega)
	\]
	where the second group is defined using an exact perturbation with local system $\NR_\omega$. In particular, the adjunction inequality from Theorem \ref{T4.1} holds also for the group $\HM_*(Y,\s,c;\NR_\omega)$: it is trivial, whenever $\langle c_1(\s),\kappa\rangle >\|\kappa\|_{Th}$ for some integral class $\kappa\in H_2(Y;\Z)$.  
\end{proposition}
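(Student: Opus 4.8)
The plan is to deform the period class $c$ to $0$ along a path of ``good'' period classes, invoke the deformation invariance of the non-exact monopole Floer homology from \cite[Chapter 31]{Bible}, and then use that a vanishing period class is, by definition, an exact perturbation, so that Theorem \ref{T4.1} applies directly. First, I would dispose of the ``bar'' term: since $c_1(\s)$ is non-torsion and, by hypothesis, $c$ is not balanced with respect to $\s$, we have $c\neq c_b\colonequals-2\pi^2c_1(\s)$, hence $\bHM_*(Y,\s,c;\NR_\omega)=0$, and the exact sequence \eqref{E4.1} makes $j_*$ an isomorphism, so $\HM_*(Y,\s,c;\NR_\omega)=\tHM_*(Y,\s,c;\NR_\omega)=\fHM_*(Y,\s,c;\NR_\omega)$; the same holds for the exact perturbation, whose period class $0$ is positively monotone ($t=1$) and thus not balanced.

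Next I would choose the path. Set
\[
R_\s\colonequals\{\,2\pi^2(t-1)c_1(\s):t\le 0\,\}\subset H^2(Y;\R),
\]
the closed ray of period classes that are balanced or negatively monotone with respect to $\s$. By hypothesis $c\notin R_\s$, and $0\notin R_\s$ (otherwise $c_1(\s)$ would be torsion, or $t=1\le 0$). Since $b_1(Y)\ge 2$, the space $H^2(Y;\R)\cong\R^{b_1(Y)}$ with a ray deleted is still path-connected, so I can pick a smooth path $s\mapsto c_s$ ($s\in[0,1]$) in $H^2(Y;\R)\setminus R_\s$ with $c_0=c$ and $c_1=0$, realized by a smooth family of closed $2$-forms $\omega_s$ with $[\omega_s]=\tfrac{i}{2\pi}c_s$. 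Along this path the period class is never balanced, so each $\fHM_*(Y,\s,c_s;\NR_{\omega_s})$ is defined and equals the reduced group $\HM_*(Y,\s,c_s;\NR_{\omega_s})$; moreover, because the path also avoids every negatively monotone class, the continuation/cobordism machinery of \cite[Chapter 31]{Bible}, applied to the cylinder $\R\times Y$ with the interpolating perturbation and the Novikov-type $c$-complete local systems of \cite[Chapter 30]{Bible}, provides isomorphisms identifying these groups for all $s$. Hence $\HM_*(Y,\s,c;\NR_\omega)\cong\HM_*(Y,\s,0;\NR_\omega)$, and since a period class $0$ is represented by an exact $2$-form, the right-hand side is by definition the Floer homology $\HM_*(Y,\s;\NR_\omega)$ of an exact perturbation. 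Finally, the adjunction inequalities of Theorem \ref{T4.1} hold for an arbitrary local coefficient system, in particular $\NR_\omega$, so if $\langle c_1(\s),\kappa\rangle>\|\kappa\|_{Th}$ for an integral $\kappa\in H_2(Y;\Z)$ then $\HM_*(Y,\s;\NR_\omega)=0$, whence $\HM_*(Y,\s,c;\NR_\omega)=0$.

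The hard part is the deformation step: one must verify that the invariance of \cite{Bible} survives along a path that avoids the entire ray $R_\s$ rather than merely the single wall $c=c_b$, and one must carry along the local coefficient system $\NR_{\omega_s}$, whose monodromy $q^{f_s(z)}$ with $f_s(z)=\langle(2\pi^2c_1(\s)+c_s)\cup z,[Y]\rangle$ genuinely varies with $s$. This is exactly where ``not negatively monotone'' is used: for a negatively monotone $c$ the class $2\pi^2c_1(\s)+c$ is a negative multiple of $c_1(\s)$, so the relevant energy/Novikov filtration runs opposite to that of the exact case and the comparison breaks down, as the $\T^3$ computation preceding the proposition demonstrates. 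I expect all of this to be a routine assembly of \cite[Chapters 30--31]{Bible}, consistent with the disclaimer that no originality is claimed.
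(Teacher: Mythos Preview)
Your strategy --- deform $c$ to $0$ along a path in $H^2(Y;\R)$ missing the ray $R_\s$ and invoke continuation invariance --- is genuinely different from the paper's, and the step you flag as ``hard'' is in fact a real gap rather than a routine assembly. Chapter~31 of \cite{Bible} does \emph{not} supply a continuation isomorphism between $\HM_*(Y,\s,c_s;\NR_{\omega_s})$ and $\HM_*(Y,\s,c_{s'};\NR_{\omega_{s'}})$ for an arbitrary path of non-balanced period classes: what it supplies is exactly the comparison of any non-balanced $c$ to the \emph{balanced} class $c_b=-2\pi^2c_1(\s)$, and this comparison lands not in the ordinary group at $c_b$ but in a specific completion. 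The paper follows this route: Proposition~\ref{P4.3} gives $\tHM_*(Y,\s,c;\NR_\omega)\cong \tHM_\diamond(Y,\s,c_b;\NR_\omega)$; Proposition~\ref{P4.4} shows that when $c$ is not monotone the three completions $\tHM_*,\tHM_\diamond,\tHM_\bullet$ at $c_b$ all coincide, by proving the corresponding $\bHM$ groups vanish via a coupled Morse homology computation on the Picard torus; and finally \cite[Theorem~31.1.1]{Bible} identifies $\tHM_\bullet(Y,\s,c_b;\NR_\omega)$ with the exact-perturbation group. The positively monotone case is handled separately by \cite[Theorem~31.1.2]{Bible}. All of this keeps the local system $\NR_\omega$ fixed throughout.

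Your proposed path instead varies the local system to $\NR_{\omega_s}$, which creates two problems. First, for the continuation map on $\R\times Y$ with interpolating perturbation to be a well-defined chain map, the coefficient system must be $c_s$-complete in the sense of \cite[Definition~30.2.2]{Bible} for every $s$ simultaneously; but the Novikov direction of $\NR_{\omega_s}$ is governed by $2\pi^2c_1(\s)+c_s$, and there is no reason a single coefficient system is complete for the whole family when $b_1(Y)\geq 2$ and the classes $2\pi^2c_1(\s)+c_s$ rotate in $H^2(Y;\R)$. Second, even granting invariance, your endpoint is $\HM_*(Y,\s;\NR_{\omega_1})$ with $[\omega_1]=0$, whose monodromy is $q^{\langle 2\pi^2c_1(\s)\cup z,[Y]\rangle}$, not $\NR_\omega$; you would still need an extra argument (along the lines of \cite[P.~288~(16.5)]{Bible}, cf.\ Remark~\ref{R4.5}) to change the local system. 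The paper's detour through $c_b$ and the Picard-torus vanishing of $\bHM_\circ$ is precisely what replaces the missing ``continuous deformation of period classes'' machinery.
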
 


The non-vanishing result is more robust: it suffices to rule out exact perturbations:

\begin{proposition}\label{P4.5} Let $Y$ be any irreducible closed oriented 3-manifold with $b_1(Y)\geq 1$. If the period class $c=-2\pi i[\omega]\in H^2(Y; \R)$ is non-exact, then for any integral class $\kappa\in H^2(Y;\Z)$, there is a \spinc structure $\s$ such that 
	\[
\langle c_1(\s), \kappa\rangle=\|\kappa\|_{Th} \text{ and }	\HM_*(Y,\s,c;\NR_\omega)\neq \{0\}.
	\] 
\end{proposition}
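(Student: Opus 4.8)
The plan is to deduce the statement from the exact-perturbation non-vanishing theorem of Kronheimer--Mrowka, Theorem~\ref{T4.1}(2), by checking that replacing an exact perturbation with the non-exact one attached to $\omega$ — and correspondingly passing to the Novikov coefficient system $\NR_\omega$ — does not destroy non-vanishing, once the \spinc structure is chosen with some care. First I would apply Theorem~\ref{T4.1}(2) to fix a \spinc structure $\s$ with $\langle c_1(\s),\kappa\rangle=\|\kappa\|_{Th}$ such that $\HM_*(Y,\s;\Gamma_\eta)\ne\{0\}$ for every real $1$-cycle $\eta$ with $[\eta]\ne 0$. The key point is that the monodromy of $\NR_\omega$ about a class $z\in H^1(Y;\Z)$ is $q^{f(z)}$ with $f(z)=\langle(2\pi^2 c_1(\s)+c)\cup z,[Y]\rangle$, so that $\NR_\omega$, used with an \emph{exact} perturbation, is obtained from the local system $\Gamma_\eta$ of \cite[Section~3.7]{Bible} — for $\eta$ a real $1$-cycle Poincar\'e dual to $2\pi^2 c_1(\s)+c\in H^2(Y;\R)\cong H_1(Y;\R)$ — by a flat extension of the coefficient ring, which preserves non-vanishing. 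Since $c$ is non-exact and $\s$ is not the balanced \spinc structure for $c$, the class $2\pi^2c_1(\s)+c$ is nonzero in $H^2(Y;\R)$, hence $[\eta]\ne 0$, and Theorem~\ref{T4.1}(2) gives $\HM_*(Y,\s;\NR_\omega)\ne\{0\}$ for the exact perturbation.

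The second step is to interpolate from the exact perturbation to the one given by $\omega$. I would take the straight segment of period classes $u\mapsto uc$, $u\in[0,1]$, realised by a path of closed $2$-forms, and use the continuation maps of \cite{Bible} together with the vanishing of $\bHM_*(Y,\s,c';\NR_\omega)$ away from the balanced locus (so that $j_*$ is an isomorphism and the reduced group equals $\tHM_*$ all along the path). These continuation maps are isomorphisms as long as the segment avoids the balanced and negatively monotone perturbations for $\s$; these fill out the ray $\{\lambda c_1(\s):\lambda\le -2\pi^2\}\subset H^2(Y;\R)$, which the segment $\{uc\}$ meets only when $c$ is itself a multiple $\lambda c_1(\s)$ with $\lambda\le -2\pi^2$. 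Outside that degenerate situation this yields $\HM_*(Y,\s,c;\NR_\omega)\cong \HM_*(Y,\s;\NR_\omega)\ne\{0\}$, as desired. When $b_1(Y)\ge 2$ and $[\omega]$ is neither balanced nor negatively monotone for the chosen $\s$ one can skip the interpolation altogether and quote Proposition~\ref{P4.2} directly.

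The main obstacle is the genuinely degenerate case: $c$ a negative multiple of $c_1(\s)$ of size at least $2\pi^2$ for \emph{every} \spinc structure $\s$ realising $\|\kappa\|_{Th}$ against $\kappa$. This forces $c$ to lie in a rational direction and, when $b_1(Y)=1$, cannot be evaded by a different choice of $\s$; moreover the interpolation genuinely breaks here, since the $\T^3$ computation recalled in this section shows the reduced homology jumps as one crosses the balanced value into the negatively monotone region. A separate input is then needed: at the balanced value one should invoke the non-vanishing statements for balanced perturbations in \cite{Bible} (where the reducible solution contributes), and in the negatively monotone region one should produce an injective comparison map from the exact-perturbation group into $\HM_*(Y,\s,c;\NR_\omega)$, using that across the balanced value the adjunction inequality can be violated only through the appearance of \emph{additional} generators; alternatively one exploits that the Thurston-norm face of the monopole polytope of $Y$ need not be a single lattice point, so a different $\s$ realising $\|\kappa\|_{Th}$ may be available. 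Making this last step precise — while keeping the argument as routine as the section advertises — is the delicate point, but it is confined to a negligible set of period classes and does not affect the applications to Thurston norm detection.
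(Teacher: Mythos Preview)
Your proposal has two genuine gaps that the paper's proof resolves by entirely different mechanisms.

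First, the claim that ``$\NR_\omega$, used with an exact perturbation, is obtained from the local system $\Gamma_\eta$ \dots\ by a flat extension of the coefficient ring'' is not correct: the fibers of $\Gamma_\eta$ are $\R$, while $\NR_\omega$ has fibers in the mod~$2$ Novikov ring, and there is no ring homomorphism $\R\to\NR$. Knowing $\HM_*(Y,\s;\Gamma_\eta)\neq 0$ therefore does not tautologically force $\HM_*(Y,\s;\NR_\omega)\neq 0$. The paper handles exactly this passage by introducing an intermediate local system $\Pi_\omega$ with fiber $\Z[\Z]$, from which one reaches $\Gamma_\xi$ via $\Z[\Z]\to\R[\Z]\xrightarrow{t\mapsto e^\alpha}\R$ and reaches $\NR_\omega$ via $\Z[\Z]\to\BF_2[\Z]\hookrightarrow\NR$. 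A Fitting-ideal argument shows that if the $\BF_2[\Z]$-homology were torsion (forcing $\HM_*(Y,\s;\NR_\omega)=0$), then the $\R[\Z]$-homology would be torsion as well, contradicting Theorem~\ref{T4.1}(2) after evaluating at a generic $\alpha$. This commutative-algebra step is the actual content hiding behind your one-line ``flat extension'' claim.

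Second, you correctly identify that your interpolation along $u\mapsto uc$ breaks precisely when $c$ is balanced or negatively monotone for the chosen $\s$, and you leave this unresolved. The paper does not attempt any interpolation. Instead, for non-torsion $\s$ it compares two completions $\tHM_\diamond$ and $\tHM_\bullet$ of the \emph{balanced} chain complex: Proposition~\ref{P4.3} identifies $\HM_*(Y,\s,c;\NR_\omega)\cong\tHM_\diamond$ for every non-balanced $c$ (including negatively monotone), while \cite[Theorem~31.1.1]{Bible} identifies $\tHM_\bullet\cong\HM_*(Y,\s;\NR_\omega)$. Since $\bHM_\bullet=0$, the map $j_\bullet=y_\diamond\circ j_\diamond$ is an isomorphism, so $y_\diamond$ is surjective, yielding the rank inequality $\rank_\NR\HM_*(Y,\s,c;\NR_\omega)\geq\rank_\NR\HM_*(Y,\s;\NR_\omega)$ uniformly in $c$. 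The balanced and torsion-$c_1$ cases are treated by minor variants. This is what makes the non-vanishing robust against the ``jump'' you observed for $\T^3$: in the negatively monotone regime the left-hand side can only go up, never down.
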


To ease our notation, we introduce the following shorthand: 

\begin{definition}\label{D6.4} For any closed 2-form $\omega\in \Omega^2(Y; i\R)$ and any integral class $\kappa\in H_2(Y;\Z)$, we write $\HM(Y,[\omega])$ for the direct sum 
	\[
	 \bigoplus_{\s} \HM_*(Y,\s,c;\NR_\omega)
	\]
	and $\HM(Y,[\omega]|\kappa)$ for the subgroup
	\[
 \bigoplus_{\langle c_1(\s),\kappa\rangle=x(\kappa)} \HM_*(Y,\s,c;\NR_\omega). 
	\]
For any connected oriented subsurface $F\subset Y$, define similarly
		\[
\HM(Y,[\omega]|F)=	\bigoplus_{\langle c_1(\s),\kappa\rangle=x(F)} \HM_*(Y,\s,c;\NR_\omega).\qedhere
	\]
\end{definition}

\begin{remark}\label{R4.5} When $[\omega]=0\in H^2(Y; i\R)$ and $c_1(\s)\neq 0$, the local system $\NR_\omega$ is not trivial. Nevertheless, by \cite[P.288 (16.5)]{Bible}, we still have an isomorphism:
	\[
	\HM_*(Y,\s,c;\NR_\omega)\cong \HM_*(Y,\s; \NR).
	\]
	The latter group is defined using the trivial system with coefficients $\NR$. Set
	\[
	\HM_*(Y|\kappa)\colonequals\bigoplus_{\langle c_1(\s),\kappa\rangle=x(\kappa)} \HM_*(Y,\s;\NR). 
	\]
	Then $	\HM_*(Y|\kappa)\cong 	\HM_*(Y,[0]|\kappa)$. We define similarly 	the group $\HM_*(Y|F)$. 
\end{remark}

\begin{corollary}\label{C6.5} If $Y$ is irreducible with $b_1(Y)>0$ and $[\omega]\neq 0\in H^2(Y; i\R)$, then $\HM(Y,[\omega]|\kappa)\neq \{0\}$ for any integral class $\kappa\in H^2(Y;\Z)$. 
\end{corollary}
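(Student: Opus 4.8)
The plan is to read this off directly from the non-vanishing result, Proposition \ref{P4.5}. First I would check that its hypotheses are satisfied: $Y$ is assumed irreducible, and the condition $b_1(Y)>0$ is literally $b_1(Y)\geq 1$. The only remaining hypothesis is that the period class $c=-2\pi i[\omega]\in H^2(Y;\R)$ be non-exact; but a de Rham cohomology class is exact if and only if it is the zero class, so this is exactly the assumption $[\omega]\neq 0\in H^2(Y;i\R)$.

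With the hypotheses in place, I would apply Proposition \ref{P4.5} to the given integral class $\kappa$: it supplies a \spinc structure $\s$ on $Y$ with $\langle c_1(\s),\kappa\rangle=\|\kappa\|_{Th}=x(\kappa)$ and $\HM_*(Y,\s,c;\NR_\omega)\neq\{0\}$. By the definition of $\HM(Y,[\omega]|\kappa)$ in Definition \ref{D6.4}, this group is one of the direct summands of $\bigoplus_{\langle c_1(\s'),\kappa\rangle=x(\kappa)}\HM_*(Y,\s',c;\NR_\omega)$, hence $\HM(Y,[\omega]|\kappa)\neq\{0\}$, as desired.

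I do not expect any genuine obstacle: all of the analytic content --- adapting the Kronheimer--Mrowka non-vanishing theorem to non-exact perturbations with the mod $2$ Novikov local system $\NR_\omega$ --- is already carried out in Proposition \ref{P4.5}, and the only book-keeping needed here is the translation between "$[\omega]$ non-vanishing in $H^2(Y;i\R)$" and "the period class $c$ non-exact", together with matching the notations $\|\cdot\|_{Th}$ and $x(\cdot)$ for the Thurston norm.
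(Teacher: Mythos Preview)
Your proposal is correct and matches the paper's intent: the corollary is stated without proof precisely because it is an immediate consequence of Proposition~\ref{P4.5}, and you have correctly identified the hypothesis translation ($[\omega]\neq 0$ iff $c$ non-exact) and the notational match $\|\kappa\|_{Th}=x(\kappa)$.
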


The proof of Proposition \ref{P4.2} and \ref{P4.5} will dominate the rest of this section. 

\subsection{Completions of Chain Complexes} The proof of Proposition \ref{P4.2} is achieved in two steps: the first step is to relate the group $\HM_*(Y,\s,c;\NR_\omega)$ with the Floer homology associated to a balanced class $c_b=-2\pi^2c_1(s)$. For monotone perturbations, this is already done in \cite[Theorem 31.1.1\ \&\ 31.5.1]{Bible}. We shall describe a slightly different setup that allows generalization for any non-balanced perturbations. 
 
Consider the monopole Floer chain complexes of $(Y,\s, c_b)$ associated to the local system $\NR_\omega$:
\[
\widecheck{C}_*(Y,\s, c_b; \NR_\omega),\ \widehat{C}_*(Y,\s, c_b; \NR_\omega),\ \overline{C}_*(Y,\s, c_b; \NR_\omega). 
\]
As modules, they are free over the Novikov ring $\NR$, but none of them is finitely generated. Since the period class $c_b$ is balanced, we have to blow up the configuration space in order to define the Floer homology. Suppose an admissible perturbation of the Chern-Simons-Dirac functional $\CL_\omega$ is chosen, then each reducible solution $[\alpha]$ of the perturbed 3-dimensional Seiberg-Witten equations will contribute infinitely many generators to each of $\widecheck{C}_*,\widehat{C}_*,\overline{C}_*$, corresponding to the eigenvectors of the Dirac operator at $[\alpha]$.

As noted in \cite[Section 30.1]{Bible}, we can form a chain level completion using the filtration of eigenvalues. Label the reducible solutions in the quotient configuration space $\CB(Y,\s)$ as $[\alpha^1],\cdots,[\alpha^p]$ and label the corresponding critical points in the blown-up space as $[\alpha^r_i]$ with $i\in \Z$ and $1\leq r\leq p$, so that $[\alpha^r_i]$ corresponds to the eigenvalue $\lambda^r_i$ of the perturbed Dirac operator at $[\alpha^r]$ and $$\cdots<\lambda^r_i<\lambda^r_{i+1}<\cdots,$$ with $\lambda^r_0$ being the first positive one. For any $m\geq 1$, let
\begin{equation}\label{E4.3}
 \widehat{C}_*(Y,\s, c_b;  \NR_\omega)_m\subset  \widehat{C}_*(Y,\s, c_b; \NR_\omega),
\end{equation}
be the subgroup generated by $[\alpha^r_i]$ with $i\leq -m$. This defines a filtration on $\widehat{C}_*$, called the $\lambda$-filtration. We form the completion 
\[
 \widehat{C}_\bullet(Y,\s, c_b; \NR_\omega)\supset\widehat{C}_*(Y,\s, c_b; \NR_\omega).  
\]
The same construction applies also to the bar-version; so we obtain 
\[
\overline{C}_\bullet(Y,\s, c_b; \NR_\omega)\supset \overline{C}_*(Y,\s, c_b; \NR_\omega).
\]

On the other hand, $\widehat{C}_*$ carries an additional filtration arising from the base ring $\NR$, called the $\NR$-filtration. One may formally write 
\[
 \widehat{C}_*(Y,\s, c_b; \NR_\omega)=\bigoplus_{n\geq 0} \NR e_n,
\]
by identifying the fibers of $\NR_\omega$ at different critical points using a collection of paths in the blown-down space $\CB(Y,\s)$. We may start with the group ring $\BF_2[\R]$ and obtain $\NR$ by taking the completion in the negative direction. This filtration on $\NR=\BF_2[\R]^-$ then induces the $\NR$-filtration on the free module $ \widehat{C}_*(Y,\s, c_b; \NR_\omega)$; let 
\[
\widehat{C}_\diamond(Y,\s, c_b; \NR_\omega)\subset \widehat{C}_\bullet (Y,\s,c_b; \NR_\omega),
\]
be the resulting completion. Any element $\sum_{n\geq 0} a_n\cdot e_n\in \widehat{C}_\diamond(Y,\s, c_b; \NR_\omega)$ may have infinitely many non-zero coefficients $a_n\in \NR$, but under the topology of $\NR$, we must have 
\begin{equation}\label{E4.5}
\lim_{n\to\infty} a_n=0.
\end{equation}

The bar-version analogue is constructed in a slightly different way. First, take the $\NR$-completion of its $m$-th filtered subgroup $\overline{C}_*(Y,\s, c_b;  \NR_\omega)_m\subset  \overline{C}_*(Y,\s, c_b; \NR_\omega)$ for each $m\in \Z$, denoted by
\[
\overline{C}_\diamond(Y,\s, c_b;  \NR_\omega)_m.
\]
Next, we form the union:
\[
\overline{C}_\diamond(Y,\s, c_b;  \NR_\omega)\colonequals\bigcup_{m\in \Z} \overline{C}_\diamond(Y,\s, c_b;  \NR_\omega)_m\subset \overline{C}_\bullet(Y,\s, c_b;  \NR_\omega).
\]
An element in $\overline{C}_\diamond(Y,\s, c_b;  \NR_\omega)$ has only finitely many non-zero coefficients for critical points with $\lambda^r_i>0$, while a infinite sum may occur in the negative direction satisfying the convergence condition \eqref{E4.5}. The upshot is that the homology groups of 
\[
\widehat{C}_*(Y,\s, c_b; \NR_\omega),\widehat{C}_\diamond(Y,\s, c_b; \NR_\omega), \overline{C}_\diamond(Y,\s, c_b; \NR_\omega)
\]
form a long exact sequence (by the proof of \cite[Proposition 22.2.1]{Bible}), which fits into a diagram below: 
\begin{equation}\label{E4.2}
\begin{tikzcd}[column sep=1.7em]
\cdots\arrow[r,"i_*"] &\fHM_*(Y,\s,c_b; \NR_\omega)\arrow[r,"j_*"] \arrow[d,equal]&\tHM_*(Y,\s,c_b; \NR_\omega)\arrow[r,"p_*"] \arrow[d,"x_*"]&\bHM_*(Y,\s,c_b;\NR_\omega)\arrow[r,"i_*"]\arrow[d]&\cdots\\
\cdots\arrow[r,"i_\diamond"] &\fHM_*(Y,\s,c_b; \NR_\omega)\arrow[r,"j_\diamond"]\arrow[d,equal] &\tHM_\diamond(Y,\s,c_b; \NR_\omega)\arrow[r,"p_\diamond"]\arrow[d,"y_\diamond"] &\bHM_\diamond(Y,\s,c_b;\NR_\omega)\arrow[r,"i_\diamond"]\arrow[d]&\cdots\\
\cdots\arrow[r,"i_\bullet"] &\fHM_*(Y,\s,c_b; \NR_\omega)\arrow[r,"j_\bullet"] &\tHM_\bullet(Y,\s,c_b; \NR_\omega)\arrow[r,"p_\bullet"] &\bHM_\bullet(Y,\s,c_b;\NR_\omega)\arrow[r,"i_\bullet"]&\cdots
\end{tikzcd}
\end{equation}

The next proposition says that $\HM_*(Y,\s,c;\NR_\omega)$ can be computed in terms of the group $\tHM_\diamond$ associated the balanced period class $c_b\colonequals -2\pi^2c_1(\s)$. 
\begin{proposition}\label{P4.3} For any $(Y,\s)$ and any non-balanced perturbation with period class $c=-2\pi i[\omega]$, we have an isomorphism
	\begin{align*}
		\tHM_*(Y,\s, c;\NR_\omega)&\cong \tHM_\diamond(Y,\s,c_b; \NR_\omega),
	\end{align*}
\end{proposition}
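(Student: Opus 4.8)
The plan is to mimic the treatment of monotone perturbations in \cite[Section 31]{Bible}, noting that once the statement is phrased through the $\diamond$-completion, monotonicity is never used beyond the fact that the period class is non-balanced. First I would fix a $g$-harmonic 2-form $\omega$ with $[\omega]=\frac{i}{2\pi}c$ and a $g$-harmonic $\omega_b$ with $[\omega_b]=\frac{i}{2\pi}c_b$, and interpolate along the straight line $\omega_r=(1-r)\omega+r\omega_b$, $r\in[0,1]$, whose period class is $c_r=(1-r)c+rc_b$. Reducible solutions of the $\omega_r$-perturbed equations \eqref{3DDSWEQ} in the $\s$-summand occur precisely when $c_r=c_b$; since $c$ is non-balanced this happens only at $r=1$ (reducibles of other \spinc structures sit in other summands of the Floer complex and play no role). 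Hence for $r<1$ the complex $\widehat{C}_*(Y,\s,c_r;\NR_\omega)$, built after a small admissible perturbation, is a finitely generated free $\NR$-module. I would also record here that $\NR_\omega$, with monodromy $q^{\langle(2\pi^2c_1(\s)+c)\cup z,[Y]\rangle}$, is $c_r$-complete for every $r\in[0,1]$ in the sense of \cite[Definition 30.2.2]{Bible}; this is precisely why the exponent in its definition is chosen as it is.

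Next, for $r,r'\in[0,1)$ I would build the continuation map from the cobordism $[0,1]\times Y$ with an interpolating perturbation. Since no reducibles appear anywhere for $r<1$, this is the usual argument of \cite[Chapters 22--25]{Bible} carried out over $\NR_\omega$, and it yields a chain homotopy equivalence, so $\tHM_*(Y,\s,c;\NR_\omega)\cong\tHM_*(Y,\s,c_r;\NR_\omega)$ for all $r<1$. It then remains to compare the limit $r\to1$ with the $\diamond$-completed complex at $c_b$. For this I would use the cobordism $\R_t\times Y$ carrying a 2-form equal to $\omega$ for $t\ll0$ and to $\omega_b$ for $t\gg0$, and count its index-$0$ solutions to obtain a chain map $\widehat{C}_*(Y,\s,c;\NR_\omega)\to\widehat{C}_\diamond(Y,\s,c_b;\NR_\omega)$. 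The target must be the $\diamond$-completion because the reducibles are ``born at infinity'' as the perturbation reaches the balanced wall: a solution asymptotic at $t=-\infty$ to a fixed irreducible may be asymptotic at $t=+\infty$ to boundary-unstable reducible critical points $[\alpha^r_i]$ with $i$ arbitrarily negative, though only finitely many occur in each step of the $\lambda$-filtration and their $\NR$-coefficients tend to $0$ by the energy bound, cf.~\eqref{E4.5}. Running the same cobordism backwards produces a chain map $\widehat{C}_\diamond(Y,\s,c_b;\NR_\omega)\to\widehat{C}_*(Y,\s,c;\NR_\omega)$ landing in the uncompleted finitely generated complex, since moving toward the non-balanced end only discards reducible generators. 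A neck-stretching and gluing argument, as in the monotone case of \cite[Section 31]{Bible}, identifies the two composites with the respective identities up to chain homotopy, so the induced maps on homology are mutually inverse isomorphisms. Equivalently, one can phrase the outcome as: in the diagram \eqref{E4.2} the boundary-unstable ``reducible tail'' of $\widehat{C}_\diamond(Y,\s,c_b;\NR_\omega)$ is acyclic because the Novikov monodromy renders the ``$U$''-type action on it invertible in $\NR$, so $\tHM_\diamond(Y,\s,c_b;\NR_\omega)$ is computed by its irreducible part, which the previous step matches with the complex at the non-balanced class $c$.

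The main obstacle is the step at $r=1$: making the two cobordism maps genuinely well defined on the $\diamond$-completed complex, and verifying the two gluing identities, both require controlling the reducible tails born at the balanced wall and checking convergence simultaneously in the $\lambda$-filtration and the $\NR$-filtration. This is exactly the analysis carried out in \cite[Section 31]{Bible} for monotone perturbations, so the real work is to re-read those arguments and confirm that the only property used is $c\neq c_b$ (equivalently, that reducibles in the $\s$-summand appear only at the endpoint of the family), which is precisely the non-balanced hypothesis; everything else is routine bookkeeping.
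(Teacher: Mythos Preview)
Your proposal is correct and follows essentially the same approach as the paper: both invoke the proof of \cite[Theorem 31.1.1]{Bible} (see \cite[Section 31.2]{Bible}) and observe that monotonicity is not needed, only the non-balanced hypothesis $c\neq c_b$. The paper's proof is more terse, noting in particular that because $\NR_\omega$ is $c$-complete there is no need to estimate the topological energy $\E_1^{top}$ from \cite[(31.5)]{Bible}, which is the simplification you allude to when you remark that $\NR_\omega$ is $c_r$-complete for all $r$ and that the $\NR$-coefficients of the reducible tail tend to zero.
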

\begin{proof} This result follows from the proof of \cite[Theorem 31.1.1]{Bible}. See \cite[Section 31.2]{Bible}. Since we have used a $c$-complete local coefficient system $\NR_\omega$ here, there is no need to estimate the topological energy $\E_{1}^{top}$ in \cite[(31.5)\  P.611]{Bible}. The argument is even simpler. 
\end{proof}

The second step in the proof of Proposition \ref{P4.2} is to relate $\tHM_\diamond(Y,\s,c_b; \NR_\omega)$ with the bullet-version $\tHM_\bullet (Y,\s,c_b;\NR_\omega)$ using the vertical map $y_\diamond$ in \eqref{E4.2}. In fact, more is true when the class $[\omega]$ is not monotone with respect to $c_1(\s)$:
\begin{proposition}\label{P4.4}  Let $Y$ be any closed oriented 3-manifold with $b_1(Y)\geq 2$ and $\s$ be any non-torsion \spinc structure. Suppose that the period class $c=-2\pi i[\omega]$ is not monotone, then the vertical maps $x_*$ and $y_\diamond$ in digram \eqref{E4.2} are isomorphisms 
	\[
	\tHM_*(Y,\s,c_b; \NR_\omega)\xrightarrow[\cong]{x_*} \tHM_\diamond(Y,\s,c_b; \NR_\omega)\xrightarrow[\cong]{y_\diamond}
	\tHM_\bullet(Y,\s,c_b; \NR_\omega). 
	\]
\end{proposition}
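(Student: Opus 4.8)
The plan is to derive both isomorphisms from the vanishing of the bar--version homology in all three of the completions occurring in \eqref{E4.2}; that is, I will show
\[
\bHM_*(Y,\s,c_b;\NR_\omega)=\bHM_\diamond(Y,\s,c_b;\NR_\omega)=\bHM_\bullet(Y,\s,c_b;\NR_\omega)=\{0\}.
\]
Once this is known, exactness of each of the three rows of \eqref{E4.2} forces $j_*$, $j_\diamond$ and $j_\bullet$ to be isomorphisms; since the two left--hand squares of \eqref{E4.2} commute and the $\fHM$--column consists of identity maps, we obtain $x_*=j_\diamond\circ j_*^{-1}$ and $y_\diamond=j_\bullet\circ j_\diamond^{-1}$, both isomorphisms, which is the assertion. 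So everything reduces to the displayed vanishing.

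For the vanishing, recall that the bar--complex of $(Y,\s,c_b)$ --- in any of the three completions --- is built entirely from the reducible locus of the perturbed Seiberg--Witten equations \eqref{3DDSWEQ} together with the eigenvalue blow--up. Because $c_b=-2\pi^2c_1(\s)$ is the balanced class, reducible solutions $(B,0)$ do exist: they are the $\s$--connections $B$ with $F_{B^t}=2\omega$, and --- whether or not $\s$ is torsion --- their gauge--equivalence classes form a torus $T$ of dimension $b_1(Y)$, namely $H^1(Y;i\R)/2\pi i\,H^1(Y;\Z)$, over which the blown--up reducible locus in $\CB^\sigma(Y,\s)$ fibers with simply connected fibers; moreover $\pi_1(T)\cong H^1(Y;\Z)$. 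By the same coupled Morse theory that \cite{Bible} uses to compute $\bHM$ when $b_1>0$, the group $\bHM_*(Y,\s,c_b;\NR_\omega)$ is assembled from $H_*(T;\NR_\omega|_T)$ --- more precisely it is the output of a spectral sequence with a page isomorphic to $H_*(T;\NR_\omega|_T)$ tensored over $\NR$ with the periodic module supplied by the blow--up --- and the analogous statements hold for the $\diamond$-- and $\bullet$--completions. In particular all three bar--groups vanish once $H_*(T;\NR_\omega|_T)=\{0\}$.

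The point is that $\NR_\omega|_T$ is a non--trivial local system. Its monodromy is the homomorphism $z\mapsto q^{f(z)}$ on $\pi_1(T)\cong H^1(Y;\Z)$, with $f(z)=\langle(2\pi^2c_1(\s)+c)\cup z,[Y]\rangle$ as in \eqref{E4.6}. Write $\theta\colonequals 2\pi^2c_1(\s)+c\in H^2(Y;\R)$. The hypothesis that the perturbation is not monotone says precisely that $\theta$ is not a real multiple of $c_1(\s)$; in particular $\theta\neq 0$, so by Poincar\'e duality the homomorphism $z\mapsto\langle\theta\cup z,[Y]\rangle$ on $H^1(Y;\Z)$ is non--zero. (This is also where $b_1(Y)\geq 2$ is used: when $b_1(Y)=1$, $H^2(Y;\R)$ is one--dimensional, so no non--monotone perturbation exists and the statement has no content.) Choose a basis $z_1,\dots,z_{b_1(Y)}$ of $H^1(Y;\Z)$ with $f(z_1)\neq 0$. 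Then $C_*(T;\NR_\omega|_T)$ factors over the field $\NR$ as a tensor product of $b_1(Y)$ two--term complexes $\NR\xrightarrow{\,1+q^{f(z_i)}\,}\NR$; the first of these has invertible differential, because $1+q^{f(z_1)}$ has leading term the monomial $q^{\max(0,f(z_1))}$ and is therefore a unit in $\NR$. Hence $C_*(T;\NR_\omega|_T)$ is acyclic, $H_*(T;\NR_\omega|_T)=\{0\}$, and the bar--groups all vanish. For the completed versions the acyclicity is moreover witnessed by an explicit chain contraction, built from $(1+q^{f(z_1)})^{-1}$ in the torus directions, which shifts both the $\lambda$--filtration and the $\NR$--filtration by a uniformly bounded amount and so extends continuously to $\overline{C}_\diamond$ and $\overline{C}_\bullet$.

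I expect the real work to be in this final point: checking that the coupled--Morse description of the bar--complex is compatible with \emph{both} the eigenvalue ($\lambda$--)filtration and the $\NR$--filtration, so that the contracting homotopy genuinely survives completion. This is the bar--version analogue of the energy estimates of \cite[Section 31]{Bible}; as in the proof of Proposition \ref{P4.3}, the $c$--completeness of $\NR_\omega$ should keep these estimates routine, the only essentially new ingredient being the non--degeneracy of the cup--product pairing that guarantees $f(z_1)\neq 0$.
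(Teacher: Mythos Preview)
Your overall strategy---show that $\bHM_\circ(Y,\s,c_b;\NR_\omega)=0$ for $\circ\in\{*,\diamond,\bullet\}$ and then read off the isomorphisms from the long exact sequences---is exactly the paper's approach. The gap is in your description of the bar--complex.

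The coupled Morse model for $\overline{C}_*$ is \emph{not} $C_*(T;\NR_\omega|_T)$ tensored with a periodic module. Because $\s$ is non--torsion, the spectral flow along a loop $z\in H^1(Y;\Z)$ in the Picard torus is $g(z)=\langle\tfrac12 c_1(\s)\cup z,[Y]\rangle$, which is nonzero for some $z$. This couples the torus directions to the eigenvalue grading: following \cite[Theorem 35.1.6]{Bible}, the correct model is the Morse complex $C(\T^b,h;\Gamma_\omega)$ where $\Gamma_\omega$ has fiber $\NR[T,T^{-1}]$ and monodromy $z\mapsto q^{f(z)}T^{g(z)}$, with the variable $T$ recording the eigenvalue index. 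Your tensor--product splitting of the torus complex then produces factors
\[
\NR[T,T^{-1}]\xrightarrow{\ 1+q^{f(z_i)}T^{g(z_i)}\ }\NR[T,T^{-1}],
\]
and if $g(z_1)\neq 0$ the element $1+q^{f(z_1)}T^{g(z_1)}$ is \emph{not} a unit in $\NR[T,T^{-1}]$ (nor, in general, in the $\diamond$--completion), so your contracting homotopy does not exist.

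The fix, and the place where the non--monotone hypothesis is genuinely used, is to choose $z_1\in\ker g$ with $f(z_1)\neq 0$. Non--monotone means precisely that $f$ is not a real multiple of $g$, so $f|_{\ker g}\not\equiv 0$ and such a $z_1$ exists; along this direction the monodromy lies in $\NR^\times$ and your unit argument goes through. (Your version uses only $\theta\neq 0$, i.e.\ $f\neq 0$; but that holds equally for positively monotone $c$, where this argument must fail and a separate proof is needed---cf.\ \cite[Theorem 31.1.2]{Bible}.) The paper implements this by writing $\T^b=S^1\times\T^{b-1}$ with $\T^{b-1}$ dual to $\ker g$, killing $H_*(\T^{b-1};\Gamma_\omega)$ first, and then passing to the completions by a universal--coefficient argument over the rings $\NR[T,T^{-1}]\subset\NR[T,T^{-1}]_\diamond\subset\NR[T,T^{-1}]]$.
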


Now we ready to prove Proposition \ref{P4.2}.

\begin{proof}[Proof of Proposition \ref{P4.2}] When $c$ is not monotone, we can combine Proposition \ref{P4.3}\ \&\ \ref{P4.4} with the following isomorphism 
	\begin{equation}\label{E4.8}
		\tHM_\bullet(Y,\s,c_b; \NR_\omega)\cong \HM_*(Y,\s; \NR_\omega)
	\end{equation}
	from \cite[Theorem 31.1.1]{Bible} to conclude. The case when $c$ is positively monotone is already addressed in \cite[Theorem 31.1.2]{Bible}.
\end{proof}

\begin{proof}[Proof of Proposition \ref{P4.4}] Since the left vertical maps in digram \eqref{E4.2} are identity maps, the statement then follows from the fact that for any $\circ\in \{*, \diamond,\bullet\}$, 
	\[
	\bHM_\circ (Y,\s,c_b; \NR_\omega)=\{0\}.
	\]
The case when $\circ=\bullet$ is addressed already in \cite[Theorem 31.1.1]{Bible}; in fact, the group $\bHM_\bullet(Y,\s,c_b; \Gamma)$ always vanishes, no matter which local system $\Gamma$ we use. 

In general, for any $\circ\in \{*, \diamond,\bullet\}$, the group $\bHM_\circ (Y,\s,c_b; \NR_\omega)$ is an instance of coupled Morse homology (see \cite[Section 34]{Bible}) associated to the Picard torus of $Y$ 
$$\T^b\colonequals H^1(Y; \R)/ H^1(Y;\Z),$$ 
with $b\colonequals b_1(Y)\geq 2$. In particular, we may use \cite[Theorem 35.1.6]{Bible} to compute $\bHM_\circ (Y,\s,c_b; \NR_\omega)$ using a standard chain complex, which we recall below. Consider the local coefficient system $\Gamma_\omega$ on $\T^b$ with fiber $\NR[T,T^{-1}]$ and with monodromy given by 
\begin{align}\label{E4.7}
\pi_1(\T^b)\cong H^1(Y,\Z)&\to \NR[T,T^{-1}]^\times\\
z&\mapsto q^{f(z)}T^{g(z)}\nonumber 
\end{align}
where $f(z)$ is as in \eqref{E4.6} and $g(z)=\langle \half c_1(\s)\cup z, [Y]\rangle$. Then the group $\bHM_*(Y,\s,c_b; \NR_\omega)$ is isomorphic to the homology of $(C, \partial=\partial_1+\partial_3)$, where 
\[
C=C(\T^b, h; \Gamma_\omega)\colonequals \bigoplus_{n\geq 0}^N \NR[T,T^{-1}] x_n
\]
is the Morse complex of $\T^b$ with local coefficients $\Gamma_\omega$, defined using a suitable Morse function $h: \T^b\to\R$ and $\partial_1$ is the Morse differential. As a result, $C$ is a finite-rank free module over the ring $\NR[T,T^{-1}]$, generated by critical points $\{x_n\}$ of $h$. The Morse index then induces an additional grading on $C$ such that $\deg\partial_i=-i$ for $i=1,3$. The $m$-th $\lambda$-filtered subgroup of $C$ is given by
\[
C_m\colonequals T^{-m}\bigoplus_{n\geq 0}^N \NR[T^{-1}] x_n, m\in \Z.
\] 

The upshot is that te differential $\partial$ is $\NR[T,T^{-1}]$-linear. The bar-version chain complex $$(\overline{C}_*(Y,\s,c_b;\NR_\omega),\bpartial)$$
which defines $\bHM_*(Y,\bs,c;\NR_\omega)$ admits a very similar structure to $(C,\partial)$, but $\bpartial$ is only linear in $\NR$ and it may have higher components with respect to the Morse grading: $\bpartial=\bpartial_1+\bpartial_3+\bpartial_5+\cdots$. By \cite[Proposition 34.4.1\ \&\ 33.3.8, Theorem 35.1.6]{Bible}, $(\overline{C}_*(Y,\s,c_b;\NR_\omega),\bpartial)$ is homotopic to such a standard complex $(C,\partial)$, preserving the topology induced by $\{(\overline{C}_*)_m\}$ and $\{C_m\}$ respectively.

To compute the homology of $(C,\partial)$, we exploit the spectral sequence induced from the Morse grading, which abuts to $H(C,\partial)$ and whose $E_1$-page is $H(C,\partial_1)$. Since the period class $c=-2\pi i[\omega]$ is not monotone, one verifies immediately that $H(C,\partial_1)=\{0\}$. Indeed, we may write $\T^b=S^1\times \T^{b-1}$ such that a integral multiple of $\{pt\}\times \T^{b-1}$ is dual to the map $g\in H^1(\T^b,\Z)=\Hom(H_1(\T^b, \Z), \Z)$. One may further decompose $\T^{b-1}$ into a product of circles and let $h$ be the sum of Morse functions on circles. It follows that the homology of $\T^{b-1}$ with local coefficients $\Gamma_\omega$ is trivial, since the holonomy of $\Gamma_\omega$ along some $S^1$-factor is non-trivial and lies in $\NR$. Finally, we deduce that $H(C,\partial_1)=\{0\}$ using the K\"{u}nneth formula. 
\smallskip

This finishes the proof when $\circ=*$. When $\circ=\diamond$ or $\bullet$, it suffices to replace the base ring $\NR[T,T^{-1}]$ in \eqref{E4.7} by 
\[
\NR[T,T^{-1}]_\diamond \text{ or } \NR[T,T^{-1}]]
\]
respectively. The ring $\NR[T,T^{-1}]_\diamond$ is obtained by taking the completion of $\NR[T^{-1}]$ with respect to the topology of $\NR$ and then inverting $T^{-1}$. Any element in $\NR[T,T^{-1}]_\diamond$ is then of the form $\sum_{n\geq m}^{\infty} a_n T^{-n}$ for some $m\in \Z$ and $a_n\in \NR$ such that $\lim_{n\to\infty} a_n=0$ in $\NR$; so
\[
\NR[T,T^{-1}]\subset \NR[T,T^{-1}]_\diamond\subset  \NR[T,T^{-1}]].
\]

One may now apply the universal coefficient theorem to conclude.
\end{proof}

\subsection{Proof  of Proposition \ref{P4.5}} To deal with the non-vanishing result, we first look at a non-torsion \spinc structure $\s$ on $Y$. 

If the period class $c$ is non-balanced with respect to $\s$, then the map $j_\bullet=y_\diamond\circ j_\diamond$ in diagram \eqref{E4.2} is an isomorphism, since the group $\bHM_\bullet(Y,\s,c_b;\NR_\omega)$ vanishes by \cite[Theorem 31.1.1]{Bible}. This shows that the vertical map
\[
y_\diamond: \tHM_\diamond(Y,\s,c_b; \NR_\omega)\to
\tHM_\bullet(Y,\s,c_b; \NR_\omega). 
\]
is always surjective. By \eqref{E4.8} and Proposition \ref{P4.3}, we conclude that 
\begin{equation}\label{E4.9}
\rank_\NR  \HM_*(Y,\s, c; \NR_\omega)\geq \rank_\NR\HM_*(Y,\s; \NR_\omega). 
\end{equation}

If the period class $c$ is balanced with respect to $\s$, then $\HM_*(Y,\s, c; \NR_\omega)\colonequals \im j^*$ in the digram \eqref{E4.2}. We apply the same argument to $y_\diamond\circ x_*$; so the inequality \eqref{E4.9} still holds.

When $c_1(\s)$ is torsion and $c$ is non-exact, the inequality \eqref{E4.9} follows from \cite[Theorem 31.1.3]{Bible}.

 It remains to verify that for the \textit{special} \spinc structure $\s$ given by Theorem \ref{T4.1} $(2)$, we have $
\HM_*(Y,\s; \NR_\omega)\neq \{0\}$. Combined with \eqref{E4.9}, this will complete the proof of Proposition \ref{P4.5}. 

\smallskip

The rest of the argument is completely formal and is inspired by \cite[Section 32.3]{Bible}. The group $\HM_*(Y,\s; \NR_\omega)$ can be understood using a smaller local coefficient system $\Pi_\omega$, whose fiber at any point in $\CB(Y,\s)$ is the group ring $R\colonequals \Z[\Z]$ and whose monodromy is again \eqref{E4.6} (note that the image of $f$ lies in a $\Z$-subgroup in $\R$). Let $(\widehat{C}_*,\hat{\partial})$ be the resulting chain complex defined over $R$. There are a few other base rings that we will look at: 
\[
\begin{tikzcd}
R=\Z[\Z] \arrow[r]\arrow[rd] & \R[\Z] \arrow[r,"g_\alpha"]&\R\\
& \BF_2[\Z]\arrow[r] & \K \arrow[r] & \NR.  
\end{tikzcd}
\]
where $\K$ is the rational field of $\BF_2[\Z]$ and for any $\alpha\in \R$, $g_\alpha$ is the ring homomorphism sending the generator $t\in\Z$ to $e^{\alpha}\in \R$. Note that both $\BF_2[\Z]$ and $\R[\Z]$ are principal ideal domains. Moreover, $\K\to \NR$ is a field extension. The goal is to show that $H(\widehat{C}_*\otimes_R \BF_2[\Z], \hat{\partial})$ contains at least one free summand, so that $\HM_*(Y,\s ;\NR_\omega)=H(\widehat{C}_*\otimes_R \NR, \hat{\partial})\neq \{0\}$.

Consider a finite presentation of $H(\widehat{C}_*)$ over the Noetherian ring $R$:
\begin{equation}\label{E4.10}
R^n\xrightarrow{A} R^k\to H(\widehat{C}_*)\to 0, 
\end{equation}
and let $I\subset R$ be the ideal generated by all $k\times k$ minors of $A$, called the Fitting ideal of $H(\widehat{C}_*)$. By \cite[Corollary 20.4]{E95}, $I$ is independent of the resolution that we choose.

Suppose on the contrary that the finitely generated $\BF_2[\Z]$-module $H(\widehat{C}_*\otimes_R \BF_2[\Z], \hat{\partial})$ is torsion. Then we claim that $I\neq \{0\}$; otherwise, the extension $I_{\BF_2}$ of $I$ in $\BF_2[\Z]$ is trivial. $I_{\BF_2}$ is also the Fitting ideal of $H(\widehat{C}_*)\otimes_\Z \BF_2$, which can be seen by taking the tensor product of \eqref{E4.10} with $\BF_2$ (over $\Z$). As a result, $H(\widehat{C}_*)\otimes_\Z \BF_2$ contains a free $\BF_2[\Z]$-summand. The universal coefficient theorem \cite[Theorem 3A.3]{Hatcher} then provides an injection:
\[
0\to H(\widehat{C}_*)\otimes_\Z \BF_2\to H(\widehat{C}_*\otimes_\Z\BF_2, \hat{\partial}),
\]
which contradicts the assumption that we start with. 

Now consider the tensor product of $\eqref{E4.10}$ with $\R$. The Fitting ideal $I_\R$ of $H(\hat{C}_*)\otimes_\Z \R$ is then the extension of $I$ in $\R[\Z]$. Since $I\neq \{0\}$, $I_\R\neq \{0\}$. Using the universal coefficient theorem, we conclude that 
\[
H(\hat{C}_*)\otimes_\Z \R\cong H(\hat{C}_*\otimes_\Z \R)
\]
is a torsion $\R[\Z]$-module and is therefore annihilated by some polynomial $u(t)\in \R[\Z]$.

On the one hand, one may pick $\alpha\in \R$ such that $g_\alpha(u(t))=u(e^{\alpha})\neq 0\in \R$; as a result, the group $H(\widehat{C}_*\otimes_{g_\alpha} \R, \hat{\partial})$ is trivial for this particular ring homomorphism $g_\alpha: \R[\Z]\to \R$.  

On the other hand, the local coefficient system $\Pi_\omega\otimes_{g_\alpha} \R$ agrees with $\Gamma_\xi$ for some 1-cycle $\xi$ with $[\xi]\neq 0\in H_1(Y;\Z)$, since $[\omega]$ is non-balanced. Theorem \ref{T4.1} (2) then asserts that $\HM_*(Y,\s; \Gamma_\xi)$ is non-trivial. A contradiction.\qed

\subsection{Computation for Product 3-manifolds} For future reference, we recall a classical result for the 3-manifold $\Sigma_g\times S^1$, where $\Sigma_g$ is a closed surface of genus $g\geq 2$.


\begin{lemma}\label{L6.9} Following the shorthands from Definition \ref{D6.4}, for any closed 2-form $\omega\in \Omega^2(\Sigma_g\times S^1, i\R)$ with
	\begin{equation}\label{E4.11}
	i\langle [\omega], [\Sigma_g]\rangle<2\pi (g-1),
	\end{equation}
	we have 
	$
\HM(\Sigma_g\times S^1,[\omega]|\Sigma_g )\cong \NR.
	$
\end{lemma}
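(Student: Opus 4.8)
The plan is to reduce the statement to the well-understood monopole Floer homology of $\Sigma_g\times S^1$ with its standard (exact or product) perturbation, and then quote the classical computation. First I would observe that the constraint \eqref{E4.11}, namely $i\langle[\omega],[\Sigma_g]\rangle<2\pi(g-1)$, is exactly the inequality that prevents the period class $c=-2\pi i[\omega]$ from being negatively monotone or balanced with respect to any \spinc structure $\s$ on $\Sigma_g\times S^1$ that contributes to the $\Sigma_g$-graded summand: for such $\s$ one has $\langle c_1(\s),[\Sigma_g]\rangle=-\chi(\Sigma_g)=2g-2$ after the adjunction line is reached, and the monotonicity relation $2\pi^2 c_1(\s)+c=2\pi^2 c_1(\s)\cdot t$ paired with $[\Sigma_g]$ forces a value of $t$ incompatible with \eqref{E4.11} unless $t>0$. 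Thus the perturbation is at worst positively monotone, and I may invoke Proposition \ref{P4.2} to identify, for each relevant non-torsion \spinc structure $\s$,
\[
\HM_*(\Sigma_g\times S^1,\s,c;\NR_\omega)\cong \HM_*(\Sigma_g\times S^1,\s;\NR_\omega),
\]
the right-hand side being the group defined with an exact perturbation.

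Next I would appeal to the classical computation of the Seiberg--Witten Floer homology of $\Sigma_g\times S^1$ due to Kronheimer--Mrowka (and going back to Muñoz/MuñozWang in the instanton case), which states that among all \spinc structures $\s$ with $\langle c_1(\s),[\Sigma_g]\rangle = 2g-2$ — i.e.\ those realizing the Thurston norm $x(\Sigma_g)=2g-2$ — the reduced monopole Floer homology $\HM_*(\Sigma_g\times S^1,\s;\NR)$ is nonzero for exactly one such $\s$ (the one pulled back from the canonical \spinc structure on $\Sigma_g$, with $c_1(\s)$ the Euler class) and has rank $1$ over $\NR$ in that case; for all other \spinc structures on the adjunction line it vanishes. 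Combined with Remark \ref{R4.5}, which lets me pass between $\NR_\omega$ and the trivial $\NR$-system when $[\omega]=0$, and with Proposition \ref{P4.2} above, this yields
\[
\HM(\Sigma_g\times S^1,[\omega]\,|\,\Sigma_g)\;=\;\bigoplus_{\langle c_1(\s),[\Sigma_g]\rangle=2g-2}\HM_*(\Sigma_g\times S^1,\s,c;\NR_\omega)\;\cong\;\NR.
\]

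The main obstacle I anticipate is bookkeeping the exact range of \spinc structures and verifying that \eqref{E4.11} does indeed exclude negative monotonicity \emph{for every} \spinc structure on the $\Sigma_g$-graded summand, not merely the distinguished one; one must be careful that for \spinc structures $\s$ on the adjunction line the value $\langle c_1(\s),[\Sigma_g]\rangle$ is pinned to $2g-2$, so the monotonicity constant $t$ is uniquely determined by $[\omega]$ and one checks $t\ge 0$ precisely under \eqref{E4.11}. A secondary point is that $b_1(\Sigma_g\times S^1)=2g+1\ge 2$, so the hypothesis of Proposition \ref{P4.2} is met; and that the non-torsion hypothesis there is satisfied since $c_1(\s)$ is non-torsion on the adjunction line when $g\ge 1$. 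With these checks in place the argument is essentially a citation of the classical product-manifold computation, just as indicated in the surrounding text.
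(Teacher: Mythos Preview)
Your proposal is correct and follows essentially the same route as the paper: verify via the pairing $\langle 2\pi^2 c_1(\s)+c,[\Sigma_g]\rangle=4\pi^2(g-1)-2\pi i\langle[\omega],[\Sigma_g]\rangle>0$ that the period class is neither negatively monotone nor balanced for any $\s$ on the adjunction line, invoke Proposition~\ref{P4.2} to reduce to an exact perturbation, and then cite the classical computation for $\Sigma_g\times S^1$ (the paper cites \cite[Lemma~2.2]{KS} directly, which already handles the local system $\NR_\omega$, so your detour through Remark~\ref{R4.5} is unnecessary and slightly misapplied since $[\omega]$ need not vanish here). Otherwise the argument is the same.
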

\begin{proof}[Proof of Lemma \ref{L6.9}] Let $\s$ be any \spinc structures contributing to the group $\HM(\Sigma_g\times S^1,[\omega]|\Sigma_g )$. Under the assumption \eqref{E4.11}, the period class $c=-2\pi i[\omega]$ is neither negatively monotone nor balanced with respect to $\s$; indeed,
	\[
	\langle 2\pi^2 c_1(\s)+c, [\Sigma_g]\rangle=4\pi^2(g-1)-2\pi i\langle [\omega],[\Sigma_g]\rangle >0.
	\] 
	Using Proposition \ref{P4.2}, we reduce the computation to the case when the perturbation is exact, but the coefficient system is still $\NR_\omega$. Now we use \cite[Lemma 2.2]{KS} to conclude the proof.
\end{proof}

\section{Thurston Norm Detection}\label{Sec7}

From now on, we will always take $Y$ to be an oriented 3-manifold with toroidal boundary.

\begin{definition}\label{D7.1}For any $T$-surface $\TSigma=(\Sigma, g_\Sigma,\lambda,\mu)$ and any 1-cell $\y\in \AT(\emptyset,\TSigma)$, consider the set of  monopole classes:
	\[
	\sw(\y)\colonequals \{c_1(\bs): \HM_*(\y,\bs)\neq\{0\}\} \subset H^2(Y,\partial Y; \Z),
	\]
	and for any $\kappa\in H_2(Y,\partial Y;\Z)$, define 
	\[
	\varphi_{\y}(\kappa)\colonequals \max_{c_1(\bs)\in \sw(\y)} \langle c_1(\bs), \kappa\rangle,\ \kappa\in H_2(Y,\partial Y;\Z). 
	\]
	Our convention here is that $\varphi_{\y}\equiv -\infty$, if $\sw(\hy)=\emptyset$. 
\end{definition}

When $Y$ is connected and irreducible, it is tempting to generalize Theorem \ref{T4.1} and relate $\sw(\y)$ with the Thurston norm on $H_2(Y,\partial Y;\R)$. However, the author was not able to show that $\sw(\y)$ is symmetric about the origin; so only a weaker statement is obtained in this paper.

\begin{theorem}\label{T7.1} For any $T$-surface $\TSigma=(\Sigma, g_\Sigma,\lambda,\mu)$, let $\y\in \AT(\emptyset,\TSigma)$ be any 1-cell with $Y$ connected and irreducible. Then the set of monopole classes $\sw(\y)$ is non-empty and determines the Thurston norm on $H_2(Y,\partial Y;\R)$ in the following sense:
	\[
	x(\kappa)=\half ( \varphi_{\y}(\kappa)+\varphi_{\y}(-\kappa)),\  \forall \kappa\in H_2(Y,\partial Y; \Z). 
	\]
	In general, we only have an inequality $\varphi_{\y}(\kappa)+\varphi_{\y}(-\kappa)\leq 2x(\kappa)$.
\end{theorem}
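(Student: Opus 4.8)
The plan is to reduce the statement to the Thurston norm detection for \emph{closed} $3$-manifolds (Theorem~\ref{T4.1}, in the non-exact form of Propositions~\ref{P4.2} and~\ref{P4.5}) by gluing $\y$ to its mirror via the Gluing Theorem~\ref{T2.5}.

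First I would form the double. With $-\TSigma=(-\Sigma,g_\Sigma,-\lambda,\mu)$, the mirror manifold $-Y$ equipped with the same $2$-form $\omega$ is a $1$-cell $\y^-\in\AT(\emptyset,-\TSigma)\simeq\AT(\TSigma,\emptyset)$: near $\partial(-Y)$ one has $\omega=\mu-ds\wedge\lambda$, matching the $T$-surface data $(-\lambda,\mu)$. Then $\y\circ_h\y^-=(DY,\omega_D,\dots)$, where $DY=Y\cup_\Sigma(-Y)$ is the double (closed, connected, oriented, $b_1>0$) and $\omega_D$ is the doubled closed $2$-form (the collar forms $\mu\pm ds\wedge\lambda$ agree along $\Sigma$). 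Floer's Excision Theorem~\ref{T3.3} with the grading statement Theorem~\ref{T3.5} gives, for every $\bs\in\Spinc(DY)$,
\[
\bigoplus_{\bs_1\circ_h\bs_2=\bs}\HM_*(\y,\bs_1)\otimes_\NR\HM_*(\y^-,\bs_2)\ \cong\ \HM_*(DY,\bs,c;\NR_{\omega_D}),\qquad c=-2\pi i[\omega_D],
\]
so that $\sw(DY,\omega_D)=\{c_1(\bs_1\circ_h\bs_2):\bs_1\in\sw(\y),\ \bs_2\in\sw(\y^-)\}$, a finite set since $\HM_*(\y)$ and $\HM_*(\y^-)$ are finitely generated. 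An orientation-reversal-plus-conjugation duality identifies $\sw(\y^-)$ with $\sw(\y)$ through a bijection $\bs_2\mapsto\bs_2^{\vee}$, under which, for $\kappa\in H_2(Y,\partial Y;\Z)$ with double $D\kappa=[F\cup_{\partial F}(-F)]\in H_2(DY;\Z)$,
\begin{equation}\label{EqAvgAdj}
\langle c_1(\bs_1\circ_h\bs_2),D\kappa\rangle=\langle c_1(\bs_1),\kappa\rangle-\langle c_1(\bs_2^{\vee}),\kappa\rangle,\qquad\text{hence}\qquad\max_{\bs_D\in\sw(DY,\omega_D)}\langle c_1(\bs_D),D\kappa\rangle=\varphi_{\y}(\kappa)+\varphi_{\y}(-\kappa).
\end{equation}
It is the minus sign here, forced by orientation reversal, that produces the \emph{average} $\tfrac12(\varphi_{\y}(\kappa)+\varphi_{\y}(-\kappa))$ rather than $\varphi_{\y}(\kappa)$ alone.

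Next, the doubling formula for the Thurston norm: $x_{DY}(D\kappa)=2x(\kappa)$ when $Y$ is irreducible. The bound $\le$ is immediate by doubling a norm-minimizing surface (taken with no closed or disk components once $x(\kappa)>0$), whose double has Euler number $2x(\kappa)$; the bound $\ge$ follows from Gabai's sutured-manifold theory~\cite{Gabai83}, since a taut surface in $DY$ representing $D\kappa$ can be made invariant under the swap involution and cut into two taut halves in $Y$ of norm $x(\kappa)$ each. For the upper bound in the theorem I would then invoke Proposition~\ref{P4.2}: having checked that $\omega_D$ is neither balanced nor negatively monotone with respect to any $\bs$ with $c_1(\bs)\ne0$ --- here the smallness of $\omega|_{\partial Y}$ (the hypothesis $(\star)$, visible in $|\langle\mu,[\Sigma^{(i)}]\rangle|<2\pi$) is exactly what prevents $[\omega_D]$ from being the kind of large real multiple of a characteristic class that negative monotonicity requires --- the adjunction inequality yields $\HM_*(DY,\bs,c;\NR_{\omega_D})=0$ whenever $|\langle c_1(\bs),D\kappa\rangle|>x_{DY}(D\kappa)$ (apply it to $\pm D\kappa$; the case $c_1(\bs)=0$ is trivial). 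With \eqref{EqAvgAdj} this gives $\varphi_{\y}(\kappa)+\varphi_{\y}(-\kappa)\le 2x(\kappa)$; since this uses neither irreducibility of $Y$ nor the lower bound on $x_{DY}$, it proves the general inequality (with $\varphi_{\y}\equiv-\infty$ covering $\sw(\y)=\emptyset$). For the reverse inequality, assuming $Y$ irreducible: $DY$ is then irreducible (being irreducible and, its toroidal boundary being disconnected, not a solid torus, $Y$ has incompressible boundary, so its double is irreducible), and $[\omega_D]\ne0$; Proposition~\ref{P4.5} (equivalently Corollary~\ref{C6.5}, the adaptation of \cite[Corollary 41.4.3]{Bible} to non-exact perturbations) then produces $\bs$ with $\langle c_1(\bs),D\kappa\rangle=x_{DY}(D\kappa)$ and $\HM_*(DY,\bs,c;\NR_{\omega_D})\ne0$, whence $\varphi_{\y}(\kappa)+\varphi_{\y}(-\kappa)\ge2x(\kappa)$ and, in particular, $\sw(\y)\ne\emptyset$. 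Combining the two bounds gives equality.

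\textbf{Expected main obstacle.} Two pieces of bookkeeping are the crux. First, pinning down the duality $\sw(\y^-)\cong\sw(\y)$ together with the \emph{sign} in \eqref{EqAvgAdj}: both orientation reversal and charge conjugation intervene (the latter because the perturbation form changes sign under conjugation), and a sign error here would give $2\varphi_{\y}(\kappa)$, which is false. Second, verifying that the doubled form $\omega_D$ genuinely lands in the range covered by Proposition~\ref{P4.2}, i.e.\ that smallness on $\partial Y$ rules out negative monotonicity on $DY$ for \emph{every} relevant $\bs$; this is the only substantial analytic input beyond what is already in Section~\ref{Sec6}. A lesser point is justifying the doubling formula $x_{DY}(D\kappa)=2x(\kappa)$ cleanly, which is exactly where irreducibility of $Y$ is genuinely used.
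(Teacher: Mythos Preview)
Your overall strategy---double $Y$, apply the Gluing Theorem, then invoke the adjunction inequality and non-vanishing on the closed double---is exactly the paper's approach. But there is a genuine gap in the setup, and your diagnosis of the obstacle is off.

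The horizontal composition $\y\circ_h\y^-$ is \emph{not} defined in $\AT$. Your claim that ``near $\partial(-Y)$ one has $\omega=\mu-ds\wedge\lambda$'' is incorrect: reversing the orientation of $Y$ does not change the differential form $\omega$, so on the collar it is still $\mu+ds\wedge\lambda$. Working through the conventions (P1)--(P2), one finds $(-\y)\in\AT(\TSigma',\emptyset)$ with $\TSigma'=(\Sigma,g_\Sigma,-\lambda,\mu)$, \emph{not} $\AT(\TSigma,\emptyset)$; the $\lambda$'s have opposite sign, so $\y$ and $(-\y)$ are not horizontally composable. The paper flags this explicitly and circumvents it by modifying $\omega$ near the boundary to produce an auxiliary $1$-cell $\y'\in\AT(\emptyset,\TSigma')$, so that $(-\y')\in\AT(\TSigma,\emptyset)$ and $\y\circ_h(-\y')$ is now legal. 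The price is Lemma~\ref{L7.3}(2), which establishes $\HM_*(\y,\bs)\cong\HM_*(\y',\bs)$ via an explicit hexagon cobordism; this is a non-trivial ingredient that your sketch does not supply.

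Your ``expected main obstacle'' is in the right place but misidentifies the mechanism. The duality you need is \emph{not} charge conjugation: conjugation sends $\bs\mapsto\bs^*$ with $c_1(\bs^*)=-c_1(\bs)$, and invoking it would presuppose $\sw(\y)=-\sw(\y)$, which is precisely the symmetry the paper cannot prove (see Remark~\ref{R1.3}). What actually works is the combination of Poincar\'e duality $\HM_*((-\y'),\bs)\cong\HM^*(\y',\bs)$ (same $\bs$) with Lemma~\ref{L7.3}(2), giving $\sw(-\y')=\sw(\y)$ as subsets of $H^2(Y,\partial Y;\Z)$ with no sign flip on $c_1$. The minus sign in your equation \eqref{EqAvgAdj} then comes entirely from the pairing $\langle c_1(\bs_2),[-F]\rangle=-\langle c_1(\bs_2),[F]\rangle$, which is what produces $\varphi_{\y}(\kappa)+\varphi_{\y}(-\kappa)$ rather than $2\varphi_{\y}(\kappa)$. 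Once Lemma~\ref{L7.3} is in place, the rest of your argument (monotonicity check via $|\langle\mu,[\Sigma^{(i)}]\rangle|<2\pi$, Gabai's result that the doubled norm-minimizing surface is norm-minimizing, Propositions~\ref{P4.2} and~\ref{P4.5}) lines up with the paper's.
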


One may approach the more desirable statement
\[
x(\kappa)=\varphi_{\y}(\kappa)=\varphi_{\y}(-\kappa).
\]
by either proving the symmetry of $\sw(\y)$ or the adjunction inequality:
\[
\varphi_{\y}(\kappa)\leq x(\kappa).
\]
But the author was unable to verify either of them directly. This Thurston norm detection result is accompanied with a fiberness detection result: 
\begin{theorem}\label{T7.2} For any $T$-surface $\TSigma=(\Sigma, g_\Sigma,\lambda,\mu)$, let $\y\in \AT(\emptyset,\TSigma)$ be any 1-cell with $Y$ connected and irreducible, and $\kappa\in H_2(Y,\partial Y;\Z)$ be any integral class. Consider the subgroup
	\[
	\HM_*(\y|\kappa)\colonequals \bigoplus_{\langle c_1(\bs), \kappa\rangle=\varphi_{\y}(\kappa)} \HM_*(\y,\bs).
	\]
	If $\rank_\NR 	\HM_*(\y|\kappa)=\rank_\NR 	\HM_*(\y|-\kappa)=1$, then $\kappa$ can be represented by a Thurston norm minimizing surface $F$ and $Y$ fibers over $S^1$ with $F$ as fiber. 
\end{theorem}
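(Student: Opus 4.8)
The plan is to convert the fibering problem into a product-detection statement for sutured monopole Floer homology, using the Decomposition Theorem \ref{T8.1} and the known sutured fiberness criterion of Kronheimer--Mrowka and Ni.

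\emph{Reductions and geometric setup.} We may assume $\kappa$ is primitive: dividing $\kappa$ by its divisibility rescales $\varphi_{\y}$ and $x$ equally and affects neither the rank hypothesis nor the conclusion. By Theorem \ref{T7.1} the Thurston norm $x(\kappa)$ is finite, so $\kappa$ is represented by some norm-minimizing properly embedded surface; choose $F\subset Y$ norm-minimizing with no sphere, disk, or boundary-parallel components, so that $F$ is a taut (Gabai \cite{Gabai83}) representative of $\kappa$. Using the Invariance Theorem \ref{T5.1}, replace the auxiliary data on $\TSigma$ so that $[*_\Sigma\lambda]$ is a positive multiple of the Poincar\'e dual of $[\partial F]\in H_1(\partial Y;\Z)$; this is the compatibility condition of Remark \ref{R1.3} and Corollary \ref{C8.4} that lets us regard $\y$ as the decorated sutured manifold $(Y,\gamma)$ with sutures of slope $\partial F$ in the sense of Section \ref{Sec8}.

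\emph{Passage to sutured Floer homology.} Decomposing $(Y,\gamma)$ along $F$ yields a balanced sutured manifold $(Y_F,\gamma_F)$, and, because $F$ is norm-minimizing, the Decomposition Theorem \ref{T8.1} (the analogue of \cite[Theorem 1.3]{J08} and \cite[Theorem 6.8]{KS}) identifies the extremal grading summand with the sutured monopole Floer homology of the decomposed piece,
\[
\HM_*(\y|\kappa)\;\cong\;\SHM(Y_F,\gamma_F)
\]
(over $\NR$, with the coefficient bookkeeping of Section \ref{Sec8}). Applying the same statement to $-F$, i.e.\ decomposing $(Y,\gamma)$ along $F$ with the opposite transverse orientation, gives $\HM_*(\y|-\kappa)\cong\SHM(Y_F,\gamma_F')$, where $(Y_F,\gamma_F')$ differs from $(Y_F,\gamma_F)$ by interchanging the roles of the two new copies $F_+,F_-$ of $F$ in $R_\pm$. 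The rank hypothesis thus says that both $\SHM(Y_F,\gamma_F)$ and $\SHM(Y_F,\gamma_F')$ have rank $1$ over $\NR$. By the product-detection theorem for sutured monopole Floer homology, \cite[Theorem 6.1]{KS} together with Ni's sutured-hierarchy argument \cite{Ni09b} (the monopole counterpart of Juh\'asz's \cite[Theorem 1.4]{J08} and of Gabai's theory), a taut balanced sutured manifold with sutured Floer homology of rank $1$ is a product sutured manifold; hence $(Y_F,\gamma_F)\cong(R\times[-1,1],\partial R\times[-1,1])$ for some compact surface $R$. Comparing the new boundary copies $F_\pm\subset R\times\{\pm1\}$ and using that $Y$ is irreducible and $F$ was taken with a minimal number of components, we get $R\cong F$ componentwise, so $Y_F\cong F\times[-1,1]$. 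A surface with $[F]=\kappa$ is a fiber of a fibration $Y\to S^1$ exactly when the cut-open manifold $Y_F$ is a trivial $I$-bundle over $F$; this is what we have shown, and primitivity of $\kappa$ forces $F$ (hence the fiber) to be connected, so $Y$ fibers over $S^1$ with fiber the norm-minimizing surface $F$.

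\emph{Main obstacle.} I expect the delicate step to be the second one: making the Decomposition Theorem \ref{T8.1} sharp enough that it is precisely the \emph{extremal} summand $\HM_*(\y|\kappa)$ — and not a larger or smaller sum of $\bs$-summands — that is matched with $\SHM(Y_F,\gamma_F)$. In the closed case this matching is essentially equivalent to the norm-detection statement itself, so care is needed to feed in the Thurston-norm input (Theorem \ref{T7.1}, proved via the double and \cite[Corollary 41.4.3]{Bible}) without circularity. A secondary technical point, when $\partial\kappa\neq 0$, is tracking the sutures on the toral components of $\partial Y$ correctly through the decomposition — this is exactly why $[*_\Sigma\lambda]$ is normalized in the first step — and the remaining comparison of $\NR$-coefficients between $\HM_*$ and $\SHM$ is routine given Section \ref{Sec8}.
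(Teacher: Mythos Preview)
Your overall strategy—reduce to properties \ref{P5}\ref{P6}, apply Theorem \ref{T8.1}, then invoke the sutured product-detection criterion—is the paper's strategy as well, but two genuine gaps remain.

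First, the Invariance Theorem \ref{T5.1} does \emph{not} allow you to change $[*_\Sigma\lambda]$: it says $\HM_*(\y)$ is unchanged when $[*_\Sigma\lambda]$ and $[\omega]$ are \emph{held fixed}. To apply Theorem \ref{T8.1} you need both \ref{P5} and \ref{P6} (you omit the latter entirely), so you must pass to a new 1-cell $\y_2$ with possibly different $[*_\Sigma\lambda_2]$ and $[\omega_2]$, and then argue that the rank-$1$ hypothesis transfers from $\y$ to $\y_2$. The paper does this by passing to the double $\tilde Y=Y\cup_\Sigma(-Y)$: using \eqref{E7.2} one gets $\HM(\tilde Y,[\tilde\omega]\,|\,\tilde F)\cong\NR$, then Floer excision along $\tilde F$ (Step 3 in Section \ref{Sec9}) shows this group is independent of the class $[\tilde\omega]$, hence $\HM(\tilde Y,[\tilde\omega_2]\,|\,\tilde F)\cong\NR$ as well, whence $\HM_*(\y_2|[F])\cong\NR$. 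This detour through the double is the real ``main obstacle,'' not the precision of Theorem \ref{T8.1} that you flag.

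Second, the product-detection result Theorem \ref{T9.2} (your \cite[Theorem 6.1]{KS}) requires that $(M(Y,F),\gamma)$ be a \emph{homology product}; this is not automatic from $\SHM\cong\NR$ in the monopole setting and needs its own argument. The paper verifies it in Steps 2 and 4: on the double, Ni's torsion criterion \cite[Proposition 3.1]{Ni09b} together with \eqref{E9.2} shows $(\tilde M,\tilde F_+)$ is a homology product, and a Mayer--Vietoris comparison then descends this to $(M(Y,F),\gamma)$. Your blanket appeal to ``Ni's sutured-hierarchy argument'' does not supply this step. Finally, the connectedness of $F$ and the equality $\varphi_{\y}(\kappa)=\varphi_{\y}(-\kappa)=x(\kappa)$ are obtained in the paper via McMullen's Alexander-norm results (Lemma \ref{L9.1}), not by a bare primitivity argument.
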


The rest of this section is devoted to the proof of Theorem \ref{T7.1}, while the proof of Theorem \ref{T7.2} is deferred to Section \ref{Sec9}, after a digression into sutured monopole Floer homology in Section \ref{Sec8}.

\begin{proof}[Proof of Theorem \ref{T7.1}] We focus on the case when $Y$ is irreducible. The strategy is to apply the Gluing Theorem \ref{T2.5} and reduce the problem to the double of $Y$:
	\[
	\tilde{Y}\colonequals Y\ \bigcup_{\Sigma}\ (-Y).
	\]
	Since $Y$ is irreducible, so is $\tilde{Y}$. Then we can conclude using Proposition \ref{P4.2} and \ref{P4.5}.
	
	For any 1-cell $\y=(Y,g_Y,\omega,\cdots)$, consider its orientation reversal:
	\[
	(-\y)=(-Y,g_Y,\omega, \cdots)\in \AT(\TSigma',\emptyset).
	\]
	The problem here is that the $T$-surface $\TSigma'=(\Sigma, g_\Sigma, -\lambda,\mu)$ is different from $\TSigma$ in that the sign of $\lambda$ is reversed. We can not form the horizontal composition $\y\circ_h (-\y)$. 
	
This problem is circumvented by changing the 2-form $\omega$ on $(-Y)$. Since $\HM_*(\y,\bs)$ is independent of the metric $g_Y$, we assume that it is cylindrical on $(-2,1]_s\times\Sigma\subset Y$. Pick a cut-off function $\chi_1: (-2,1]_s\to \R$ such that $\chi_1\equiv 0$ if $s\leq -3/2$ and $\equiv 1$ if $s\geq-1$. Write $
\omega=\bomega+\chi_1(s)ds\wedge\lambda$ and set 
\[
\y'\colonequals (Y,g_Y,\omega',\cdots )\in \AT(\emptyset,\TSigma') \text{ with }\omega'\colonequals\bomega-\chi_1(s)ds\wedge\lambda. 
\]
We shall work instead with the orientation reversal $(-\y')\in \AT(\TSigma, \emptyset)$. At this point, we need a lemma relating the Floer homology of $\y'$ with that of $\y$:
\begin{lemma}\label{L7.3} For any $T$-surface $\TSigma$, let $\y\in \AT(\emptyset,\TSigma)$ be any 1-cell and $\bs\in \Spincr(Y)$ be any relative \spinc structure. Then we have the following isomorphisms:
\begin{enumerate}[label=(\arabic*)]
\item\label{7.2.1} (Poincar\'{e} Duality) $ \HM_*((-\y),\bs)\cong \HM^*(\y,\bs)$.
\item\label{7.2.2} (Reversing the sign of $\lambda$) $\HM_*(\y,\bs)\cong \HM_*(\y',\bs)$.
\end{enumerate}
\end{lemma}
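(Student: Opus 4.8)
The plan is to treat \ref{7.2.1} as an instance of the standard orientation–reversal duality in monopole Floer homology, and \ref{7.2.2} by combining the Gluing Theorem \ref{T2.5} with the Invariance Theorem \ref{T5.1} through the orientation–preserving symmetry $-\Id$ of the torus components.

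For \ref{7.2.1} I would argue at the chain level, following \cite[Chapter 22]{Bible}. Reversing the orientation of $Y$ interchanges the downward and upward gradient flows of the Chern--Simons--Dirac functional (the Hodge star on $2$-forms, the Clifford multiplication $\rho_3$ and the Dirac operator pick up the relevant signs), while $\omega$ is kept and the asymptotic solution $(B_*,\Psi_*)$ is replaced by the corresponding finite-energy solution for $\mu+ds\wedge(-\lambda)$ on $\R_s\times(-\Sigma)$ — i.e.\ exactly the canonical solution for the $T$-surface $-\TSigma$. Since $\lambda$ and $\mu$ are nowhere vanishing, every critical point of the perturbed functional is irreducible and non-degenerate, so there is no blow-up and a single Floer group; the complex $\check C_*((-\y),\bs)$ is then canonically the $\NR$-linear dual of $\check C_*(\y,\bs)$ with transposed differential. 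One checks that the natural identification $\Spincr(-Y)=\Spincr(Y)$ is grading–compatible (the canonical grading by relative $2$-plane fields reverses under orientation reversal, matching the cohomological grading), which yields $\HM_*((-\y),\bs)\cong\HM^*(\y,\bs)$.

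For \ref{7.2.2} the first step is the cohomological observation that $[\omega]=[\omega']$ in $H^2(Y;i\R)$: with $X(s)\colonequals\int_{-3/2}^{s}\chi_1$, the $1$-form $b\colonequals 2X(s)\lambda$ (extended by zero outside the collar, which is smooth since $\chi_1$ vanishes to infinite order at $s=-3/2$) satisfies $db=2\chi_1\,ds\wedge\lambda=\omega-\omega'$ on all of $Y$. Since $b$ is generally not compactly supported, Theorem \ref{T5.2} does not apply directly; and Theorem \ref{T5.1} does not apply directly either, because $\TSigma$ and $\TSigma'$ differ in the sign of $\lambda$, hence in the class $[*_\Sigma\lambda]$. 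The device I would use to bridge this gap is the orientation–preserving involution $\iota=-\Id$ on each torus component of $\Sigma$: it fixes $g_\Sigma$ and $\mu$ and sends $\lambda\mapsto-\lambda$, so it is an isomorphism of $T$-surfaces $\iota\colon\TSigma\xrightarrow{\ \sim\ }\TSigma'$. Concretely I would introduce a $1$-cell $\y_0\in\AT(\TSigma,\TSigma')$ with underlying manifold a long product $[-N,N]_s\times\Sigma$, cylindrical metric, $2$-form $\mu+ds\wedge\lambda$, incoming identification $\Id$ and outgoing identification $\iota$; arguing exactly as in Lemma \ref{L5.2} (glue $\y_0$ to its opposite to obtain a copy of $\T^3$, compute with Lemma \ref{L5.4}, and pin down the relative $\Spinc$ grading by topological invariance of the Euler characteristic, Theorem \ref{T3.4}) shows that $\HM_*(\y_0,\bs)\cong\NR$ for a single relative $\Spinc$ structure and vanishes otherwise. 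The Gluing Theorem \ref{T2.5} together with Theorem \ref{T3.5} then gives $\HM_*(\y,\bs)\cong\HM_*(\y\circ_h\y_0,\,\bs\circ_h\bs_0)$ for all $\bs$.

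It remains to identify $\HM_*(\y\circ_h\y_0,-)$ with $\HM_*(\y',-)$, and this is the step I expect to be the main obstacle. The composite $\y\circ_h\y_0$ is a $1$-cell over $\TSigma'$ whose underlying manifold is $Y$ (the product $\y_0$ being absorbed as a collar) and whose $2$-form lies in the class $[\omega]=[\omega']$, but whose boundary identification is twisted by $\iota$ relative to that of $\y'$; since $\iota$ is not isotopic to $\Id_{\T^2}$, this twist is not covered by Theorems \ref{T5.1}--\ref{T5.2} as stated. The mechanism I would invoke is that such a twist is absorbed upon passing to the completion: reparametrizing the attached cylindrical end of $\widehat{Y\circ_h Y_0}$ by $(s,p)\mapsto(s,\iota(p))$ is an isometry of completed Riemannian manifolds carrying one $2$-form to the other up to a compactly supported exact form, so the content reduces to the case of equal identifications — to which Theorem \ref{T5.1} applies via $[\omega]=[\omega']$ — at the cost of relabelling $\Spincr(Y)$ by $\iota_*$ on $H^1(\Sigma;\Z)$, an explicit shift that fixes $c_1(\bs)$ and hence does not affect the statement of Theorem \ref{T7.1}. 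Making this last reduction fully rigorous — i.e.\ establishing invariance of $\HM_*(\y,\bs)$ under composing the boundary identification with an automorphism of the $T$-surface — is the technical heart of the argument. As an alternative I would consider bypassing $\y_0$ and constructing a continuation isomorphism directly along a path $\lambda_t$ of nowhere-vanishing harmonic $1$-forms joining $\lambda$ to $-\lambda$ (for instance a rotation by $\pi$ on each torus component), the subtleties then being to maintain property \ref{P3} along the path and to arrange that the round-trip loop is null-homotopic, so that the reverse continuation is a two-sided inverse.
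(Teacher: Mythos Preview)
Your treatment of \ref{7.2.1} matches the paper's: both invoke the chain-level orientation-reversal argument of \cite[Section 22.5]{Bible}, noting that over $\BF_2$ no orientation issues arise.

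For \ref{7.2.2} your route is genuinely different, and the gap you yourself flag is real. The paper does \emph{not} pass through the involution $\iota=-\Id$ or through any $1$-cell $\y_0$ with twisted boundary identification. Instead it constructs an explicit strict cobordism
\[
\x:\y\ \coprod\ e_{\TSigma}\longrightarrow \y',
\]
built from a hexagon $\Omega_1\times\Sigma$ (analogous to the octagon in Subsection~\ref{Subsec3.3}) with $[-1,1]_t\times Y_-$ attached along one edge, and with $\omega_X=\bar\omega+dh\wedge\lambda$ for a suitable function $h$ on $\hx$ interpolating between $+s$ and $-s$ on the various ends. Reversing the roles of $\y$ and $\y'$ gives $\x':\y'\coprod e_{\TSigma'}\to\y$. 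The key step is that the vertical composition $\x'\circ_v\x$ (and likewise $\x\circ_v\x'$) agrees, up to a compactly supported exact form, with the octagon cobordism that induces the gluing map $\alpha$ of Theorem~\ref{T3.3}; hence $\alpha_1'\circ\alpha_1$ and $\alpha_1\circ\alpha_1'$ are invertible, and the induced maps are isomorphisms for each fixed $\bs$. No mapping-class ambiguity ever enters.

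By contrast, your step~3 does not close. The diffeomorphism of completions $(s,p)\mapsto(s,\iota(p))$ on the end does identify $\widehat{Y\circ_h Y_0}$ with the completion underlying $\y'$, but the induced bijection $\Spincr(Y,\iota\circ\psi)\to\Spincr(Y,\psi)$ depends on a choice of lift of $\iota$ to the standard \spinc structure on $\Sigma$, and different lifts differ by elements of $H^1(\Sigma;\Z)$ acting nontrivially on $\Spincr(Y)$. So you obtain $\HM_*(\y,\bs)\cong\HM_*(\y',f(\bs))$ for \emph{some} bijection $f$, not for $f=\Id$ as Lemma~\ref{L7.3} asserts. Your claim that the resulting shift ``fixes $c_1(\bs)$'' is not justified: the shift lands in $H^1(\Sigma;\Z)/\im H^1(Y;\Z)\hookrightarrow H^2(Y,\partial Y;\Z)$ and moves $c_1$ by twice that element, which need not vanish. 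Even for the weaker goal of Theorem~\ref{T7.1} this would require a separate argument. Your alternative via a path $\lambda_t$ faces the same kind of obstacle: property \ref{P3} forces $[*_\Sigma\lambda_t]$ to stay in $\im(H^1(Y)\to H^1(\Sigma))$, which it generally will not along a rotation from $\lambda$ to $-\lambda$.
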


Let us finish the proof of Theorem \ref{T7.1} assuming Lemma \ref{L7.3}. Consider the horizontal composition of $\y$ and $-\y'$:
\[
(\tilde{Y},\tilde{\omega},\cdots)\colonequals \y\circ_h (-\y').
\]
We verify that the closed 2-form $\tilde{\omega}$ is neither negatively monotone nor balanced with respect to any \spinc structures on the double $\tilde{Y}$. Indeed, by \ref{T3} and \ref{T1}, $|\langle \tilde{\omega},[\Sigma^{(i)}]\rangle|<2\pi$ and $\neq 0$ for any component $\Sigma^{(i)}\subset \Sigma$.

Any integral class $\kappa\in H_2(Y, \partial Y;\Z)$ can be represented by a properly embedded oriented surface $F\subset Y$ minimizing the Thurston norm. Combined with its orientation reversal $(-F)\subset (-Y)$, they form a closed surface 
\[
\tilde{F}\colonequals F\cup (-F)\subset \tilde{Y}.
\]
Since $Y$ is irreducible and $Y\neq S^1\times D^2$, the surface $F$ has no sphere or disk components. By \cite[Lemma 6.15]{Gabai83}, the double $\tilde{F}\subset \tilde{Y}$ is also norm-minimizing.
 By Theorem \ref{T3.5}, we have 
 \begin{equation}\label{E7.1}
 \HM_*(\tilde{Y},\s,\tilde{c}; \NR_{\tilde{\omega}})\cong \bigoplus_{ \bs_1\circ_h \bs_2=\s} \HM_*(\y,\bs_1)\otimes_\NR \HM_*(-\y',\bs_2). 
 \end{equation}
 where the sum is over all pairs $(\bs_1,\bs_2)\in \Spincr(Y)\times \Spincr(-Y)$ with $\bs_1\circ_h \bs_2=\s$. Note that 
  \begin{equation}\label{E7.3}
 \langle c_1(\bs_1\circ_h\bs_2), [\tilde{F}]\rangle =\langle c_1(\bs_1), [F]\rangle +\langle c_1(\bs_2), [-F]\rangle,
 \end{equation}
 By the adjunction inequality from Proposition \ref{P4.2}, the left hand side of \eqref{E7.1} vanishes whenever
 \[
 \langle c_1(\s), [\tilde{F}]\rangle >x(\tilde{F})=2x(F). 
 \]
 Combined with Corollary \ref{C6.5} and \eqref{E7.1}\eqref{E7.3}, this implies that the group
\begin{equation}\label{E7.2}
\HM(\tilde{Y},[\tilde{\omega}]|[\tilde{F}])\cong \HM_*(\y|\kappa)\otimes_\NR \HM_*(-\y'|-\kappa),
\end{equation}
 is non-vanishing. By Lemma \ref{L7.3}, we have 
 \begin{equation}
\rank_\NR \HM_*(-\y'|-\kappa)= \rank_\NR\HM_*(\y|-\kappa),
 \end{equation}
so
 \[
2x(\kappa)=x(\tilde{F})=\varphi_{\y}(\kappa)+\varphi_{(-\y')}(-\kappa)=\varphi_{\y}(\kappa)+\varphi_{\y}(-\kappa).
 \]
This completes the proof of Theorem \ref{T7.1} when $Y$ is irreducible. In the general case, the inequality 
\[
\varphi_{\y}(\kappa)+\varphi_{\y}(-\kappa)\leq 2x(\kappa),
\]
follows from the vanishing result that we used earlier. 
\end{proof}

\begin{proof}[Proof of Lemma \ref{L7.3}] The first isomorphism \ref{7.2.1} is due to Poincar\'{e} duality. Changing the orientation of $\hy$ has the same effect as changing the sign of the perturbed Chern-Simons-Dirac functional $\CL_\omega$ in \eqref{E3.3}, while keeping the orientation fixed; see \cite[Section 22.5]{Bible} for more details. Since we have worked with a ring $\NR$ over $\BF_2$, there is no need to deal with orientations. Our Floer homology $\HM_*(\y)$ is defined using a local coefficient system with fibers $\NR$. Thus the most relevant analogue for closed 3-manifolds is \cite[P.624 (32.2)]{Bible}.
	
	\smallskip
	
The second isomorphism \ref{7.2.2} is induced from a concrete strict cobordism
\[
\x:\y\coprod e_{\TSigma}\to \y',
\]
as we describe now. Similar to the construction in Section \ref{Sec3}, we start with a hexagon $\Omega_1$ with boundary consisting of geodesic segments of length $2$ and whose internal angles are always $\pi/2$. Also, the metric of $\Omega_1$ is flat near the boundary.
	\begin{figure}[H]
	\centering
	\begin{overpic}[scale=.09]{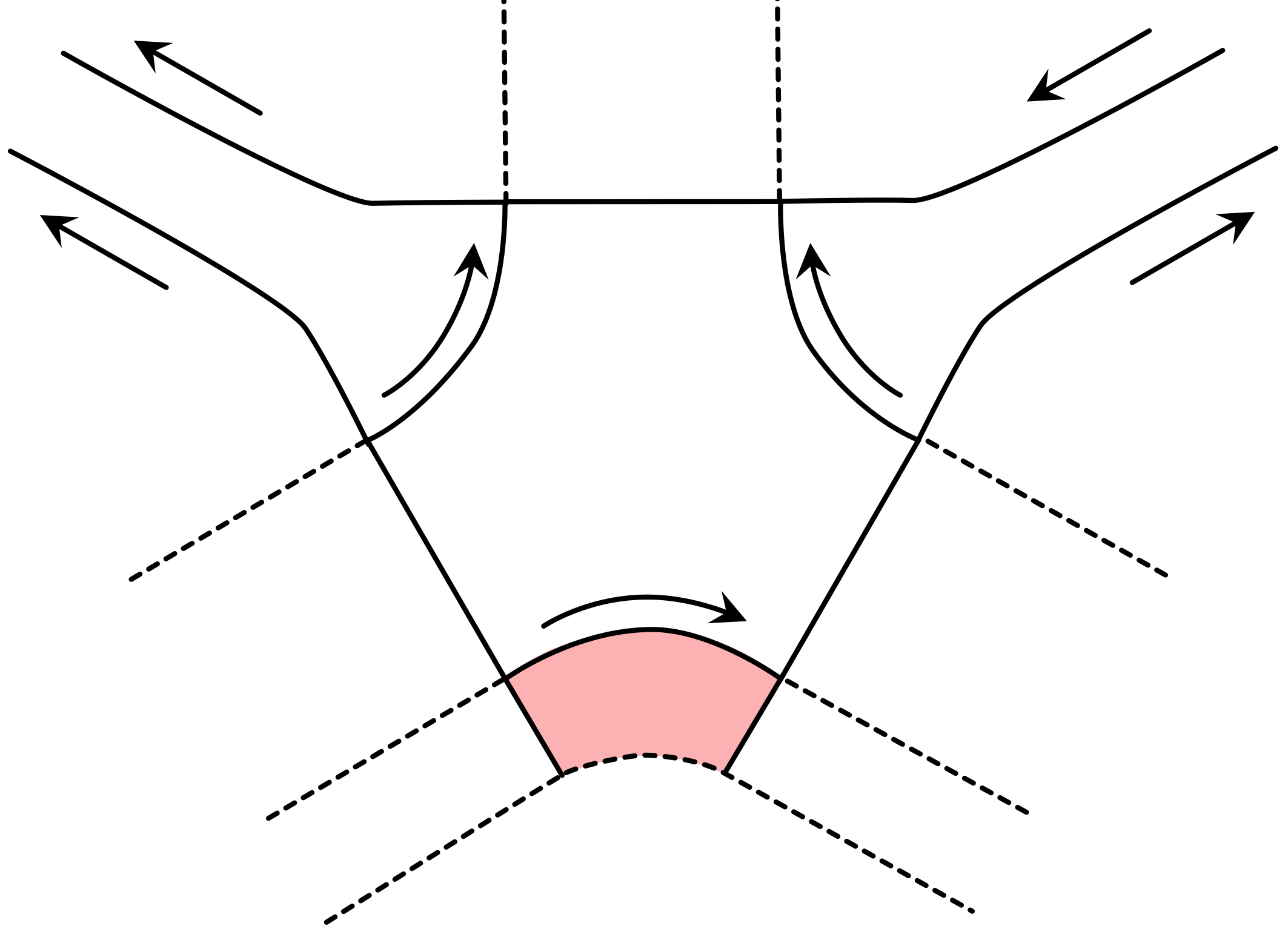}
\put(37,41){\small$\gamma_{12}$}
\put(56,41){\small$\gamma_{23}$}
\put(53,28){\small$t_{13}$}
\put(27,21){\small$\hy$}
\put(68,21){\small$\hy$}
\put(41,58){\small$\R_{s_2}\times\Sigma$}
\put(5,46){\small$s_1$}
\put(94,47){\small$s_3$}
\put(15,69){\small$s_2$}
\put(80,69){\small$s_2$}
\put(46,18){\small $\gamma_{13}$}
\put(27,46){\small$t_{12}$}
\put(67,46){\small$t_{32}$}
\put(48,46){$\Omega_1$}
	\end{overpic}	
	\medskip
	\caption{}
	\label{Pic12} 
\end{figure}

The desired  cobordism $X$ is then obtained from $\Omega_1\times\Sigma$ by attaching $[-1,1]_t\times Y_-$ to $\gamma_{13}\times \Sigma$ in Figure \ref{Pic12}, where $Y_-\colonequals \{s\leq -1\}\subset \hy$. In this case,  we use $s_1,s_2,s_3$ for spatial coordinates and $t_{12}, t_{13},t_{32}$ for time coordinates; their relations are indicated as in Figure \ref{Pic12}.

We have to specify the closed 2-form $\omega_X$ on $X$ in order to construct the cobordism map. Recall that the 2-form $\omega$ on $Y$ is decomposed as 
\[
\omega=\bomega+\chi_1(s)ds\wedge\lambda. 
\] 
First, pull back $\bomega$ to $[-1,1]_t\times Y_-$ and extend it on $\Omega_1\times\Sigma$ constantly as the harmonic 2-form $\mu\in \Omega^1_h(\Sigma, i\R)$. Second, consider a function $f: (-2,\infty)_s\to \R$ such that $f(s)\equiv s$ when $s\geq -1$ and $f'(s)=\chi_1(s)$ when $s\in (-2,1]$. We shall regard $f$ as a function on $\hy$ by extending $f$ constantly when $s\leq -3/2$. Now consider a function $h: \hx\to \R$ with the following properties:
\begin{itemize}
\item $h\equiv f$ on a neighborhood $[-1,-1+\epsilon)_{t_{13}}\times \hy$ and $\equiv -f$ on $(1-\epsilon, 1]_{t_{13}}\times \hy$;
\item $h\equiv s_1\equiv s_2$ on $\gamma_{12}\times [1,\infty)_{s_1}\times\Sigma$ and $\equiv -s_3\equiv s_2$ on $\gamma_{23}\times [1,\infty)_{s_3}\times \Sigma$;
\item $h\equiv s_2$ on a neighborhood $(1-\epsilon,1]_{t_{12}}\times \R_{s_2}\times \Sigma$. 
\end{itemize}

The extension of $h$ over the interior of $X$ can be arbitrary. Finally, set
\[
\omega_X=\bomega+dh\wedge\lambda \text{ on } \hx. 
\]
The strict cobordism $\x=(X,\omega_X)$ then induces a map:
\[
\alpha_1:\HM_*(\y,\bs)\xrightarrow{\Id\otimes 1}\HM_*(\y,\bs)\otimes \HM_*(e_\TSigma, \bs_{std})\xrightarrow{\HM_*(\x)} \HM_*(\y',\bs). 
\]
Switching the roles of $\y$ and $\y'$ produces a strict cobordism $\x': \y'\coprod e_{\TSigma}\to \y$ and a map in the opposite direction:
\[
\alpha_1':\HM_*(\y',\bs)\xrightarrow{\Id\otimes 1}\HM_*(\y',\bs)\otimes \HM_*(e_{\TSigma'}, \bs_{std})\xrightarrow{\HM_*(\x')} \HM_*(\y,\bs).
\]
It remains to verify that $\alpha_1$ and $\alpha_1'$ are mutual inverses to each other (up to a non-zero scalar). To see this, compose $\x$ with $\x'$ along the common boundary $\y'$. The resulting cobordism
\[
\y\ \coprod\  e_{\TSigma}\ \coprod\ e_{\TSigma'}\to \y
\]
induces the gluing map $\alpha$ in Subsection \ref{Subsec3.3}, which is an isomorphism by Theorem \ref{T3.3}. Indeed, one can change the 2-form $\omega_X'\circ \omega_X$ into the standard one in \eqref{E3.1} by adding an exact 2-form. Thus the map $\alpha_1'\circ\alpha_1: \HM_*(\y,\bs)\to \HM_*(\y,\bs)$ is invertible. The other composition $\alpha_1\circ \alpha_1'$ is dealt with in a similar manner. This completes the proof of Lemma \ref{L7.3}.
\end{proof}

\section{Relations with Sutured Floer Homology}\label{Sec8}

Before we prove the fiberness detection result, Theorem \ref{T7.2}, we add a digression to explain the relations of $\HM_*(\y)$ with the sutured Floer homology $\SHM$ \cite{KS} and $\SFH$ \cite{J06,J08}. In the original definition of Gabai \cite[Definition 2.6]{Gabai83}, any 3-manifolds with toroidal boundary are sutured manifolds with only toral components. However, they are not examples of balanced sutured manifolds in the sense of Juh\'{a}sz \cite[Definition 2.2]{J06}; thus the sutured Floer homology, either $\SHM$ or $\SFH$, is never defined for this class of sutured manifolds. 

One may regard our construction as a natural extension of the existing sutured Floer theory, and ask if the suture manifold decomposition theorem, e.g.  \cite[Theorem 1.3]{J08} and \cite[Proposition 6.9]{KS}, continue to hold in our case. In this section, we will only prove a preliminary result towards this direction. Since our Floer homology $\HM_*(\y)$ also relies on the closed 2-form $\omega$, it is not clear to the author whether a general result is available. 

It is worth mentioning that the sutured monopole Floer homology $\SHM$ introduced by Kronheimer-Mrowka \cite{KS} is defined over $\Z$, but the construction extends naturally to the mod 2 Novikov ring $\NR$, as explained in \cite[Section 2.2]{Sivek12}. We shall work with the latter case.

\subsection{Cutting along a surface}
For any $T$-surface $\TSigma=(\Sigma, g_{\Sigma},\lambda,\mu)$ and any 1-cell $\y\in \AT(\emptyset, \TSigma)$, we listed a few cohomological conditions on $\omega$ in \ref{P2} \ref{P3}. In this section, we shall think of them geometrically and work with a more restrictive setup:
\begin{enumerate}[label=(P\arabic*)]
	\setcounter{enumi}{4}
\item\label{P5} there exists a properly embedded oriented surface $F\subset Y$ such that $\partial F$ intersects each component of $\Sigma$ in parallel circles, and $i[*_2\lambda]$ is dual to $z[\partial F]\in H_1(\Sigma, \Z)$ for some $z\in \R$; moreover, $F$ has no closed component or disk components. In particular, $\chi(F)\leq 0$.
\item\label{P6} the Poincar\'{e} dual of $-2\pi i[\omega]\in H^2(Y; \R)$ can be represented by a real 1-cycle $\eta$ that lies on the surface $F$. 
\end{enumerate} 

If $\y$ satisfies the additional properties \ref{P5}\ref{P6}, then we can cut $Y$ along the surface $F$ to obtain a balanced sutured manifold, denoted by $M(Y,F)$. Let
\[
\SHM(M(Y,F))
\]
be the sutured monopole Floer homology of $M(Y, F)$ defined over $\NR$ with trivial coefficient system; cf. \cite[Section 2.2]{Sivek12}.
\begin{theorem}\label{T8.1} For any $T$-surface $\TSigma$ and any 1-cell $\y\in \AT(\emptyset, \TSigma)$ satisfying \ref{P5}\ref{P6}, we have 
	\[
	\SHM(M(Y,F))\cong \bigoplus_{\langle c_1(\bs),[F]\rangle =x(F)} \HM_*(\y,\bs).
	\]
	Moreover, $\varphi_{\y}([F])\leq x(F)$ with $\varphi_{\y}$ defined as in Definition \ref{D7.1}.
\end{theorem}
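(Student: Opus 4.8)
The strategy is to produce a \emph{closure} of the balanced sutured manifold $M(Y,F)$ of the special form $\bar Y=Y\circ_h\y_{\mathrm{aux}}$ and then to apply the Gluing Theorem \ref{T2.5}. Cutting $Y$ along $F$ yields a balanced sutured manifold with $R_+\cong R_-\cong F$ and one annular suture parallel to each component of $\partial F$ (balancedness uses that $F$ has no closed or disk components, so that in particular $x(F)=-\chi(F)$). Recall that $\SHM(M(Y,F))$ is, by definition \cite{KS}, the top $[\bar R]$-graded summand (the one where $\langle c_1(\s),[\bar R]\rangle=-\chi(\bar R)$) of the monopole Floer homology of any closure $(\bar Y,\bar R)$ with $\bar R$ a closed surface of sufficiently large genus. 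I will build $\bar Y$ by capping $Y$ off with a \emph{connected} $1$-cell $\y_{\mathrm{aux}}\in\AT(\TSigma,\emptyset)$ whose underlying $3$-manifold is a surface bundle over $S^1$ with fibre $G$ and $\partial Y_{\mathrm{aux}}\cong\Sigma$, where $\partial G$ meets each boundary torus $\T^2_j$ exactly in $\partial F\cap\T^2_j$; then $\bar R\colonequals F\cup_{\partial F}G$ is a closed, nonseparating surface in $\bar Y=Y\circ_h\y_{\mathrm{aux}}$ whose genus can be made as large as we like by enlarging $G$.

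Taking $\y_{\mathrm{aux}}$ connected with all of $\Sigma$ as boundary is what makes the cohomological constraints \ref{P2}, \ref{P3} for $\y_{\mathrm{aux}}$ (needed to equip it with a closed $2$-form extending $\omega$ near $\Sigma$) hold automatically: for a connected $3$-manifold with toroidal boundary the image of $H^2(\,\cdot\,)\to H^2(\Sigma)$ is the hyperplane $\{(c_j):\sum_jc_j=0\}$, and $[\mu|_\Sigma]$ already lies there because it lies in the image of $H^2(Y)\to H^2(\Sigma)$ by \ref{P2}; similarly for $[*_\Sigma\lambda]$. Choosing the extension $\omega_{\bar Y}$ so that $i\langle[\omega_{\bar Y}],[\bar R]\rangle$ stays bounded in terms of the data of $\y$ and $\TSigma$ (the contribution of $\y_{\mathrm{aux}}$ is controlled independently of $g(G)$), and then taking $g(\bar R)$ large, we arrange
\begin{equation*}
i\langle[\omega_{\bar Y}],[\bar R]\rangle<2\pi\bigl(g(\bar R)-1\bigr),
\end{equation*}
so that by the computation underlying \eqref{E4.11} in Lemma \ref{L6.9} the period class of $\omega_{\bar Y}$ is neither negatively monotone nor balanced for any \spinc structure contributing to the top $[\bar R]$-grading. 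Proposition \ref{P4.2} then lets us replace the perturbation by an exact one, and an argument parallel to Section \ref{Sec6} (closure-independence of $\SHM$ over $\NR$ being insensitive to using the system $\NR_\omega$ in place of the trivial one) gives
\begin{equation*}
\SHM(M(Y,F))\cong\HM(\bar Y,[\omega_{\bar Y}]\,\vert\,\bar R).
\end{equation*}

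The auxiliary input I need is that $\HM_*(\y_{\mathrm{aux}})$ is, in the top $[G]$-grading, of rank one over $\NR$: this is the analogue of Lemma \ref{L6.9} for a bundle over $S^1$ with fibre having boundary, and follows by further capping $\y_{\mathrm{aux}}$ off along its boundary tori to a closed surface bundle over $S^1$ (the solid-torus pieces glued in contribute only invertible scalars, as in Lemma \ref{L3.5}), applying Proposition \ref{P4.2}, and invoking that a closed fibred $3$-manifold has one-dimensional reduced monopole Floer homology in the fibre grading \cite{KM97B,Bible}; moreover the $[G]$-grading in which it is nonzero is bounded above by $x(G)=-\chi(G)$. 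Now apply the Gluing Theorem \ref{T2.5} and its grading compatibility (Theorem \ref{T3.5}): for any $\bs$ with $\HM_*(\y,\bs)\neq\{0\}$, tensoring with a generator of the top $[G]$-grading of $\y_{\mathrm{aux}}$ produces $\s$ on $\bar Y$ with $\HM_*(\bar Y,\s,c;\NR_{\omega_{\bar Y}})\neq\{0\}$ and $\langle c_1(\s),[\bar R]\rangle=\langle c_1(\bs),[F]\rangle-\chi(G)$, and the adjunction inequality of Proposition \ref{P4.2} for the integral class $[\bar R]$ forces $\langle c_1(\s),[\bar R]\rangle\le\|[\bar R]\|_{Th}\le x(\bar R)=x(F)-\chi(G)$, hence $\langle c_1(\bs),[F]\rangle\le x(F)$; maximizing over $\bs$ yields $\varphi_{\y}([F])\le x(F)$.

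Finally, combining the two displays above with the K\"{u}nneth formula behind the Gluing Theorem: since $-\chi(\bar R)=x(F)+\bigl(-\chi(G)\bigr)$ and $\langle c_1(\bs),[F]\rangle\le x(F)$, $\langle c_1(\bs'),[G]\rangle\le x(G)$ on the respective supports, the top $[\bar R]$-grading on $\bar Y$ is exactly the tensor product of the $\langle c_1(\bs),[F]\rangle=x(F)$ part on $\y$ with the rank-one top $[G]$-grading on $\y_{\mathrm{aux}}$, so
\begin{equation*}
\SHM(M(Y,F))\cong\HM(\bar Y,[\omega_{\bar Y}]\,\vert\,\bar R)\cong\bigoplus_{\langle c_1(\bs),[F]\rangle=x(F)}\HM_*(\y,\bs),
\end{equation*}
as claimed. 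The main obstacle I anticipate is the first step: building the surface-bundle auxiliary piece $\y_{\mathrm{aux}}$ and the closure $\bar Y=Y\circ_h\y_{\mathrm{aux}}$ with a compatible closed $2$-form $\omega_{\bar Y}$ — in particular realizing $\partial F$ on $\Sigma$ by $\partial G$ while keeping $\y_{\mathrm{aux}}$ fibred (when the components of $\partial F$ on a torus are not primitive the monodromy must permute the boundary circles of $G$) — together with the bookkeeping reconciling Kronheimer--Mrowka's conventions for $\SHM$ (closure-independence, genus stabilization, coefficient systems) with the monopole groups produced by the Gluing Theorem, and the rank-one computation for $\y_{\mathrm{aux}}$.
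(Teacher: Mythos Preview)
Your overall architecture coincides with the paper's: glue $\y$ to a fibred auxiliary $1$-cell $\y_1\in\AT(\TSigma,\emptyset)$, apply the Gluing Theorem together with the rank-one computation for the fibred piece, and read off both the adjunction bound $\varphi_{\y}([F])\le x(F)$ and the isomorphism in the top $[F]$-grading. The paper's proof does exactly this, with $\y_1$ a mapping torus over a connected $F_1$ of large genus and $\bar R=F\cup F_1$; your $\y_{\mathrm{aux}}$ and $G$ play the same roles.

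There is, however, a genuine gap in your argument precisely at the step you flagged as ``bookkeeping'': the identification $\SHM(M(Y,F))\cong \HM(\bar Y,[\omega_{\bar Y}]\,|\,\bar R)$. By definition $\SHM(M(Y,F))=\HM_*(\bar Y\,|\,\bar R)\cong\HM(\bar Y,[0]\,|\,\bar R)$, so what you need is
\[
\HM(\bar Y,[0]\,|\,\bar R)\ \cong\ \HM(\bar Y,[\omega_{\bar Y}]\,|\,\bar R).
\]
Proposition \ref{P4.2} does \emph{not} give this: it changes the period class of the perturbation but keeps the local system $\NR_{\omega}$ fixed, and $\NR_{\omega_{\bar Y}}$ is genuinely different from $\NR_0$ when $[\omega_{\bar Y}]\neq 0$ (the monodromy \eqref{E4.6} depends on $c$). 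Nor does ``closure-independence'' supply this for free; closure-independence in \cite{KS,Sivek12} is proved for a fixed coefficient package, not across different $\NR_\omega$. This is exactly where hypothesis \ref{P6} enters in the paper, and your argument never uses it. Because the Poincar\'e dual of $-2\pi i[\omega]$ is represented by a cycle on $F$ (and one arranges the same on the auxiliary piece), $[\omega_{\bar Y}]$ is represented by a form supported in a tubular neighbourhood of $\bar R$; then Floer's excision \cite[Theorem 3.1]{KS} peels off a $\bar R\times S^1$ carrying all of $\omega_{\bar Y}$, and Lemma \ref{L6.9} finishes the comparison. Without \ref{P6} this excision step is unavailable, and I do not see how to bridge the two local systems.

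A smaller point: your rank-one computation for $\y_{\mathrm{aux}}$ by ``capping with solid tori, as in Lemma \ref{L3.5}'' is not quite right. Lemma \ref{L3.5} concerns the $4$-manifold $D^2\times\T^2$ and the relative monopole invariant used in the proof of Theorem \ref{T3.3}; it is not a statement about gluing solid tori to a $3$-manifold within the bi-category $\AT$. The paper handles this instead via Lemma \ref{L8.2}, by doubling the fibred piece to a closed mapping torus (over a surface of genus $\ge 2$), applying \eqref{E7.2}, and then using Lemma \ref{L6.9} together with the Euler-characteristic symmetry (Theorem \ref{T3.4}) to pin down both the rank and the grading $\varphi_{\y_1}(\pm[F_1])=-\chi(F_1)$.
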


For the proof of Theorem \ref{T8.1}, we have to first understand the special case when $F$ is connected and $Y$ fibers over $S^1$ with $F$ as a fiber. Thus $F$ is a genus $g$ surface with $n$ boundary circles, and $Y$ is the mapping torus of a self-diffeomorphism $\phi: F\to F$ such that $\phi|_{\pi_0(\partial F)}$ has at least two orbits.

\begin{lemma}\label{L8.2} For any $T$-surface $\TSigma$ and any 1-cell $\y\in \AT(\emptyset, \TSigma)$, if $F$ is connected and $Y$ is a mapping torus over $F$, then for $\kappa=\pm [F]$, $\varphi_{\y}(\kappa)=-\chi(F)$ and 
	\[
\HM(\y|\kappa)\cong \NR.
	\]
\end{lemma}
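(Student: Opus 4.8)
The plan is to imitate the proof of Theorem~\ref{T7.1}: double $Y$ along $\Sigma$, reduce to a closed $3$-manifold fibering over $S^1$ with fiber of genus $\geq 2$, and then combine the gluing formula with the known fiberedness computation for closed manifolds. First I would dispose of the degenerate case $\chi(F)=0$, which forces $g=0$ and $n=2$, so that $F$ is an annulus and $Y$, being the mapping torus of a diffeomorphism of the annulus, is diffeomorphic to $[-1,1]_s\times\T^2$; by Theorem~\ref{T5.1} and Lemma~\ref{L5.2}, $\HM_*(\y,\bs)\cong\NR$ for $\bs=\bs_{std}$ and $\{0\}$ otherwise, and since $c_1(\bs_{std})$ pairs trivially with $[F]$ this yields $\varphi_{\y}(\pm[F])=0=-\chi(F)$ and $\HM(\y|\pm[F])\cong\NR$.

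So assume $\chi(F)<0$ from now on. As in the proof of Theorem~\ref{T7.1}, I first replace $\y$ by the $1$-cell $\y'\in\AT(\emptyset,\TSigma')$ obtained by reversing the sign of $\lambda$ (Lemma~\ref{L7.3}\ref{7.2.2}), so that the orientation reversal $(-\y')\in\AT(\TSigma,\emptyset)$ can be glued to $\y$; set $(\tilde Y,\tilde\omega,\cdots)\colonequals\y\circ_h(-\y')$. Since $Y$ is the mapping torus of $\phi\colon F\to F$, the closed manifold $\tilde Y=Y\cup_\Sigma(-Y)$ is the mapping torus of $\phi\cup\phi'$ over the closed oriented surface $\tilde F=F\cup_{\partial F}(-F)$, whose genus is $h=2g+n-1\geq 2$ because $\chi(F)<0$ and $n\geq 2$. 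In particular $\tilde F$ is a fiber, hence Thurston-norm minimizing, with $x(\tilde F)=-\chi(\tilde F)=-2\chi(F)=2x([F])$.

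Next, restricting the gluing isomorphism of Theorem~\ref{T3.5} to the top Thurston grading gives, exactly as in \eqref{E7.2},
\[
\HM(\tilde Y,[\tilde\omega]\,|\,[\tilde F])\ \cong\ \HM_*(\y\,|\,[F])\otimes_\NR\HM_*(-\y'\,|\,-[F]).
\]
For a closed $3$-manifold fibering over $S^1$ with fiber of genus $\geq 2$, the left-hand side is a free $\NR$-module of rank $1$: by Proposition~\ref{P4.2} the non-exact perturbation $\tilde\omega$ may be traded for an exact one with local system $\NR_{\tilde\omega}$ (the required non-monotonicity being checked on $[\tilde F]$ as in the proof of Theorem~\ref{T7.1} and in Lemma~\ref{L6.9}), and then the rank-$1$ statement in the top grading is the fiberedness input of Kronheimer--Mrowka (\cite[Lemma~2.2]{KS}; compare Lemma~\ref{L6.9} and the closed case in \cite{Bible}). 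Hence both tensor factors on the right have rank $1$, and by Lemma~\ref{L7.3}, $\rank_\NR\HM_*(-\y'\,|\,-[F])=\rank_\NR\HM_*(\y\,|\,-[F])$; so $\rank_\NR\HM(\y|\kappa)=1$ for $\kappa=\pm[F]$.

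It remains to pin down the grading, i.e. $\varphi_{\y}([F])=\varphi_{\y}(-[F])=-\chi(F)$. The inequality $\varphi_{\y}([F])+\varphi_{\y}(-[F])\leq 2x([F])=-2\chi(F)$ is the general bound of Theorem~\ref{T7.1}. For the matching lower bound I would invoke Theorem~\ref{T3.4}: the graded Euler characteristic of $\HM_*(\y,\cdot)$ is the Milnor--Turaev torsion of the fibered manifold $Y$, whose leading coefficient in the $[F]$-direction is $\pm1$ and sits in grading $-\chi(F)$, so $\sum_{\langle c_1(\bs),[F]\rangle=-\chi(F)}\chi(\HM_*(\y,\bs))\neq 0$ and thus $\varphi_{\y}([F])\geq -\chi(F)$; the same argument applied to the fibration composed with $S^1\xrightarrow{-1}S^1$ (whose fiber is $-F$) gives $\varphi_{\y}(-[F])\geq -\chi(F)$, and the three inequalities force equality, with $\HM(\y|\kappa)$ then being the rank-$1$ group found above. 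The step I expect to be the main obstacle is verifying that $\tilde\omega$ is neither balanced nor negatively monotone for the \spinc structures entering $\HM(\tilde Y,[\tilde\omega]\,|\,[\tilde F])$, so that Proposition~\ref{P4.2} applies: this requires the smallness of $(\lambda,\mu)$ from \ref{star} together with \ref{T1}, and some care with the $ds\wedge\lambda$-contribution of $\tilde\omega$ along the collar; a secondary, more routine point is tracking the tensor decomposition through the concatenation map $\bs_1\circ_h\bs_2$ and the identifications of Lemma~\ref{L7.3}, so as to be sure the surviving summand lands in grading $-\chi(F)$ on the $\y$-side.
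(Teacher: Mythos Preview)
Your overall strategy matches the paper's: dispose of $\chi(F)=0$ via Lemma~\ref{L5.2}, then for $\chi(F)<0$ double to a closed mapping torus $\tilde Y$ over $\tilde F$ with $g(\tilde F)\geq 2$, invoke the tensor factorization \eqref{E7.2}, show the left-hand side has rank~$1$, deduce $\rank_\NR\HM(\y|\pm[F])=1$ via Lemma~\ref{L7.3}, and finally pin down $\varphi_{\y}(\pm[F])$ using the Euler characteristic.

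The gap is in the rank-$1$ step for $\HM(\tilde Y,[\tilde\omega]\,|\,[\tilde F])$. You apply Proposition~\ref{P4.2} and then cite \cite[Lemma~2.2]{KS}, but that lemma is for $\Sigma_g\times S^1$, not a general mapping torus; the relevant mapping-torus result is \cite[Lemma~4.7]{KS}, and as the paper observes, that gives the computation only when $\tilde\omega=0$. After Proposition~\ref{P4.2} you still carry the local system $\NR_{\tilde\omega}$, whose monodromy depends on the original period class, and you supply no argument that the rank-$1$ result for mapping tori holds with this particular local system. The paper fixes this with a step you omit: it inserts an auxiliary cylindrical $1$-cell $\y_1=([-3,3]_s\times\Sigma,\omega_1,\cdots)\in\AT(\TSigma,\TSigma)$ into the horizontal composition---which by Lemma~\ref{L5.2} leaves the tensor factorization unchanged---and tunes the compactly supported part of $\omega_1$ so that $\langle[\tilde\omega],[\tilde F]\rangle=0$. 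This is precisely the condition that allows Floer's excision \cite[Theorem~3.1]{KS}, with the nonexact perturbation in place, to reduce the mapping torus to the product $\tilde F\times S^1$, where Lemma~\ref{L6.9} finishes the job. Your self-identified ``main obstacle,'' the monotonicity check for Proposition~\ref{P4.2}, is in fact routine and already handled in the proof of Theorem~\ref{T7.1} via $|\langle\mu,[\Sigma^{(i)}]\rangle|<2\pi$ and $\neq 0$. Your grading argument, using the monic leading term of the Milnor--Turaev torsion of a fibered manifold, is a legitimate alternative to the paper's, which instead uses only the conjugation symmetry $\bs\leftrightarrow\bs^*$ from Theorem~\ref{T3.4} combined with the equality case of Theorem~\ref{T7.1}.
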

\begin{proof}[Proof of Lemma \ref{L8.2}] If $\chi(F)=0$, then $F$ is an annulus and $Y=[-3,3]_s\times\T^2$ is a product. This case is addressed already in Lemma \ref{L5.2}. We focus on the case when $\chi(F)<0$.

With loss of generality, let $\kappa=[F]$. Note that $Y$ is irreducible and $F$ minimizes the Thurston norm. We follow the notations and arguments in the proof of Theorem \ref{T7.1}. Let $(\tilde{Y},\tilde{F})$ denote the double of $(Y,F)$, then $\tilde{Y}$ is a mapping torus over $\tilde{F}$ with $g(\tilde{F})\geq 2$. By \eqref{E7.2}, we have 
	\begin{equation}\label{E8.1}
\HM(\tilde{Y},[\tilde{\omega}]|\tilde{F})=\HM(\y|\kappa)\otimes_\NR \HM(-\y'| -\kappa). 
\end{equation}

The rest of the proof is divided into four steps.

\medskip

\Step 1. We can arrange the 2-form $\tilde{\omega}$ so that $\langle [\tilde{\omega}], [\tilde{F}]\rangle=0$.

Instead of $\y\circ_h(-\y')$, we consider the horizontal composition
\[
(\tilde{Y},\tilde{\omega},\cdots)=\y\circ_h \y_1\circ_h (-\y'),
\]
where $\y_1=([-3,3]_s\times \Sigma, \omega_1,\cdots)\in \AT(\TSigma,\TSigma)$. Here $\omega_1=\mu+ds\wedge\lambda+\omega_2$ and $\omega_2\in \Omega^2_c(Y_1, i\R)$ is compactly supported. By Lemma \ref{L5.2}, inserting this extra 1-cell $\y_1$ does not affect the identity \eqref{E8.1}, but changing the class $[\omega_2]\in H^2(Y_1,\partial Y_1; i\R)$ can effectively alter the pairing $\langle [\tilde{\omega}], [\tilde{F}]\rangle$. From now on, we shall always assume that $\langle [\tilde{\omega}], [\tilde{F}]\rangle=0$.

\medskip

\Step 2. The group $\HM(\tilde{Y},[\tilde{\omega}]|\tilde{F})$ in \eqref{E8.1} has rank $1$. 

Since $g(\tilde{F})\geq 2$, this computation for mapping tori is \cite[Lemma 4.7]{KS}, if $\tilde{\omega}=0$. The general case is not really different. Since $\langle [\tilde{\omega}], [\tilde{F}]\rangle=0$, we can still apply Floer's excision theorem \cite[Theorem 3.1]{KS} to reduce the problem to the case when $\tilde{Y}=\tilde{F}\times S^1$ is a product. The same trick is used also in the proof of Theorem \ref{T8.1} below. Now the statement follows from Lemma \ref{L6.9}. 

\medskip

\Step 3. We conclude from \eqref{E8.1}, Lemma \ref{L7.3} and \Step 2 that
\[
\rank_{\NR} \HM(\y|\kappa)=\rank_{\NR} \HM(\y|-\kappa)=1. 
\]

\medskip

\Step 4. By Theorem \ref{T7.1}, $\varphi_{\y}(\kappa)+\varphi_{\y}(-\kappa)=2x(\kappa)$. We have to verify that $\varphi_{\y}(\kappa)=\varphi_{\y}(-\kappa)$. This equality now follows from the symmetry of the graded Euler characteristics:
\[
\SW(Y):\bs\mapsto \chi(\HM_*(\y,\bs));
\]
see Theorem \ref{T3.4}. This function is invariant under the conjugacy of relative \spinc structures: $\bs\leftrightarrow \bs^*$ by  \cite{MT96,Taubes01}. The computation of $\HM(\y|\kappa)$ shows that $\chi(\HM_*(\y,\bs))\neq 0$ for exactly one $\bs$ with $\langle c_1(\bs),\kappa\rangle\geq \varphi_{\y}(\kappa)$. This proves the equality $\varphi_{\y}(\kappa)=\varphi_{\y}(-\kappa)$ and completes the proof of Lemma \ref{L8.2}.
\end{proof}

\begin{proof}[Proof of Theorem \ref{T8.1}] In \cite{KS}, the group $\SHM(M(Y,F))$ is defined as the monopole Floer homology of a suitable closure of $M(Y,F)$, which can be described as follows. 
	
Consider an 1-cell $\y_1\in \AT(\TSigma, \emptyset)$ such that $Y_1$ is a mapping torus over a connected surface $F_1$. We require that 
\begin{itemize}
\item $g(F_1)\geq 2$;
\item $[\partial F_1]=-[\partial F]\in H_1(\Sigma, \Z)$, and 
\item the 1-cell $\y_1$ satisfies property \ref{P5}\ref{P6} for the surface $F_1$. 
\end{itemize}

Consider the closed 3-manifold obtained by gluing $\y$ and $\y_1$:
\[
(Y_2, \omega_2,\cdots)\colonequals \y\circ_h\y_1,
\]
We can arrange so that $\partial F$ is identical to $\partial F_1$ on $\Sigma$; so $Y_2$ contains a closed oriented surface $F_2\colonequals F\cup F_1$ with $g(F_2)\geq 2$. Moreover, $F_2$ is connected. Following the shorthands from Definition \ref{D6.4}, the sutured monopole Floer homology of $M(Y,K)$ is then defined as 
\[
\SHM(M(Y, K))\colonequals  \HM_*(Y_2|F_2);
\]
see \cite[Definition 4.3]{KS} and \cite[Section 2.2]{Sivek12}. For the latter group, we have 
\[
\HM_*(Y_2|F_2)\cong \HM_*(Y_2,[0]|F_2),
\]
 by Remark \ref{R4.5}. Let $c_2=-2\pi i[\omega_2]$ be the period class of $\omega_2\in \Omega^2(Y_2, i\R)$. Using Property \ref{P6}, we can arrange so that the Poincar\'{e} dual of $c_2$ is represented by a real 1-cycle $\eta_2$ lying over $F_2\subset Y_2$. Now consider the subgroup
\[
G_y\colonequals \bigoplus_{\langle c_1(\bs), [F]\rangle = y} \HM_*(\y,\bs)\subset \HM_*(\y).
\]
Theorem \ref{T3.5} and Lemma \ref{L8.2} then imply that 
\begin{equation}\label{E8.2}
\bigoplus_{\langle c_1(\s), [F_2]\rangle =y+x(F_1)} \HM_*(Y_2, \s,c_2; \NR_{\omega_2})= G_b\otimes_\NR \HM(\y_1|[F_1])\cong G_b,
\end{equation}
for any $y\geq \varphi_{\y}([F])$. By the adjunction inequality from Proposition \ref{P4.2}, the left hand side of \eqref{E8.2} vanishes whenever $b+x(F_1)>x(F_2)$. As a result,
\[
\varphi_{\y}([F])\leq x(F)=-\chi(F).
\]
Let $y=x(F)$ in \eqref{E8.2}, then
\[
\HM(Y_2,[\omega_2]|F_2)\cong G_{x(F)}.
\]

 To complete the proof of Theorem \ref{T8.1}, it remains to verify that
 \begin{equation}\label{E8.3}
\HM(Y_2,[0]| F_2)\cong \HM(Y_2, [\omega_2]|F_2).
 \end{equation}
This isomorphism, which involves only the closed 3-manifold $Y_2$, is similar to the one in \cite[Corollary 3.4]{KS}, except that $\omega_2$ is used for non-exact perturbations here. The proof of \eqref{E8.3} relies on the property that the real 1-cycle $\eta_2$ that represents $c_2$ lies on the surface $F_2$, so we can pick $\omega_2'\in [\omega_2]$ such that $\omega_2'$ is supported on a tubular neighborhood of $F_2$:
\[
[-1,1]\times F_2\subset Y_2. 
\]
By identifying $\{\pm 1\}\times F_2$, $\omega_2'$ becomes a closed 2-form on $F_2\times S^1$.  The same process applied to $Y_2\setminus [-1,1]\times F_2$ yields another copy of $Y_2$, but the 2-form is zero now. 

\smallskip

 As in the proof of \cite[Corollary 3.4]{KS}, we apply Floer's excision theorem \cite[Theorem 3.1]{KS} to obtain that 
\[
\HM(Y_2, [\omega_2]|F_2)=\HM(Y_2,[0]|F_2)\otimes_\NR \HM(F_2\times S^1,[\omega_2']|F_2). 
\]
By Lemma \ref{L6.9}, $\HM( F_2\times S^1,[\omega_2']|F_2)\cong \NR$. This completes the proof of Theorem \ref{T8.1}.
\end{proof}

The proof of Theorem \ref{T8.1} has an immediate corollary:
\begin{corollary}\label{C8.4} For any $T$-surface $\TSigma$ and any 1-cell $\y\in \AT(\emptyset, \TSigma)$ satisfying \ref{P5}, $\varphi_{\y}([F])\leq x(F)$.
\end{corollary}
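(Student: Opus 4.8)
The plan is to observe that the inequality $\varphi_{\y}([F])\le x(F)$ has, in effect, already been proved: it was established inside the proof of Theorem \ref{T8.1} at a stage that used only property \ref{P5} for $\y$, while property \ref{P6} for $\y$ entered afterwards solely to upgrade that inequality into the full identification $\SHM(M(Y,F))\cong\bigoplus_{\langle c_1(\bs),[F]\rangle=x(F)}\HM_*(\y,\bs)$ through the isomorphism \eqref{E8.3}. So I would simply isolate and reproduce that earlier portion of the argument.

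Concretely: given $\y\in\AT(\emptyset,\TSigma)$ satisfying \ref{P5}, I fix a properly embedded surface $F\subset Y$ as in \ref{P5} and then, exactly as in the proof of Theorem \ref{T8.1}, choose an auxiliary $1$-cell $\y_1\in\AT(\TSigma,\emptyset)$ that is a mapping torus over a connected surface $F_1$ of genus $\ge2$ with $[\partial F_1]=-[\partial F]\in H_1(\Sigma,\Z)$ and satisfying \ref{P5}\ref{P6} for $F_1$; the construction of $\y_1$ depends only on $\TSigma$ and the class $[\partial F]$, not on $\y$. I form the closed $3$-manifold $(Y_2,\omega_2,\cdots)\colonequals\y\circ_h\y_1$, which contains the connected oriented surface $F_2\colonequals F\cup F_1$ of genus $\ge2$. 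Since $F$ and $F_1$ are glued along circles and neither has sphere or disk components, $-\chi(F_2)=-\chi(F)-\chi(F_1)=x(F)+x(F_1)$, and this number is an upper bound for the Thurston norm of $[F_2]$.

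Next I carry out the gluing computation behind \eqref{E8.2}. Writing $G_y\colonequals\bigoplus_{\langle c_1(\bs),[F]\rangle=y}\HM_*(\y,\bs)$ and $c_2\colonequals-2\pi i[\omega_2]$, Theorem \ref{T3.5} combined with Lemma \ref{L8.2} applied to $\y_1$ (which yields $\varphi_{\y_1}([F_1])=x(F_1)$ and $\HM(\y_1|[F_1])\cong\NR$) produces, for every $y\ge\varphi_{\y}([F])$, an isomorphism
\[
\bigoplus_{\langle c_1(\s),[F_2]\rangle=y+x(F_1)}\HM_*(Y_2,\s,c_2;\NR_{\omega_2})\;\cong\;G_y\otimes_\NR\HM(\y_1|[F_1])\;\cong\;G_y.
\]
Taking $y=\varphi_{\y}([F])$ makes the right-hand side nonzero. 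On the other hand, the adjunction inequality of Proposition \ref{P4.2}, applied on the closed $3$-manifold $Y_2$ with the class $[F_2]$, kills the left-hand side as soon as $y+x(F_1)$ exceeds the Thurston norm of $[F_2]$, hence in particular whenever $y>x(F)$ by the previous paragraph. Comparing the two, $\varphi_{\y}([F])\le x(F)$.

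The only points that require attention are exactly those already settled in the proof of Theorem \ref{T8.1}: that Proposition \ref{P4.2} genuinely applies to $(Y_2,\omega_2)$, i.e.\ that $b_1(Y_2)\ge2$ and that $c_2$ is neither balanced nor negatively monotone for the \spinc structures occurring in the sum — both controlled by the $T$-surface axioms \ref{T3}\ref{T1} and the choice of $\y_1$. None of this uses \ref{P6} for $\y$, so the entire content of the corollary is the (essentially bookkeeping) verification that the part of the proof of Theorem \ref{T8.1} preceding \eqref{E8.3} never secretly invoked \ref{P6}; this is the only possible obstacle, and it is not one.
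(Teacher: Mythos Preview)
Your proposal is correct and matches the paper's own treatment: the paper presents Corollary~\ref{C8.4} simply as ``an immediate corollary'' of the proof of Theorem~\ref{T8.1}, and you have correctly isolated the portion of that proof (through the adjunction-inequality step yielding $\varphi_{\y}([F])\le x(F)$) that only uses \ref{P5} for $\y$, with \ref{P6} entering afterwards solely for the isomorphism \eqref{E8.3}. Your check that Proposition~\ref{P4.2} applies via the $T$-surface axioms \ref{T3}\ref{T1} (pairing with the gluing tori $\Sigma^{(i)}\subset Y_2$) rather than via \ref{P6} is exactly the point.
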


\begin{remark} The property \ref{P5} is crucial to the proof of Theorem \ref{T8.1} for the following reason: for the gluing argument to work, we have to make sure that 
\begin{enumerate}[label=(\roman*)]
\item\label{i} $[\partial F]=-[\partial F_1]\in H_1(\Sigma;\Z)$;
\item\label{ii} there exists classes $a\in H_2(Y,\partial Y;\R)$ and $a_1\in H_2(Y_1, \partial Y_1;\R)$ such that 
\[
[\partial a]=-[\partial a_1]=\text{the Poincar\'{e} dual of }i[*_\Sigma\lambda]\in H_1(\Sigma;\R);
\]
\end{enumerate}
In general, it is not clear to the author whether $Y$ and $Y_1$ can be always glued. However, Property \ref{P5} reduces \ref{i}\ref{ii} into a single condition, which is easier to verify in practice.
\end{remark}

\subsection{Relations with Link Floer Homology} As a special case of Theorem \ref{T8.1}, our construction recovers the monopole knot Floer homology $\KHM$ introduced by Kronheimer and Mrowka \cite{KS}. This statement is also true for the link Floer homology; let us expand on its meaning.

For any link $L=\{L_i\}_{i=1}^n$ inside a closed oriented 3-manifold $Z_0$, consider the balanced sutured manifold 
\[
Z_0(L)\colonequals (Z_0\setminus N(L), \gamma)
\]
where $s(\gamma)\cap N(L_i)$ are two meridional sutures on $\partial N(L_i)$ oriented in opposite ways. The link Floer homology of $(Z_0, L)$ is defined as the sutured monopole Floer homology of $Z_0(L)$:
\[
\LHM(Z_0, L)\colonequals \SHM( Z_0(L)),
\]
and we shall work with the mod 2 Novikov ring $\NR$. 

On the other hand, pick a meridian $m_i$ for each link component $L_i$ and consider the link complement
\[
Y(Z_0, L)\colonequals Z_0\setminus N(L\cup m_1\cup m_2\cup\cdots \cup m_n). 
\]
Each $m_i$ bounds a disk in $Z_0$, which becomes an annulus $A_i$ in $Y(Z_0, L)$. Let $F$ be the union $\bigcup_{i=1}^n A_i$ (with any fixed orientation). The balanced sutured manifold $Z_0(L)$ is then obtained from $Y(Z_0, L)$ by cutting along $F$. 

To apply Theorem \ref{T8.1}, we have to specify the choice of $\omega$:
\begin{itemize}
\item let $\Sigma=\partial Y(Z_0, L)$. Pick a flat metric $g_\Sigma$ and $\lambda\in \Omega^1_h(\Sigma, i\R)$ such that $i[*_\Sigma\lambda]$ is dual to $z[\partial F]\in H_1(\Sigma, \R)$ for some $z\neq 0\in \R$;
\item $\eta$ is a real 1-cycle on $F$ such that $\eta\cap A_i$ is a segment joining two components of $\partial A_i$. Let $-2\pi i[\omega]\in H^2(Y(Z_0, L), \R)$ be the dual of $\eta$. 
\end{itemize}

Finally, let $\y(Z_0, L)=(Y(Z_0, L), \omega,\cdots )\in \AT(\emptyset, \TSigma)$ be any 1-cell satisfying these properties. As a corollary of Theorem \ref{T8.1}, we have

\begin{corollary}\label{C8.3} For any 1-cell $\y(Z_0,L)$ constructed above, we have an isomorphism
	\[
	\HM_*(\y(Z_0, L))\cong \SHM(Z_0(L))=\LHM(Z_0, L). 	
	\]
	Moreover, if $	\HM_*(\y(Z_0, L),\bs)\neq \{0\}$, then $\langle c_1(\bs), [A_i]\rangle=0$ for any $1\leq i\leq n$. 
\end{corollary}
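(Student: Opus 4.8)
The plan is to derive both statements from Theorem \ref{T8.1} together with the adjunction-type bound of Corollary \ref{C8.4}.

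First, put $F=\bigcup_{i=1}^n A_i$, as in the construction of $\y(Z_0,L)$. Each $A_i$ is an annulus, so $\chi(A_i)=0$ and hence $x(F)=0$. By construction the $1$-cell $\y(Z_0,L)$ satisfies \ref{P5} and \ref{P6} for this $F$: the harmonic form $\lambda$ was chosen so that $i[*_\Sigma\lambda]$ is dual to $z[\partial F]$, the surface $F$ has no closed or disk components, and $-2\pi i[\omega]$ was chosen dual to a $1$-cycle lying on $F$. As recalled in the excerpt, cutting $Y(Z_0,L)$ along $F$ refills the meridians $m_i$ and returns exactly the balanced sutured manifold $Z_0(L)=(Z_0\setminus N(L),\gamma)$ with two meridional sutures on each torus, i.e.\ $M(Y(Z_0,L),F)=Z_0(L)$. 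Hence Theorem \ref{T8.1} gives
\[
\LHM(Z_0,L)=\SHM(Z_0(L))\cong\bigoplus_{\langle c_1(\bs),[F]\rangle=0}\HM_*(\y(Z_0,L),\bs).
\]
To see that the right-hand side is in fact all of $\HM_*(\y(Z_0,L))$, I would apply Corollary \ref{C8.4} both to $F$ and to $F$ with the reversed orientation: property \ref{P5} holds for the latter as well (with $z$ replaced by $-z$), so $\varphi_{\y}([F])\le x(F)=0$ and $\varphi_{\y}(-[F])\le x(F)=0$, and these together force $\langle c_1(\bs),[F]\rangle=0$ for every $\bs\in\sw(\y(Z_0,L))$. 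This proves the first isomorphism.

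It remains to prove the ``moreover'', i.e.\ that $\HM_*(\y(Z_0,L),\bs)\neq\{0\}$ forces $\langle c_1(\bs),[A_i]\rangle=0$ for each individual $i$. Here I would pass to the double $\tilde Y=Y(Z_0,L)\cup_\Sigma(-Y(Z_0,L))$ used in the proof of Theorem \ref{T7.1}, in which $\tilde A_i:=A_i\cup(-A_i)$ is an embedded torus, so $\|[\tilde A_i]\|_{Th}=0$. As in the proof of Theorem \ref{T7.1}, the doubled perturbing $2$-form $\tilde\omega$ is neither balanced nor negatively monotone for any \spinc structure (this uses \ref{T1} and \ref{T3}) and $b_1(\tilde Y)\ge 2$, so the adjunction inequality of Proposition \ref{P4.2} applied to the class $[\tilde A_i]$ shows that $\HM_*(\tilde Y,\s,\tilde c;\NR_{\tilde\omega})=\{0\}$ whenever $\langle c_1(\s),[\tilde A_i]\rangle\neq 0$. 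Feeding this through the K\"{u}nneth decomposition of Theorem \ref{T3.5}, the identifications of Lemma \ref{L7.3}, and the identity $\langle c_1(\bs_1\circ_h\bs_2),[\tilde A_i]\rangle=\langle c_1(\bs_1),[A_i]\rangle-\langle c_1(\bs_2),[A_i]\rangle$ (up to the orientation bookkeeping of Theorem \ref{T7.1}) shows that $\bs\mapsto\langle c_1(\bs),[A_i]\rangle$ is constant on $\sw(\y(Z_0,L))$. Since these $n$ constants sum to $\langle c_1(\bs),[F]\rangle=0$, the case $n=1$ is immediate; for $n\ge 2$ I would pin the common value to $0$ using the conjugacy symmetry $\bs\leftrightarrow\bs^*$ of the graded Euler characteristic $\SW(Y)$ from Theorem \ref{T3.4} (picking a conjugate pair in $\sw(\y(Z_0,L))$ when $\SW(Y)\not\equiv 0$, and using the first isomorphism together with the conjugation symmetry of link Floer homology in the degenerate case $\SW(Y)\equiv 0$). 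Once $\langle c_1(\bs),[A_i]\rangle=0$ for all $i$, the first isomorphism upgrades to $\LHM(Z_0,L)\cong\HM_*(\y(Z_0,L))$.

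I expect the last step — forcing the common value $\langle c_1(\bs),[A_i]\rangle$ to be $0$, rather than merely showing it is independent of $\bs$ — to be the main obstacle, since it is precisely the kind of conjugation-symmetry assertion that is not available in full generality here (cf.\ Remark \ref{R1.3}). A cleaner, if more laborious, alternative I would be prepared to use instead is to realize the passage from $Y(Z_0,L)$ to $Z_0(L)$ as an iterated product decomposition: cut along $A_1,\dots,A_n$ one annulus at a time, applying Theorem \ref{T8.1} for a single annulus at each stage. The price there is that, after each cut, one must re-present the resulting $3$-manifold with its restricted $2$-form as a genuine $1$-cell of $\AT$ — re-choosing the harmonic data $(\lambda,\mu)$ on the merged boundary torus so that \ref{P5} and \ref{P6} persist for the remaining annuli — and invoke the invariance Theorem \ref{T5.1} to check this does not change $\HM_*$.
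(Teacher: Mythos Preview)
Your proof of the first isomorphism is essentially the paper's: apply Theorem \ref{T8.1} to $F=\bigcup A_i$, then bound $\langle c_1(\bs),[F]\rangle$ from both sides using $x(F)=0$.

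For the ``moreover'' clause, your doubling argument has the gap you yourself identify: pairing with the torus $\tilde A_i$ in $\tilde Y$ only shows that $\langle c_1(\bs),[A_i]\rangle$ takes a single value on $\sw(\y)$, not that this value is zero. Your repair via the conjugation symmetry of $\SW(Y)$ works when $\SW(Y)\not\equiv 0$, but in the degenerate case you are forced to import an external conjugation symmetry for $\LHM$ and match it to the relative $\Spinc$ grading through the isomorphism you just proved; this is at best delicate and arguably circular. Your iterated-cut alternative is plausible but, as you note, requires re-building the $T$-surface data after each cut and invoking Theorem \ref{T5.1} repeatedly.

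The paper avoids all of this with a single clean move: rather than doubling $Y$, it glues $\y$ to the \emph{product} $\y_1=(-F)\times S^1$, which is a disjoint union of copies of $[-1,1]\times\T^2$. By Lemma \ref{L5.2}, $\HM_*(\y_1,\bs)$ is nonzero only for $\bs_{std}$, and crucially $\langle c_1(\bs_{std}),[(-A_i)\times\{pt\}]\rangle=0$. Each $A_i$ and its partner $(-A_i)\times\{pt\}$ glue to a torus $\T^2_i$ in the closed manifold $\y\circ_h\y_1$; the adjunction inequality (Proposition \ref{P4.2}) forces $\langle c_1(\bs\circ_h\bs_{std}),[\T^2_i]\rangle=0$, and since the $\y_1$ side contributes zero this pins $\langle c_1(\bs),[A_i]\rangle=0$ directly. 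The point is that gluing to something whose $c_1$-pairings are \emph{known} eliminates the ambiguity that doubling leaves behind.
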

\begin{proof}[Proof of Corollary \ref{C8.3}] If $\HM_*(\y(Z_0, L),\bs)\neq \{0\}$, then Theorem \ref{T8.1} implies that 
	\[
	\langle c_1(\bs), [F] \rangle\leq \varphi_{\y}([F])\leq x(F)=0. 
	\]
	Applying the same argument for $[-F]$, we conclude that $	\langle c_1(\bs), [F] \rangle=0$. The desired isomorphism then follows from the first part of Theorem \ref{T8.1}.
	
	It remains to verify the stronger statement: $\langle c_1(\bs), [A_i] \rangle=0$. In the proof of Theorem \ref{T8.1}, we composed $\y$ with a mapping torus over a connected surface $F_1$. We now make $F_1$ disconnected: take $F_1=(-F)$ and $Y_1=(-F)\times S^1$. When $Y(Z_0, L)$ are glued with $Y_1$, we require that 
	\begin{itemize}
\item $\partial N(L_i\cup m_i)\subset \partial Y(Z_0, L)$ is identified with $\partial (-A_i)\times S^1\subset \partial Y_1$;
\item $\partial A_i\subset\partial Y(Z_0, L)$ is identified with $\partial(-A_i)\times\{pt\}\subset \partial Y_1$.
	\end{itemize}
 The closed 3-manifold $\y\circ_h\y_1$ contains a collection of 2-tori $\T^2_i$, which are doubles of $A_i$'s. Now we use the adjunction inequality in Proposition \ref{P4.2} and Lemma \ref{L5.2} to conclude. 
\end{proof}

\section{Fiberness Detection}\label{Sec9}

\subsection{Some Preparations} In this section, we complete the proof of Theorem \ref{T7.2}. We start with a preliminary result:
\begin{lemma}\label{L9.1} Under the assumption of Theorem \ref{T7.2}, if  $\kappa\in H_2(Y,\partial Y;\Z)$ is primitive, then $\kappa$ can be represented by a connected Thurston norm minimizing surface $F$ that intersects each component of $\Sigma$ non-trivially. Moreover, $\varphi_{\y}(\kappa)=\varphi_{\y}(-\kappa)$.
\end{lemma}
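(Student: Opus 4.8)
\textbf{Proof proposal for Lemma \ref{L9.1}.}

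The plan is to produce the surface representing $\kappa$ in two stages: first make it connected, then make it hit every boundary torus. For the connectedness, I would start with \textit{any} Thurston norm minimizing surface $F_0$ properly embedded with $[F_0]=\kappa$; since $Y$ is irreducible and $\kappa$ is primitive, a standard argument (as in Gabai \cite{Gabai83} or Thurston \cite{T86}) lets me assume $F_0$ has no sphere or disk components and no closed components, and that no two components are parallel. If $F_0$ is disconnected, then because $\kappa$ is primitive, the components of $F_0$ cannot all be homologous to parallel copies of a single surface; I would use an oriented-cut-and-paste (double curve sum) along intersection arcs/curves with a suitable surface, or more directly invoke the fact that a primitive class in a 3-manifold with $b_1\geq 1$ is always represented by a connected norm-minimizing surface — this is the norm-minimizing analogue of the classical statement that a primitive class is represented by a connected surface, and it is norm-nonincreasing because the operation only removes a circle of double points. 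This gives a connected norm-minimizing $F$ with $[F]=\kappa$.

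Next I would arrange that $F$ intersects each component $\T^2_j$ of $\Sigma$ non-trivially. Since $\partial F$ is a (possibly empty on some tori) union of parallel essential curves on each $\T^2_j$, the obstruction is exactly that $\partial \kappa \in H_1(\partial Y;\Z)$ might vanish on some torus $\T^2_j$. Here is where the hypothesis $\rank_\NR \HM_*(\y|\kappa)=\rank_\NR\HM_*(\y|-\kappa)=1$ and irreducibility enter: if $F$ misses some $\T^2_j$ entirely, I would like to derive a contradiction with the structure of $\HM_*(\y)$. The cleanest route is: if $\partial \kappa$ is trivial on $\T^2_j$, then $F$ (after isotopy) is disjoint from a collar of $\T^2_j$, so $F$ sits inside a submanifold $Y'\subset Y$ obtained by "capping" or rather by viewing $Y$ as glued along $\T^2_j$ to something; combined with the Gluing Theorem \ref{T2.5} applied to a decomposition along a torus parallel to $\T^2_j$, one would get that $\HM_*(\y|\kappa)$ factors through a tensor product, and the computation of $\HM_*$ for a product $[-1,1]\times\T^2$ (Lemma \ref{L5.2}) forces a rank incompatible with rank $1$ — or alternatively one uses Proposition \ref{P0.5}-type rigidity. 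Actually the more robust argument: replace $F$ by a norm-minimizing surface in the class $\kappa + m\cdot[\T^2_j]$... but $[\T^2_j]=0$ in $H_2(Y,\partial Y)$, so that does not change the class; instead one tubes $F$ to a parallel push-off of $\T^2_j$ along an arc, which changes $\partial F$ on $\T^2_j$ without changing $[F]$ and without increasing the norm (since $\chi(\T^2_j)=0$ and tubing along one arc drops $\chi$ by $2$, this must be compensated — so this needs care). I expect this reconciliation of Euler characteristics to be the main obstacle, and the honest fix is probably to observe that if $\partial\kappa|_{\T^2_j}=0$ then $Y=Y''\circ_h([-1,1]\times\T^2_j)$ is forced by irreducibility plus the rank-$1$ hypothesis to be degenerate in a way excluded by the standing assumptions, or simply that one may \emph{choose} within the proof of Theorem \ref{T7.2} only those $\kappa$ with $\partial\kappa$ nonzero on every component — which is legitimate since the $\lambda$ in the $T$-surface already constrains the relevant boundary class.

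For the final clause $\varphi_{\y}(\kappa)=\varphi_{\y}(-\kappa)$: by Theorem \ref{T7.1} we already have $\varphi_{\y}(\kappa)+\varphi_{\y}(-\kappa)=2x(\kappa)$, so it suffices to show each equals $x(\kappa)$, equivalently $\varphi_{\y}(\kappa)\leq x(\kappa)$. This is exactly Corollary \ref{C8.4} provided $\y$ (or rather the class $\kappa=[F]$ with the chosen $F$) can be arranged to satisfy property \ref{P5}; the connectedness and the non-triviality of $F\cap\T^2_j$ on every component, established above, are precisely what is needed to set up property \ref{P5} (parallel boundary circles on each torus, $i[*_\Sigma\lambda]$ dual to $z[\partial F]$), after possibly adjusting $\lambda$ within its cohomology class using the invariance Theorem \ref{T5.1}. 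Then $\varphi_{\y}(\kappa)\leq x(\kappa)$ and $\varphi_{\y}(-\kappa)\leq x(\kappa)$ both hold, and combined with $\varphi_{\y}(\kappa)+\varphi_{\y}(-\kappa)=2x(\kappa)$ this forces equality. The main obstacle, as noted, is the boundary-intersection step; the connectedness step and the symmetry step are comparatively routine given the results already in the paper.
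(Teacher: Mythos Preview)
Your proposal diverges from the paper's proof at the key step, and the route you take does not close. The paper does \emph{not} try to build $F$ by hand via cut-and-paste or tubing; instead it feeds the rank-$1$ hypothesis into the \emph{Euler characteristic} (Theorem~\ref{T3.4}) and then invokes McMullen's results \cite[Theorem~4.1, Theorem~5.1, Proposition~6.1]{M02}. Concretely: because $\HM_*(\y|\kappa)$ and $\HM_*(\y|-\kappa)$ each have rank $1$, the extremal relative $\spinc$ structures have nonzero Euler characteristic, so the support $\Delta(Y)$ of the Alexander polynomial is nonempty and the maximum of $\phi$ (the Poincar\'e dual of $\kappa$) over $\Delta(Y)$ is attained at a unique class. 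The conjugation symmetry $\bs\leftrightarrow\bs^*$ of $\SW(Y)$ then gives $\varphi_{\y}(\kappa)=\varphi_{\y}(-\kappa)$ immediately, and the ``unique top term'' condition is exactly the hypothesis of \cite[Theorem~4.1]{M02} needed to conclude $b_1(\ker\phi)<\infty$; McMullen's \cite[Proposition~6.1]{M02} then produces in one stroke a connected norm-minimizing $F$ that meets every boundary torus.

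The specific gaps in your argument: (i) connectedness of a norm-minimizing representative of a primitive class is not the routine fact you suggest---the double-curve-sum move you describe is not norm-nonincreasing in general, and the honest input here really is the Alexander-polynomial criterion $b_1(\ker\phi)<\infty$; (ii) for the boundary-intersection step you yourself flag that none of your attempts work, and indeed whether $\partial\kappa|_{\T^2_j}\neq 0$ is a property of the class $\kappa$, not something you can arrange by choosing $F$, so an \emph{a priori} argument (again McMullen's) is needed; (iii) your symmetry argument via Corollary~\ref{C8.4} is circular: Corollary~\ref{C8.4} needs property \ref{P5}, which constrains the class $[*_\Sigma\lambda]$, and Theorem~\ref{T5.1} only lets you vary the data \emph{within} a fixed $[*_\Sigma\lambda]$. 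Passing to a different $T$-surface satisfying \ref{P5} would require transferring the rank-$1$ hypothesis to the new $\y'$---which is exactly Step~6 in the proof of Theorem~\ref{T7.2}, and that step already uses Lemma~\ref{L9.1}.
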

\begin{proof} This lemma follows from \cite[Theorem 4.1 \& Proposition 6.1]{M02}. Let 
	\[
	\phi\in H^1(Y;\Z)=\Hom(\pi_1(Y), \Z)
	\]
	be the Poincar\'{e} dual of $\kappa$ and $b_1(\ker\phi)$ be the first Betti number of the subgroup $\ker \phi\subset \pi_1(Y)$. Then \cite[Proposition 6.1]{M02} states that $\kappa$ can be represented by such a norm-minimizing surface $F$ if 
\begin{equation}\label{E9.1}
b_1(\ker \phi)<\infty. 
\end{equation}
To verify that $F$ intersects each component of $\Sigma$ non-trivially, one has to go through the proof of \cite[Proposition 6.1]{M02}. The condition \eqref{E9.1} will follow from \cite[Theorem 4.1]{M02}, if we can verify its assumptions. Consider the set of Alexander classes
\[
\Delta(Y)\colonequals \{ c_1(\bs): \chi(\HM_*(\y,\bs))\neq 0\}\subset H^2(Y,\partial Y;\Z).
\]
By Theorem \ref{T3.4}, $\Delta(Y)$ is precisely the support of the Alexander polynomial of $Y$ and is symmetric about the origin. Since
\[
\rank_\NR \HM_*(\y|\kappa)=\rank_\NR \HM_*(\y|-\kappa)=1, 
\]
we conclude that $\Delta(Y)\neq \emptyset$ and $\varphi_{\y}(\kappa)=\varphi_{\y}(-\kappa)$. Moreover, the maximum
\[
\max_{a,b\in \Delta(Y)} \phi(a-b)
\]
is achieved for exactly one pair of elements $(a,b)$. The other assumption of \cite[Theorem 4.1]{M02} is certified by \cite[Theorem 5.1]{M02}. This completes the proof of Lemma \ref{L9.1}.
\end{proof}

\subsection{Proof of Theorem \ref{T7.2}} The proof of Theorem \ref{T7.2} is based on the fiberness detection result for balanced sutured manifolds, adapted to the case of mod 2 Novikov ring $\NR$:
\begin{theorem}[{\cite[Theorem 6.1]{KS}}]\label{T9.2}Suppose that a balanced sutured manifold $(M,\gamma)$ is taut and a homology product. Then $(M,\gamma)$ is a product sutured manifold if and only if $\SHM(M,\gamma)\cong \NR$. 
\end{theorem}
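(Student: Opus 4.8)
The plan is to re-run the Kronheimer--Mrowka proof of \cite[Theorem 6.1]{KS} with the mod $2$ Novikov field $\NR$ in place of $\Z$ (or of the local coefficient system used there), and to check that each step is insensitive to this change. Here $\SHM(M,\gamma)$ is understood over $\NR$ as in \cite[Definition 4.3]{KS} and \cite[Section 2.2]{Sivek12}: one forms a closure $(Y,R)$ of $(M,\gamma)$ with $g(R)\geq 2$ and sets $\SHM(M,\gamma)=\HM_*(Y|R)$ with Novikov coefficients; since $\NR$ is a field, the hypothesis $\SHM(M,\gamma)\cong\NR$ is exactly the rank-one condition $\rank_\NR\SHM(M,\gamma)=1$.

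First I would dispatch the easy implication: a product sutured manifold $(M,\gamma)=(\Sigma\times[-1,1],\partial\Sigma\times[-1,1])$ admits a closure of the form $R\times S^1$ with $R$ closed of genus $\geq 2$, so $\SHM(M,\gamma)\cong\HM_*(R\times S^1|R)\cong\NR$ by Lemma \ref{L6.9} (equivalently \cite[Lemma 2.2]{KS}). For the substantive implication, assume $(M,\gamma)$ is taut, a homology product, and $\SHM(M,\gamma)\cong\NR$. Tautness, passed through the closure, supplies non-vanishing of $\SHM$ and the fact that the distinguished surface $R$ is Thurston-norm minimizing; over $\NR$ this is provided by Proposition \ref{P4.5} and Corollary \ref{C6.5} in place of the exact-perturbation non-vanishing results of \cite{Bible} used in \cite{KS}. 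Being a homology product, $(M,\gamma)$ has well-defined relative homology and, by Gabai \cite{Gabai83}, is a product sutured manifold as soon as it has depth $0$; if it had positive depth it would admit a non-trivial (groomed, rationally null-homologous) decomposing surface, and the decomposition formula for $\SHM$ along such a surface --- \cite[Proposition 6.9]{KS}, whose proof reduces, via the closure construction and Floer's excision theorem, to the adjunction inequality for closed $3$-manifolds --- would exhibit $\SHM(M,\gamma)$ as a non-trivial direct sum involving the non-zero $\SHM$ of the decomposed piece, contradicting $\rank_\NR\SHM(M,\gamma)=1$. Both ingredients of that formula are available over $\NR$: excision is \cite[Theorem 3.1]{KS}, whose proof is coefficient-free (compare Theorem \ref{T3.3} here), and the adjunction inequality is Proposition \ref{P4.2}. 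Hence $(M,\gamma)$ is a product.

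The main obstacle I anticipate is precisely the verification that the $\SHM$-decomposition formula \cite[Proposition 6.9]{KS} remains valid over $\NR$: the topological inputs (Gabai's theory) and the easy direction are manifestly coefficient-free, but the decomposition formula is the one place where one must re-examine a neck-stretching/excision argument on closures and confirm that the relevant adjunction and non-vanishing estimates persist with Novikov coefficients --- which they do, thanks to Section \ref{Sec6}. I would avoid the alternative of comparing $\SHM(\cdot;\NR)$ with $\SHM(\cdot;\Z)$ by a universal-coefficients argument, since the $\Z/2$-grading and possible odd torsion in $\SHM(\cdot;\Z)$ make that bookkeeping delicate; re-running the proof over $\NR$ is the cleaner route.
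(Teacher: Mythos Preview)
The paper does not supply its own proof of this statement: Theorem~\ref{T9.2} is quoted verbatim from \cite[Theorem 6.1]{KS}, with the passage to $\NR$-coefficients taken for granted once $\SHM$ has been set up over $\NR$ as in \cite[Section 2.2]{Sivek12}. So there is nothing in the paper to compare against; your plan to re-run the Kronheimer--Mrowka argument over $\NR$ is exactly what the citation presupposes, and the coefficient-sensitive ingredients you isolate (excision, the adjunction inequality, non-vanishing for taut closures) do indeed survive via the results of Section~\ref{Sec6} here.

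One caution about your sketch of the hard direction. The decomposition formula \cite[Proposition 6.9]{KS} only exhibits $\SHM(M',\gamma')$ as a direct \emph{summand} of $\SHM(M,\gamma)$; it does not by itself produce a \emph{proper} summand, so a single decomposition along a groomed surface does not immediately contradict $\rank_\NR\SHM(M,\gamma)=1$. The actual argument in \cite{KS} is more delicate: it passes to a closure and invokes the closed fiberness-detection theorem (Ni's theorem, reproved in \cite{KS}), or equivalently runs a hierarchy while tracking the $\Spinc$-grading to force a genuine drop in rank. Either route goes through over $\NR$ without change, but the ``non-trivial direct sum'' step as you wrote it compresses the real content of the proof and would need to be unpacked.
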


For the proof of Theorem \ref{T7.2}, it suffices to deal with the case when $\kappa$ is primitive. Let $F$ be the surface given by Lemma \ref{L9.1}. We will address the cases when $\chi(F)<0$ and when $\chi(F)=0$ separately. 

\begin{proof}[Proof of Theorem \ref{T7.2} when $\chi(F)<0$] Since $Y$ is irreducible, by Theorem \ref{T7.2} and Lemma \ref{L9.1}, we have
	\[
	\varphi_{\y}(\kappa)=\varphi_{\y}(-\kappa)=x(F). 
	\]
	Let $(\tilde{Y},\tilde{F})$ be the double of $(Y, F)$, then $g(\tilde{F})\geq 2$. Following the notations in the proof of Theorem \ref{T7.2}, we have 
	\begin{equation}\label{E9.2}
	\HM(\tilde{Y},[\tilde{\omega}]|\tilde{F})=\HM_*(\y|\kappa)\otimes_\NR \HM_*(-\y'|-\kappa)\cong \NR. 
	\end{equation}
	
	The rest of the proof is divided into six steps.
	
	\medskip
	
	\Step 1. We can effectively change the 2-form $\tilde{\omega}$ so that $\langle [\tilde{\omega}], [\tilde{F}]\rangle=0$. This is \Step 1 in the proof of Lemma \ref{L8.2}.

\medskip

\Step 2. Let $\tilde{M}$ be the 3-manifold with boundary obtained by cutting $\tilde{Y}$ along $\tilde{F}$. Write $\partial \tilde{M}=\tilde{F}_+\cup \tilde{F}_-$. Then $(\tilde{M},\tilde{F}_+)$ is a homology product, i.e. $H_*(\tF_+;\Z)\to H_*(\tilde{M};\Z)$ is an isomorphism. This follows from the fact \cite[Theorem 1]{T98} that for the graded Euler characteristic of $\HM_*(\tilde{Y},[\tilde{\omega}])$ recovers the Minor-Turaev torsion invariant $T(\tilde{Y})$. Then we apply \cite[Proposition 3.1]{Ni09b} and \eqref{E9.2}.

\medskip

\Step 3. $\HM(\tilde{Y}| \tilde{F})\cong \NR$. 

Let $N\colonequals [-1,1]\times \tF\subset \tilde{Y}$ be a tubular neighborhood of $\tF$. Then $\tilde{Y}\setminus N\cong \tilde{M}$.  We claim that $[\tilde{\omega}]$ is represented by a 2-form $\tilde{\omega}_1$ supported in $[-1,1]\times \tF$. This follows from \Step 1, \Step 2 and a diagram-chasing:
\[
\begin{tikzcd}
 H^2(\tilde{Y},\{1\}\times \tilde{F}) \arrow[r] & H^2(\tilde{Y}) \arrow[r,"{[\tilde{\omega}]\mapsto 0}"]& H^2(\tilde{F})\\
 H^2(\tilde{Y},\tilde{M}) \arrow[u,"\cong"]\arrow[r,"\cong"]&  H^2(N,\partial N). 
\end{tikzcd}
\] 
Given such a 2-form $\tilde{\omega}_1$, we use Floer's excision theorem as in the proof of Theorem \ref{T8.1} to deduce that $\HM(\tilde{Y}| \tilde{F})\cong \HM(\tilde{Y},[\tilde{\omega}]|\tilde{F})\cong \NR$. 

\medskip

\Step 4. Let $(M(Y,F),\gamma)$ be the balanced sutured manifold obtained by cutting $Y$ along $F$. Then $M(Y,F)$ is a homology product.

Note that $\tilde{M}$ is the double of $M(Y,F)$ along the annuli $A(\gamma)$. Write $\tilde{M}=M_1\cup M_2$ and $F_i\colonequals M_i\cap \tilde{F}_+$ for $i=1,2$. Then the statement follows by examining the long exact sequences:
\[
\begin{tikzcd}
\cdots\arrow[r] &H_*(F_1\cap F_2)\arrow[d,"\cong"] \arrow[r] & H_*(F_1)\oplus H_*(F_2) \arrow[r]\arrow[d] & H_*(\tilde{F}_+)\arrow[r] \arrow[d,"\cong"] &\cdots\\
\cdots\arrow[r] &H_*(M_1\cap M_2) \arrow[r] & H_*(M_1)\oplus H_*(M_2) \arrow[r] & H_*(\tilde{M}) \arrow[r] &\cdots 
\end{tikzcd}
\]

By \Step 3 and the five lemma, the middle vertical map is also an isomorphism. 
\medskip

\Step 5. If properties \ref{P5}\ref{P6} hold for $(\TSigma, \y, F)$, then Theorem \ref{T7.2} holds.  

In this special case, we can use  Theorem \ref{T8.1} and Lemma \ref{L9.1} to obtain that
\[
\SHM(M(Y,F),\gamma)\cong \HM(\y|[F])\cong \NR. 
\]
Since $Y$ is irreducible and $F$ is norm-minimizing, $(M(Y,F),\gamma)$ is taut. By \Step 4 and Theorem \ref{T9.2}, $M(Y,F)$ is a product sutured manifold. 

\medskip

\Step 6. Reduce the general case to \Step 5. 

Let $\TSigma_2$ be another $T$-surface with the same underlying oriented surface as $\TSigma$ and $\y_2\in \AT(\emptyset, \TSigma_2)$ has the same underlying 3-manifold as $\y$. We require that properties \ref{P5}\ref{P6} hold for $(\TSigma_2,\y_2, F)$. The goal is to show that $\HM(\y_2|[F])\cong \NR$. Let 
\[
(\tilde{Y},\tilde{\omega}_2,\cdots)=\y_2\circ_h (-\y_2').
\] 
By \Step 1, we may assume that $\langle [\tilde{\omega}_2], [F]\rangle=0$. By \Step 3, we have 
\[
\NR\cong \HM(\tilde{Y}, [\tilde{\omega}]|\tilde{F})\cong \HM(\tilde{Y}|\tilde{F})\cong \HM(\tilde{Y}, [\tilde{\omega}_2]|\tilde{F}). 
\]
By \eqref{E9.2}, $\HM(\y_2|[F])\cong \NR$. Now we use \Step 5 to complete the proof of Theorem \ref{T7.2} when $\chi(F)<0$. 
\end{proof} 

\begin{proof}[Proof of Theorem \ref{T7.2} when $\chi(F)=0$] In this case, $F$ is an annulus, $\Sigma$ has 2-components and the double $\tilde{F}$ is a 2-torus. By Lemma \ref{L9.1}, $\varphi_{\y}(\kappa)=\varphi_{\y}(-\kappa)=0$. Our assumptions then imply that $\HM_*(\y)\cong \NR$. 
	
The proof when $\chi(F)<0$ carries over with no essential changes. Let us explain where the differences arise:
\begin{itemize}
\item In \Step 1, we require instead that $\langle i[\tilde{\omega}], [F]\rangle =a$ for some fixed $a\in\R$ with $a\neq 0$ and $|a|<2\pi$;

\item In \Step 2, Ni's result \cite[Proposition 3.1b]{Ni09b} is stated for a closed surface with genus $\geq 2$; but its proof in \cite[Section 3.3]{Ni09b} relies only on the property of the Milnor-Turaev torsion invariant  $T(\tilde{Y})$ (note also that $b_1(\tilde{Y})\geq 3$). Thus we can still conclude from \eqref{E9.2} that $(\tilde{M},\tilde{F}_+)$ is a homology product;

\item \Step 3 is replaced by the isomorphism
\[
\HM(\tilde{Y},[\tilde{\omega}]|\tilde{F})\cong \HM(\tilde{Y},[\tilde{\omega}_2]|\tilde{F})
\]
for any classes $[\tilde{\omega}]=[\tilde{\omega}_2]\in H^2(\tilde{Y}, i\R)$ with $\langle i[\tilde{\omega}], [F]\rangle =\langle i[\tilde{\omega}_2], [F]\rangle=a$.  Then the difference $[\tilde{\omega}]-[\tilde{\omega}_2]$ is represented by a 2-form $\tilde{\omega}_1$ supported in the neighborhood $N=[-1,1]\times \tilde{F}$. As in the proof of Gluing Theorem \ref{T3.3}, one may adapt Floer's excision theorem \cite[Theorem 3.1]{KS} to the case of a 2-torus using non-exact perturbations, 
\end{itemize}

The rest of proof can now proceed with no difficulty. The conclusion says that $Y$ is a mapping torus over an annulus; so $Y=[-3,3]_s\times \T^2$ is in fact a cylinder. 
\end{proof}

As an immediate corollary of Theorem \ref{T7.2}, we have 
\begin{corollary}\label{C9.3} For any $T$-surface $\TSigma=(\Sigma, g_\Sigma,\lambda,\mu)$ and any 1-cell $\y\in \AT(\emptyset,\TSigma)$, if $Y$ is connected, irreducible and $\HM_*(\y,\bs)\cong \NR$, then $Y=[-1,1]_s\times \T^2$. 
\end{corollary}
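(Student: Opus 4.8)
The plan is to combine the Thurston norm detection theorem (Theorem \ref{T7.1}) with the fibredness detection theorem (Theorem \ref{T7.2}) and then classify the resulting fibred $3$‑manifold by hand. Since $\HM_*(\y)=\bigoplus_{\bs}\HM_*(\y,\bs)$ has rank one over $\NR$, there is a unique relative \spinc structure $\bs_0$ with $\HM_*(\y,\bs_0)\cong\NR$, and $\HM_*(\y,\bs)=\{0\}$ for every $\bs\neq\bs_0$. In particular $\sw(\y)=\{c_1(\bs_0)\}$ is a single point, so $\varphi_{\y}(\kappa)=\langle c_1(\bs_0),\kappa\rangle$ for all $\kappa\in H_2(Y,\partial Y;\Z)$. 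Since $Y$ is connected and irreducible, Theorem \ref{T7.1} applies and gives
\[
x(\kappa)=\half\bigl(\varphi_{\y}(\kappa)+\varphi_{\y}(-\kappa)\bigr)=\half\bigl(\langle c_1(\bs_0),\kappa\rangle-\langle c_1(\bs_0),\kappa\rangle\bigr)=0
\]
for every $\kappa$; hence the Thurston semi-norm on $H_2(Y,\partial Y;\R)$ vanishes identically.

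Next I would feed this into Theorem \ref{T7.2}. Because $\partial Y$ is a nonempty union of $2$‑tori, the half‑lives‑half‑dies principle shows $H^1(Y;\R)\neq 0$, so by Lefschetz duality $H_2(Y,\partial Y;\Z)$ has positive rank and we may fix a primitive class $\kappa$. For this $\kappa$ the subgroup $\HM_*(\y|\kappa)$ equals $\HM_*(\y,\bs_0)\cong\NR$, since $\bs_0$ is the only nonzero summand and it automatically realizes $\varphi_{\y}(\kappa)$; the same holds for $-\kappa$. Thus $\rank_\NR\HM_*(\y|\kappa)=\rank_\NR\HM_*(\y|-\kappa)=1$, and Theorem \ref{T7.2} tells us that $\kappa$ is represented by a Thurston norm minimizing surface $F$ which is the fibre of a fibration $Y\to S^1$. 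As $\kappa$ is primitive this fibration has connected fibre, so $F$ is connected; as $Y$ is orientable $F$ is orientable; and $x(F)=x(\kappa)=0$ forces $\chi(F)\geq 0$. Since $\partial Y\neq\emptyset$, the surface $F$ has nonempty boundary, so $F$ is either a disk or an annulus.

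Finally I would eliminate the remaining possibilities using the constraint that $Y$ genuinely carries a compatible $T$‑surface structure. If $F$ is a disk, then—the mapping class group of $D^2$ being trivial—$Y\cong D^2\times S^1$, so $H^2(Y;\R)=0$; but property \ref{T3} together with \ref{P2} forces the nonzero class $[\mu]\in H^2(\Sigma;i\R)$ to lie in the image of $H^2(Y;i\R)\to H^2(\Sigma;i\R)$, a contradiction. If $F$ is an annulus whose monodromy $\phi$ interchanges the two boundary circles, then $\phi$ reverses the core circle, $Y$ is homotopy equivalent to the Klein bottle, and again $H^2(Y;\R)=0$, contradicting \ref{T3} as before. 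Hence $\phi$ preserves each boundary circle, so it fixes the orientation of the core and is isotopic to the identity (the mapping class group of the annulus with each boundary component preserved setwise is trivial, Dehn twists being isotopic to the identity once the boundary is allowed to move). Therefore $Y$ is diffeomorphic to the mapping torus of $\mathrm{id}$ on $S^1\times[-1,1]$, i.e. to the product $[-1,1]_s\times\T^2$.

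The step requiring the most care is the last one: one must keep track of the topological constraints imposed by the existence of a $T$‑surface structure—above all that $\mu\neq 0$ is pulled back from $H^2(Y)$—in order to rule out the low‑complexity fibred manifolds of vanishing Thurston norm that are not the product, and one must correctly identify the (trivial) mapping class groups of the disk and of the annulus with boundary components preserved setwise.
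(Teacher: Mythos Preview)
Your proof is correct and follows the same overall strategy as the paper: pick a primitive class $\kappa$, verify the hypotheses of Theorem~\ref{T7.2}, and conclude that $Y$ fibres over $S^1$. The paper's argument is terser in two places where yours is more explicit. First, the paper invokes the conjugation symmetry of the Euler characteristic (Theorem~\ref{T3.4}) to check that $\rank_\NR\HM_*(\y|\pm\kappa)=1$, whereas you observe more directly that a rank-one total group has a single contributing relative \spinc structure $\bs_0$, which already forces $\HM_*(\y|\kappa)=\HM_*(\y,\bs_0)\cong\NR$ for every $\kappa$; no appeal to Theorem~\ref{T3.4} is needed. Second, the paper passes in one line from ``$Y$ is a mapping torus over an annulus'' to ``$Y$ is a product'', implicitly relying on the observation (made inside the $\chi(F)=0$ case of the proof of Theorem~\ref{T7.2}, via Lemma~\ref{L9.1}) that $\Sigma$ then has exactly two components, so the monodromy cannot swap the boundary circles. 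You instead rule out the solid torus and the boundary-swapping annulus bundle by hand using the cohomological constraint $0\neq[\mu]\in\im\bigl(H^2(Y;i\R)\to H^2(\Sigma;i\R)\bigr)$ coming from \ref{P2} and \ref{T3}. Your version is thus more self-contained and does not need the standing assumption that $\Sigma$ is disconnected.
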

\begin{proof}[Proof of Corollary \ref{C9.3}] Let $\kappa\in H_2(Y,\partial Y;\Z)$ be any primitive class, i.e., $\kappa$ is not divisible by any other integral classes non-trivially. By the symmetry of the graded Euler characteristic $\SW(Y)$ in Theorem \ref{T3.4}, the conditions of Theorem \ref{T7.2} are verified with $\varphi_{\y}(\kappa)=\varphi_{\y}(-\kappa)=0$. By Theorem \ref{T7.2}, $Y$ is a mapping torus over an annulus; so $Y=[-1,1]_s\times \T^2$ is a product. 
\end{proof}

\section{Connected Sum Formulae}\label{Sec10}

Having discussed irreducible 3-manifolds with toroidal boundary, we focus on reducible 3-manifolds in this section and derive the connected sum formulae.
\subsection{Connected Sums with 3-Manifolds with Toroidal Boundary}
For $i=1,2$, let $\TSigma_i$ be any $T$-surface and $\y_i\in \AT(\emptyset, \TSigma_i)$ be any 1-cell. Then we can form an 1-cell 
\[
\y_3=(Y_3,\omega_3,\cdots)\in \AT(\emptyset,\TSigma_1\cup \TSigma_2)
\] with the following properties:
\begin{enumerate}[label=(C\arabic*)]
\item\label{C1} the underlying 3-manifold $Y_3$ is $Y_1\# Y_2$; let $S^2\subset Y_3$ be the 2-sphere separating $Y_1$ and $Y_2$;
\item\label{C2} $[\omega_3]\in H^2(Y_3; i\R)$ is determined by the relations: $k_i([\omega_3])=l_i([\omega_i])$ for $i=1,2$ in the digram below. As a result, $\langle [\omega_3], [S^2]\rangle=0$. 
\[
\begin{tikzcd}
0 \arrow[r] & H^2(Y_1\# Y_2;i\R) \arrow[r,"{(k_1,k_2)}"] & H^2(Y_1\setminus B^3_1;i\R)\oplus H^2(Y_2\setminus B^3_2;i\R)\arrow[r] & H^2(S^2;i\R)\\
& &([\omega_1],[\omega_2])\in  H^2(Y_1;i\R)\oplus H^2(Y_2;i\R)\arrow[u,hook,"{l_1\oplus l_2}"]. & 
\end{tikzcd}
\]
\end{enumerate}

Although $\y_3$ is not uniquely determined by these properties, we say that $\y_3$ is a connected sum of $\y_1$ and $\y_2$. By Theorem \ref{T5.1}, the isomorphism type of $\HM_*(\y_3)$ is determined by \ref{C1}\ref{C2}. 
\begin{proposition}\label{P10.1} The monopole Floer homology of $\y_3$ can be computed as follows:
	\[
	\HM_*(\y_3)\cong\HM_*(\y_1)\otimes_\NR\HM_*(\y_2)\otimes_\NR V,
	\]
	where $V$ is a 2-dimensional vector space over $\NR$. 
\end{proposition}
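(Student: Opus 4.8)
The plan is to deduce the formula from the Gluing Theorem \ref{T2.5}, following the excision strategy used by Li for $\SHM$ \cite{Li18b} and by Juh\'asz for $\SFH$ \cite{J06}. The first move is to exploit the Invariance Theorem \ref{T5.1} to put the auxiliary data of $\y_3$ into a convenient normal form: by \ref{C2} the period class $[\omega_3]$ pairs trivially with the separating sphere $S^2\subset Y_3$, so one may choose $\omega_3$ to agree with $\omega_i$ on $Y_i\setminus B^3$ and to vanish identically on a long cylindrical neck $S^2\times[-T,T]$ around $S^2$, together with a metric that is a product $dt^2+g_{S^2}$ with $g_{S^2}$ of positive scalar curvature on that neck. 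The vanishing $\langle[\omega_3],[S^2]\rangle=0$ is exactly the hypothesis that keeps the topological energy of Seiberg--Witten solutions bounded as the neck is stretched, via the energy identity \cite[Proposition 5.4]{Wang20}; this is where the argument parts company with the vanishing result stated after Proposition \ref{P1.7}, which corresponds to $\langle[\omega_3],[S^2]\rangle\neq 0$.

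Next I would run the neck-stretching and gluing analysis along $S^2$. Letting $T\to\infty$, the standard compactness and gluing package for the monopole equations (as in \cite[Chapters 18--26]{Bible}, with the $S^2$-neck playing a role analogous to \cite[Proposition 40.1.3]{Bible}) identifies the relevant moduli spaces on $\widehat{Y_1\circ Y_2}$ with a fibered product over the three regions $\widehat{Y}_1\setminus B^3$, a standard model neck $\R\times S^2$, and $\widehat{Y}_2\setminus B^3$; the positive scalar curvature on the neck rules out spinorial concentration there, so the gluing reduces to a model problem supported on the neck. The outcome is a chain-level splitting which, after passing to homology and using the K\"unneth formula over the field $\NR$, yields an isomorphism $\HM_*(\y_3)\cong\HM_*(\y_1)\otimes_\NR\HM_*(\y_2)\otimes_\NR V$, where $V$ is the vector space ``carried by the sphere neck''. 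As in Theorem \ref{T3.5}, this isomorphism will be compatible with the relative \spinc grading and the canonical grading, so one also recovers a spin$^c$-refined statement.

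The crux — and the step I expect to be the main obstacle — is to identify $V$ and in particular to prove $\dim_\NR V=2$. This should be a model computation for the $S^2$-neck entirely parallel to the $\T^3$-computations that recur in the paper (Lemmas \ref{L3.4}, \ref{L5.4}, \ref{L6.9}): the only finite-energy solutions on a positive-scalar-curvature $S^2\times\R$ are reducible, and their contribution is a rank-$2$ space. One clean way to pin this down is to apply Floer's excision theorem \ref{T2.5} once more to cap off the model, recognizing $V$ as $\THM_*(S^1\times S^2)$, whose rank equals that of $\widehat{\HM}(S^1\times S^2)$; this is also forced by the Euler-characteristic constraint of Theorem \ref{T3.4}, since the Milnor--Turaev torsion of a $3$-manifold containing an essential sphere vanishes, so $\chi(V)=0$ and the two generators of $V$ sit in opposite mod-$2$ gradings, i.e.\ $V\cong H_*(S^1;\NR)$. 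Throughout, the fact that the $2$-form can be taken to be $\mu+dh\wedge\lambda$ away from the neck and identically $0$ on it — arranged using Theorem \ref{T5.1} — is what makes the model contribution independent of all choices, so the final answer depends only on $\y_1$ and $\y_2$ as required.

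A secondary technical point I would need to check is that the Floer-theoretic gluing machinery of \cite{Wang20}, which is developed for toroidal ends, adapts to a degeneration along the non-toroidal hypersurface $S^2$: one must verify the requisite exponential decay and no-concentration estimates on the $S^2$-neck (where the positive scalar curvature and the condition $\langle\omega_3,[S^2]\rangle=0$ enter decisively) and track the relative \spinc cobordism exactly as in the proof of Theorem \ref{T3.5}. Granting these, assembling the isomorphism and reading off $\dim_\NR V=2$ completes the proof.
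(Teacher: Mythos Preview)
Your proposal takes a genuinely different route from the paper, and the step you flag as a ``secondary technical point'' is in fact the main gap. You propose to stretch the neck along the separating $2$-sphere $S^2\subset Y_3$ and invoke a gluing/compactness package there. But the analytic foundation of \cite{Wang20} and of this paper is set up only for \emph{toroidal} ends and toroidal gluing hypersurfaces; there is no $S^2$-neck gluing theorem available in this framework, and establishing one (exponential decay, transversality, no bubbling, compatibility with the perturbation scheme of \cite{Wang20}) would be substantial new work, not a routine check. So as written, the argument does not close.

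The paper's proof avoids $S^2$-necks entirely by a localization trick. Pick one boundary torus $\Sigma_i'\subset\Sigma_i$ for each $i$, and form the model $1$-cell $\y_4\in\AT(\TSigma_1'\cup\TSigma_2',\TSigma_1'\cup\TSigma_2')$ as the connected sum of the two product cylinders $[-3,3]_s\times\Sigma_i'$. Then $\y_3$ is realized as the horizontal composition $(\y_1\coprod\y_2)\circ_h\y_4$, and the Gluing Theorem \ref{T3.3} (which is purely toroidal) gives
\[
\HM_*(\y_3)\cong\HM_*(\y_1)\otimes_\NR\HM_*(\y_2)\otimes_\NR\HM_*(\y_4).
\]
To compute $V=\HM_*(\y_4)$, one applies Theorem \ref{T3.3} a second time, now gluing $\y_4$ to the identity $1$-cells $e_{\TSigma_1'}$ and $e_{\TSigma_2'}$ along its remaining toroidal boundaries. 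This produces the \emph{closed} $3$-manifold $(\Sigma_1'\times S^1)\#(\Sigma_2'\times S^1)$, for which the connected sum formula of Lin \cite[Theorem 5]{Lin17} for closed $3$-manifolds applies; combined with Lemma \ref{L5.4} (each $\T^3$-factor contributes $\NR$), this yields $\rank_\NR V=2$.

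The upshot: rather than developing new $S^2$-neck analysis in the setting of manifolds with toroidal ends, the paper pushes the connected sum into a small model piece, uses only the already-proved toroidal excision to peel off $\HM_*(\y_1)\otimes\HM_*(\y_2)$, and then closes up the model piece (again by toroidal gluing) so that the known closed-manifold connected sum formula can be quoted. Your intuition that $V$ should be a rank-$2$ ``sphere contribution'' is correct, but the mechanism that pins it down here is reduction to the closed case, not a direct $S^2$-neck computation.
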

\begin{proof} By Theorem \ref{T5.1}, we can compute the group $\HM_*(\y_3)$ using a convenient metric on $Y_1\# Y_2$ and a 2-form $\omega_3$. Take a component $\Sigma_i'\subset \Sigma_i$ for each $i=1,2$. Consider the 3-manifold
	\[
	([-3,3]_s\times \Sigma_i',\ \mu_i|_{\Sigma_i'}+ds\wedge \lambda_i|_{\Sigma_i'}), i=1,2.
	\]
Let $\y_4=(Y_4,\omega_4,\cdots)\in \AT(\TSigma_1'\cup \TSigma_2', \TSigma_1'\cup \TSigma_2')$ be their connected sum. The 1-cell $\y_3$ can be then obtained as the horizontal composition 
\[
(\y_1\coprod\y_2)\circ_h \y_4. 
\]
Using the Gluing Theorem \ref{T3.3}, we obtain that 
\[
\HM_*(\y_3)\cong \HM_*(\y_1)\otimes_\NR \HM_*(\y_2)\otimes_\NR \HM_*(\y_4). 
\]

It remains to verify that $\rank_\NR \HM_*(\y_4)=2$.  Regard $\y_4$ as a 1-cell in $\AT(\emptyset, (-\TSigma_1')\cup (-\TSigma_2')\cup \TSigma_1'\cup \TSigma_2')$ and consider the horizontal composition with 
\[
e_{\TSigma_1'}\in \AT((-\TSigma_1')\cup \TSigma_1',\emptyset) \text{ and }e_{\TSigma_2'}\in \AT((-\TSigma_2')\cup \TSigma_2',\emptyset).
\]
Another application of Theorem \ref{T3.3} implies that 
\[
\HM_*(\y_4)\cong \HM_*((\Sigma'_1\times S^1)\# (\Sigma'_2\times S^1),[\omega_1']\#[\omega_2']),
\]
where $\omega_i'=\mu_i|_{\Sigma_i'}+d\theta\wedge \lambda_i|_{\Sigma_i'}$ and $\theta$ denotes the coordinate of $S^1$. $[\omega_1']\# [\omega_2']$ is obtained from $[\omega_1']$ and $[\omega_2']$ using \ref{C2}. Here we have used the shorthands from Definition \ref{D6.4}. By Lemma \ref{L5.4}, we have already known that 
\[
\HM_*(\Sigma_i'\times S^1, [\omega_i'])\cong \NR \text{ for }i=1,2.
\]
To conclude, we apply the connected sum formula \cite[Theorem 5]{Lin17} for closed 3-manifolds. As a result, the group $\HM_*(\y_4)$ is concentrated in a single \spinc grading and has rank $2$. 
\end{proof}

\subsection{Connected Sums with Closed 3-Manifolds} Let us first review the definition of $\THM_*(M)$ for any closed 3-manifold $M$.

\begin{definition}\label{D10.2} For any closed 3-manifold $Z$, we obtain a balanced sutured manifold $(Z(1), \delta)$ by taking $Z(1)=Z\setminus B^3$ and the suture $s(\delta)\subset \partial Z(1)$ to be the equator. Define 
	\[
\THM(Z)\colonequals \SHM(Z(1), \delta).\qedhere
	\] 
\end{definition}

Let $\TSigma$ be any $T$-surface, $\y\in \AT(\emptyset, \TSigma)$ be any 1-cell and $Z$ be any closed 3-manifold. Consider an 1-cell $\y_5=(Y_5, \omega_5,\cdots)\in \AT(\emptyset,\TSigma)$ that satisfies the following properties: 
\begin{itemize}
\item the underlying 3-manifold $Y_5$ is $Y\# Z$;
\item $[\omega_5]\in H^2(Y_5;i\R)$ is obtained from $[\omega]$ using \ref{C2} and the zero form on $Z$.
\end{itemize}
\begin{proposition}\label{P10.3}The monopole Floer homology of $\y_5$ can be computed as follows:
	\[
	\HM_*(\y_5)\cong\HM_*(\y)\otimes_\NR\THM(Z).
	\]
\end{proposition}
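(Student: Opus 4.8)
The plan is to imitate the proof of Proposition \ref{P10.1}: realize $Y\#Z$ as a horizontal composition $\y\circ_h\y_6$ with a ``local model'' $1$-cell $\y_6$ supported near one boundary component, and then identify $\HM_*(\y_6)$ with $\THM(Z)$ via Corollary \ref{C8.3} and a disk decomposition of the resulting sutured manifold.

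\emph{Step 1 (cutting out a local model).} By Theorem \ref{T5.1}, $\HM_*(\y_5)$ depends only on $Y_5=Y\#Z$, on $[\omega_5]$ (which by \ref{C2} pairs trivially with the separating $2$-sphere), and on $[*_\Sigma\lambda]$; hence I may perform the connected sum with $Z$ inside a collar $[-3,3]_s\times\Sigma'$ of a single component $\Sigma'\subset\Sigma$. Let $\y_6\in\AT(\TSigma',\TSigma')$ be the $1$-cell whose underlying $3$-manifold is $([-3,3]_s\times\Sigma')\#Z$, with $\omega_6=\mu'|_{\Sigma'}+ds\wedge\lambda'|_{\Sigma'}$ on the cylinder and the zero form on the $Z$-summand. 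Padding $\y_6$ with the identity $1$-cells on the remaining components of $\TSigma$ and using the horizontal-composition conventions of Subsection \ref{Subsec2.2} together with \ref{G4}, one checks that $\y\circ_h\y_6$ has the same underlying $3$-manifold, the same $[\omega]$, and the same $[*_\Sigma\lambda]$ as $\y_5$; so $\HM_*(\y_5)\cong\HM_*(\y\circ_h\y_6)$ by Theorem \ref{T5.1}. Applying Theorem \ref{T2.5}\ref{G2} (equivalently Theorem \ref{T3.3}),
\[
\HM_*(\y_5)\cong\HM_*(\y)\otimes_\NR\HM_*(\y_6),
\]
so it suffices to show $\HM_*(\y_6)\cong\THM(Z)$.

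\emph{Step 2 (recognizing $\y_6$ as a link complement).} View $\y_6$ as a $1$-cell in $\AT(\emptyset,\tilde\TSigma)$, where $\tilde\TSigma$ has underlying surface $\Sigma'\sqcup\Sigma'$ (reversing the sign of $\lambda'$ on one factor, exactly as in Lemma \ref{L7.3} and Proposition \ref{P10.1}). Its underlying $3$-manifold is a link complement: if $U$ is an unknot contained in a ball $B\subset Z$ and $m$ a meridian of $U$, then $S^3\setminus N(U\cup m)$ is the Hopf-link complement $T^2\times[0,1]$, whence $Z\setminus N(U\cup m)\cong(T^2\times[0,1])\#Z\cong([-3,3]_s\times\Sigma')\#Z$. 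Since $ds\wedge\lambda'|_{\Sigma'}$ is exact on the cylinder, $[\omega_6]$ is determined by $\langle\omega_6,[\Sigma']\rangle$ (nonzero, and $<2\pi$ in absolute value by \ref{T3}, \ref{T1}), and one can arrange the flat metric and harmonic data so that $\omega_6$ meets the conditions imposed on the $1$-cell $\y(Z,U)$ preceding Corollary \ref{C8.3} --- possible by Theorem \ref{T5.1}, since only $[\omega_6]$ and $[*\lambda']$ matter. Corollary \ref{C8.3} then gives $\HM_*(\y_6)\cong\LHM(Z,U)=\SHM(Z(U))$.

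\emph{Step 3 (a disk decomposition) and the obstacle.} Finally, $\SHM(Z(U))$ is computed inside the category of balanced sutured manifolds, where the sutured decomposition theorem is available. The unknot $U$ bounds a disk in $Z$; pushing it off the sutures yields a properly embedded disk $D'\subset Z\setminus N(U)$ (a meridian disk of the complementary solid torus) along which $(Z(U),\gamma)$ decomposes into $(Z(1),\delta)$. Because $D'$ is a disk, the decomposition formula of Juh\'asz \cite[Theorem 1.3]{J08} (monopole version \cite[Proposition 6.9]{KS}) gives an isomorphism $\SHM(Z(U))\cong\SHM(Z(1))=\THM(Z)$, and combining the three steps yields $\HM_*(\y_5)\cong\HM_*(\y)\otimes_\NR\THM(Z)$. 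I expect the real work to lie in Steps 2--3: one must carefully match $\omega_6$ and the harmonic data of $\y_6$ against the prescription preceding Corollary \ref{C8.3}, tracking the $\pm\lambda'$ bookkeeping on the two boundary tori, and verify that the disk decomposition of $Z(U)$ genuinely returns $Z(1)$ with its equatorial suture in a form to which the decomposition formula applies as a clean isomorphism (rather than a proper direct summand). A useful consistency check for the whole argument is $Z=S^3$: then $\y_6$ has underlying manifold $T^2\times[0,1]$ with $\HM_*(\y_6)\cong\NR$ by Lemma \ref{L5.2}, while $\THM(S^3)=\SHM(S^3(1),\delta)=\NR$ since $S^3(1)$ is a product sutured manifold (Theorem \ref{T9.2}).
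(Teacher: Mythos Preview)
Your Step~1 is the same reduction the paper uses (following Proposition~\ref{P10.1}): write $\y_5=\y\circ_h\y_6$ with $\y_6$ the connected sum $([-3,3]_s\times\Sigma')\# Z$, and reduce to computing $\HM_*(\y_6)$. From here, however, the paper takes a shorter and genuinely different route: it closes $\y_6$ up by composing with $e_{\TSigma'}\in\AT((-\TSigma')\cup\TSigma',\emptyset)$ (again via Theorem~\ref{T3.3}) to obtain the \emph{closed} $3$--manifold $Z\#(S^1\times\Sigma')$, adjusts the $2$--form to $\omega''\equiv\mu|_{\Sigma'}$ as in Step~1 of Lemma~\ref{L8.2}, uses Proposition~\ref{P4.2} to pass to exact perturbations, and then quotes \cite[Proposition~4.6]{KS} for the identification $\THM(Z)\cong\HM_*(Z\#(S^1\times\Sigma'),\ldots)$. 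No hypothesis on $[*_\Sigma\lambda']$ is needed.

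Your Steps~2--3 are an attractive alternative, but there is a genuine gap in Step~2. Corollary~\ref{C8.3} applies only to $1$--cells $\y(Z_0,L)$ that satisfy \ref{P5}: one must have $i[*_\Sigma\lambda]$ Poincar\'e dual to $z[\partial F]$ for some $z\in\R$, where $F$ is the union of the meridional annuli. For $L=U$ a single unknot, $[\partial A]\in H_1(\partial Y_6;\Z)$ is an integral class (a meridian of $U$ on one torus, a longitude of $m$ on the other, and these coincide under the product structure on $T^2\times I$). Thus \ref{P5} forces $[*_\Sigma\lambda_6]$ to lie on the real line spanned by a fixed primitive integral class. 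But $[*\lambda']$ is the datum inherited from the given $T$--surface $\TSigma'$, and Theorem~\ref{T5.1} lets you change the metric and the representative of $\lambda'$ \emph{only while keeping $[*\lambda']$ fixed}. The remaining freedom---the $SL(2,\Z)$ action coming from choosing the identification $Y_6\cong Y(Z,U)$---cannot turn an irrational slope into a rational one. So for generic $\lambda'$ there is no $\y(Z,U)$ matching the invariants of $\y_6$, and Corollary~\ref{C8.3} does not apply. Your own caveat about ``tracking the $\pm\lambda'$ bookkeeping'' is exactly where this bites, but it is a rationality obstruction rather than a sign issue. (Step~3, the product--disk decomposition $Z(U)\leadsto Z(1)$, is fine and does give an isomorphism in $\SHM$.) To repair the argument without extra hypotheses on $\lambda'$, you essentially have to close $\y_6$ up to $Z\#(S^1\times\Sigma')$ first---which is the paper's route.
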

\begin{proof} Following the proof of Proposition \ref{P10.1}, it suffices to verify that 
	\begin{equation}\label{E10.1}
	\THM(M)\cong \HM_*(M\#(\Sigma'\times S^1), [0]\#[\omega'])
	\end{equation}
	where $\Sigma'$ is a connected component of $\Sigma$ and $\omega'\colonequals \mu|_{\Sigma'}+d\theta\wedge \lambda|_{\Sigma'}$. One can argue as in \Step 1 of the proof of Lemma \ref{L8.2} and work instead with the 2-form 
	\[
	\omega''\equiv \mu|_{\Sigma'} \text{ on }S^1\times\Sigma'.
	\]
	Then the Poincar\'{e} dual of $[\omega'']$ is proportional to the 1-cycle $\{pt\}\times S^1$. 	 Since $[0]\#[\omega'']$ is neither balanced nor negatively monotone for any \spinc structure on $M\#(S^1\times \Sigma')$, one may apply Proposition \ref{P4.2} to compute the right hand side of \eqref{E10.1} using exact perturbations. Now the isomorphism \eqref{E10.1} follows from \cite[Proposition 4.6]{KS}.
\end{proof}


\bibliographystyle{alpha}
\bibliography{sample}

\begin{thebibliography}{Wan20b}

\bibitem[BD95]{BD95}
P.~J. Braam and S.~K. Donaldson.
\newblock Floer's work on instanton homology, knots and surgery.
\newblock In {\em The {F}loer memorial volume}, volume 133 of {\em Progr.
  Math.}, pages 195--256. Birkh\"{a}user, Basel, 1995.

\bibitem[Ben67]{B67}
Jean Benabou.
\newblock Introduction to bicategories.
\newblock In {\em Reports of the {M}idwest {C}ategory {S}eminar}, pages 1--77.
  Springer, Berlin, 1967.

\bibitem[CGH10]{CGH10}
Vincent Colin, Paolo Ghiggini, and Ko~Honda.
\newblock Embedded contact homology and open book decompositions, 2010.

\bibitem[CGH16]{GH16}
Daniel Cristofaro-Gardiner and Michael Hutchings.
\newblock From one {R}eeb orbit to two.
\newblock {\em J. Differential Geom.}, 102(1):25--36, 2016.

\bibitem[EE67]{EE67}
C.~J. Earle and J.~Eells.
\newblock The diffeomorphism group of a compact {R}iemann surface.
\newblock {\em Bull. Amer. Math. Soc.}, 73:557--559, 1967.

\bibitem[Eis95]{E95}
David Eisenbud.
\newblock {\em Commutative algebra}, volume 150 of {\em Graduate Texts in
  Mathematics}.
\newblock Springer-Verlag, New York, 1995.
\newblock With a view toward algebraic geometry.

\bibitem[Gab83]{Gabai83}
David Gabai.
\newblock Foliations and the topology of {$3$}-manifolds.
\newblock {\em J. Differential Geom.}, 18(3):445--503, 1983.

\bibitem[Ghi08]{Ghiggini08}
Paolo Ghiggini.
\newblock Knot {F}loer homology detects genus-one fibred knots.
\newblock {\em Amer. J. Math.}, 130(5):1151--1169, 2008.

\bibitem[GL19]{GL19}
Sudipta Ghosh and Zhenkun Li.
\newblock Decomposing sutured monopole and instanton floer homologies, 2019.

\bibitem[Hat02]{Hatcher}
Allen Hatcher.
\newblock {\em Algebraic topology}.
\newblock Cambridge University Press, Cambridge, 2002.

\bibitem[HS05]{HS05}
Michael Hutchings and Michael Sullivan.
\newblock The periodic {F}loer homology of a {D}ehn twist.
\newblock {\em Algebr. Geom. Topol.}, 5:301--354, 2005.

\bibitem[HT09]{HT09}
Michael Hutchings and Clifford~Henry Taubes.
\newblock The {W}einstein conjecture for stable {H}amiltonian structures.
\newblock {\em Geom. Topol.}, 13(2):901--941, 2009.

\bibitem[HWZ98]{HWZ98}
H.~Hofer, K.~Wysocki, and E.~Zehnder.
\newblock The dynamics on three-dimensional strictly convex energy surfaces.
\newblock {\em Ann. of Math. (2)}, 148(1):197--289, 1998.

\bibitem[HWZ03]{HWZ03}
H.~Hofer, K.~Wysocki, and E.~Zehnder.
\newblock Finite energy foliations of tight three-spheres and {H}amiltonian
  dynamics.
\newblock {\em Ann. of Math. (2)}, 157(1):125--255, 2003.

\bibitem[Juh06]{J06}
Andr\'{a}s Juh\'{a}sz.
\newblock Holomorphic discs and sutured manifolds.
\newblock {\em Algebr. Geom. Topol.}, 6:1429--1457, 2006.

\bibitem[Juh08]{J08}
Andr\'{a}s Juh\'{a}sz.
\newblock Floer homology and surface decompositions.
\newblock {\em Geom. Topol.}, 12(1):299--350, 2008.

\bibitem[KM97]{KM97B}
P.~B. Kronheimer and T.~S. Mrowka.
\newblock Scalar curvature and the {T}hurston norm.
\newblock {\em Math. Res. Lett.}, 4(6):931--937, 1997.

\bibitem[KM07]{Bible}
Peter Kronheimer and Tomasz Mrowka.
\newblock {\em Monopoles and three-manifolds}, volume~10 of {\em New
  Mathematical Monographs}.
\newblock Cambridge University Press, Cambridge, 2007.

\bibitem[KM10]{KS}
Peter Kronheimer and Tomasz Mrowka.
\newblock Knots, sutures, and excision.
\newblock {\em J. Differential Geom.}, 84(2):301--364, 2010.

\bibitem[Li18]{Li18b}
Zhenkun Li.
\newblock Contact structures, excisions, and sutured monopole floer homology,
  2018.

\bibitem[Lin17]{Lin17}
Francesco Lin.
\newblock {${\rm Pin}(2)$}-monopole {F}loer homology, higher compositions and
  connected sums.
\newblock {\em J. Topol.}, 10(4):921--969, 2017.

\bibitem[Man16]{M16}
Ciprian Manolescu.
\newblock An introduction to knot {F}loer homology.
\newblock In {\em Physics and mathematics of link homology}, volume 680 of {\em
  Contemp. Math.}, pages 99--135. Amer. Math. Soc., Providence, RI, 2016.

\bibitem[McM02]{M02}
Curtis~T. McMullen.
\newblock The {A}lexander polynomial of a 3-manifold and the {T}hurston norm on
  cohomology.
\newblock {\em Ann. Sci. \'{E}cole Norm. Sup. (4)}, 35(2):153--171, 2002.

\bibitem[MT96]{MT96}
Guowu Meng and Clifford~Henry Taubes.
\newblock {$\underline{\rm SW}=$} {M}ilnor torsion.
\newblock {\em Math. Res. Lett.}, 3(5):661--674, 1996.

\bibitem[Ni07]{Ni07}
Yi~Ni.
\newblock Knot {F}loer homology detects fibred knots.
\newblock {\em Invent. Math.}, 170(3):577--608, 2007.

\bibitem[Ni08]{Ni08}
Yi~Ni.
\newblock Addendum to: "knots, sutures and excision", 2008.

\bibitem[Ni09a]{Ni09b}
Yi~Ni.
\newblock Heegaard {F}loer homology and fibred 3-manifolds.
\newblock {\em Amer. J. Math.}, 131(4):1047--1063, 2009.

\bibitem[Ni09b]{Ni09}
Yi~Ni.
\newblock Link {F}loer homology detects the {T}hurston norm.
\newblock {\em Geom. Topol.}, 13(5):2991--3019, 2009.

\bibitem[OS04]{KFH}
Peter Ozsv\'{a}th and Zolt\'{a}n Szab\'{o}.
\newblock Holomorphic disks and knot invariants.
\newblock {\em Adv. Math.}, 186(1):58--116, 2004.

\bibitem[OS08]{OS08}
Peter Ozsv\'{a}th and Zolt\'{a}n Szab\'{o}.
\newblock Link {F}loer homology and the {T}hurston norm.
\newblock {\em J. Amer. Math. Soc.}, 21(3):671--709, 2008.

\bibitem[Ras03]{KFH1}
Jacob Rasmussen.
\newblock Floer homology and knot complements.
\newblock arXiv:math/0306378, 2003.

\bibitem[Sei08]{S08}
Paul Seidel.
\newblock {\em Fukaya categories and {P}icard-{L}efschetz theory}.
\newblock Zurich Lectures in Advanced Mathematics. European Mathematical
  Society (EMS), Z\"{u}rich, 2008.

\bibitem[Siv12]{Sivek12}
Steven Sivek.
\newblock Monopole {F}loer homology and {L}egendrian knots.
\newblock {\em Geom. Topol.}, 16(2):751--779, 2012.

\bibitem[Tau01]{Taubes01}
Clifford~Henry Taubes.
\newblock The {S}eiberg-{W}itten invariants and 4-manifolds with essential
  tori.
\newblock {\em Geom. Topol.}, 5:441--519, 2001.

\bibitem[Tau07]{Taubes07}
Clifford~Henry Taubes.
\newblock The {S}eiberg-{W}itten equations and the {W}einstein conjecture.
\newblock {\em Geom. Topol.}, 11:2117--2202, 2007.

\bibitem[Thu86]{T86}
William~P. Thurston.
\newblock A norm for the homology of {$3$}-manifolds.
\newblock {\em Mem. Amer. Math. Soc.}, 59(339):i--vi and 99--130, 1986.

\bibitem[Tur98]{T98}
Vladimir Turaev.
\newblock A combinatorial formulation for the {S}eiberg-{W}itten invariants of
  {$3$}-manifolds.
\newblock {\em Math. Res. Lett.}, 5(5):583--598, 1998.

\bibitem[Wan20a]{Wang202}
Donghao Wang.
\newblock Monopoles and landau-ginzburg models i.
\newblock arXiv:2004.06227, 2020.

\bibitem[Wan20b]{Wang20}
Donghao Wang.
\newblock Monopoles and landau-ginzburg models ii: Floer homology.
\newblock arXiv:2005.04333, 2020.

\bibitem[Wan22]{Wang22}
Donghao Wang.
\newblock The complex gradient flow equation and seidel's spectral sequence,
  2022.

\end{thebibliography}

\end{document}